\pdfoutput=1

\newif\ifpersonal
\personaltrue

\documentclass[11pt,a4paper,dvipsnames,x11names]{amsart}

\DeclareMathAlphabet{\mathpzc}{OT1}{pzc}{m}{it}

\usepackage[utf8]{inputenc}
\usepackage[T1]{fontenc}
\usepackage[english]{babel}

\usepackage[a4paper,margin=1in]{geometry}

\usepackage{microtype,lmodern}
\usepackage{mathpazo}
\usepackage{euler}

\usepackage{amsmath,amssymb,amsthm,mathtools,mathrsfs,bm,eucal,tensor}
\usepackage{stmaryrd}
\usepackage{dsfont}

\usepackage[all,cmtip]{xy}
\usepackage{tikz}
\usetikzlibrary{arrows.meta,calc,positioning,fit,backgrounds,shapes.geometric}
\usepackage{tikz-cd}

\tikzset{
  >=Stealth,
  box/.style={draw, rounded corners, inner sep=6pt},
  v/.style={draw, circle, inner sep=1.4pt},
  arr/.style={->, thick},
  darr/.style={->, thick, dashed}
}

\usepackage{enumerate,comment,braket,xspace,csquotes}

\usepackage{xcolor}
\usepackage{hyperref}
\hypersetup{
  colorlinks=true,
  linkcolor=blue!60!black,
  citecolor=blue!60!black,
  urlcolor=blue!60!black
}

\usepackage[capitalize,nameinlink]{cleveref}

\numberwithin{equation}{section}

\theoremstyle{plain}
\newtheorem{theorem}{Theorem}[section]
\newtheorem{proposition}[theorem]{Proposition}
\newtheorem{lemma}[theorem]{Lemma}
\newtheorem{corollary}[theorem]{Corollary}

\newtheorem{assumption}[theorem]{Assumption}

\theoremstyle{definition}
\newtheorem{definition}[theorem]{Definition}
\newtheorem{remark}[theorem]{Remark}
\newtheorem{problem}[theorem]{Problem}

\newcommand*{\defeq}{\mathrel{\vcenter{\baselineskip0.5ex \lineskiplimit0pt
                     \hbox{\scriptsize.}\hbox{\scriptsize.}}}%
                     =}

\newcommand{\OO}{\mathcal{O}}
\newcommand{\JJ}{\mathcal{J}}
\newcommand{\FF}{\mathcal{F}}
\newcommand{\Xred}{X_{\mathrm{red}}}
\newcommand{\CC}{\mathbb C}

\newcommand{\ZZ}{\mathbb Z}

\newcommand{\id}{\mathrm{id}}

\newcommand{\Hom}{\mathrm{Hom}}
\newcommand{\Aut}{\mathrm{Aut}}
\newcommand{\MC}{\operatorname{MC}}
\newcommand{\Der}{\operatorname{Der}}
\newcommand{\At}{\operatorname{At}}
\newcommand{\at}{\operatorname{at}}
\newcommand{\Ext}{\operatorname{Ext}}
\newcommand{\Sym}{\operatorname{Sym}}
\newcommand{\Def}{\operatorname{Def}}
\newcommand{\rk}{\operatorname{rk}}

\newcommand{\Res}{\operatorname{Res}}
\newcommand{\fil}{\operatorname{fil}}

\newcommand{\Tot}{\operatorname{Tot}}
\newcommand{\spl}{\mathrm{spl}}
\newcommand{\cl}{\mathrm{cl}}
\newcommand{\der}{\mathrm{der}}
\newcommand{\Tw}{\mathfrak{Tw}}

\newcommand{\proj}{\mathrm{proj}}
\newcommand{\res}{\mathrm{res}}
\newcommand{\Spc}{\mathbf{Spc}}

\providecommand{\Spec}{\operatorname{Spec}}

\newcommand{\Gr}{\mathrm{Gr}}
\newcommand{\gr}{\mathrm{gr}}

\title{Formal moduli and the splitting theory of supermanifolds}
\author{Mauricio Corr\^ea}
\address{Dipartimento di Matematica, Universit\`a degli Studi di Bari Aldo Moro, Bari, Italy}
\email{mauricio.barros@uniba.it}
\author{Simone Noja}
\address{Dipartimento di Matematica, Universit\`a degli Studi di Bari Aldo Moro, Bari, Italy}
\email{simone.noja@uniba.it}
\date{}

\subjclass[2020]{32C11, 58A50, 14A30, 14D23}

\keywords{Complex supermanifolds;
splitting problem;
non-split supermanifolds;
obstruction classes;
Atiyah class;
formal moduli problems;
filtered dg Lie /
\(L_\infty\)-algebras.
}

\begin{document}

\begin{abstract}

We develop a formal moduli theory for the splitting problem of complex supermanifolds.
Starting from Green's obstruction tower, we construct a finite-step filtered dg Lie algebra
which controls splittings by filtered Maurer--Cartan theory.  We prove that the classical
successive obstruction classes are recovered as the leading terms of adapted
Maurer--Cartan representatives, and we transfer the theory to a minimal filtered
\(L_\infty\)-model whose higher brackets give the intrinsic Kuranishi relations among
the obstruction coordinates.  We also prove that, in a precise filtered sense, the affine
Atiyah class contains the entire Green obstruction tower: the Donagi--Witten component
gives the primary obstruction, while the higher obstructions arise as successive projected
defects and residual classes of the same Atiyah cocycle.  We then pass to families, by
constructing the fixed-retract formal moduli problem with prescribed split model and by
identifying the formal neighbourhood of the split section with the fibrewise deformation
theory of the split model; under standard finiteness, base-change, and descent hypotheses
this yields relative tangent-obstruction and Kuranishi presentations.  Finally, we work
out explicit examples of non-split supergeometries, including cases with residual higher
obstructions and a first nonlinear Kuranishi relation.  These examples illustrate the
interaction between Green obstructions, Atiyah classes, and higher \(L_\infty\)-operations.
\end{abstract}

\maketitle
 
\section{Introduction}

A complex supermanifold is locally obtained by adjoining odd variables to an ordinary
complex manifold.  Thus, locally, its structure sheaf is an exterior algebra over the
structure sheaf of its reduced space.  The main question is whether this local
description can be made global.  In other words, one asks whether the odd directions of
the supermanifold can be globally separated from the even geometry.  This is the
\emph{splitting problem}.

\smallskip

More explicitly, if \(X = (|X|, \mathcal{O}_X)\) is a complex supermanifold\footnote{Here described as a locally ringed space, where $|X|$ is a topological space and \( \mathcal{O}_X\) is a sheaf of superalgebras on \(|X|\).} with reduced space
\(
\Xred=(|X|,\mathcal O_X/\mathcal J_X)
\) 
and odd conormal bundle \(
\FF_X\defeq \mathcal J_X/\mathcal J_X^2
\), then \(X\) has a canonical associated split model
\(
\widehat X
=
\bigl(\Xred,\bigwedge\nolimits^\bullet \FF_X\bigr).
\)
The splitting problem asks whether \(X\) is globally isomorphic to \(\widehat X\), compatibly
with the reduced space and with the odd conormal bundle.  A positive answer means that
the supermanifold is completely reconstructed from classical data: an ordinary complex manifold -- the reduced space $\Xred$ -- together with a holomorphic vector bundle -- the odd conormal bundle $\FF_X$.  A negative answer means that the
supermanifold carries genuinely supergeometric information which is not contained in
these two pieces of data.

\smallskip

This distinction is one of the fundamental features of complex supergeometry.  In the
smooth category, every supermanifold is split by Batchelor's theorem
\cite{BatchelorStructure}.  Smooth supermanifolds therefore have no intrinsic splitting
obstructions: up to a non-canonical isomorphism, they are exterior algebras over ordinary
manifolds.  In the holomorphic and algebraic categories the situation is different.
There exist complex supermanifolds which are not split, or
not projected.  Such objects cannot be reconstructed in an elementary way from their
reduced spaces.  In the terminology of Donagi and Witten, the relevant supergeometric
space then ``has a life of its own'' \cite{DonagiWittenNotProjected}.  For this reason,
the splitting problem is not a peripheral question: it is a central problem in the
structure theory and classification of complex supermanifolds.

The classical approach to this problem goes back to Green \cite{Green1982} and Palamodov \cite{PalamodovInvariants}, and was further
developed by Manin, Onishchik and others; see
\cite{ManinGaugeField,OnishchikClassification,VoronovManinPenkovElements}.
It expresses the splitting problem in terms of a finite tower of non-abelian
cohomological lifting problems.  Starting from the canonical first-order identification
of \(X\) with its split model, one tries to lift a splitting order by order along the
\(\mathcal J_X\)-adic filtration.  At each stage a cohomology class with values in a
certain Green obstruction sheaf measures the failure of the lifting.  This produces a
finite sequence of obstruction classes.  If all of them vanish, a splitting is obtained.

\smallskip

The splitting problem also has a concrete role in theoretical and mathematical physics.  Super Riemann
surfaces and their moduli spaces are the geometric objects underlying perturbative
superstring theory \cite{WittenSRSModuli}.  In practice, many constructions in low genus were based, explicitly
or implicitly, on the possibility of reducing integrals over supermoduli space to
ordinary moduli spaces of curves with spin structure.  Such a reduction is naturally
available when the relevant supermoduli space is projected.  The work of Donagi and
Witten showed that this expectation fails in general: the moduli space of super Riemann
surfaces is not projected in genus \(g\ge 5\), and the obstruction is already visible in
the geometry of the super Atiyah class
\cite{DonagiWittenNotProjected,DonagiWittenSuperAtiyah}.  This clarified the geometric
origin of difficulties that had appeared in the perturbative theory; see also Witten's
analysis of super Riemann surfaces and supermoduli
\cite{WittenSRSModuli}.  Later work on Ramond punctures and superstring measures
further confirms that projectedness and splitting are not technical issues but part of
the geometry that the theory has to retain
\cite{DonagiOttBadLocus,DonagiOttMeasureRamond,DonagiOttRamond,
OttVoronovGenusZeroRamond}.

\smallskip

The purpose of this paper is to give a complete deformation-theoretic formulation of the
splitting problem for complex supermanifolds.  By this we mean a formulation which does
not only record the classical sheaf-theoretic obstruction classes, but also explains their
relations, their homotopy-theoretic meaning, their Atiyah-theoretic origin, and their
global moduli interpretation.  The classical Green tower remains the starting point, but
we show that it is the visible cohomological shadow of a richer and more structured
object.

The main point is that the same splitting problem admits several equivalent, but
complementary, descriptions.  First, it is a finite sheaf-theoretic obstruction problem in
the sense of Green.  Second, it is a filtered Maurer--Cartan problem controlled by a
finite-step filtered differential graded Lie algebra.  Third, after homotopy transfer, it is
encoded by a minimal filtered \(L_\infty\)-algebra on cohomology, whose higher brackets
give the Kuranishi relations among the obstruction coordinates.  Fourth, the same
obstruction tower is contained in the affine Atiyah class of the supermanifold: the
Donagi--Witten component gives the primary obstruction, and the higher Green
obstructions are obtained as successive projected defects and residual classes of the
same filtered Atiyah cocycle.  Finally, the fixed-retract theory globalizes to families: the formal neighbourhood of
the split model is controlled fibrewise by the filtered deformation complex of the
relative split model, and it admits relative and Kuranishi presentations under the
corresponding base-change and finite-model hypotheses.

This gives, in particular, two extensions of the existing picture.  On the one hand, it
brings the splitting theory of complex supermanifolds into the framework of higher
algebra and formal moduli problems in characteristic zero, as developed by Lurie,
Pridham, and subsequent work on global formal moduli
\cite{LurieDAGX,PridhamUDDT,HennionTangentLie,CalaqueGrivauxFMP}.  On the
other hand, it completes the Atiyah-class interpretation initiated by Donagi and Witten:
the affine Atiyah class contains not only the primary splitting obstruction, but the whole
finite Green obstruction tower.

\smallskip

The first step is to translate Green's obstruction tower into infinitesimal
deformation theory.  On the split model
\(
\widehat X=\bigl(\Xred,\wedge^\bullet \FF_X\bigr),
\)
the exterior degree in \(\wedge^\bullet \FF_X\) gives a natural grading.  The tangent
sheaf of \(\widehat X\) is therefore filtered by the order to which a derivation
raises the odd degree.  Equivalently, one filters the even derivations of
\(\mathcal O_{\widehat X}\) by those which are invisible modulo higher and higher
powers of the odd ideal.  This is the Green filtration. 

Thus we attach to \(X\) the finite-step filtered differential graded Lie algebra\footnote{Analogously, the filtered dg Lie algebra with finite nilpotent Green filtration.}
\[
L_X
=
A^{0,\bullet}\!
\bigl(\Xred,\mathcal T_{\widehat X,\bar 0}^{[\ge 2]}\bigr),
\]
where \(\mathcal T_{\widehat X,\bar 0}^{[\ge 2]}\) denotes the sheaf of even
derivations of the split model which raise the exterior degree by at least two.  The
differential is the Dolbeault operator of the split model and the bracket is the
commutator of derivations.  The Green filtration on
\(\mathcal T_{\widehat X,\bar 0}^{[\ge 2]}\) induces a finite filtration
\(F^\bullet L_X\).  We prove that the splitting problem is equivalent to a filtered
Maurer--Cartan problem for \(L_X\).
More precisely, the holomorphic structure of
\(X\), after choosing a \(C^\infty\)-splitting, determines a Maurer--Cartan gauge class
\[
[\mu_X]\in MC(L_X)/\mathrm{gauge},
\]
and the successive attempts to split \(X\) correspond to moving this gauge class into
deeper steps of the filtration.  Adapted Maurer--Cartan representatives recover the
classical Green obstruction classes through their leading homogeneous terms.

This gives a formal moduli interpretation of the splitting problem.  The functor
\[
\Def_X(A)
=
MC_\infty(L_X\otimes \mathfrak m_A),
\]
defined on local Artin dg algebras, is the pointed formal moduli problem controlled by
\(L_X\).  Its filtration induces a finite tower of formal moduli problems
\[
\Def_X=\Def_X^{\ge 1}
\longleftarrow
\Def_X^{\ge 2}
\longleftarrow
\cdots
\longleftarrow
\Def_X^{\ge N}
\longleftarrow
\Def_X^{\ge N+1}=*,
\qquad
N=\Bigl\lfloor \frac{\rk \FF_X}{2}\Bigr\rfloor .
\]
In this sense the classical finite obstruction tower is realized as the filtration tower
of a formal moduli problem.  This places the splitting problem of complex
supermanifolds within the general framework of derived deformation theory and formal
moduli problems in characteristic zero, as developed by Lurie and Pridham
\cite{LurieDAGX,PridhamUDDT}, and in the global form considered by
Hennion, Calaque--Grivaux, and Nuiten
\cite{HennionTangentLie,CalaqueGrivauxFMP,Nuiten}.  To the best of our knowledge,
this is the first systematic use of filtered dg Lie, \(L_\infty\), and formal moduli
methods in the splitting theory of complex supermanifolds.

The second step is to pass from the dg Lie controller to cohomology.  After choosing a
filtered contraction of \(L_X\) onto its cohomology
\(
H_X\defeq H^\bullet(L_X),
\)
homotopy transfer gives a minimal filtered \(L_\infty\)-algebra
\(
(H_X,\{\ell_m\}_{m\ge 2})
\)
which controls the same formal moduli problem.  A Maurer--Cartan element in this
minimal model has the form
\[
\alpha_X
=
\alpha_{X,1}+\cdots+\alpha_{X,N},
\qquad
\alpha_{X,j}\in
H^1\!\bigl(\Xred,\mathcal T_{\widehat X,\bar0}^{\langle 2j\rangle}\bigr).
\]
The higher brackets give the Kuranishi equations
\[
\kappa_j(\alpha_{X,1},\ldots,\alpha_{X,j-1})=0,
\qquad
2\le j\le N.
\]
These equations are the intrinsic compatibility relations among the successive
cohomological coordinates.  The individual higher coordinates depend on the choice of
partial splitting tower and on the chosen contraction, as is expected in Kuranishi
theory; the filtered formal moduli problem itself is independent of these choices.

The third step concerns Atiyah classes.  Donagi and Witten showed that the primary
splitting obstruction is a distinguished component of the restricted super Atiyah class
\cite{DonagiWittenSuperAtiyah}.  We extend this picture to the full obstruction tower.
More precisely, we show that the affine Atiyah class of \(X\) contains all Green
obstructions in a filtered sense.  The primary obstruction is the Donagi--Witten
component.  At higher orders, after a lower-order splitting has been fixed, the first
non-split homogeneous part of the affine Atiyah cocycle has a pure odd Hessian symbol.
Its even projection gives the next Green obstruction, and, after this obstruction
vanishes and a further splitting is chosen, its residual odd projection gives the next
obstruction.  Thus the affine Atiyah class does not decompose into a direct sum of
higher obstruction classes; rather, it contains them as successive projected defects and
residual classes of one filtered Atiyah cocycle.  This completes the Atiyah-theoretic
interpretation of the splitting tower.

The fourth part of the paper globalizes the preceding construction.  We fix a smooth
proper family
\(
\pi:Y\to S
\)
and a vector bundle \(F\) on \(Y\), regarded as the prescribed odd conormal bundle.  The
corresponding split relative model is
\(
A_{\spl}=\bigwedge\nolimits^\bullet_{\mathcal O_Y}F .
\)
The correct fixed-retract moduli problem is not the moduli problem of filtered
\(\mathcal O_Y\)-superalgebras: imposing an \(\mathcal O_Y\)-algebra structure would
amount to choosing a projection to the reduced family and would exclude precisely the
non-projected phenomena detected by Green's obstruction tower.  Instead, we consider
filtered \(\pi^{-1}\mathcal O_S\)-superalgebras whose associated graded algebra is
identified with \(A_{\spl}\).  This gives a relative moduli problem of supermanifold
structures with fixed reduced family and fixed odd conormal bundle.  We call it the
Green--Onishchik fixed-retract moduli problem, in reference to Green's non-abelian
cohomological classification and Onishchik's moduli theory of complex analytic
supermanifolds with fixed retract
\cite{Green1982,OnishchikClassification}.

The main result of the relative theory is the construction and identification of the
derived formal neighbourhood of the split section.  For every test family \(T\to S\), it
is controlled by the fibrewise Green dg Lie algebra
\(
R\Gamma\bigl(Y_T,\mathfrak t_{F_T/T}\bigr),
\)
where \(\mathfrak t_{F/S}\) is the sheaf of even relative derivations of the split model
which raise the odd Green filtration by at least two.  Thus the same filtered
Maurer--Cartan mechanism that controls the splitting of a single supermanifold also
controls the formal geometry of the fixed-retract moduli problem in families.

When the relative controller satisfies derived base change, the fibrewise description
assembles into the compact relative expression
\[
\widehat\Tw_{F/S}
\simeq
\Def_S\bigl(R\pi_*\mathfrak t_{F/S}\bigr).
\]
This is the base-change-compatible form of the Green--Onishchik formal moduli problem.
It gives a relative tangent-obstruction theory whose homogeneous pieces recover the
relative Green obstruction sheaves.  Under finite perfect model, amplitude, and descent
hypotheses, this formal theory further admits derived Kuranishi presentations, locally
and, when the descent data are available, globally.

The final section is devoted to examples.  Explicit non-split and non-projected
supermanifolds are relatively scarce in the literature, especially beyond the primary
obstruction level.  Important classes of examples have been constructed and studied by
Manin, Onishchik, Vishnyakova, Bettadapura, Cacciatori--Noja--Re, Noja, Leites--Tikhomirov, and others; see, for instance,
\cite{ManinGaugeField,
OnishchikClassification,
VishnyakovaEvenHomogeneous,
VishnyakovaCP1Supermanifolds,
VishnyakovaSplittingHomogeneous,
VishnyakovaGradedCovering,
BettadapuraHigherObstructions,
BettadapuraObstructedThickenings,
CacciatoriNojaReCalabiYau,
NojaPiProjectiveSpaces,
NojaBVSupermanifolds,
LeitesTikhomirovNonSplitSuperstrings}.
Against this background, the examples considered here have two purposes.  First, they
show that the preceding formalism is not only a reformulation of the classical theory, but
also gives effective computational tools for detecting and organizing non-split
supermanifolds.  Second, they enlarge the supply of explicit geometries for which the
Green obstruction tower, the affine Atiyah class, and the Kuranishi relations can be
compared directly.

In odd rank three over an elliptic curve we exhibit concrete \v{C}ech cocycles for which
the same weight-two tangent class contains both the primary even obstruction and, after
normalization, the residual odd obstruction.  Over a K3 surface we obtain further
nontrivial residual classes in a setting where the cohomology is richer but the formal
controller remains abelian for weight reasons.  We then discuss a relative
Green--Onishchik example in odd rank three, illustrating the formal moduli problem in
families.  Finally, in odd rank four on an abelian surface, we obtain the first nonlinear
Kuranishi relation.  This last example shows that, in higher odd rank, the Green
coordinates are not merely independent linear obstruction parameters: their Lie brackets
can impose genuine quadratic compatibility equations.
\medskip

We now describe the organization of the paper.  Section~2 fixes the basic notation for
complex supermanifolds, projections, splittings, and truncations.  Section~3 recalls the
classical Green lifting tower and the associated obstruction sheaves.  Section~4
constructs the filtered dg Lie algebra \(L_X\), identifies the splitting problem with a
filtered Maurer--Cartan problem, and defines the corresponding formal moduli problem.
Section~5 transfers this dg Lie model to a minimal filtered \(L_\infty\)-model and
identifies the Kuranishi equations.  Section~6 relates the affine Atiyah class to the
entire Green obstruction tower.  Section~7 gives a \v{C}ech--Thom--Whitney model and
makes the comparison between \v{C}ech coordinates, Atiyah symbols, and Kuranishi
compatibilities explicit.  Section~8 constructs the Green--Onishchik formal moduli
problem and studies its possible algebraizations.  Section~9 contains the examples.

We conclude the introduction with a schematic roadmap of the paper in Figure \ref{fig:introduction-roadmap}. The diagram is not
meant to record the technical constructions in detail; rather, it indicates the main
viewpoints on the splitting problem and how they are related.  The starting point is the
classical Green obstruction tower.  The paper then develops three complementary
enhancements of this tower: a formal deformation-theoretic one, an Atiyah-theoretic one,
and a global moduli-theoretic one.  The examples at the end show how these viewpoints
interact in concrete non-split geometries.

\begin{figure}[h]
\centering
\begin{tikzpicture}[
  >=Latex,
  node distance=7mm and 8mm,
  box/.style={
    draw,
    rounded corners=3pt,
    align=center,
    font=\small,
    inner sep=5pt,
    minimum height=10mm,
    text width=.50\linewidth,
    fill=gray!6
  },
  smallbox/.style={
    draw,
    rounded corners=3pt,
    align=center,
    font=\small,
    inner sep=5pt,
    minimum height=10mm,
    text width=.16\linewidth,
    fill=gray!6
  },
  bluebox/.style={
    draw,
    rounded corners=3pt,
    align=center,
    font=\small,
    inner sep=6pt,
    minimum height=11mm,
    text width=.50\linewidth,
    fill=blue!4
  },
  greenbox/.style={
    draw,
    rounded corners=3pt,
    align=center,
    font=\small,
    inner sep=6pt,
    minimum height=11mm,
    text width=.50\linewidth,
    fill=green!5
  },
  orangebox/.style={
    draw,
    rounded corners=3pt,
    align=center,
    font=\small,
    inner sep=6pt,
    minimum height=11mm,
    text width=.50\linewidth,
    fill=orange!7
  },
  arrow/.style={->, thick}
]

\node[bluebox] (split) {%
\textbf{The splitting problem}\\
Is a complex supermanifold determined by its reduced space and its odd bundle,
or does it carry new supergeometric structure?};

\node[box, below=of split] (green) {%
\textbf{Classical viewpoint}\\
Green's finite tower of splitting obstructions};

\node[smallbox, below left=of green] (formal) {%
\textbf{Deformation viewpoint}\\
Formal deformation theory};

\node[smallbox, below right=of green] (atiyah) {%
\textbf{Atiyah viewpoint}\\
The affine Atiyah class};

\node[box, below=14mm of green] (linfty) {%
\textbf{Homotopy viewpoint}\\
Minimal \(L_\infty\)-models and compatibility relations};

\node[greenbox, below=of linfty] (global) {%
\textbf{Global viewpoint}\\
A formal moduli problem with fixed retract and its derived moduli interpretation};

\node[orangebox, below=of global] (examples) {%
\textbf{Examples}\\
Explicit non-split geometries and concrete computations};

\draw[arrow] (split) -- (green);
\draw[arrow] (green) -- (formal);
\draw[arrow] (green) -- (atiyah);
\draw[arrow] (formal) -- (linfty);
\draw[arrow] (atiyah) -- (linfty);
\draw[arrow] (linfty) -- (global);
\draw[arrow] (global) -- (examples);

\end{tikzpicture}
\caption{Conceptual roadmap of the paper.  The splitting problem is approached through
classical obstruction theory, deformation theory, and Atiyah theory; these are then
assembled into a homotopy-algebraic and global moduli-theoretic picture, and tested on
explicit examples.}
\label{fig:introduction-roadmap}
\end{figure}

\section{Basics}

\subsection{Preliminaries on complex supermanifolds.}
This section recalls the basic supergeometric material needed in the rest of the paper and fixes
the notation used throughout. All of this is standard; we refer, for example, to
\cite{BruzzoHernandezPolishchukHilbertPicard,EderHuertaNojaPoincareDuality,KesslerSupergeometry,ManinGaugeField,NojaGeometryForms,VoronovManinPenkovElements}
for background from the smooth, holomorphic, and algebraic points of view. Our main purpose
here is not to develop the general theory, but to isolate the specific objects that later control the
splitting problem: the odd conormal bundle \(\FF_X\), the split model \(\widehat X\), and the finite
tower of truncations \(X^{(k)}\) and \(\widehat X^{(k)}\).

 \begin{definition}      
  A {superspace} is a locally ringed space $X \defeq (|X|,
  \mathcal{O}_{X})$ whose structure sheaf is given by a sheaf of
  $\mathbb{Z}_2$-graded supercommutative rings $\mathcal{O}_{X} \defeq
  \mathcal{O}_{X, 0} \oplus \mathcal{O}_{X, 1}$. Given two superspaces
  $X$ and $Y$, a {morphism of superspaces} $\varphi :
  X \to Y$ is a pair $\varphi \defeq (|\varphi|, \varphi^\sharp)$ where
  \begin{enumerate}
  \item $|\varphi| : |{X}| \to |{Y}|$ is a continuous map
    of topological spaces;
  \item $\varphi^\sharp : \OO_{Y} \to |\varphi|_*
    \OO_X$ is a morphism of sheaves of $\ZZ_2$-graded
    supercommutative rings, having the properties that it preserves the
    $\ZZ_2$-grading and the unique maximal ideal of each stalk, i.e. $\varphi^\sharp_x
    (\mathfrak{m}_{|\varphi| (x)}) \subseteq \mathfrak{m}_x$ for all $x \in |{X}|.$
  \end{enumerate}
\end{definition}

\noindent
Given a superspace $X$, the odd sections in $\mathcal{O}_X$ define a sheaf of ideals.

\begin{definition} 
  Let $X$ be a superspace. The sheaf of ideals $\mathcal{J}_X  \defeq \langle \mathcal{O}_{X, 1} \rangle =
  \mathcal{O}_{X,1} + (\mathcal{O}_{X, 1})^2$, generated by all the odd
  sections in $\OO_X = \OO_{X, 0 } \oplus \OO_{X,
  1}$ is called the {odd-generated ideal} of $X$.
\end{definition}

\noindent
For a general superspace, this ideal need not coincide with the full nilradical, since
there may be even nilpotent sections not generated by odd sections. However, for a
supermanifold, $\mathcal{J}_X$ is precisely the nilpotent ideal defining the reduced
space. The supermanifold case is our exclusive concern here, and we turn to the
definition now.

\begin{definition} 
  \label{supermanifold}
  A {complex supermanifold} is a superspace $X$ such that
  \begin{enumerate}
  \item $X_{\mathrm{red}} \defeq (|X|, \mathcal{O}_{X} /
    \mathcal{J}_{X})$ is a complex manifold;
  \item the quotient sheaf $\mathcal{J}_{X} / \mathcal{J}^2_X$ is a
    locally-free sheaf of $\mathcal{O}_{X} / \mathcal{J}_X$-modules and the
    structure sheaf $\mathcal{O}_{X}$ is locally isomorphic to the sheaf of
    exterior algebras $\wedge^\bullet \left( \mathcal{J}_{X} /
      \mathcal{J}^2_X \right)$ over $\mathcal{O}_{X} / \mathcal{J}_X$.
  \end{enumerate}
 We call \(\Xred\) the \emph{reduced space} of the supermanifold \(X\), and we denote
\(
\mathcal O_{\Xred}\defeq \mathcal O_X/\mathcal J_X.
\)
Further, we write
\(
\FF_X \defeq \mathcal J_X/\mathcal J_X^2
\)
for the holomorphic \emph{odd conormal bundle} of \(X\). If \(\dim \Xred=m\) and \(\rk(\FF_X)=n\),
we say that \(X\) has dimension \(m|n\). 
Finally, a {morphism of complex supermanifolds} is a morphism
  of superspaces. 
\end{definition}

\begin{remark} 
Definition~\ref{supermanifold} implies that a complex supermanifold of dimension \(m|n\) is locally isomorphic,
as a locally ringed space, to the standard complex superdomain
\[
\CC^{m|n}
=
\Bigl(\CC^m,\mathcal O_{\CC^m}\otimes_\CC \wedge^\bullet_\CC(\theta^1,\dots,\theta^n)\Bigr),
\]
where \(\mathcal O_{\CC^m}\) denotes the sheaf of holomorphic functions on \(\CC^m\), and
\(\wedge^\bullet_\CC(\theta^1,\dots,\theta^n)\) is the complex exterior algebra on odd generators
\(\theta^1,\dots,\theta^n\). The standard coordinate functions \(x^1,\dots,x^m\) on \(\CC^m\) are the
even coordinates, while \(\theta^1,\dots,\theta^n\) are the odd coordinates. We will often denote
such a coordinate system by
\(
x^a \mid \theta^\alpha,
\)
where the symbol ``\(\mid\)'' separates the even and odd variables, just as in the dimension
notation \(m|n\).
\end{remark}

\subsection{Projections and splittings}

Definition \ref{supermanifold} implies that any supermanifold comes endowed with a
short exact sequence of sheaves that encodes the relation between the supermanifold
itself and its reduced space:
\begin{eqnarray}
  \label{ses}
  \xymatrix{
  0 \ar[r] & \mathcal{J}_X \ar[r] & \mathcal{O}_{X} \ar[r]_{\iota^\sharp} & \ar@{-->}@/_1.2pc/[l]_{\pi^\sharp} \mathcal{O}_{\Xred} \ar[r] & 0.
  }
\end{eqnarray}
The morphism of sheaves $\iota^\sharp : \mathcal{O}_X \to \mathcal{O}_{\Xred}$ is canonical: it mods out all
the nilpotent sections from the structure sheaf $\mathcal{O}_{X}$. That is, for every open set $U \in |X|$ we have 
\begin{align}
\xymatrix@R=1.5pt{
\iota^\sharp_U : \OO_X (U) \ar[r] & \OO_{\Xred} (U) \\
f \ar@{|->}[r] & f \ \mbox{mod}\ \mathcal{J}_X (U).
}
\end{align}
At the level of spaces, it corresponds to the embedding $\iota : \Xred \to X$: intuitively, we can view $\Xred$ as the subspace defined by setting all nilpotent
sections to zero. 

\medskip

If it exists, a splitting of the short exact sequence \ref{ses}, that is a map $\pi^\sharp : \OO_{\Xred} \to \OO_X$ such that $\iota^\sharp \circ \pi^\sharp = \id_{{\Xred}}$ is called a \emph{projection} of $X$. On every open set $|U| \subset X$, the morphism $\pi^\sharp_U : \OO_{\Xred}(U) \to \OO_{X} (U)$, chooses a representative in $\OO_X (U)$ of every class (of holomorphic functions) mod $\mathcal{J}_X (U)$ in a way that is compatible 
with the restrictions -- hence, $\pi^\sharp$ is non-canonical. Whenever it exists, the morphism $\pi^\sharp$ identifies $\OO_{\Xred}$ with a subsheaf of rings of $\OO_X$ which is complementary to $\mathcal{J}_X$ in the sense that one has an isomorphism of sheaves of $\OO_{\Xred}$-modules
\begin{align}
\OO_X \cong \pi^\sharp (\OO_{\Xred}) \oplus \mathcal{J}_X \cong \OO_{\Xred}\oplus \mathcal{J}_X.
\end{align}   
Notice, though, that the multiplicative structure of $\OO_X$ remains non-trivial, as $\mathcal{J}_X$ is not a subsheaf of rings with unity, but a nilpotent ideal instead.  \\
At the level of locally ringed space, the morphism $\pi^\sharp$ corresponds to a projection $\pi : X \to \Xred$ such that $\pi \circ \iota = \id_{\Xred}.$ 
\begin{definition}  \label{projected}
A complex supermanifold $X$ is said to be projected if it admits a projection $\pi : X \to \Xred$. \\
Equivalently, $X$ is projected if the canonical quotient map $\iota^\sharp $ admits a section as a morphism of sheaves of $\CC$-algebras, \emph{i.e.}\
if there exists a morphism $\pi^\sharp : \OO_{\Xred} \to \OO_X$ such that $\iota^\sharp \circ \pi^\sharp = \id_{\mathcal O_{\Xred}}.$ 
\end{definition}

Given a complex supermanifold $X$ its structure sheaf admits a canonical filtration induced by $\JJ_X \subset \OO_X$.
\begin{definition}  The $\JJ_X$-adic filtration $F^\bullet \OO_X$ of $\OO_X$ is the decreasing filtration 
   \begin{eqnarray} 
    \label{filtX}
 F^\bullet \OO_X: \quad   \mathcal{O}_X = F^0 \OO_X \supset F^1 \OO_X \supset F^2 \mathcal{O}_X \ldots 
  \end{eqnarray}
defined for every $k \geq 0$ by 
$
F^k \OO_X \defeq \JJ_X^k
$
with the convention that $\JJ_X^0 = \mathcal{O}_X$.
\end{definition}
Note that since $\JJ_X$ is nilpotent, if the odd dimension of $X$ is $n$, then $\JJ^{n+1}_X = 0$, hence this filtration is finite: 
  \begin{eqnarray}
    \label{filt}
    \mathcal{O}_X \supset \mathcal{J}_X \supset \mathcal{J}_X^2 \supset  \ldots \supset \mathcal{J}_X^{n} \supset \mathcal{J}_X^{n+1} = 0.
  \end{eqnarray}
The associated graded sheaf to the $\JJ_X$-adic filtration of $\OO_X$ is the $\ZZ$-graded sheaf (of superalgebras)
\begin{equation}
\mbox{Gr}^{\bullet} \mathcal{O}_X \defeq \bigoplus_{k \geq 0} \mbox{gr}^k \OO_X, \qquad \mbox{gr}^k \OO_X\defeq F^k \OO_X / F^{k+1}\OO_X.
\end{equation}
Explicitly, 
  \begin{eqnarray}
    \label{grsheaf}
    \mbox{{Gr}}^{\bullet} \mathcal{O}_X \defeq
    \bigoplus_{k = 0}^{n} \JJ_X^k / \JJ_X^{k+1} = \mathcal{O}_{\Xred }
    \oplus {\mathcal{J}_X}/{\mathcal{J}^2_{X}} \oplus \ldots \oplus
    {\mathcal{J}_X^{n-1}}/{\mathcal{J}^n_{X}} \oplus
    {\mathcal{J}^n_X}.
  \end{eqnarray}
  The following is well-known
  \begin{lemma}
There is a canonical isomorphism of \(\mathbb Z\)-graded
\(\mathcal O_{\Xred}\)-superalgebras
\begin{align}
\operatorname{Gr}^\bullet \mathcal O_X
\;\xrightarrow{\ \sim\ }\;
\wedge^\bullet\FF_X.
\end{align}
More precisely, for every \(k\ge 0\) there is a canonical isomorphism
\[
\operatorname{gr}^k\mathcal O_X
\defeq
\mathcal J_X^k/\mathcal J_X^{k+1}
\;\xrightarrow{\ \sim\ }\;
\wedge^k \FF_X.
\]
\end{lemma}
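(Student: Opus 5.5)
The plan is to build the map from the universal property of the exterior algebra and then check it is an isomorphism locally. The multiplication on \(\mathcal O_X\) sends \(\mathcal J_X^k\times\mathcal J_X^l\) into \(\mathcal J_X^{k+l}\), so \(\operatorname{Gr}^\bullet\mathcal O_X\) is a \(\mathbb Z\)-graded sheaf of supercommutative algebras. Its degree-zero part is \(\mathcal O_X/\mathcal J_X=\mathcal O_{\Xred}\), so every graded piece is an \(\mathcal O_{\Xred}\)-module, since \(\mathcal J_X\) acts by zero on \(\mathcal J_X^k/\mathcal J_X^{k+1}\). Its degree-one part is \(\FF_X=\mathcal J_X/\mathcal J_X^2\) by definition.

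\textbf{Step 1: existence of the map.} Consider the inclusion \(\FF_X=\operatorname{gr}^1\mathcal O_X\hookrightarrow \operatorname{Gr}^\bullet\mathcal O_X\). Every local section of \(\FF_X\) is odd: \(\mathcal J_X=\mathcal O_{X,1}+\mathcal O_{X,1}^2\) and \(\mathcal O_{X,1}^2\subset\mathcal J_X^2\), so \(\FF_X\) is the image of \(\mathcal O_{X,1}\). By supercommutativity, the class \(\bar s\) of an odd \(s\) satisfies \(\bar s^2=0\), and \(\bar s\bar t=-\bar t\bar s\). The universal property of the exterior algebra over \(\mathcal O_{\Xred}\) then gives a unique morphism of graded \(\mathcal O_{\Xred}\)-superalgebras
\[
\phi:\wedge^\bullet\FF_X\to\operatorname{Gr}^\bullet\mathcal O_X ,
\]
which is the identity in degrees \(0\) and \(1\). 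In degree \(k\) it is \(\bar s_1\wedge\cdots\wedge\bar s_k\mapsto s_1\cdots s_k \bmod \mathcal J_X^{k+1}\). No choices enter, so \(\phi\) is canonical and compatible with restrictions. The asserted isomorphism is \(\phi^{-1}\), and the statement for each \(k\) is the degree-\(k\) component.

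\textbf{Step 2: surjectivity.} \(\mathcal J_X^k\) is generated, as an \(\mathcal O_X\)-module, by products of \(k\) sections of \(\mathcal J_X\). Modulo \(\mathcal J_X^{k+1}\), it is therefore generated over \(\mathcal O_{\Xred}\) by products of \(k\) classes in \(\FF_X\). Hence each \(\phi_k\) is surjective.

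\textbf{Step 3: injectivity (the main point).} Injectivity is local, so by Definition~\ref{supermanifold} we may work on \(U\) with \(\mathcal O_X|_U\cong\mathcal O_{U}\otimes\wedge^\bullet(\theta^1,\dots,\theta^n)\). There \(\mathcal J_X\) is the ideal generated by the \(\theta^\alpha\), and \(\mathcal J_X^k\) consists of the elements whose expansion has only monomials of \(\theta\)-degree at least \(k\). Thus \(\mathcal J_X^k/\mathcal J_X^{k+1}\) is free over \(\mathcal O_U\) on the monomials \(\theta^{\alpha_1}\cdots\theta^{\alpha_k}\) with \(\alpha_1<\dots<\alpha_k\). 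Also \(\FF_X|_U\) is free on the \(\bar\theta^\alpha\), and \(\phi_k\) maps the standard basis \(\bar\theta^{\alpha_1}\wedge\cdots\wedge\bar\theta^{\alpha_k}\) of \(\wedge^k\FF_X|_U\) to this basis. So \(\phi_k\) is an isomorphism on \(U\), hence globally.

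The only subtle point is that the local identification in Definition~\ref{supermanifold} must match \(\mathcal J_X\) with the ideal generated by the odd coordinates. This holds because any isomorphism of superalgebras preserves odd sections, and so preserves the ideal they generate. Canonicity is independent of this local chart, since \(\phi\) was defined intrinsically in Step 1.
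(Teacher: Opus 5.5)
The paper does not prove this lemma: it introduces it with ``The following is well-known'' and uses it freely afterwards, so there is no argument of the authors to compare with. Your proof is correct and is the standard one. You obtain \(\phi\colon\wedge^\bullet\FF_X\to\operatorname{Gr}^\bullet\mathcal O_X\) from the universal property, using two facts: \(\FF_X\) is the image of \(\mathcal O_{X,1}\), so its classes are odd and square-zero; and \(\operatorname{gr}^0\mathcal O_X=\mathcal O_{\Xred}\) is even and hence central. Surjectivity comes from the generators of \(\mathcal J_X^k\), and injectivity is checked in a local chart. Your \(\phi\) is also exactly the map the paper later calls canonical. In the proof that \(k\)-splittings are filtered isomorphisms, it invokes \(\wedge^p\FF_X\to\mathcal J_X^p/\mathcal J_X^{p+1}\) ``induced by multiplication in \(\mathcal O_X\)'', so your construction matches what the paper has in mind.

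The one point I would phrase more carefully is in Step 3. A local isomorphism \(\mathcal O_U\otimes\wedge^\bullet(\theta^1,\dots,\theta^n)\cong\mathcal O_X|_U\) of \(\CC\)-superalgebras induces an isomorphism \(\mathcal O_U\cong\mathcal O_{\Xred}|_U\) on reductions, and this need not be the identity. So ``free over \(\mathcal O_U\)'' should be read as ``free over \(\mathcal O_{\Xred}|_U\) on the images of the \(\theta\)-monomials'', after transport of structure. The induced maps on \(\mathcal J^k/\mathcal J^{k+1}\) are semilinear with respect to this isomorphism. Hence freeness, and the fact that the \(\mathcal O_{\Xred}\)-linear map \(\phi_k\) sends a basis to a basis, go through unchanged. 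Alternatively, once the chart shows that \(\mathcal J_X^k/\mathcal J_X^{k+1}\) is locally free of rank \(\binom nk\), injectivity follows from your Step 2 alone, since a surjection between locally free modules of the same finite rank is an isomorphism.
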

\begin{definition} 
We denote
\(
\widehat{\mathcal O}_X \defeq \wedge^\bullet \FF_X
\)
and we call
\(
\widehat X \defeq (\Xred,\widehat{\mathcal O}_X)
\)
the split model associated with \(X\).
For every \(k\ge 0\), we also set
\[
\widehat{\mathcal O}_X^{(k)} \defeq \wedge^{\le k}\FF_X
=
\bigoplus_{j=0}^k \wedge^j \FF_X
\]
with the truncated exterior product, or equivalently
\[
\wedge^{\le k}\FF_X
=
\wedge^\bullet \FF_X
\Big/
\bigoplus_{j>k}\wedge^j\FF_X.
\]
We denote by
\(
\widehat X^{(k)} \defeq (\Xred ,\widehat{\mathcal O}_X^{(k)})
\)
the \(k\)-th split truncation.
\end{definition}

The sheaf \(\operatorname{Gr}^\bullet \OO_X \cong \wedge^\bullet \mathcal{F}_X \) carries its canonical augmentation 
\begin{equation}
\varepsilon: \wedge^\bullet \FF_X \to \wedge^0 \FF_X = \mathcal O_{\Xred},
\end{equation}
given by projection onto degree \(0\), while \(\mathcal O_X\) carries the canonical quotient map introduced above
\(
\iota^\sharp:\mathcal O_X \to \mathcal O_{X_{\mathrm{red}}}.
\)
\begin{definition} \label{splitting}
A \emph{splitting} of \(X\) is an isomorphism of sheaves of
\(\CC\)-superalgebras on \(|X|=|\Xred|\)
\[
 \sigma: \wedge^\bullet \FF_X \xrightarrow{\sim} \mathcal O_X
\]
such that the following hold:
\begin{enumerate}
\item 
$ \sigma $ is compatible with the augmentations, \emph{i.e.}\
$\iota^\sharp \circ \sigma = \varepsilon.$ Equivalently, the following diagram commutes:
\begin{equation}
\begin{tikzcd}
\wedge^\bullet \FF_X \arrow[r, "\sigma", "\sim"'] \arrow[d, "\varepsilon"'] &
\mathcal O_X \arrow[d, "\iota^\sharp"] \\
\mathcal O_{X_{\mathrm{red}}} \arrow[r, equals] &
\mathcal O_{X_{\mathrm{red}}}.
\end{tikzcd}
\end{equation}
\item the induced morphism
\begin{align}
\operatorname{gr}^1(\sigma):
\FF_X 
\cong
\frac{\ker(\varepsilon)}{\ker(\varepsilon)^2}
\longrightarrow
\frac{\mathcal J_X}{\mathcal J_X^2}
=
\FF_X
\end{align}
is the identity.
\end{enumerate}
\end{definition}
\begin{definition}
A complex supermanifold \(X\) is said to be \emph{split} if it admits a
splitting.
\end{definition}
\begin{remark}  \label{rmk:norm}
Condition~(2) in Def.\ \ref{splitting} is only a normalization and does not change the notion of split supermanifold.
Indeed, let
\(
\sigma:\wedge^\bullet \FF_X \xrightarrow{\sim} \mathcal O_X
\)
be any isomorphism of augmented sheaves of \(\CC\)-superalgebras compatible with the
augmentations. Then \(\sigma\) induces an automorphism
\(
\phi \defeq \gr_1(\sigma):\FF_X \xrightarrow{\sim} \FF_X.
\)
Precomposing \(\sigma\) with the exterior algebra automorphism
\[
\wedge^\bullet(\phi^{-1}):\wedge^\bullet \FF_X \xrightarrow{\sim} \wedge^\bullet \FF_X
\]
produces an augmentation-compatible splitting whose induced map on
\(\FF_X=\mathcal{J}_X/\mathcal{J}_X^2\) is the identity. Thus every splitting admits a canonical normalization after fixing the original
augmentation-compatible isomorphism \cite{Green1982,ManinGaugeField,OnishchikClassification}.
\end{remark}


The notion of split supermanifold should be compared with the weaker notion of projected supermanifold, Def.\ \ref{projected}. Namely, the following is well-known.
\begin{lemma} Every split supermanifold is projected.
\end{lemma}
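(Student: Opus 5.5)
The plan is to build the projection directly from a splitting by composing with the degree-zero inclusion of the exterior algebra. Let \(\sigma:\wedge^\bullet\FF_X\xrightarrow{\sim}\mathcal O_X\) be a splitting in the sense of Def.~\ref{splitting}. The exterior algebra \(\wedge^\bullet\FF_X\) contains its degree-zero part as a subsheaf of superalgebras, so there is a canonical inclusion
\[
j:\mathcal O_{\Xred}=\wedge^0\FF_X\hookrightarrow\wedge^\bullet\FF_X .
\]
This \(j\) is a morphism of sheaves of \(\CC\)-superalgebras, since the product of degree-zero elements stays in degree zero and the unit lies in \(\wedge^0\). It is also a section of the augmentation: \(\varepsilon\circ j=\id_{\mathcal O_{\Xred}}\), because \(\varepsilon\) is the projection onto degree zero. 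We then set
\[
\pi^\sharp\defeq\sigma\circ j:\mathcal O_{\Xred}\longrightarrow\mathcal O_X .
\]

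Next we verify the defining property of Def.~\ref{projected}. Since \(\pi^\sharp\) is a composite of morphisms of sheaves of \(\CC\)-superalgebras, it is one itself. By condition (1) of Def.~\ref{splitting}, \(\iota^\sharp\circ\sigma=\varepsilon\), and therefore
\[
\iota^\sharp\circ\pi^\sharp=\iota^\sharp\circ\sigma\circ j=\varepsilon\circ j=\id_{\mathcal O_{\Xred}} .
\]
Thus \(\pi^\sharp\) is a section of \(\iota^\sharp\), as required.

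Finally, we record that \(\pi^\sharp\) yields a morphism of locally ringed superspaces \(\pi=(\id_{|X|},\pi^\sharp):X\to\Xred\) with \(\pi\circ\iota=\id\). The only check here is locality on stalks. Let \(f\) be a germ in the maximal ideal of \(\mathcal O_{\Xred,x}\). Then \(\pi^\sharp(f)\) maps under \(\iota^\sharp\) to \(f\), which is non-invertible. Since \(\mathcal J_X\) is nilpotent, an element of \(\mathcal O_{X,x}\) is invertible exactly when its reduction is, so \(\pi^\sharp(f)\) lies in \(\mathfrak m_x\). Hence \(X\) is projected.

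No step is a real obstacle; the only point needing care is this locality check on stalks. Note also that condition (2) of Def.~\ref{splitting} is not used, consistent with Remark~\ref{rmk:norm}.
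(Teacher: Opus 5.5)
Your proof is correct and follows the paper's argument: compose the degree-zero inclusion \(\mathcal O_{\Xred}\hookrightarrow\wedge^\bullet\FF_X\) with \(\sigma\), then use \(\iota^\sharp\circ\sigma=\varepsilon\) to see that the composite is a section of \(\iota^\sharp\). Your stalkwise check that \(\pi^\sharp\) preserves maximal ideals is a harmless addition that the paper leaves implicit.
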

\begin{proof}
If \(X\) is split, there exists an isomorphism
\(
\sigma:\wedge^\bullet \FF_X 
\xrightarrow{\sim}\mathcal O_X
\)
compatible with the augmentations. Therefore, the composition
\(\mathcal O_{X_{\mathrm{red}}}\hookrightarrow
\wedge^\bullet \FF_X
\xrightarrow{\ \sigma\ }
\mathcal O_X
\)
is a section of the quotient map \(\iota^\sharp:\mathcal O_X\to \mathcal O_{X_{\mathrm{red}}}\), so \(X\) is projected.
\end{proof}

\subsection{Truncations and $k$--splittings}

A natural finite-order analogue of a splitting is obtained by requiring the split model to agree with the structure sheaf of \(X\) only up to 
the \((k+1)\)-power of the nilpotent ideal, leading to the notion of a \(k\)-splitting. For this, we denote for every $k \geq 1$
\begin{equation}
\varepsilon_k: \wedge^{\leq k} \FF_X \to \OO_{\Xred},  \qquad \iota^\sharp_k: \mathcal O_X/\mathcal J_X^{k+1}\to \mathcal O_{\Xred},
\end{equation}
the truncated canonical augmentation and the natural quotient map, respectively. 

\begin{definition}[$k$-splitting] \label{ksplitting} A \emph{\(k\)-splitting} of \(X\) is an isomorphism of sheaves of
\(\CC\)-superalgebras on \(|X|=|\Xred|\)
\[
\sigma^{(k)}: \wedge^{\leq k} \FF_X \xrightarrow{\sim}\mathcal O_X/\mathcal J_X^{k+1}
\]
such that:

\begin{enumerate}
\item \(\sigma^{(k)}\) is compatible with the augmentations, namely
\(
\iota^\sharp_k\circ \sigma^{(k)}=\varepsilon_k;
\)

\item the induced morphism
\[
\operatorname{gr}^1(\sigma^{(k)}):
\FF_X
\cong
\frac{\ker(\varepsilon_k)}{\ker(\varepsilon_k)^2}
\longrightarrow
\frac{\mathcal J_X/\mathcal J_X^{k+1}}{(\mathcal J_X/\mathcal J_X^{k+1})^2}
\cong
\frac{\mathcal J_X}{\mathcal J_X^2}
=
\FF_X
\]
is the identity.
\end{enumerate}
\end{definition}
\begin{definition}[$k$-thickening] \label{def:thick}For every \(k\geq 0\) we define the \emph{$k$-thickening} of $\Xred$ as
\begin{equation}
X^{(k)}
\defeq
\bigl(X_{\mathrm{red}},\mathcal O_X/\mathcal J_X^{k+1}\bigr).
\end{equation}
\end{definition}
\begin{remark}
As for full splittings, condition~(2) in Definition~\ref{ksplitting} is only a normalization, and similar considerations as in Remark \ref{rmk:norm} apply to $k$-splittings.
\end{remark}

\begin{remark}  After Def.\ \ref{def:thick}, the splitting problem naturally becomes a sequence of finite-order comparison problems:
for each \(k\ge0\), one asks whether the \(k\)-th thickening
\(
X^{(k)}=(\Xred,\mathcal O_X/\mathcal J_X^{k+1})
\)
is isomorphic to the corresponding split truncation
\(
\widehat X^{(k)}=(\Xred,\wedge^{\le k}\FF_X).
\)
A \(k\)-splitting is precisely such an isomorphism, normalized at first order. In
particular, a splitting in the sense of Def.\ \ref{splitting} induces, by reduction modulo
\(\mathcal J^{k+1}\), a \(k\)-splitting for every \(k\geq 1\). 
\end{remark}


Now fix \(k\geq 1\), and define the sheaves
\begin{equation}
\mathcal A_X^{(k)}
\defeq
\bigwedge\nolimits^{\leq k}\mathcal F_X,
\qquad
\mathcal B_X^{(k)}
\defeq
\mathcal O_X/\mathcal J_X^{k+1}.
\end{equation}
for the sake of notation. Endow \(\mathcal A_X^{(k)}\) with the decreasing filtration
\begin{equation}
F^p\mathcal A_X^{(k)}
\defeq
\bigoplus_{j\geq p}\wedge^j\mathcal F_X,
\qquad 0\leq p\leq k+1,
\end{equation}
and \(\mathcal B_X^{(k)}\) with the decreasing filtration
\begin{equation}
F^p\mathcal B_X^{(k)}
\defeq
\mathcal J_X^p/\mathcal J_X^{k+1},
\qquad 0\leq p\leq k+1.
\end{equation}

\begin{lemma} If
\(
\sigma^{(k)}:\mathcal A_X^{(k)}\xrightarrow{\sim}\mathcal B_X^{(k)}
\)
is a \(k\)-splitting of \(X\), then \(\sigma^{(k)}\) is a filtered isomorphism, \emph{i.e.}\
\begin{equation}
\sigma^{(k)}\bigl(F^p\mathcal A_X^{(k)}\bigr)=F^p\mathcal B_X^{(k)}
\qquad \text{for all }p,
\end{equation}
and the induced isomorphism on associated graded algebras
\begin{equation}
\gr\bigl(\sigma^{(k)}\bigr):
\gr_F\mathcal A_X^{(k)}
\longrightarrow
\gr_F\mathcal B_X^{(k)}
\end{equation}
coincides with the canonical one
\[
\bigwedge\nolimits^{\leq k}\mathcal F_X
\;\xrightarrow{\sim}\;
\bigoplus_{p=0}^k \mathcal J_X^p/\mathcal J_X^{p+1}.
\]
\end{lemma}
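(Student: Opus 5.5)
The plan is to show first that \(\sigma^{(k)}\) carries the augmentation ideal onto the augmentation ideal, then pass to powers, and finally read off the associated graded map from the normalization~(2).

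\emph{Step 1: the ideals match.} By compatibility with augmentations, \(\iota^\sharp_k\circ\sigma^{(k)}=\varepsilon_k\). Hence
\[
\sigma^{(k)}\bigl(\ker\varepsilon_k\bigr)\subseteq\ker\iota^\sharp_k=\mathcal J_X/\mathcal J_X^{k+1}.
\]
Because \(\sigma^{(k)}\) is bijective, the same identity applied to \((\sigma^{(k)})^{-1}\) gives the reverse inclusion. So \(\sigma^{(k)}(F^1\mathcal A_X^{(k)})=F^1\mathcal B_X^{(k)}\).

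\emph{Step 2: the powers match.} The filtrations are the adic filtrations of these ideals. On the \(\mathcal A\) side,
\[
(\ker\varepsilon_k)^p=\bigoplus_{j\ge p}\wedge^j\FF_X,
\]
since \(\wedge^j\FF_X\) is locally generated by \(j\)-fold products of elements of \(\FF_X\), truncated at degree \(k\). On the \(\mathcal B\) side, \((\mathcal J_X/\mathcal J_X^{k+1})^p=\mathcal J_X^p/\mathcal J_X^{k+1}\). An isomorphism of sheaves of algebras sends the \(p\)-th power of an ideal onto the \(p\)-th power of its image: products go to products, and the power of an ideal sheaf is the sheafification of the sum of products, which commutes with an isomorphism. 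Step~1 then gives \(\sigma^{(k)}(F^p\mathcal A_X^{(k)})=F^p\mathcal B_X^{(k)}\) for all \(p\), including the cases \(p=0\) and \(p=k+1\), where both sides are everything or zero.

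\emph{Step 3: the associated graded map.} Since \(\sigma^{(k)}\) is a filtered multiplicative isomorphism, \(\gr(\sigma^{(k)})\) is an isomorphism of graded algebras. In degree~\(0\) it is the identity of \(\mathcal O_{\Xred}\), by augmentation compatibility. In degree~\(1\) it is \(\gr^1(\sigma^{(k)})\), which condition~(2) says is the identity of \(\FF_X\), that is, the canonical map \(\FF_X\to\mathcal J_X/\mathcal J_X^2\).

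The canonical isomorphism \(\wedge^{\le k}\FF_X\to\bigoplus_p\mathcal J_X^p/\mathcal J_X^{p+1}\) from the earlier Lemma on \(\operatorname{Gr}^\bullet\mathcal O_X\) is the unique graded algebra map extending the identity in degrees \(0\) and \(1\). It is induced by \(f_1\wedge\cdots\wedge f_p\mapsto f_1\cdots f_p \bmod \mathcal J_X^{p+1}\).

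In degree \(p\) the graded piece \(\wedge^p\FF_X\) is locally generated, as an \(\mathcal O_{\Xred}\)-module, by products of degree-one elements. Two graded algebra maps that agree in degrees \(0\) and \(1\) therefore agree everywhere, so \(\gr(\sigma^{(k)})\) equals the canonical isomorphism.

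\emph{Main obstacle.} The step needing the most care is the \(\mathcal O_{\Xred}\)-linearity used in Step~3. The map \(\sigma^{(k)}\) is only \(\CC\)-linear, and \(\mathcal O_{\Xred}\) acts on \(\gr\mathcal B\) through the degree-zero piece. Multiplicativity of \(\gr(\sigma^{(k)})\) together with its being the identity in degree \(0\) yields exactly this linearity, so the generation argument applies.
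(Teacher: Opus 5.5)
Your proof is correct and is essentially the paper's argument: augmentation compatibility, applied to \(\sigma^{(k)}\) and its inverse, matches the augmentation ideals; multiplicativity carries their powers onto each other; and \(\gr(\sigma^{(k)})\) is determined by its degree-\(0\) and degree-\(1\) components, because \(\wedge^{\le k}\FF_X\) is generated by its degree-one part. The differences are cosmetic: you use the inverse to get equality already at \(F^1\) and then transport powers, where the paper proves inclusions for all \(p\) and inverts at the end, and you spell out the \(\mathcal O_{\Xred}\)-linearity of \(\gr(\sigma^{(k)})\), which the paper leaves implicit.
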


\begin{proof}
By definition, \(\sigma^{(k)}\) is compatible with the augmentations:
\(
\iota^\sharp_k\circ \sigma^{(k)}=\varepsilon_k,
\)
where
\(
\varepsilon_k:\mathcal A_X^{(k)}\to \mathcal O_{\Xred}
\)
is the projection onto degree \(0\), and
\(
\iota^\sharp_k:\mathcal B_X^{(k)}=\mathcal O_X/\mathcal J_X^{k+1}\to \mathcal O_{\Xred}
\)
is the natural quotient map. Therefore
\[
\sigma^{(k)}\bigl(\ker(\varepsilon_k)\bigr)\subseteq \ker(\iota^\sharp_k).
\]
Since
\(
\ker(\varepsilon_k)=F^1\mathcal A_X^{(k)},
\) and 
\(
\ker(\iota^\sharp_k)=F^1\mathcal B_X^{(k)},
\)
it follows that
\[
\sigma^{(k)}\bigl(F^1\mathcal A_X^{(k)}\bigr)\subseteq F^1\mathcal B_X^{(k)}.
\]
Now \(\sigma^{(k)}\) is an algebra morphism, hence for every \(p\geq 1\),
\[
\sigma^{(k)}\Bigl(\bigl(F^1\mathcal A_X^{(k)}\bigr)^p\Bigr)
\subseteq
\bigl(F^1\mathcal B_X^{(k)}\bigr)^p.
\]
But
\(
\bigl(F^1\mathcal A_X^{(k)}\bigr)^p
=
F^p\mathcal A_X^{(k)},
\) and 
\(
\bigl(F^1\mathcal B_X^{(k)}\bigr)^p
=
F^p\mathcal B_X^{(k)},
\)
so we obtain for all $p$
\[
\sigma^{(k)}\bigl(F^p\mathcal A_X^{(k)}\bigr)\subseteq F^p\mathcal B_X^{(k)}.
\]
Since \(\sigma^{(k)}\) is an isomorphism, from
\(
\iota^\sharp_k\circ \sigma^{(k)}=\varepsilon_k
\)
we also get
\(
\varepsilon_k\circ \bigl(\sigma^{(k)}\bigr)^{-1}=\iota^\sharp_k.
\)
Hence the same argument applied to \(\bigl(\sigma^{(k)}\bigr)^{-1}\) yields
\[
\bigl(\sigma^{(k)}\bigr)^{-1}\bigl(F^p\mathcal B_X^{(k)}\bigr)\subseteq F^p\mathcal A_X^{(k)},
\]
which is equivalent to
\[
F^p\mathcal B_X^{(k)}\subseteq \sigma^{(k)}\bigl(F^p\mathcal A_X^{(k)}\bigr).
\]
Therefore for all $p$
\[
\sigma^{(k)}\bigl(F^p\mathcal A_X^{(k)}\bigr)=F^p\mathcal B_X^{(k)},
\]
hence \(\sigma^{(k)}\) is a filtered isomorphism. It follows that it induces an isomorphism on associated graded algebras
\[
\gr\bigl(\sigma^{(k)}\bigr):
\gr_F\mathcal A_X^{(k)}
\longrightarrow
\gr_F\mathcal B_X^{(k)}.
\]
Now
\(
\gr_F^p\mathcal A_X^{(k)}
=
F^p\mathcal A_X^{(k)}/F^{p+1}\mathcal A_X^{(k)}
\cong
\wedge^p_{\mathcal O_{\Xred}}\mathcal F_X,
\) 
whereas
\(
\gr_F^p\mathcal B_X^{(k)}
=
F^p\mathcal B_X^{(k)}/F^{p+1}\mathcal B_X^{(k)}
\cong
\mathcal J_X^p/\mathcal J_X^{p+1}.
\)\\
In degree \(0\), the induced map is the identity on \(\mathcal O_{\Xred} \), due to compatibility
\(
\iota^\sharp_k\circ \sigma^{(k)}=\varepsilon_k.
\)
In degree \(1\), the induced map is precisely
\(
\gr^1\bigl(\sigma^{(k)}\bigr)
\)
which is the identity by the defining condition of a \(k\)-splitting.

Since \(\gr_F\mathcal A_X^{(k)}\cong \bigwedge^{\leq k}\mathcal F_X\) is generated as an
\(\mathcal O_{\Xred}\)-algebra by its degree-\(1\) part \(\mathcal F_X\), the graded algebra morphism
\(\gr(\sigma^{(k)})\) is uniquely determined by its components in degrees \(0\) and \(1\). Hence it must be the canonical morphism
\(
\wedge^p\mathcal F_X \rightarrow \mathcal J_X^p/\mathcal J_X^{p+1}
\)
induced by multiplication in \(\mathcal O_X\).
\end{proof}

The content of the previous lemma can be summarized by the following obvious corollary. 

\begin{corollary} A \(k\)-splitting of \(X\) is a filtered isomorphism
\(
\sigma^{(k)}:
\bigwedge\nolimits^{\leq k} \mathcal F_X
\xrightarrow{\sim}
\mathcal O_X/\mathcal J_X^{k+1}
\)
lifting the canonical isomorphism of associated graded algebras
\(
\bigwedge\nolimits^{\leq k}\mathcal F_X
\;\xrightarrow{\sim}\;
\Gr  \bigl(\mathcal O_X/\mathcal J_X^{k+1}\bigr).
\)
\end{corollary}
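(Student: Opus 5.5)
This corollary is a direct repackaging of the preceding lemma, so the plan is to read off its two clauses from that lemma and the definition of a \(k\)-splitting. Let \(\sigma^{(k)}\) be a \(k\)-splitting in the sense of Definition~\ref{ksplitting}.

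The first step is to invoke the preceding lemma with \(\mathcal A_X^{(k)}=\wedge^{\le k}\FF_X\) and \(\mathcal B_X^{(k)}=\mathcal O_X/\mathcal J_X^{k+1}\), each carrying the filtration introduced before that lemma. The lemma gives \(\sigma^{(k)}(F^p\mathcal A_X^{(k)})=F^p\mathcal B_X^{(k)}\) for all \(p\), so \(\sigma^{(k)}\) is a filtered isomorphism. It also shows that \(\gr(\sigma^{(k)})\) agrees with the canonical map \(\wedge^{\le k}\FF_X\to\bigoplus_{p\le k}\mathcal J_X^p/\mathcal J_X^{p+1}\).

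The second step is to identify this target with \(\Gr(\mathcal O_X/\mathcal J_X^{k+1})\). Since \(F^p\mathcal B_X^{(k)}/F^{p+1}\mathcal B_X^{(k)}=\mathcal J_X^p/\mathcal J_X^{p+1}\) for \(p\le k\), and these graded pieces vanish for \(p>k\), the target is exactly that associated graded algebra. The canonical isomorphism \(\gr^p\mathcal O_X\cong\wedge^p\FF_X\) from the earlier lemma, truncated at degree \(k\), identifies it with \(\wedge^{\le k}\FF_X\).

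The statement ``\(\sigma^{(k)}\) lifts the canonical graded isomorphism'' means precisely that \(\gr(\sigma^{(k)})\) equals this canonical map, and that equality is the second conclusion of the lemma.

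If the corollary is also read as a characterization, there is a converse to check. Suppose \(\sigma^{(k)}\) is a filtered superalgebra isomorphism lifting the canonical graded map. Taking \(\gr^0\) gives compatibility with the augmentations, and taking \(\gr^1\) gives the normalization, so \(\sigma^{(k)}\) is a \(k\)-splitting.

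There is no real obstacle here. The only point needing care is bookkeeping: one must check that the truncation at \(k+1\) is compatible on both sides, so that \(\wedge^{\le k}\) corresponds exactly to \(\Gr\) of the quotient \(\mathcal O_X/\mathcal J_X^{k+1}\).
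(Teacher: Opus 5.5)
Your proposal is correct and follows the paper's route: the paper gives no separate argument and presents the corollary as an immediate summary of the preceding lemma (filtered isomorphism, plus the identification of \(\gr(\sigma^{(k)})\) with the canonical map onto \(\bigoplus_{p\le k}\mathcal J_X^p/\mathcal J_X^{p+1}=\Gr(\mathcal O_X/\mathcal J_X^{k+1})\)). Your additional converse, which reads off augmentation compatibility from \(\gr^0\) and the normalization from \(\gr^1\), is also correct, though the paper does not state it.
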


Moreover, one should notice that the 1-splitting exists canonically.

\begin{lemma}[Canonical first-order splitting] \label{lem:canonical-first-splitting}
There is a canonical isomorphism of augmented sheaves of \(\CC\)-superalgebras
\[
\mathcal O_X/\mathcal J_X^2
\cong
\mathcal O_{\Xred}\oplus \FF_X,
\]
where \(\FF_X\) is square-zero. Equivalently, \(X\) has a canonical normalized
\(1\)-splitting
\[
\sigma^{(1)}_{\mathrm{can}}:
\wedge^{\le 1}\FF_X
=
\mathcal O_{\Xred}\oplus \FF_X
\xrightarrow{\sim}
\mathcal O_X/\mathcal J_X^2.
\]
\end{lemma}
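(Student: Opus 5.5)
The plan is to use the $\ZZ_2$-grading to produce the decomposition directly, so that no local choices are made and canonicity is automatic. Everything reduces to locating $\mathcal J_X$ and $\mathcal J_X^2$ relative to the parity decomposition $\mathcal O_X=\mathcal O_{X,0}\oplus\mathcal O_{X,1}$.

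The first step is to determine the graded pieces of the ideals. By definition $\mathcal J_X=\mathcal O_{X,1}+(\mathcal O_{X,1})^2$, and $(\mathcal O_{X,1})^2\subseteq \mathcal O_{X,0}$ by supercommutativity, so
\[
\mathcal J_{X,1}=\mathcal O_{X,1},\qquad \mathcal J_{X,0}=(\mathcal O_{X,1})^2 .
\]
Both $\mathcal J_X$ and $\mathcal J_X^2$ are $\ZZ_2$-graded ideals, since they are generated by homogeneous sections. The key observation is
\[
\mathcal J_{X,0}=(\mathcal O_{X,1})^2\subseteq \mathcal J_X^2 .
\]
Conversely, $(\mathcal J_X^2)_0\subseteq \mathcal J_{X,0}$ holds trivially. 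Hence $(\mathcal J_X^2)_0=\mathcal J_{X,0}$.

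The second step is to read off both parity components of the quotient. For the even part,
\[
(\mathcal O_X/\mathcal J_X^2)_0=\mathcal O_{X,0}/\mathcal J_{X,0}\cong \mathcal O_X/\mathcal J_X=\mathcal O_{\Xred}.
\]
Here the last isomorphism holds because $\mathcal O_{X,1}\subseteq\mathcal J_X$. For the odd part,
\[
(\mathcal O_X/\mathcal J_X^2)_1=\mathcal J_{X,1}/(\mathcal J_X^2)_1\cong \mathcal J_X/\mathcal J_X^2=\FF_X.
\]
This holds because $\mathcal J_X/\mathcal J_X^2$ is concentrated in odd degree: its even part $\mathcal J_{X,0}/(\mathcal J_X^2)_0$ vanishes by the first step.

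The third step assembles the multiplicative structure. The even part $\mathcal O_{\Xred}$ is a subsheaf of rings of $\mathcal O_X/\mathcal J_X^2$, and it acts on the odd part through the induced $\mathcal O_{\Xred}$-module structure of $\mathcal J_X/\mathcal J_X^2$. The product of two odd sections lies in $\mathcal J_X^2$, so $\FF_X$ is square-zero. This gives the isomorphism of superalgebras $\mathcal O_X/\mathcal J_X^2\cong\mathcal O_{\Xred}\oplus\FF_X$.

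Under this isomorphism, $\iota_1^\sharp$ becomes projection onto the even summand, which is exactly $\varepsilon_1$. The induced map on $\ker(\varepsilon_1)/\ker(\varepsilon_1)^2=\FF_X$ is the identity by construction. So $\sigma^{(1)}_{\mathrm{can}}$ is a normalized $1$-splitting in the sense of Definition~\ref{ksplitting}. No choice entered at any stage, so the isomorphism is canonical and in particular glues globally.

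I expect the only delicate point to be the inclusion $\mathcal J_{X,0}\subseteq\mathcal J_X^2$, that is, that every even nilpotent section is a sum of products of odd sections. This is precisely where the definition $\mathcal J_X=\langle\mathcal O_{X,1}\rangle$ is used. It is what makes the even part of $\mathcal O_X/\mathcal J_X^2$ a genuine copy of $\mathcal O_{\Xred}$, rather than an extension of it that would require a choice of projection.
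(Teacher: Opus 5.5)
Your proposal is correct and takes essentially the same route as the paper. Both read off the isomorphism from the parity decomposition of $\mathcal O_X/\mathcal J_X^2$: the even part is identified with $\mathcal O_X/\mathcal J_X=\mathcal O_{\Xred}$, and the odd part with the square-zero ideal $\mathcal J_X/\mathcal J_X^2=\FF_X$. Your version also makes explicit the step the paper leaves implicit, namely $(\mathcal J_X^2)_0=\mathcal J_{X,0}=(\mathcal O_{X,1})^2$, which is why $\mathcal J_X/\mathcal J_X^2$ is purely odd and the even part of the quotient is exactly $\mathcal O_{\Xred}$.
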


\begin{proof}
Modulo \(\mathcal J_X^2\), the ideal \(\mathcal J_X/\mathcal J_X^2\) is square-zero and
purely odd. The parity decomposition of the quotient
\(\mathcal O_X/\mathcal J_X^2\) identifies its even part canonically with
\(\mathcal O_X/\mathcal J_X=\mathcal O_{\Xred}\), and its odd square-zero ideal with
\(\mathcal J_X/\mathcal J_X^2=\FF_X\). This gives the required augmented
superalgebra isomorphism, and the induced map on \(\FF_X\) is the identity.
\end{proof}

The splitting problem is therefore organized by a finite tower of truncations
\(
X^{(0)},X^{(1)},\dots,X^{(n)},
\)
to be compared with the split truncations
\(
\widehat X^{(0)},\widehat X^{(1)},\dots,\widehat X^{(n)}.
\)
Section~3 studies the successive lifting problems in this tower and introduces the corresponding
Green obstruction sheaves and obstruction classes.


\section{The classical splitting tower: liftings and obstructions} 

The splitting problem for a complex supermanifold is naturally finite and inductive. Indeed,
once the split model \(\widehat X\) and its truncations \(\widehat X^{(k)}\) have been fixed, one may ask
at each order whether a given \(k\)-splitting extends to order \(k+1\). This leads to the classical
tower of lifting problems introduced by Green in the holomorphic setting
\cite{Green1982}, and further developed in subsequent work on the classification and obstruction
theory of complex supermanifolds; see for example
\cite{BettadapuraHigherObstructions,BettadapuraObstructedThickenings,
BettadapuraKoszulSplitting, OnishchikClassification}. The resulting obstruction classes live in degree-one cohomology
groups of certain sheaves on \(X_{\mathrm{red}}\), which we call the \emph{Green obstruction sheaves}.

The purpose of this section is to formulate this classical obstruction tower in the precise form
needed later in the paper. We organize the splitting problem as a successive lifting problem for
the truncations
\[
X^{(0)} \subset X^{(1)} \subset \cdots \subset X^{(n)}=X,
\]
identify the sheaves controlling the ambiguity and existence of liftings, and define the associated
obstruction classes. These are the sheaf-theoretic objects that Sections~4 and~5 will later
reinterpret in terms of filtered Maurer--Cartan theory and minimal filtered \(L_\infty\)-models.

\smallskip

Given a certain $k$-splitting of $X$, it is natural to ask if it is possible to obtain a $(k+1)$-splitting in a certain compatible way.  
\begin{definition}
Let \(k\geq 1\), and let
\(
\sigma^{(k)}:
\bigwedge\nolimits^{\leq k}\mathcal F_X
\xrightarrow{\sim}
\mathcal O_X/\mathcal J_X^{k+1}
\)
be a \(k\)-splitting. A \emph{lifting} of \(\sigma^{(k)}\) to order \(k+1\) is a
\((k+1)\)-splitting
\(
\sigma^{(k+1)}:
\bigwedge\nolimits^{\leq k+1}\mathcal F_X
\xrightarrow{\sim}
\mathcal O_X/\mathcal J_X^{k+2}
\)
such that the diagram
\begin{equation}
\begin{tikzcd}
\bigwedge^{\leq k+1}\mathcal F_X
\arrow[r, "\sigma^{(k+1)}", "\sim"']
\arrow[d]
&
\mathcal O_X/\mathcal J_X^{k+2}
\arrow[d]
\\
\bigwedge^{\leq k}\mathcal F_X
\arrow[r, "\sigma^{(k)}", "\sim"']
&
\mathcal O_X/\mathcal J_X^{k+1}
\end{tikzcd}
\end{equation}
commutes, where the vertical arrows are the natural truncation morphisms.
\end{definition}

Thus the splitting problem for \(X\) becomes the problem of constructing a compatible tower of
successive liftings of the canonical \(1\)-splitting:
\[
\sigma^{(1)}_{\mathrm{can}}
\leadsto
\sigma^{(2)}
\leadsto
\cdots
\leadsto
\sigma^{(k)}
\leadsto
\sigma^{(k+1)}
\leadsto \cdots \sigma^{(n)} = \sigma.
\]
A (global) splitting of \(X\) is precisely a compatible choice of liftings at all orders.

\medskip

We now aim at developing a (sheaf-theoretic) obstruction theory for the existence of liftings of $k$-splittings. 

\begin{definition}
For every \(k\geq 1\), let \(\mathscr K_X^{(k+1)}\) be the sheaf on \(X_{\mathrm{red}}\)
whose sections over an open set \(U\subseteq X_{\mathrm{red}}\) are the automorphisms
\begin{equation}
g:
\bigwedge\nolimits^{\leq k+1}_{\mathcal O_U}\mathcal F_X|_U
\xrightarrow{\sim}
\bigwedge\nolimits^{\leq k+1}_{\mathcal O_U}\mathcal F_X|_U
\end{equation}
of sheaves of \(\CC\)-superalgebras such that:

\begin{enumerate}
\item \(g\) is compatible with the augmentations;
\item \(\gr^1(g)=\mathrm{id}_{\mathcal F_X|_U}\);
\item \(g\) induces the identity on
\(
\bigwedge\nolimits^{\leq k}_{\mathcal O_U}\mathcal F_X|_U.
\)
\end{enumerate}
\end{definition}

In the following, we will denote by 
\begin{equation}
\mathcal{T}_X \defeq \mathcal D er_{\mathbb C}
\bigl(\mathcal O_X,\mathcal O_X \bigr)
\end{equation} 
the holomorphic tangent sheaf of the supermanifold $X$. Furthermore, it is crucial to observe that restricting $\mathcal{T}_X$ to the reduced space $\Xred$ one gets the decomposition 
\begin{equation}
\mathcal{T}_X |_{\Xred} \defeq \mathcal{T}_X \otimes_{\OO_X} \OO_{\Xred} \cong \mathcal{T}_{\Xred} \oplus \mathcal H om_{\mathcal O_{X_{\mathrm{red}}}} (\FF_X, \OO_{\Xred}),
\end{equation}
where $\mathcal{T}_{\Xred}$ is the holomorphic tangent sheaf of $\Xred.$ \\
The sheaves appearing in the above decomposition $\mathcal{T}_X |_{\Xred} \defeq \mathcal{T}_{+} \oplus \mathcal{T}_-$ enter the following proposition.

\begin{proposition}
For every \(k\geq 1\), there is a canonical isomorphism of sheaves of abelian groups
\begin{equation}
\mathscr K_X^{(k+1)}\cong \mathcal Q_X^{(k+1)},
\end{equation}
where
\begin{equation} \label{Qsheaf}
\mathcal Q_X^{(k+1)}
\defeq
\begin{cases}
\mathcal H om_{\mathcal O_{X_{\mathrm{red}}}}
\bigl(\mathcal F_X,\wedge^{k+1}\mathcal F_X\bigr),
& \text{if \(k\) is even},\\[6pt]
\mathcal T_{X_{\mathrm{red}}}\otimes_{\mathcal O_{X_{\mathrm{red}}}}
\wedge^{k+1}\mathcal F_X,
& \text{if \(k\) is odd}.
\end{cases}
\end{equation}
\end{proposition}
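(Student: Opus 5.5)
Write $\mathcal A \defeq \bigwedge^{\le k+1}\FF_X|_U$ and $\mathcal I\defeq \wedge^{k+1}\FF_X|_U$ for the top-degree ideal, so that $\mathcal I\cdot F^1\mathcal A=0$ and $\mathcal I^2=0$.

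\textbf{Step 1: an automorphism in $\mathscr K_X^{(k+1)}$ is $\id+D$ with $D$ a derivation.} Take $g\in \mathscr K_X^{(k+1)}(U)$ and set $D\defeq g-\id$. By condition (3), $D$ has image in $\mathcal I$. Since $g$ is even, so is $D$. Expanding $g(ab)=g(a)g(b)$ gives
\[
D(ab)=D(a)b+aD(b)+D(a)D(b),
\]
and the last term vanishes because $\mathcal I^2=0$. So $D$ is an even $\CC$-linear derivation $\mathcal A\to\mathcal I$. It vanishes on $F^2\mathcal A$ by the Leibniz rule, since $F^1\mathcal A\cdot\mathcal I=0$ and $k+1\ge 2$.

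Conversely, for any such $D$, the map $\id+D$ is an algebra endomorphism, and it is invertible with inverse $\id-D$. It satisfies (1) because $\mathcal I\subset\ker\varepsilon$. It satisfies (2) because $k+1\ge2$. It satisfies (3) trivially. Composition gives
\[
(\id+D)(\id+D')=\id+D+D'+DD',
\]
and $DD'=0$ since $D'$ lands in $\mathcal I$ and $D$ kills $\mathcal I\subset F^2\mathcal A$. Hence
\[
\mathscr K_X^{(k+1)}\cong \mathcal D er^{\bar0}_{\CC}(\mathcal A,\mathcal I)
\]
as sheaves of abelian groups.

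\textbf{Step 2: computing these derivations by parity.} Such a derivation is determined by its restrictions $D_0\colon \OO_{\Xred}\to\mathcal I$ and $D_1\colon\FF_X\to\mathcal I$, because $\mathcal A$ is generated by degrees $0$ and $1$ and $D$ kills $F^2\mathcal A$.

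\emph{Compatibility conditions.} $D_0$ is a $\CC$-derivation into the $\OO_{\Xred}$-module $\mathcal I$. For $f\in\OO_{\Xred}$ and $\xi\in\FF_X$, Leibniz gives $D_1(f\xi)=D_0(f)\xi+fD_1(\xi)$, and $D_0(f)\xi\in F^{k+2}\mathcal A=0$. So $D_1$ is $\OO_{\Xred}$-linear. Conversely, any pair ($D_0$ a derivation, $D_1$ linear) extends uniquely to a derivation of $\mathcal A$ vanishing on $F^2$. The point to check is the consistency relation on products $\xi\eta$, but both sides are $0$ there, since $D_1(\xi)\eta\in F^{k+2}\mathcal A=0$.

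\emph{Parity cases.} Evenness forces $D$ to preserve $\ZZ_2$-degree, and $\mathcal I$ has parity $k+1$.
\begin{itemize}
\item If $k$ is odd, $\mathcal I$ is even. Then $D_1$, which maps odd to even, must vanish, and $D_0\in \mathcal D er(\OO_{\Xred},\mathcal I)\cong \mathcal T_{\Xred}\otimes\wedge^{k+1}\FF_X$, using local freeness of $\mathcal I$.
\item If $k$ is even, $\mathcal I$ is odd. Then $D_0$ must vanish, and $D_1\in \mathcal H om_{\OO_{\Xred}}(\FF_X,\wedge^{k+1}\FF_X)$.
\end{itemize}
This is exactly $\mathcal Q_X^{(k+1)}$.

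\textbf{Step 3: canonicity.} Every identification above is natural and commutes with restriction, so the isomorphism is canonical.

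The only delicate point is the compatibility check in Step 2: that the $\OO_{\Xred}$-linearity of $D_1$ and the extension from generators really hold inside the truncated algebra. I expect it to go through because the product of $\mathcal I$ with anything of positive degree is zero.
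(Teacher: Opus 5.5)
Your proposal is correct and follows the same route as the paper. Both write $g=\mathrm{id}+D$, use $(\wedge^{k+1}\FF_X)^2=0$ to see that $D$ is an even derivation with values in $\wedge^{k+1}\FF_X$, and split by parity to land in $\mathcal{T}_{\Xred}\otimes\wedge^{k+1}\FF_X$ or in $\mathcal{H}om_{\OO_{\Xred}}(\FF_X,\wedge^{k+1}\FF_X)$. Your additional checks make explicit several steps the paper only asserts: that $D$ kills $F^2$, hence $DD'=0$ and $(\mathrm{id}+D)^{-1}=\mathrm{id}-D$; the $\OO_{\Xred}$-linearity of $D_1$; and the extension from generators.
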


\begin{proof}
Let \(U\subseteq X_{\mathrm{red}}\) be open, and let
\(
g\in \mathscr K_X^{(k+1)}(U).
\)
Since \(g\) induces the identity modulo degree \(k+1\), we may write
\(
g=\mathrm{id}+\delta,
\)
where
\[
\delta:
\wedge^{\leq k+1}\mathcal F_X|_U
\longrightarrow
\wedge^{k+1}\mathcal F_X|_U.
\]
Because \((\wedge^{k+1}\mathcal F_X)^2=0\) in the truncated exterior algebra,
the condition that \(g\) be an algebra automorphism is equivalent to the Leibniz rule.
Hence \(\delta\) is an even derivation with values in \(\wedge^{k+1}\mathcal F_X|_U\).

Assume first that \(k\) is even. Then \(k+1\) is odd, so \(\wedge^{k+1}\mathcal F_X\)
is odd. Since \(\delta\) is even, it preserves parity, and therefore
\(
\delta(\mathcal O_{X_{\mathrm{red}}}|_U)=0.
\)
Thus \(\delta\) is determined by its restriction to degree \(1\),
\[
\delta|_{\mathcal F_X|_U}:
\mathcal F_X|_U\longrightarrow \wedge^{k+1}\mathcal F_X|_U,
\]
which is \(\mathcal O_U\)-linear. Conversely, every
\(
\alpha\in
\mathcal H om_{\mathcal O_U}
\bigl(\mathcal F_X|_U,\wedge^{k+1}\mathcal F_X|_U\bigr)
\)
extends uniquely to such a derivation \(\delta_\alpha\), and
\(
g_\alpha \defeq \mathrm{id}+\delta_\alpha
\)
lies in \(\mathscr K_X^{(k+1)}(U)\). Hence
\[
\mathscr K_X^{(k+1)}(U)
\cong
\mathcal H om_{\mathcal O_U}
\bigl(\mathcal F_X|_U,\wedge^{k+1}\mathcal F_X|_U\bigr).
\]

Assume now that \(k\) is odd. Then \(k+1\) is even, so \(\wedge^{k+1}\mathcal F_X\)
is even. Since \(\delta\) is even and \(\mathcal F_X\) is odd, one must have
\(
\delta(\mathcal F_X|_U)=0.
\)
Thus \(\delta\) is determined by its restriction to degree \(0\),
\[
\delta|_{\mathcal O_U}:
\mathcal O_U\longrightarrow \wedge^{k+1}\mathcal F_X|_U,
\]
which is a \(\mathbb C\)-derivation. Therefore
\[
\delta|_{\mathcal O_U}\in
\mathcal D er_{\mathbb C}
\bigl(\mathcal O_U,\wedge^{k+1}\mathcal F_X|_U\bigr)
\cong
\mathcal T_{X_{\mathrm{red}}}|_U
\otimes_{\mathcal O_U}
\wedge^{k+1}\mathcal F_X|_U.
\]
Conversely, every such derivation extends uniquely to an even derivation of
\(\wedge^{\leq k+1}\mathcal F_X|_U\) vanishing on \(\mathcal F_X|_U\), and again
\(\mathrm{id}+\delta\) lies in \(\mathscr K_X^{(k+1)}(U)\).

If \(g_a=\mathrm{id}+\delta_a\) for \(a=1,2\), then \(\delta_1\delta_2=0\) in the
truncated algebra, and therefore
\[
g_1\circ g_2
=
\mathrm{id}+\delta_1+\delta_2.
\]
Thus, under the above identifications, composition in \(\mathscr K_X^{(k+1)}\)
corresponds to addition in \(\mathcal Q_X^{(k+1)}\). These identifications are
compatible with restrictions, hence globalize to the claimed isomorphism.
\end{proof}

The sheaves $\mathcal{Q}^{(k+1)}$ are the classical \emph{Green obstruction sheaves} controlling the extension of a \(k\)-splitting
to order \(k+1\).

\begin{definition}[Set of $(k+1)$-splittings]
Let
\(
\sigma^{(k)}:
\wedge^{\leq k}\mathcal F_X
\xrightarrow{\sim}
\mathcal O_X/\mathcal J_X^{k+1}
\)
be a \(k\)-splitting. For every open set \(U\subseteq X_{\mathrm{red}}\), we denote by
\begin{equation}
\mathscr L_X^{(k+1)}(\sigma^{(k)})(U)
\end{equation}
the set of all \((k+1)\)-splittings of \(X|_U\) lifting \(\sigma^{(k)}|_U\).
\end{definition}

\begin{proposition}
If \(\mathscr L_X^{(k+1)}(\sigma^{(k)})(U)\) is non-empty, then it is a torsor under
\(\mathcal Q_X^{(k+1)}|_U\).

\smallskip

\noindent  More precisely, if
\( 
\sigma^{(k+1)}_U,\tau^{(k+1)}_U
\in
\mathscr L_X^{(k+1)}(\sigma^{(k)})(U),
\) 
then
\begin{equation}
g_U\defeq(\tau^{(k+1)}_U)^{-1}\circ \sigma^{(k+1)}_U
\in \mathscr K_X^{(k+1)}(U)
\cong \mathcal Q_X^{(k+1)}(U),
\end{equation}
and this identifies \(\mathscr L_X^{(k+1)}(\sigma^{(k)})(U)\) with a principal
homogeneous space under \(\mathcal Q_X^{(k+1)}(U)\).
\end{proposition}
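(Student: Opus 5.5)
The plan is to verify directly that $\mathscr K_X^{(k+1)}(U)$ acts on $\mathscr L_X^{(k+1)}(\sigma^{(k)})(U)$ by precomposition, freely and transitively, and then to transport the group structure along the isomorphism $\mathscr K_X^{(k+1)}\cong\mathcal Q_X^{(k+1)}$ of the preceding proposition.

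\textbf{Action.} First define the action. For $g\in\mathscr K_X^{(k+1)}(U)$ and $\sigma^{(k+1)}_U\in\mathscr L_X^{(k+1)}(\sigma^{(k)})(U)$, set $g\cdot\sigma^{(k+1)}_U\defeq\sigma^{(k+1)}_U\circ g^{-1}$. We must check that this is again a lifting of $\sigma^{(k)}|_U$.
\begin{itemize}
\item It is an isomorphism of $\CC$-superalgebras, being a composition of two.
\item It is compatible with augmentations, since $g$ is: $\varepsilon_{k+1}\circ g^{-1}=\varepsilon_{k+1}$.
\item The induced map $\gr^1$ is the identity, because $\gr^1$ is functorial under composition and $\gr^1(g)=\id$.
\item It reduces to $\sigma^{(k)}$ modulo $\wedge^{k+1}\FF_X$ and $\mathcal J_X^{k+1}$, because $g$ induces the identity on $\wedge^{\le k}\FF_X$. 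Concretely, the truncation map $q:\wedge^{\le k+1}\FF_X\to\wedge^{\le k}\FF_X$ satisfies $q\circ g=q$.
\end{itemize}
The axioms of a group action are immediate; since $\mathscr K_X^{(k+1)}$ is abelian, the choice of left versus right action is harmless.

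\textbf{Key step: $(\tau^{(k+1)}_U)^{-1}\circ\sigma^{(k+1)}_U$ lies in $\mathscr K_X^{(k+1)}(U)$.} Given two liftings $\sigma^{(k+1)}_U,\tau^{(k+1)}_U$, put $g_U\defeq(\tau^{(k+1)}_U)^{-1}\circ\sigma^{(k+1)}_U$. This is an automorphism of the superalgebra $\wedge^{\le k+1}\FF_X|_U$, and we check the three defining conditions.
\begin{itemize}
\item Augmentation compatibility follows from $\iota^\sharp_{k+1}\circ\tau=\varepsilon_{k+1}$, which gives $\varepsilon_{k+1}\circ\tau^{-1}=\iota^\sharp_{k+1}$.
\item $\gr^1(g_U)=\gr^1(\tau)^{-1}\gr^1(\sigma)=\id$. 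By the lemma that $k$-splittings are filtered isomorphisms, both maps are filtered, so $\gr^1$ makes sense and is functorial.
\item For the third condition, let $p:\mathcal O_X/\mathcal J_X^{k+2}\to\mathcal O_X/\mathcal J_X^{k+1}$ be the projection. The commuting square defining a lifting gives $p\circ\sigma=\sigma^{(k)}\circ q$ and $p\circ\tau=\sigma^{(k)}\circ q$. Hence $q\circ\tau^{-1}=(\sigma^{(k)})^{-1}\circ p$, so $q\circ g_U=(\sigma^{(k)})^{-1}\circ p\circ\sigma=q$.
\end{itemize}
The point requiring care is that $g_U$ "induces the identity on $\wedge^{\le k}$" in the sense of $q\circ g_U=q$. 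This is exactly what allowed writing $g=\id+\delta$ with $\delta$ valued in $\wedge^{k+1}\FF_X$ in the previous proof, so the definitions agree. This step is the only real check; the rest is bookkeeping.

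\textbf{Free and transitive.} By construction $g_U\cdot\tau^{(k+1)}_U$ recovers $\sigma^{(k+1)}_U$ (up to inverting $g_U$, depending on the chosen convention), so the action is transitive. If $\sigma\circ g^{-1}=\sigma$, then $g=\id$ because $\sigma$ is an isomorphism, so the action is free. Nonemptiness is assumed, hence the set is a principal homogeneous space under $\mathscr K_X^{(k+1)}(U)$.

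\textbf{Conclusion.} Using the canonical isomorphism $\mathscr K_X^{(k+1)}\cong\mathcal Q_X^{(k+1)}$ of the preceding proposition, under which composition becomes addition, the set is a torsor under $\mathcal Q_X^{(k+1)}(U)$. All constructions commute with restriction to smaller opens $V\subseteq U$, so the statement holds at the level of sheaves on $U$, not merely of sections.
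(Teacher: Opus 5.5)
Your proposal is correct and follows the same route as the paper. Both arguments show that \((\tau^{(k+1)}_U)^{-1}\circ\sigma^{(k+1)}_U\) satisfies the three defining conditions of \(\mathscr K_X^{(k+1)}(U)\), and then let \(\mathscr K_X^{(k+1)}(U)\cong\mathcal Q_X^{(k+1)}(U)\) act on lifts by composition; you write out the augmentation, \(\gr^1\) and truncation checks (via \(q\circ g=q\)) and the freeness and transitivity that the paper leaves implicit. The one cosmetic difference is that you act by \(\sigma\mapsto\sigma\circ g^{-1}\), whereas the paper precomposes with \(g\) itself (as in \(\tau_i^{(k+1)}=\sigma_i^{(k+1)}\circ(\id+\eta_i)\)), which is immaterial.
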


\begin{proof}
This is immediate from the previous proposition. The composite
\(
(\tau^{(k+1)}_U)^{-1}\circ \sigma^{(k+1)}_U
\)
is an automorphism of \(\wedge^{\leq k+1}\mathcal F_X|_U\) which is the identity after
truncation to order \(k\), is compatible with the augmentations, and induces the
identity in degree \(1\). Hence it lies in \(\mathscr K_X^{(k+1)}(U)\). Conversely, any
section of \(\mathscr K_X^{(k+1)}(U)\cong \mathcal Q_X^{(k+1)}(U)\) acts on a given lift
by composition.
\end{proof}

\begin{lemma}[Local existence of liftings] \label{lem:local-liftings}
Let
\[
\sigma^{(k)}:
\wedge^{\leq k}\mathcal F_X
\xrightarrow{\sim}
\mathcal O_X/\mathcal J_X^{k+1}
\]
be a \(k\)-splitting. Then every point of \(X_{\mathrm{red}}\) has an open neighbourhood
\(U\) such that \(\sigma^{(k)}|_U\) admits a lifting to a \((k+1)\)-splitting.
\end{lemma}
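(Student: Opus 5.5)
The plan is to reduce to the local model and use the fact that, locally, $X$ is split. First I choose $U$ small enough that $\mathcal F_X|_U$ is free and $X|_U$ is isomorphic to a standard superdomain, using Definition~\ref{supermanifold}. This gives an isomorphism of superalgebras $\psi:\wedge^\bullet\mathcal F_X|_U\xrightarrow{\sim}\mathcal O_X|_U$. It is compatible with the augmentations, because any superalgebra isomorphism sends the odd-generated ideal onto the odd-generated ideal. By Remark~\ref{rmk:norm} I may normalize $\psi$ so that $\gr^1(\psi)=\mathrm{id}$; concretely, I precompose with $\wedge^\bullet(\phi^{-1})$, where $\phi=\gr^1(\psi)$. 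So $\psi$ is a local splitting.

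Reducing $\psi$ modulo $\mathcal J_X^{k+1}$ and $\mathcal J_X^{k+2}$ gives normalized splittings $\psi^{(k)}$ and $\psi^{(k+1)}$ over $U$. These are compatible with truncation, since by the filtered-isomorphism lemma $\psi(F^p)=\mathcal J_X^p$. In general $\psi^{(k)}$ is not equal to $\sigma^{(k)}|_U$. Consider $h\defeq(\psi^{(k)})^{-1}\circ\sigma^{(k)}|_U$. It is an automorphism of $\wedge^{\le k}\mathcal F_X|_U$ that is compatible with the augmentation, satisfies $\gr^1(h)=\mathrm{id}$, and is filtered. If $h$ lifts to an automorphism $\tilde h$ of $\wedge^{\le k+1}\mathcal F_X|_U$, then $\sigma^{(k+1)}\defeq\psi^{(k+1)}\circ\tilde h$ is a $(k+1)$-splitting lifting $\sigma^{(k)}|_U$.

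The main step is therefore lifting automorphisms of the truncated split algebra over a small $U$. I will use the fact that $\wedge^{\le k}\mathcal F_X|_U$, with $\mathcal F_X|_U$ free with basis $\theta^1,\dots,\theta^n$ and coordinates $x^a$ on $U$, is the quotient of the free supercommutative algebra $\mathcal O_U[\theta]$ by $F^{k+1}$. An augmentation-compatible superalgebra endomorphism is determined by the images of the $x^a$ and the $\theta^\alpha$. For the $x^a$ this is justified by the standard result that morphisms into nilpotent extensions of $\mathcal O_U$ are determined by the images of coordinates, via the Taylor expansion $f(x+n)=\sum \partial^I f\, n^I/I!$, which is finite because $n$ is nilpotent. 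So I choose arbitrary representatives in $\wedge^{\le k+1}$ of $h(x^a)$ and $h(\theta^\alpha)$, of the correct parities. These define, by the same Taylor formula, a superalgebra endomorphism $\tilde h$ of $\wedge^{\le k+1}\mathcal F_X|_U$ that reduces to $h$. It is an automorphism because it is unipotent with respect to the filtration: $\gr(\tilde h)=\mathrm{id}$, and $F$ is finite, so a filtered map inducing the identity on the associated graded is invertible.

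The main obstacle is justifying cleanly that superalgebra morphisms out of $\mathcal O_U\otimes\wedge^\bullet(\theta)$ are determined by, and may be freely prescribed on, coordinates. This is the usual chart theorem for superdomains. I would cite it, or sketch the Taylor-expansion argument, rather than reprove it. Everything else is formal bookkeeping with the filtration and the normalization condition.
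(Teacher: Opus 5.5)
Your proposal is correct and follows essentially the same route as the paper. You compare $\sigma^{(k)}|_U$ with the truncation of a normalized local splitting, then lift the resulting unipotent automorphism of $\wedge^{\le k}\FF_X|_U$ to $\wedge^{\le k+1}\FF_X|_U$ by prescribing the images of coordinates (nilpotent Taylor expansion on functions, universal property of the exterior algebra on odd generators), invert the lift using $\gr=\mathrm{id}$ on a finite filtration, and compose with the truncated local splitting.

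Your explicit use of the uniqueness half of the chart theorem to see that $\tilde h$ reduces to $h$ spells out what the paper leaves as ``by construction''. One small point is missing: augmentation-compatibility of $\psi$ also requires the induced $\CC$-algebra automorphism of $\OO_U$ to be the identity, which holds because any such map over the identity of $U$ preserves the value of a function at every point.
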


\begin{proof}
The assertion is local on \(X_{\mathrm{red}}\).  Fix a point
\(p\in X_{\mathrm{red}}\).  After shrinking to a sufficiently small neighbourhood
\(U\ni p\), we may assume that \(U\) is a coordinate polydisc, that
\(\mathcal F_X|_U\) is free, and that \(X|_U\) is split.  Choose a local splitting
\(
\rho_U:
\wedge^\bullet \mathcal F_X|_U
\xrightarrow{\sim}
\mathcal O_X|_U .
\)
It induces compatible truncated splittings
\[
\rho_U^{(k)}:
\wedge^{\le k}\mathcal F_X|_U
\xrightarrow{\sim}
\mathcal O_X|_U/\mathcal J_X^{k+1},
\qquad
\rho_U^{(k+1)}:
\wedge^{\le k+1}\mathcal F_X|_U
\xrightarrow{\sim}
\mathcal O_X|_U/\mathcal J_X^{k+2}.
\]
The composite
\[
a^{(k)}
\defeq
(\rho_U^{(k)})^{-1}\circ \sigma^{(k)}|_U
\]
is an augmentation-preserving filtered automorphism of
\(
A_k\defeq \wedge^{\le k}\mathcal F_X|_U
\)
which induces the identity on the reduced quotient \(\mathcal O_U\) and on the first
graded piece \(\mathcal F_X|_U\).

We now lift \(a^{(k)}\) to a filtered automorphism of
\(
A_{k+1}\defeq \wedge^{\le k+1}\mathcal F_X|_U .
\)
Choose holomorphic coordinates \(z^1,\ldots,z^m\) on \(U\), and choose a local frame
\(\theta^1,\ldots,\theta^n\) of \(\mathcal F_X|_U\).  Let
\(
q:A_{k+1}\rightarrow A_k
\)
be the natural quotient.  Since \(a^{(k)}\) is augmentation-preserving and filtered, we
may choose lifts
\(
Z^a\in (A_{k+1})_{\bar0},
\) and \( \Theta^\alpha\in (A_{k+1})_{\bar1}
\)
such that
\[
q(Z^a)=a^{(k)}(z^a),
\qquad
q(\Theta^\alpha)=a^{(k)}(\theta^\alpha),
\]
and such that
\[
Z^a\equiv z^a \pmod{J_{A_{k+1}}},
\qquad
\Theta^\alpha\equiv \theta^\alpha \pmod{J_{A_{k+1}}^2}.
\]
Here \(J_{A_{k+1}}\subset A_{k+1}\) denotes the ideal generated by the odd variables.

The assignment \(z^a\mapsto Z^a\) determines a morphism of sheaves of holomorphic
algebras
\(
\varphi:\mathcal O_U\rightarrow (A_{k+1})_{\bar0}
\)
by nilpotent Taylor expansion.  Namely, for a holomorphic function
\(f=f(z^1,\ldots,z^m)\) on an open subset \(V\subseteq U\), write
\[
N^a\defeq Z^a-z^a\in J_{A_{k+1}}(V).
\]
Since \(J_{A_{k+1}}^{k+2}=0\), the finite expression
\[
\varphi(f)
=
\sum_{|I|\le k+1}
\frac{1}{I!}
\frac{\partial^{|I|}f}{\partial z^I}
\,
N^I
\]
is well-defined and is compatible with restrictions.  The usual Taylor formula in a
nilpotent thickening shows that \(\varphi\) is a morphism of sheaves of
\(\mathbb C\)-algebras.  Moreover, by construction, \(q\circ\varphi\) agrees with the
\(\mathcal O_U\)-part of \(a^{(k)}\).

Using the \(\mathcal O_U\)-algebra structure on \(A_{k+1}\) defined by \(\varphi\), the
odd sections \(\Theta^\alpha\) determine a unique morphism of sheaves of
supercommutative algebras
\[
a^{(k+1)}:
A_{k+1}
=
\wedge^{\le k+1}\mathcal F_X|_U
\longrightarrow
A_{k+1}
\]
by the rule
\[
z^a\longmapsto Z^a,
\qquad
\theta^\alpha\longmapsto \Theta^\alpha .
\]
Indeed, the elements \(\Theta^\alpha\) are odd, hence they satisfy the required
super-anticommutation relations in the supercommutative algebra \(A_{k+1}\).  Thus the
universal property of the exterior algebra applies.

By construction,
\(
q\circ a^{(k+1)}=a^{(k)}\circ q.
\)
Hence \(a^{(k+1)}\) lifts \(a^{(k)}\).  It remains only to check that
\(a^{(k+1)}\) is an automorphism.  It is filtered, and it induces the identity on
\(
\operatorname{gr}^0 A_{k+1}\cong\mathcal O_U
\) and \( 
\operatorname{gr}^1 A_{k+1}\cong\mathcal F_X|_U .
\)
Since the associated graded algebra
\[
\operatorname{gr} A_{k+1}
\cong
\wedge^{\le k+1}\mathcal F_X|_U
\]
is generated as an \(\mathcal O_U\)-algebra by its degree-one part, it follows that
\(\operatorname{gr}(a^{(k+1)})\) is the identity in every degree.  A filtered
endomorphism of a finite filtered sheaf whose associated graded morphism is an
isomorphism is itself an isomorphism.  Therefore \(a^{(k+1)}\) is a filtered
automorphism of \(A_{k+1}\).

Finally define
\[
\sigma_U^{(k+1)}
\defeq
\rho_U^{(k+1)}\circ a^{(k+1)}:
\wedge^{\le k+1}\mathcal F_X|_U
\longrightarrow
\mathcal O_X|_U/\mathcal J_X^{k+2}.
\]
This is a \((k+1)\)-splitting.  Since \(a^{(k+1)}\) lifts \(a^{(k)}\), and since
\(\rho_U^{(k+1)}\) lifts \(\rho_U^{(k)}\), its truncation to order \(k\) is exactly
\[
\rho_U^{(k)}\circ a^{(k)}
=
\sigma^{(k)}|_U.
\]
Thus \(\sigma_U^{(k+1)}\) is a local lifting of \(\sigma^{(k)}|_U\), as required.
\end{proof}

\emph{Obstruction classes} to the existence of liftings are introduced in the following proposition and subsequent corollary.

\begin{proposition}
Let
\(
\sigma^{(k)}:
\wedge^{\leq k}\mathcal F_X
\xrightarrow{\sim}
\mathcal O_X/\mathcal J_X^{k+1}
\)
be a \(k\)-splitting. Then there is a canonically defined obstruction class
\[
\omega_X^{(k+1)}\bigl(\sigma^{(k)}\bigr)
\in
H^1\bigl(X_{\mathrm{red}},\mathcal Q_X^{(k+1)}\bigr)
\]
whose vanishing is equivalent to the existence of a \((k+1)\)-splitting lifting
\(\sigma^{(k)}\).
\end{proposition}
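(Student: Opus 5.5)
The approach is the standard Čech torsor argument, built from Lemma~\ref{lem:local-liftings} and the two preceding propositions. The sheaf of local liftings \(U\mapsto \mathscr L_X^{(k+1)}(\sigma^{(k)})(U)\) is a sheaf of sets on \(X_{\mathrm{red}}\), because \((k+1)\)-splittings are morphisms of sheaves and glue. By the torsor proposition, wherever it has sections it is a principal homogeneous space under \(\mathscr K_X^{(k+1)}\cong\mathcal Q_X^{(k+1)}\). By Lemma~\ref{lem:local-liftings}, it has sections locally. Hence it is a \(\mathcal Q_X^{(k+1)}\)-torsor, and we define \(\omega_X^{(k+1)}(\sigma^{(k)})\) to be its isomorphism class in \(H^1(X_{\mathrm{red}},\mathcal Q_X^{(k+1)})\). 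A torsor is trivial exactly when it has a global section, and a global section is a \((k+1)\)-splitting lifting \(\sigma^{(k)}\). This gives the claimed equivalence.

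To make the class concrete and to check canonicity, I would also give the Čech description. Choose an open cover \(\{U_i\}\) of \(X_{\mathrm{red}}\) and local liftings \(\sigma_i\in\mathscr L_X^{(k+1)}(\sigma^{(k)})(U_i)\). On each overlap put
\[
g_{ij}\defeq \sigma_j^{-1}\circ\sigma_i\in\mathscr K_X^{(k+1)}(U_{ij})\cong\mathcal Q_X^{(k+1)}(U_{ij}).
\]
Then \(g_{jk}g_{ij}=g_{ik}\) on triple overlaps. Since composition in \(\mathscr K_X^{(k+1)}\) corresponds to addition in \(\mathcal Q_X^{(k+1)}\) (as shown in the proof of the isomorphism), this is an additive Čech \(1\)-cocycle.

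Replacing \(\sigma_i\) by another lifting \(\sigma_i\circ h_i\) with \(h_i\in\mathscr K_X^{(k+1)}(U_i)\) changes the cocycle by a coboundary. Here one must check that the \(h_i\) commute past \(g_{ij}\). This holds because the group is abelian: both are of the form \(\mathrm{id}+\delta\) with \(\delta\) landing in the square-zero top piece \(\wedge^{k+1}\FF_X\), so products of the \(\delta\)'s vanish. Refining the cover is handled in the usual way by passing to the colimit over covers. So the class is independent of all choices and depends only on \(\sigma^{(k)}\).

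For the equivalence in cocycle language: if a global lifting \(\sigma\) exists, take \(\sigma_i=\sigma|_{U_i}\), which makes every \(g_{ij}\) trivial. Conversely, if \(g_{ij}=h_j h_i^{-1}\) (in additive notation, \(g_{ij}=h_j-h_i\)), then the modified liftings \(\sigma_i h_i^{-1}\) agree on overlaps and glue to a global \((k+1)\)-splitting. The gluing respects the truncation to order \(k\) because each \(h_i\) induces the identity on \(\wedge^{\le k}\FF_X\).

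The main point requiring care is that the identification \(\mathscr K_X^{(k+1)}\cong\mathcal Q_X^{(k+1)}\) does not depend on a chosen trivialization, so that \(H^1\) is taken with a well-defined sheaf of abelian groups. A related subtlety is that the action on \(\mathscr L_X^{(k+1)}\) is by precomposition with automorphisms of \(\wedge^{\le k+1}\FF_X\), and not by conjugation. If instead one used \(\sigma_i\circ\sigma_j^{-1}\), which are automorphisms of \(\mathcal O_X/\mathcal J_X^{k+2}\), one would have to transport the cocycle back through \(\sigma^{(k)}\). This transport is harmless because the group sits in the graded piece, where all liftings induce the canonical identification given by the lemma on filtered isomorphisms.
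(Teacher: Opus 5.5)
Your proposal is correct and follows the paper's proof: you build the same \v{C}ech cocycle $g_{ij}=\sigma_j^{-1}\circ\sigma_i$ from Lemma~\ref{lem:local-liftings}, make it additive via $\mathscr K_X^{(k+1)}\cong\mathcal Q_X^{(k+1)}$, get independence of the lifts by the coboundary argument, and prove the vanishing criterion by gluing; the torsor-class formulation is only a repackaging of this. One small slip in the gluing step: if $g_{ij}=h_j-h_i$, the modified lifts that agree on overlaps are $\sigma_i\circ h_i$, not $\sigma_i\circ h_i^{-1}$ (the latter corresponds to the convention $g_{ij}=h_i-h_j$).
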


\begin{proof}
By Lemma~\ref{lem:local-liftings}, choose an open cover \(\{U_i\}\) of
\(X_{\mathrm{red}}\) such that on each \(U_i\) there exists a local \((k+1)\)-splitting
\[
\sigma_i^{(k+1)}\in
\mathscr L_X^{(k+1)}(\sigma^{(k)})(U_i).
\]
On overlaps \(U_{ij}\defeq U_i\cap U_j\), set
\[
g_{ij}\defeq
(\sigma_j^{(k+1)})^{-1}\circ \sigma_i^{(k+1)}
\in
\mathscr K_X^{(k+1)}(U_{ij})
\cong
\mathcal Q_X^{(k+1)}(U_{ij}).
\]
Let
\(
\omega_{ij}^{(k+1)}\in \mathcal Q_X^{(k+1)}(U_{ij})
\)
denote the corresponding section. On triple overlaps \(U_{ijk} \defeq U_i \cap U_j \cap U_k\) one has
\[
g_{jk}\circ g_{ij}=g_{ik}.
\]
Since composition in \(\mathscr K_X^{(k+1)}\) corresponds to addition in
\(\mathcal Q_X^{(k+1)}\), this is equivalent to
\[
\omega_{ij}^{(k+1)}+\omega_{jk}^{(k+1)}=\omega_{ik}^{(k+1)}.
\]
Therefore \(\{\omega_{ij}^{(k+1)}\}\) is a \v{C}ech \(1\)-cocycle with values in
\(\mathcal Q_X^{(k+1)}\).

If one chooses another family of local lifts \(\tau_i^{(k+1)}\), then on each \(U_i\)
there exists
\(
\eta_i\in \mathcal Q_X^{(k+1)}(U_i)
\)
such that, under the torsor action,
\[
\tau_i^{(k+1)}=\sigma_i^{(k+1)}\circ(\mathrm{id}+\eta_i).
\]
The corresponding cocycle changes by the \v{C}ech coboundary of the \(0\)-cochain
\(\{\eta_i\}\). Hence the cohomology class
\[
\omega_X^{(k+1)}\bigl(\sigma^{(k)}\bigr)
\defeq
[\{\omega_{ij}^{(k+1)}\}]
\in
H^1\bigl(X_{\mathrm{red}},\mathcal Q_X^{(k+1)}\bigr)
\]
is well defined.

Finally, this class vanishes if and only if, after modifying the local lifts by a
\(0\)-cochain in \(\mathcal Q_X^{(k+1)}\), the transition cocycle becomes trivial. In that
case the local \((k+1)\)-splittings glue to a global one. Conversely, if a global lift
exists, one may take its restrictions as local lifts, and then the cocycle is trivial.
\end{proof}

\begin{corollary} \label{cor:split2}
A \(k\)-splitting
\(
\sigma^{(k)}:
\wedge^{\leq k}\mathcal F_X
\xrightarrow{\sim}
\mathcal O_X/\mathcal J_X^{k+1}
\)
extends to a \((k+1)\)-splitting if and only if
\[
\omega_X^{(k+1)}\bigl(\sigma^{(k)}\bigr)=0
\quad\text{in}\quad
H^1\bigl(X_{\mathrm{red}},\mathcal Q_X^{(k+1)}\bigr).
\]
Whenever this class vanishes, the set of global \((k+1)\)-splittings extending
\(\sigma^{(k)}\) is a torsor under
\(
H^0\bigl(X_{\mathrm{red}},\mathcal Q_X^{(k+1)}\bigr).
\)
\end{corollary}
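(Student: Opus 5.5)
The first assertion of Corollary~\ref{cor:split2} is exactly the content of the preceding proposition, which defines \(\omega_X^{(k+1)}(\sigma^{(k)})\) and proves that its vanishing is equivalent to the existence of a lift. So I would simply quote it, and spend the effort on the torsor statement. The plan is to globalize the local torsor structure of \(\mathscr L_X^{(k+1)}(\sigma^{(k)})\) under \(\mathcal Q_X^{(k+1)}\), using the identification \(\mathscr K_X^{(k+1)}\cong\mathcal Q_X^{(k+1)}\) as sheaves of abelian groups.

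First I would note that \(U\mapsto \mathscr L_X^{(k+1)}(\sigma^{(k)})(U)\) is a sheaf of sets on \(X_{\mathrm{red}}\). Liftings are morphisms of sheaves satisfying local conditions: augmentation compatibility, the normalization \(\gr^1=\mathrm{id}\), and reduction to \(\sigma^{(k)}\). Hence they restrict and glue. The action \(\sigma^{(k+1)}\mapsto \sigma^{(k+1)}\circ(\mathrm{id}+\delta_\eta)\) of \(\mathscr K_X^{(k+1)}\cong\mathcal Q_X^{(k+1)}\) is compatible with restrictions, so it defines an action of the sheaf of groups \(\mathcal Q_X^{(k+1)}\) on this sheaf of sets. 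By the torsor proposition, this action is free and transitive on every open set where liftings exist. By Lemma~\ref{lem:local-liftings}, the sheaf \(\mathscr L_X^{(k+1)}(\sigma^{(k)})\) is locally nonempty. Therefore it is a \(\mathcal Q_X^{(k+1)}\)-torsor in the sheaf-theoretic sense.

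Now assume \(\omega_X^{(k+1)}(\sigma^{(k)})=0\). Then a global section \(\sigma^{(k+1)}\) exists. For any other global lift \(\tau^{(k+1)}\), the element \(g=(\sigma^{(k+1)})^{-1}\circ\tau^{(k+1)}\) is defined globally. By the torsor proposition applied on each open set, together with compatibility with restrictions, \(g\) is a global section of \(\mathscr K_X^{(k+1)}\), hence an element \(\eta\in H^0(X_{\mathrm{red}},\mathcal Q_X^{(k+1)})\). Conversely, \(\sigma^{(k+1)}\circ(\mathrm{id}+\delta_\eta)\) is a global lift for every such \(\eta\). The action on global sections is free, because freeness holds on each open set. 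This yields the claimed \(H^0\)-torsor.

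The only point that needs care is the ordering convention. Composition corresponds to addition and \(\mathcal Q_X^{(k+1)}\) is abelian, so right versus left action causes no trouble. I would also check that precomposition with \(\mathrm{id}+\delta_\eta\) preserves the normalization. It does: \(\delta_\eta\) takes values in \(\wedge^{k+1}\FF_X\) with \(k+1\ge2\), so it acts trivially on \(\gr^0\), on \(\gr^1\), and modulo order \(k\). I expect no genuine obstacle beyond this bookkeeping.
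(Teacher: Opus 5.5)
Your proposal is correct and matches the paper: the corollary is stated without a separate proof and follows from the obstruction proposition (for the vanishing criterion) and the torsor proposition (for the \(H^0\)-torsor statement). Your sheaf-torsor packaging is harmless but unnecessary, since applying the torsor proposition with \(U=X_{\mathrm{red}}\) already shows that the set of global lifts, once nonempty, is a principal homogeneous space under \(\mathcal Q_X^{(k+1)}(X_{\mathrm{red}})=H^0\bigl(X_{\mathrm{red}},\mathcal Q_X^{(k+1)}\bigr)\).
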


\begin{remark}
Corollary~\ref{cor:split2} should be read in two different ways. First, for \(k=1\), one starts from the canonical first-order splitting
\(\sigma^{(1)}_{\mathrm{can}}\) of Lemma~\ref{lem:canonical-first-splitting}. This gives the primary -- or fundamental -- obstruction
\[
\omega_X^{(2)}
\defeq
\omega_X^{(2)}\bigl(\sigma^{(1)}_{\mathrm{can}}\bigr)
\in
H^1\!\bigl(X_{\mathrm{red}},
\mathcal T_{X_{\mathrm{red}}}\otimes \wedge^2\mathcal F_X\bigr),
\]
which is intrinsically attached to the supermanifold \(X\). Its vanishing is the necessary and
sufficient condition for the existence of a \(2\)-splitting.

Second, for \(k\ge2\), the obstruction class
\[
\omega_X^{(k+1)}\!\bigl(\sigma^{(k)}\bigr)\in H^1\!\bigl(X_{\mathrm{red}},\mathcal{Q}_X^{(k+1)}\bigr)
\]
is attached not to \(X\) alone, but to the chosen partial splitting tower up to order \(k\). In
particular, beyond the primary level, the higher obstruction classes are \emph{not} a priori intrinsic
invariants of the underlying supermanifold: they depend on the previously chosen liftings.
This is the source of the classical subtlety emphasized in the literature on higher obstructions,
and it is precisely this dependence on choices that will later motivate the filtered Maurer--Cartan
and minimal \(L_\infty\)-reformulations developed in Sections~4 and~5.

Thus the sheaf-theoretic obstruction tower is finite and effective, but from the secondary stage
onward it is inherently relative to a chosen partial splitting data.
\end{remark}

\begin{remark}[Postnikov-style view]
The tower
\[
X_{\mathrm{red}}=X^{(0)} \subset X^{(1)} \subset X^{(2)} \subset \cdots \subset X
\]
is formally analogous to a Postnikov tower, \emph{i.e.} to a tower built by successive extensions whose
obstructions are encoded by \(k\)-invariants. In the present situation, a \(k\)-splitting identifies
\(X^{(k)}\) with the \(k\)-th split truncation \(\widehat X^{(k)}\), and the class
\(
\omega_X^{(k+1)}\!\bigl(\sigma^{(k)}\bigr)
\)
plays the role of the corresponding obstruction to lifting this identification to the next stage.
This analogy is only heuristic, but it captures the fact that the splitting problem is organized as
a finite sequence of successive lifting problems.
\end{remark}

The following sections reinterpret this tower in three increasingly intrinsic ways.
Section~4 realizes the tower as a filtration problem for a Dolbeault dg Lie algebra.
Section~5 transfers this dg Lie algebra to a minimal filtered \(L_\infty\)-model on
cohomology. Sections~6 and~7 then relate the same data to the affine Atiyah class and
to explicit \v{C}ech representatives.

\section{The splitting problem as a formal moduli problem}

In this section, we provide a reinterpretation of the splitting problem for a fixed holomorphic
supermanifold \(X\) in Dolbeault--Maurer--Cartan terms. As reviewed in the previous
section, the classical approach, going back to Green and Palamodov, describes the failure
of splitting by a finite sequence of lifting problems for the truncations \(X^{(k)}\),
governed by the obstruction sheaves \(\mathcal Q_X^{(k)}\) and their cohomology classes
\(\omega_X^{(k)}\) \cite{BettadapuraHigherObstructions, Green1982, PalamodovInvariants}.
Our aim is to package the same finite obstruction theory into a single filtered dg Lie
algebra, so that the successive splitting obstructions arise as the graded shadows of one
Maurer--Cartan deformation.

\smallskip

Let \(X\) be a complex supermanifold of dimension \(m|n\). Throughout this section we set
\begin{equation}
\widehat{\mathcal O}_X
\defeq
\bigwedge\nolimits^\bullet_{\mathcal O_{\Xred}}\mathcal F_X,
\qquad
\widehat{\mathcal J}_X
\defeq
\bigoplus_{r\geq 1}\wedge^r_{\mathcal O_{\Xred}}\mathcal F_X,
\end{equation}
and we write
\(
\widehat X\defeq \bigl(\Xred,\widehat{\mathcal O}_X\bigr)
\)
for the split model associated with \(X\). For every \(k\geq 0\), we denote by
\begin{equation}
\widehat X^{(k)}
\defeq
\Bigl(\Xred,\bigwedge\nolimits^{\leq k}_{\mathcal O_{\Xred}}\mathcal F_X\Bigr),
\qquad
X^{(k)}
\defeq
\bigl(\Xred,\mathcal O_X/\mathcal J_X^{k+1}\bigr)
\end{equation}
the \(k\)-th truncations of the split model and of \(X\), respectively. Thus the
splitting problem is the problem of constructing a compatible tower of isomorphisms
\begin{equation}
\sigma^{(1)}\leadsto \sigma^{(2)}\leadsto \cdots \leadsto \sigma^{(n)},
\qquad
\sigma^{(k)}:\widehat X^{(k)}\xrightarrow{\sim}X^{(k)},
\end{equation}
where \(\sigma^{(1)}\) is the canonical first-order splitting and each
\(\sigma^{(k+1)}\) lifts \(\sigma^{(k)}\). As we observed, since \(\wedge^r\mathcal F_X=0\) for
\(r>n\), this tower is finite.

The construction below uses the \emph{smooth} split model
\[
\widehat X^\infty
=
\bigl(\Xred,\widehat{\mathcal O}_X^\infty\bigr),
\qquad
\widehat{\mathcal O}_X^\infty
=
\bigwedge\nolimits^\bullet_{\mathcal C^\infty_{\Xred}}F_X^\infty,
\]
where \(F_X^\infty\) is the underlying \(C^\infty\) complex vector bundle of
\(\mathcal F_X\). By the smooth splitting theorem -- or Batchelor's theorem \cite{BatchelorStructure} --, the underlying \(C^\infty\)
supermanifold of \(X\) is isomorphic to \(\widehat X^\infty\). Such a smooth splitting
is not canonical. After choosing one, the holomorphic structure of \(X\) can be
transported to \(\widehat X^\infty\) and represented by a Maurer--Cartan element
\(\mu_X\). A different smooth splitting changes \(\mu_X\) by gauge equivalence. Hence
the intrinsic datum associated with \(X\) will be the gauge class
\[
[\mu_X]\in MC(L_X)/\mathrm{gauge},
\]
not a distinguished representative.

The filtered dg Lie algebra \(L_X\) introduced below governs parity-preserving
integrable deformations of the split Dolbeault operator which are trivial on
\(\Xred\) and on the first odd neighbourhood. The actual splitting problem for the
given supermanifold \(X\) is then the problem of deciding how deeply the specific gauge
class \([\mu_X]\) can be moved into the natural filtration of \(L_X\). The leading terms
of such filtration-normalized representatives reproduce the classical obstruction classes
\(\omega_X^{(2j)}\) and \(\omega_X^{(2j+1)}\).

\subsection{The smooth split model and its graded tangent Lie superalgebra}

Let \(F_X^\infty\) be the underlying \(C^\infty\) complex vector bundle of
\(\mathcal F_X\), and define
\begin{equation}
\widehat{\mathcal O}_X^\infty
\defeq
\bigwedge\nolimits^\bullet_{\mathcal C^\infty_{\Xred}}F_X^\infty,
\qquad
\widehat{\mathcal J}_X^\infty
\defeq
\bigoplus_{r\geq 1}\wedge^r_{\mathcal C^\infty_{\Xred}}F_X^\infty.
\end{equation}
The split holomorphic structure on \(\widehat X\) is encoded by the Dolbeault operator
\begin{equation}
\bar\partial_0:
\widehat{\mathcal O}_X^\infty
\longrightarrow
\mathcal A_{\Xred}^{0,1}\otimes_{\mathcal C^\infty_{\Xred}}\widehat{\mathcal O}_X^\infty,
\end{equation}
obtained by extending the Dolbeault operators of \(\mathcal O_{\Xred}\) and
\(\mathcal F_X\) as a derivation.

We shall use the following convention. The smooth tangent sheaf of the split model means
the sheaf of \(\CC\)-linear derivations whose restriction to
\(\mathcal C^\infty_{\Xred}\) is of type \((1,0)\). Thus it is the smooth bundle
underlying the holomorphic tangent sheaf of the split model.

\begin{definition}
Let
\(
\mathcal T_{\widehat X^\infty}
\defeq
\Der^{1,0}_{\CC}\!\bigl(\widehat{\mathcal O}_X^\infty\bigr)
\)
be the sheaf of \((1,0)\)-derivations of the \(C^\infty\) split model. For every
integer \(p\geq -1\), let
\(
\mathcal T_{\widehat X^\infty}^{\langle p\rangle}
\subset
\mathcal T_{\widehat X^\infty}
\)
denote the subsheaf of derivations of homogeneous exterior weight \(p\), namely
\begin{equation}
\mathcal T_{\widehat X^\infty}^{\langle p\rangle}(U)
\defeq
\Bigl\{
\nu\in \Der^{1,0}_{\CC}\!\bigl(\widehat{\mathcal O}_X^\infty(U)\bigr)
\;\Big|\;
\nu\bigl(\wedge^rF_X^\infty(U)\bigr)
\subset
\wedge^{r+p}F_X^\infty(U)
\text{ for all }r\geq 0
\Bigr\}.
\end{equation}
\end{definition}

Thus \(\mathcal T_{\widehat X^\infty}^{\langle p\rangle}\) consists of derivations which raise
exterior degree by exactly \(p\). We set \(\wedge^rF_X^\infty=0\) for \(r<0\) and for
\(r>n\). In particular, \(\mathcal T_{\widehat X^\infty}^{\langle p\rangle}=0\) for
\(p<-1\). Since \(\widehat{\mathcal O}_X^\infty\) is \(\mathbb Z\)-graded, one has a
direct sum decomposition
\begin{equation}
\mathcal T_{\widehat X^\infty}
=
\bigoplus_{p\geq -1}\mathcal T_{\widehat X^\infty}^{\langle p\rangle}.
\end{equation}
Moreover, \(\mathcal T_{\widehat X^\infty}^{\langle p\rangle}\) has parity
\(p\bmod 2\), and the supercommutator satisfies
\begin{equation}
\bigl[
\mathcal T_{\widehat X^\infty}^{\langle p\rangle},
\mathcal T_{\widehat X^\infty}^{\langle q\rangle}
\bigr]
\subset
\mathcal T_{\widehat X^\infty}^{\langle p+q\rangle}.
\end{equation}
The degree \(-1\) piece is
\(
\mathcal T_{\widehat X^\infty}^{\langle -1\rangle}
\cong
(F_X^\infty)^\vee,
\)
and corresponds locally to the odd derivations \(\partial/\partial\theta^\alpha\). The
splitting problem itself will involve only the parity-even pieces of exterior weight at
least \(2\).

\begin{proposition}\label{splitTp}
For every \(p\geq 0\), there is a natural short exact sequence of sheaves of
\(\mathcal C^\infty_{\Xred}\)-modules
\begin{equation}
0
\longrightarrow
\Hom_{\mathcal C^\infty_{\Xred}}
\!\bigl(F_X^\infty,\wedge^{p+1}F_X^\infty\bigr)
\longrightarrow
\mathcal T_{\widehat X^\infty}^{\langle p\rangle}
\overset{\rho_p}{\longrightarrow}
T_{\Xred}^\infty\otimes_{\mathcal C^\infty_{\Xred}}\wedge^pF_X^\infty
\longrightarrow
0,
\end{equation}
where \(T_{\Xred}^\infty\) denotes the \(C^\infty\) complex vector bundle underlying
\(\mathcal T_{\Xred}\).
\end{proposition}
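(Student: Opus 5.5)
We will define both maps explicitly and then verify exactness locally, using a frame of \(F_X^\infty\). A derivation \(\nu\in\mathcal T^{\langle p\rangle}_{\widehat X^\infty}\) is determined by its values on the two generating pieces \(\wedge^0F_X^\infty=\mathcal C^\infty_{\Xred}\) and \(\wedge^1F_X^\infty=F_X^\infty\). Restricted to functions, \(\nu|_{\mathcal C^\infty_{\Xred}}:\mathcal C^\infty_{\Xred}\to\wedge^pF_X^\infty\) is a \(\CC\)-derivation of type \((1,0)\). Such derivations are exactly sections of \(T^\infty_{\Xred}\otimes\wedge^pF_X^\infty\): locally they are \(\sum_a v^a\,\partial/\partial z^a\) with \(v^a\in\wedge^pF_X^\infty\), and this identification is intrinsic. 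We define \(\rho_p(\nu)\defeq\nu|_{\mathcal C^\infty_{\Xred}}\). This is clearly \(\mathcal C^\infty_{\Xred}\)-linear, since \((f\nu)(g)=f\,\nu(g)\).

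For the kernel: if \(\rho_p(\nu)=0\), then \(\nu\) is \(\mathcal C^\infty_{\Xred}\)-linear by the Leibniz rule, because \(\nu(gs)=\nu(g)s+g\nu(s)=g\nu(s)\). It is therefore determined by the \(\mathcal C^\infty_{\Xred}\)-linear map \(\nu|_{F}:F_X^\infty\to\wedge^{p+1}F_X^\infty\). Conversely, any \(\phi\in\Hom(F_X^\infty,\wedge^{p+1}F_X^\infty)\) extends uniquely to a \(\mathcal C^\infty\)-linear derivation of the exterior algebra of parity \(p\bmod2\). This follows from the universal property of the exterior algebra, with the super sign rule \(\nu(s\wedge t)=\nu(s)\wedge t+(-1)^{p|s|}s\wedge\nu(t)\). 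This gives the inclusion on the left and exactness in the middle. Injectivity on the left holds because the derivation vanishes only if \(\phi\) does.

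Surjectivity of \(\rho_p\) is the one point needing care, and it is where the splitting of the sequence is non-canonical. We argue locally. Given \(v=\sum_a v^a\otimes\partial_{z^a}\), we choose a frame \(\theta^\alpha\) of \(F_X^\infty\) on a trivializing open set \(U\) and define \(\tilde\nu\) by \(\tilde\nu(g)=\sum_a (\partial_a g)v^a\) and \(\tilde\nu(\theta^\alpha)=0\), extended by the Leibniz rule. This is consistent: the relations of \(\wedge^\bullet_{\mathcal C^\infty}F\) are \(\mathcal C^\infty\)-bilinear anticommutation relations of free generators over a ring on which \(\tilde\nu\) already acts as a derivation, so \(\tilde\nu\) is well defined as the "flat connection lift". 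One could alternatively do this globally by choosing a connection \(\nabla\) on \(F_X^\infty\) and setting \(\tilde\nu(s)=\nabla_v s\), interpreted with values in \(\wedge^{p}F\otimes F\to\wedge^{p+1}F\), which gives a global \(C^\infty\) splitting.

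In either case, surjectivity is a local statement about sheaves, so the local construction suffices. The only point to double-check is the parity and sign bookkeeping: the extension must land in \(\mathcal T^{\langle p\rangle}\) with weight exactly \(p\), which holds by construction. Naturality of all maps in \(F_X^\infty\) completes the proof.
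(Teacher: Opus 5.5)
Your proposal is correct and follows essentially the same route as the paper. In both, $\rho_p$ is restriction to $\mathcal C^\infty_{\Xred}$, its kernel consists of the $\mathcal C^\infty_{\Xred}$-linear derivations determined by their restriction $F_X^\infty\to\wedge^{p+1}F_X^\infty$, and surjectivity comes from extending a derivation on functions to the whole exterior algebra. Your explicit frame-wise extension (with $\tilde\nu(\theta^\alpha)=0$), or its global version via a connection on $F_X^\infty$, simply spells out the step the paper compresses into the claim that every such pair of data extends uniquely to a weight-$p$ derivation.
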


\begin{proof}
Let \(U\subseteq\Xred\) be open, and let
\(\nu\in \mathcal T_{\widehat X^\infty}^{\langle p\rangle}(U)\). Since
\(\widehat{\mathcal O}_X^\infty(U)\) is generated by
\(\mathcal C^\infty_{\Xred}(U)\) and \(F_X^\infty(U)\), the derivation \(\nu\) is
determined by its restrictions to these two pieces.

Its restriction to functions is a \(\CC\)-derivation of type \((1,0)\)
\begin{equation}
\nu|_{\mathcal C^\infty_{\Xred}(U)}:
\mathcal C^\infty_{\Xred}(U)
\longrightarrow
\wedge^pF_X^\infty(U),
\end{equation}
hence corresponds canonically to a section of
\(T_{\Xred}^\infty|_U\otimes \wedge^pF_X^\infty|_U\). This defines \(\rho_p\).

The kernel of \(\rho_p\) consists exactly of those derivations vanishing on functions.
For such a derivation, the restriction
\begin{equation}
\nu|_{F_X^\infty(U)}:
F_X^\infty(U)
\longrightarrow
\wedge^{p+1}F_X^\infty(U)
\end{equation}
is \(\mathcal C^\infty_{\Xred}(U)\)-linear by the Leibniz rule. Hence it defines a
section of
\(
\Hom_{\mathcal C^\infty_{\Xred}}
\!\bigl(F_X^\infty,\wedge^{p+1}F_X^\infty\bigr)(U).
\) 

Conversely, every pair consisting of a type \((1,0)\) derivation
\(\mathcal C^\infty_{\Xred}(U)\to\wedge^pF_X^\infty(U)\) and a
\(\mathcal C^\infty_{\Xred}(U)\)-linear morphism
\(F_X^\infty(U)\to\wedge^{p+1}F_X^\infty(U)\) extends uniquely to a derivation of
\(\widehat{\mathcal O}_X^\infty(U)\) of weight \(p\). This gives exactness.
\end{proof}

The operator \(\bar\partial_0\) preserves each homogeneous piece
\(\mathcal T_{\widehat X^\infty}^{\langle p\rangle}\). We denote by
\(\mathcal T_{\widehat X}^{\langle p\rangle}\) the corresponding holomorphic vector
bundle. Since the maps in Proposition~\ref{splitTp} commute with \(\bar\partial_0\), they
are morphisms of holomorphic vector bundles.

\begin{corollary}
For \(j\geq 1\), set
\begin{equation}
\mathcal Q_X^{(2j)}
\defeq
\mathcal T_{\Xred}\otimes_{\mathcal O_{\Xred}}\wedge^{2j}\mathcal F_X,
\qquad
\mathcal Q_X^{(2j+1)}
\defeq
\mathcal H om_{\mathcal O_{\Xred}}
\!\bigl(\mathcal F_X,\wedge^{2j+1}\mathcal F_X\bigr).
\end{equation}
Then Proposition~\ref{splitTp} gives, for every \(j\geq 1\), an exact sequence
\begin{equation}\label{eq:tangent-obstruction-sequence}
0
\longrightarrow
\mathcal Q_X^{(2j+1)}
\longrightarrow
\mathcal T_{\widehat X}^{\langle 2j\rangle}
\overset{\rho_{2j}}{\longrightarrow}
\mathcal Q_X^{(2j)}
\longrightarrow
0.
\end{equation}
\end{corollary}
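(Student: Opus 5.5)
The plan is to specialize Proposition~\ref{splitTp} to $p=2j$ and then pass from $C^\infty$ bundles to holomorphic ones. Three things need checking: the outer terms of the smooth sequence, compatibility of the maps with $\bar\partial_0$, and exactness at the level of holomorphic sheaves.

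\textbf{Identifying the outer terms.} For $p=2j$, Proposition~\ref{splitTp} gives a short exact sequence of $C^\infty$ complex vector bundles
\[
0\to \Hom(F_X^\infty,\wedge^{2j+1}F_X^\infty)\to \mathcal T^{\langle 2j\rangle}_{\widehat X^\infty}\xrightarrow{\rho_{2j}} T^\infty_{\Xred}\otimes\wedge^{2j}F^\infty_X\to 0 .
\]
The two outer terms are the underlying smooth bundles of $\mathcal Q_X^{(2j+1)}$ and $\mathcal Q_X^{(2j)}$. Their holomorphic structures are the ones induced from $\mathcal T_{\Xred}$ and $\mathcal F_X$ by tensor, Hom and exterior power constructions. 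These agree with the Dolbeault operators obtained by restricting $\bar\partial_0$, which acts by the commutator $[\bar\partial_0,-]$ on derivations.

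\textbf{Compatibility with $\bar\partial_0$.} I will check in a local holomorphic frame. Take coordinates $z^a$ and a holomorphic frame $\theta^\alpha$ of $\mathcal F_X$. Then $\bar\partial_0$ annihilates $z^a$ and $\theta^\alpha$, and acts only on coefficient functions. Write a weight-$2j$ derivation as
\[
\nu=\sum_a f^a\,\partial_{z^a}+\sum_\alpha g^\alpha\,\partial_{\theta^\alpha},
\qquad f^a\in\wedge^{2j}F^\infty_X,\quad g^\alpha\in\wedge^{2j+1}F^\infty_X .
\]
Then $[\bar\partial_0,\nu]$ is $\nu$ with $\bar\partial$ applied to its coefficients $f^a$ and $g^\alpha$. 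Two consequences follow:
\begin{itemize}
\item $\rho_{2j}$, which reads off $\sum f^a\partial_{z^a}$, intertwines the Dolbeault operators;
\item the inclusion of the kernel, consisting of the terms $\sum g^\alpha\partial_{\theta^\alpha}$, does too.
\end{itemize}
So both maps are morphisms of holomorphic vector bundles, as the text preceding the corollary already asserts. Moreover the local holomorphic sections of $\mathcal T^{\langle 2j\rangle}_{\widehat X}$ are exactly the derivations with holomorphic coefficients, that is, the holomorphic even derivations of $\wedge^\bullet\mathcal F_X$ of weight $2j$.

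\textbf{Exactness of holomorphic sheaves.} A short exact sequence of smooth bundles whose maps are holomorphic is a short exact sequence of holomorphic bundles. The justification is local: in the frame above the sequence visibly splits holomorphically as a direct sum, with the $\partial_{z^a}$-part and the $\partial_{\theta^\alpha}$-part. Surjectivity of $\rho_{2j}$ on holomorphic sections is therefore immediate: lift $\sum f^a\partial_{z^a}$ with $f^a$ holomorphic to the derivation having zero $\partial_{\theta}$-part. Taking sheaves of holomorphic sections of holomorphic bundles, the sequence~\eqref{eq:tangent-obstruction-sequence} is exact.

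\textbf{Expected obstacle.} The only subtle point is frame dependence. Under a holomorphic change of even coordinates, $\partial_{z^a}$ picks up $\partial_{\theta}$-components with coefficients of weight $0$ in $\theta$. So the local splitting $\nu\mapsto\sum f^a\partial_{z^a}$ is not canonical. This does not matter for the argument: exactness is local and the maps themselves are globally defined. The non-canonicity only means the sequence need not split globally, which is the intended content.
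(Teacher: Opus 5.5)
Your proposal is correct and follows the same route as the paper: the paper specializes Proposition~\ref{splitTp} to $p=2j$ and observes that the two maps commute with $\bar\partial_0$, and your local-frame computation, together with the passage from smooth exactness with holomorphic maps to exactness of holomorphic sheaves, spells out that one-line argument. One small correction to your closing remark: if the frame of $\mathcal F_X$ is held fixed, a change of even coordinates alone does not mix $\partial_{z^a}$ with the $\partial_\theta$-directions; the extra terms $(\partial_{z^a}M^\alpha_\beta)\,\theta^\beta\partial_{\theta'^\alpha}$ come from $z$-dependent changes $\theta'^\alpha=M^\alpha_\beta(z)\theta^\beta$ of the odd frame, and this does not affect your argument.
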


Thus the obstruction sheaves \(\mathcal Q_X^{(k)}\) arise from the weight-
\(2j\) tangent piece of the split model: \(\mathcal Q_X^{(2j)}\) is the quotient part,
while \(\mathcal Q_X^{(2j+1)}\) is the kernel part.

\subsection{The filtered dg Lie algebra governing the splitting problem}

The full tangent sheaf is the natural ambient graded Lie superalgebra. The splitting
problem, however, concerns parity-preserving deformations of the split holomorphic
structure which are trivial on \(\Xred\) and on \(\mathcal F_X\). Hence only even exterior
weights \(\geq 2\) are relevant.

\begin{definition}
For \(j\geq 1\), define the filtered tails
\begin{equation}
\mathcal T_{\widehat X^\infty,\bar 0}^{[\geq 2j]}
\defeq
\bigoplus_{q\geq j}\mathcal T_{\widehat X^\infty}^{\langle 2q\rangle}.
\end{equation}
In particular,
\begin{equation}
\mathcal T_{\widehat X^\infty,\bar 0}^{[\geq 2]}
=
\bigoplus_{q\geq 1}\mathcal T_{\widehat X^\infty}^{\langle 2q\rangle}.
\end{equation}
\end{definition}

The filtration is finite:
\begin{equation}
\mathcal T_{\widehat X^\infty,\bar 0}^{[\geq 2]}
\supset
\mathcal T_{\widehat X^\infty,\bar 0}^{[\geq 4]}
\supset
\cdots
\supset
\mathcal T_{\widehat X^\infty,\bar 0}^{[\geq 2N]}
\supset 0,
\qquad
N\defeq\left\lfloor\frac n2\right\rfloor.
\end{equation}
Indeed, \(F_X\) has rank \(n\), so \(\wedge^rF_X=0\) for \(r>n\).

For a holomorphic vector bundle \(E\) on \(\Xred\), we write
\begin{equation}
A^{0,\bullet}(\Xred,E)
\defeq
\Gamma\!\Bigl(\Xred,
\mathcal A_{\Xred}^{0,\bullet}\otimes_{\mathcal C^\infty_{\Xred}}E^\infty
\Bigr)
\end{equation}
for its Dolbeault complex. The differential is the Dolbeault operator induced by the
holomorphic structure of \(E\).

\begin{definition}
Define
\begin{equation}
L_X
\defeq
A^{0,\bullet}\!\bigl(\Xred,\mathcal T_{\widehat X,\bar 0}^{[\geq 2]}\bigr),
\end{equation}
and, for every \(j\geq 1\),
\begin{equation}
F^jL_X
\defeq
A^{0,\bullet}\!\bigl(\Xred,\mathcal T_{\widehat X,\bar 0}^{[\geq 2j]}\bigr).
\end{equation}
The cohomological degree on \(L_X\) is the Dolbeault form degree. The differential
\(d\) is induced by \(\bar\partial_0\), and the bracket is the graded commutator of
\((0,\bullet)\)-forms with values in derivations.
\end{definition}

Thus
\[
L_X=F^1L_X\supset F^2L_X\supset\cdots\supset F^{N+1}L_X=0,
\]
and
\begin{equation}
F^jL_X/F^{j+1}L_X
\cong
A^{0,\bullet}\!\bigl(\Xred,\mathcal T_{\widehat X}^{\langle 2j\rangle}\bigr).
\end{equation}

\begin{proposition}\label{L_X}
The triple \((L_X,d,[\ ,\ ])\) is a finite-step filtered dg Lie algebra. More precisely,
\begin{equation}
d(F^jL_X)\subset F^jL_X,
\qquad
[F^iL_X,F^jL_X]\subset F^{i+j}L_X
\end{equation}
for all \(i,j\geq 1\). In particular, \(L_X\) is nilpotent, hence pronilpotent.
\end{proposition}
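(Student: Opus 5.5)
The plan is to reduce everything to the weight grading of the tangent sheaf established above: the weight decomposition \(\mathcal T_{\widehat X^\infty}=\bigoplus_{p}\mathcal T^{\langle p\rangle}_{\widehat X^\infty}\), the bracket rule \([\mathcal T^{\langle p\rangle},\mathcal T^{\langle q\rangle}]\subset\mathcal T^{\langle p+q\rangle}\), and the fact that \(\bar\partial_0\) preserves each \(\mathcal T^{\langle p\rangle}\). First I will make the structure on \(L_X\) explicit. An element of \(A^{0,q}(\Xred,\mathcal T_{\widehat X,\bar0}^{[\ge2]})\) is locally a finite sum \(\omega\otimes\nu\), with \(\omega\) a \((0,q)\)-form and \(\nu\) an even derivation of weight \(2i\ge 2\). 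The bracket is defined by
\[
[\omega\otimes\nu,\eta\otimes\nu']=(\omega\wedge\eta)\otimes[\nu,\nu'],
\]
with no extra Koszul sign, because \(\nu,\nu'\) are even. The differential \(d\) is the Dolbeault operator of the holomorphic bundle \(\mathcal T^{[\ge2]}_{\widehat X,\bar0}\); equivalently, \(d=[\bar\partial_0,-]\), where \(\bar\partial_0\) acts as a \((0,1)\)-form valued derivation on \(\widehat{\mathcal O}^\infty_X\).

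Next I will check the dg Lie axioms. Since \(\mathcal T_{\widehat X,\bar0}^{[\ge2]}\) is a sheaf of ordinary Lie algebras (even derivations, with the commutator), tensoring with the graded-commutative algebra \(A^{0,\bullet}\) gives a graded Lie algebra: antisymmetry and Jacobi follow from graded commutativity of \(\wedge\) together with the Lie identities for \([\ ,\ ]\). The identity \(d^2=0\) is \(\bar\partial_0^2=0\), which is integrability of the split holomorphic structure. For the Leibniz rule I will use \(d=[\bar\partial_0,-]\) together with the graded Jacobi identity in the larger Lie superalgebra of form-valued operators on \(\widehat{\mathcal O}^\infty_X\). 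Alternatively, one can check it locally in a holomorphic frame \(z^a\mid\theta^\alpha\). There every \(\nu\) has the form \(\sum f^a\partial_{z^a}+\sum g^\alpha\partial_{\theta^\alpha}\), the coefficients are smooth sections of \(\wedge^\bullet F\), and \(d\) acts only on the coefficients by \(\bar\partial\); the Leibniz rule for \(d\) then reduces to that of \(\bar\partial\) on coefficients. I will also note that \(L_X\) is closed under bracket, because even weights \(\ge2\) add to even weights \(\ge4\).

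For the filtration, \(F^jL_X\) consists of forms with values in weights \(\ge 2j\). Since \(d\) preserves each weight, it preserves each \(F^j\). For brackets, the weight rule gives
\[
[F^iL_X,F^jL_X]\subset \bigoplus_{p\ge 2i,\;q\ge 2j}\mathcal T^{\langle p+q\rangle}\text{-valued forms}\subset F^{i+j}L_X.
\]
Finiteness comes from \(\wedge^rF=0\) for \(r>n\). This makes \(\mathcal T^{\langle p\rangle}=0\) for \(p>n\): the summand \(\Hom(F,\wedge^{p+1}F)\) vanishes once \(p+1>n\), and \(T\otimes\wedge^pF\) vanishes once \(p>n\). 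Hence \(F^{N+1}L_X=0\) with \(N=\lfloor n/2\rfloor\). Nilpotency then follows by induction: any iterated bracket of \(N+1\) elements lies in \(F^{N+1}L_X=0\). A nilpotent filtration of finite length is complete, so \(L_X\) is pronilpotent.

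The main obstacle I expect is the claim that \(\bar\partial_0\) preserves each weight piece and acts as a derivation of the bracket. This amounts to showing that \(\bar\partial_0\) has weight \(0\) and is itself a derivation of \(\widehat{\mathcal O}^\infty_X\), which holds because it is built from the Dolbeault operators of \(\mathcal O_{\Xred}\) and \(\mathcal F_X\), both of weight \(0\). I will verify this in local holomorphic frames, where \(\bar\partial_0\) kills \(z^a\) and \(\theta^\alpha\) and so commutes with the weight-grading operator \(E=\sum\theta^\alpha\partial_{\theta^\alpha}\). Once that is in place, everything else is bookkeeping with weights.
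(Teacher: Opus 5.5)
Your proposal is correct and follows the same route as the paper. In both, $d=\bar\partial_0$ preserves each weight piece, so $d(F^jL_X)\subset F^jL_X$; weights add under the commutator, so $[F^iL_X,F^jL_X]\subset F^{i+j}L_X$; and $\wedge^rF=0$ for $r>n$ together with $2(N+1)>n$ forces $F^{N+1}L_X=0$. The paper takes the dg Lie axioms and the weight-zero property of $\bar\partial_0$ as established in the preceding text. Your Euler-derivation check $[\bar\partial_0,E]=0$ and your local-frame verification of the Leibniz rule just supply details the paper leaves implicit.
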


\begin{proof}
The differential \(d=\bar\partial_0\) preserves each homogeneous piece
\(\mathcal T_{\widehat X^\infty}^{\langle 2j\rangle}\), hence preserves every filtration term
\(F^jL_X\). The bracket is additive in exterior weight:
\[
\bigl[
\mathcal T_{\widehat X^\infty}^{\langle 2i\rangle},
\mathcal T_{\widehat X^\infty}^{\langle 2j\rangle}
\bigr]
\subset
\mathcal T_{\widehat X^\infty}^{\langle 2i+2j\rangle}.
\]
Therefore \([F^iL_X,F^jL_X]\subset F^{i+j}L_X\). Finally, if
\(N=\lfloor n/2\rfloor\), then \(2(N+1)>n\), so no nonzero even derivation of weight
\(\geq 2(N+1)\) occurs. Hence \(F^{N+1}L_X=0\).
\end{proof}

We call \(L_X\) the \emph{Dolbeault dg Lie algebra} of the splitting problem of \(X\).

\subsection{Maurer--Cartan elements and gauge transformations}

We now explain how Maurer--Cartan elements of \(L_X\) encode holomorphic
supermanifold structures on the fixed \(C^\infty\) split model.

\begin{definition}
A \emph{Maurer--Cartan element} of \(L_X\) is an element \(\mu\in L_X^1\) satisfying
\begin{equation}
d\mu+\frac12[\mu,\mu]=0.
\end{equation}
We denote the set of Maurer--Cartan elements by \(MC(L_X)\).
\end{definition}

An element
\(
\mu\in L_X^1
\)
is a \((0,1)\)-form on \(\Xred\) with values in even derivations of
\(\widehat{\mathcal O}_X^\infty\) of exterior weight at least \(2\). Therefore
\begin{equation}
\bar\partial_\mu\defeq \bar\partial_0+\mu
\end{equation}
is again a parity-preserving \((0,1)\)-operator on \(\widehat{\mathcal O}_X^\infty\)
satisfying the Leibniz rule. Since \(\mu\) has exterior weight at least \(2\), it acts
trivially on
\begin{equation}
\widehat{\mathcal O}_X^\infty/(\widehat{\mathcal J}_X^\infty)^2
\cong
\mathcal C^\infty_{\Xred}\oplus F_X^\infty.
\end{equation}
Thus it preserves the fixed holomorphic structures on \(\Xred\) and on
\(\mathcal F_X\). The Maurer--Cartan equation is precisely the integrability condition
\(\bar\partial_\mu^2=0\), since
\begin{equation}
\bar\partial_\mu^2
=
(\bar\partial_0+\mu)^2
=
d\mu+\frac12[\mu,\mu].
\end{equation}

\begin{proposition}\label{prop:bijection_mu}
The assignment
\[
\mu\longmapsto \bar\partial_\mu=\bar\partial_0+\mu
\]
induces a bijection between:

\begin{enumerate}
\item Maurer--Cartan elements \(\mu\in MC(L_X)\);

\item integrable Dolbeault superalgebra structures on
\(\widehat{\mathcal O}_X^\infty\) which induce the fixed holomorphic structures on
\(\Xred\) and \(\mathcal F_X\).
\end{enumerate}
Equivalently, these are holomorphic supermanifold structures on the fixed smooth split
model \(\widehat X^\infty\) with prescribed reduced space and prescribed odd conormal
bundle.
\end{proposition}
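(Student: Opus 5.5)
We construct the inverse of \(\mu\mapsto\bar\partial_\mu\) and then check that both constructions land in the right sets. For injectivity and the forward direction the discussion preceding the statement already gives most of what we need. If \(\mu\in L_X^1\), then \(\bar\partial_\mu=\bar\partial_0+\mu\) is a parity-preserving \((0,1)\)-operator satisfying the Leibniz rule. It agrees with \(\bar\partial_0\) modulo \((\widehat{\mathcal J}_X^\infty)^2\), because \(\mu\) has exterior weight \(\ge 2\). The identity \(\bar\partial_\mu^2=d\mu+\frac12[\mu,\mu]\) follows from \(\bar\partial_0^2=0\), \([\bar\partial_0,\mu]=d\mu\) and \(\mu\circ\mu=\frac12[\mu,\mu]\) for odd \(\mu\). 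So Maurer--Cartan elements give integrable structures inducing the fixed data, and distinct \(\mu\) give distinct operators.

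For surjectivity, let \(D\) be an integrable Dolbeault superalgebra structure, i.e.\ a parity-preserving \((0,1)\)-superderivation \(D:\widehat{\mathcal O}_X^\infty\to\mathcal A^{0,1}_{\Xred}\otimes\widehat{\mathcal O}_X^\infty\) with \(D^2=0\). Assume it induces \(\bar\partial_{\Xred}\) on \(\mathcal C^\infty_{\Xred}\) and \(\bar\partial_{\mathcal F}\) on \(F_X^\infty\). Set \(\mu\defeq D-\bar\partial_0\). Being a difference of two derivations, \(\mu\) is an even \(\mathcal A^{0,1}\)-valued derivation.

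The task is to show that \(\mu\) lies in \(L_X^1\). First, its restriction to functions is of type \((1,0)\) in the sense of our convention, so \(\mu\) is a \((0,1)\)-form with values in \(\mathcal T_{\widehat X^\infty}\). This uses the \((0,1)\)-operator condition: \(D\) differs from \(\bar\partial_0\) on functions by something that kills antiholomorphic derivatives, and I would state this as part of the definition of a Dolbeault structure. Second, decompose \(\mu=\sum_{p\ge -1}\mu_p\) by weight, using the direct sum decomposition of \(\mathcal T_{\widehat X^\infty}\). Since \(\mu\) is even, only even \(p\) occur. The component \(\mu_0\) records the change of \(D\) on \(\mathcal C^\infty\) modulo \(\widehat{\mathcal J}^\infty\) and on \(F_X^\infty\) modulo \((\widehat{\mathcal J}^\infty)^2\). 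By the hypothesis that the induced structures on \(\Xred\) and \(\mathcal F_X\) are the fixed ones, \(\mu_0=0\). Hence \(\mu\in A^{0,1}(\Xred,\mathcal T^{[\ge2]}_{\widehat X,\bar0})=L_X^1\), and the Maurer--Cartan equation is then exactly \(D^2=0\).

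The step needing the most care is the final reformulation in terms of holomorphic supermanifold structures. Given \(\mu\), define \(\mathcal O_\mu\defeq\ker\bar\partial_\mu\). We must show it is locally isomorphic to \(\wedge^\bullet\mathcal F_X\), with reduction \(\mathcal O_{\Xred}\) and conormal bundle \(\mathcal F_X\). I would argue by induction on the finite filtration, \(N=\lfloor n/2\rfloor\) steps. Work on a Stein polydisc with \(F^\infty\) trivialized holomorphically. Modulo \(F^{j+1}\) the leading term \(\mu_{2j}\) is \(\bar\partial_0\)-closed, by the weight-\(2j\) part of the MC equation, given that the lower terms have already been removed. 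By the Dolbeault lemma for \(\mathcal T^{\langle 2j\rangle}_{\widehat X}\), we can write \(\mu_{2j}=\bar\partial_0\xi\) locally, and the gauge automorphism \(\exp(\xi)\) pushes \(\mu\) into \(F^{j+1}\). This is a filtered nilpotent Newlander--Nirenberg argument, and finiteness of the filtration makes it terminate. The result is a local isomorphism \((\widehat{\mathcal O}^\infty,\bar\partial_\mu)\cong(\widehat{\mathcal O}^\infty,\bar\partial_0)\) that is the identity on the associated graded. So \(\mathcal O_\mu\) is locally \(\wedge^\bullet\mathcal F_X\), with the prescribed reduced space and conormal bundle.

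Conversely, a holomorphic supermanifold structure on \(\widehat X^\infty\) with this prescribed data determines its Dolbeault operator as the unique derivation extending \(\bar\partial\) with kernel the holomorphic sections. This returns us to case (2).
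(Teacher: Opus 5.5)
Your proposal is correct and follows the paper's proof. The Maurer--Cartan equation is \(\bar\partial_\mu^2=0\), the inverse is \(\mu=\bar\partial'-\bar\partial_0\) (of weight \(\ge2\) because the induced structures on \(\Xred\) and \(\mathcal F_X\) are fixed), and local splitness comes from the local \(\bar\partial\)-Poincar\'e lemma applied recursively along the finite filtration; you package this last step as successive local gauge transformations, whereas the paper corrects the generators \(z^a,\theta^\alpha\). The two are equivalent, and yours yields an algebra isomorphism directly, but with the paper's convention \(e^{\xi}*\mu\equiv\mu+\bar\partial\xi\) you should take \(\bar\partial_0\xi=-\mu_{2j}\). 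Your requirement that \(D-\bar\partial_0\) be of type \((1,0)\) on functions is only implicit in the paper's proof but is genuinely needed. For instance, take \(c\) a nonzero even \(\bar\partial_0\)-closed section of \(\bigoplus_{r\ge1}\wedge^{2r}F_X^\infty\): then \((1+c)\bar\partial_0\) is integrable, induces the fixed data, and has the same kernel as \(\bar\partial_0\), yet \(c\,\bar\partial_0\notin L_X^1\).
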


\begin{proof}
If \(\mu\in MC(L_X)\), then \(\bar\partial_\mu\) is an integrable
\(\CC\)-antilinear derivation of \(\widehat{\mathcal O}_X^\infty\). Let
\[
\mathcal O_\mu\defeq \ker(\bar\partial_\mu).
\]
Since \(\bar\partial_\mu\) is a derivation, \(\mathcal O_\mu\) is a sheaf of
\(\CC\)-superalgebras. Moreover, \(\bar\partial_\mu\) preserves the
\(\widehat{\mathcal J}_X^\infty\)-adic filtration, and the induced Dolbeault operator on
\((\widehat{\mathcal J}_X^\infty)^r/(\widehat{\mathcal J}_X^\infty)^{r+1}\) is the
split one. Hence the associated graded holomorphic superalgebra is
\(\wedge^\bullet\mathcal F_X\), and the first truncation is
\(\mathcal O_{\Xred}\oplus\mathcal F_X\).

It remains only to check local splitness. Let \(U\subset\Xred\) be a sufficiently small
coordinate ball, and choose \(\bar\partial_0\)-holomorphic split generators
\(
z^1,\ldots,z^m | \theta^1,\ldots,\theta^n
\)
of \(\widehat{\mathcal O}_X^\infty|_U\). Since \(\mu\) has weight at least \(2\),
\[
\bar\partial_\mu z^a\in A^{0,1}\!\bigl(U,(\widehat{\mathcal J}_X^\infty)^2\bigr),
\qquad
\bar\partial_\mu\theta^\alpha
\in A^{0,1}\!\bigl(U,(\widehat{\mathcal J}_X^\infty)^3\bigr).
\]
Using the local \(\bar\partial\)-Poincar\'e lemma for holomorphic vector bundles and
arguing recursively in the finite \(\widehat{\mathcal J}_X^\infty\)-adic filtration, one
corrects these generators to \(\bar\partial_\mu\)-holomorphic generators
\(
\widetilde z^1,\ldots,\widetilde z^m |
\widetilde\theta^1,\ldots,\widetilde\theta^n,
\)
with
\[
\widetilde z^a\equiv z^a\mod (\widehat{\mathcal J}_X^\infty)^2,
\qquad
\widetilde\theta^\alpha\equiv \theta^\alpha\mod (\widehat{\mathcal J}_X^\infty)^3.
\]
The morphism
\begin{equation}
\mathcal O_U\otimes_\CC\wedge^\bullet(\xi^1,\ldots,\xi^n)
\longrightarrow
\mathcal O_\mu|_U,
\qquad
z^a\mapsto\widetilde z^a,
\quad
\xi^\alpha\mapsto\widetilde\theta^\alpha,
\end{equation}
is filtration-preserving and induces the identity on the associated graded. Since the
filtration is finite, it is an isomorphism. Therefore \(\mathcal O_\mu\) defines a
holomorphic supermanifold structure on \(\widehat X^\infty\).

Conversely, let \(\bar\partial'\) be an integrable Dolbeault superalgebra structure on
\(\widehat{\mathcal O}_X^\infty\) inducing the fixed structures on \(\Xred\) and on
\(\mathcal F_X\). Then
\[
\mu\defeq \bar\partial'-\bar\partial_0
\]
is a \((0,1)\)-form with values in even derivations. Since \(\bar\partial'\) and
\(\bar\partial_0\) agree on
\(\widehat{\mathcal O}_X^\infty/(\widehat{\mathcal J}_X^\infty)^2\), the difference
\(\mu\) has exterior weight at least \(2\), so \(\mu\in L_X^1\). The equality
\((\bar\partial')^2=0\) is precisely
\[
d\mu+\frac12[\mu,\mu]=0.
\]
The two constructions are inverse to each other.
\end{proof}

\begin{definition}[Unipotent gauge group of the split model]
Let \(G_X^{\geq 2}\) be the group of parity-preserving automorphisms of the smooth
split algebra \(\widehat{\mathcal O}_X^\infty\) which are congruent to the identity
modulo \((\widehat{\mathcal J}_X^\infty)^2\) and whose logarithm is a \((1,0)\)-derivation.
Equivalently,
\begin{equation}
G_X^{\geq 2}\defeq \exp(L_X^0).
\end{equation}
We call \(G_X^{\geq 2}\) the \emph{unipotent gauge group of the split model}.
\end{definition}

\begin{lemma}[Exponential coordinates for the unipotent gauge group]
The exponential map induces a bijection
\begin{equation}
\exp:L_X^0\xrightarrow{\sim}G_X^{\geq 2},
\end{equation}
with inverse given by the finite logarithm series
\begin{equation}
\log(g)=\sum_{r\geq 1}\frac{(-1)^{r+1}}{r}(g-\id)^r.
\end{equation}
In particular, every \(e^u\in G_X^{\geq 2}\) is the identity on
\(
\widehat{\mathcal O}_X^\infty/(\widehat{\mathcal J}_X^\infty)^2.
\)
\end{lemma}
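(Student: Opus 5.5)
The plan is to exploit the nilpotency coming from the finite exterior filtration. First I will show that every \(u\in L_X^0\), viewed as an operator on \(\widehat{\mathcal O}_X^\infty\), is nilpotent. An element of \(L_X^0\) is a smooth section of \(\mathcal T_{\widehat X^\infty,\bar0}^{[\ge 2]}\), so it raises exterior weight by at least \(2\). Hence \(u^r\) maps \(\wedge^s F_X^\infty\) into \(\wedge^{\ge s+2r}F_X^\infty\), and \(u^r=0\) as soon as \(2r>n\). Consequently
\[
\exp(u)=\sum_{r=0}^{N}\frac{u^r}{r!}
\]
is a finite sum. Because \(u\) is an even derivation, the Leibniz rule gives \(\exp(u)(ab)=\exp(u)(a)\exp(u)(b)\), so \(\exp(u)\) is a parity-preserving algebra automorphism with inverse \(\exp(-u)\). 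Since \(u\) vanishes modulo \((\widehat{\mathcal J}_X^\infty)^2\), so does \(\exp(u)-\id\), and therefore \(\exp(u)\) is the identity on \(\widehat{\mathcal O}_X^\infty/(\widehat{\mathcal J}_X^\infty)^2\). Its logarithm is \(u\), which is a \((1,0)\)-derivation. This shows that \(\exp\) lands in \(G_X^{\ge2}\), and it also gives the final assertion of the lemma.

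Next I will handle the logarithm. For \(g\in G_X^{\ge2}\), the operator \(g-\id\) raises the \(\widehat{\mathcal J}_X^\infty\)-adic filtration by at least \(2\). I need to check this from the definition. Since \(g\) is congruent to the identity modulo \((\widehat{\mathcal J}_X^\infty)^2\) and is an algebra map, it preserves the filtration, and on \(\gr\) it is the identity in degrees \(0\) and \(1\), hence in all degrees. Thus \(g-\id\) at least preserves \(F^{s+1}\). To get the shift by \(2\), I will use the parity argument from the proof of Proposition~3.? on \(\mathscr K_X^{(k+1)}\): an even map cannot send odd weight \(s\) into weight \(s+1\), which has opposite parity. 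It follows that \((g-\id)^r=0\) for \(2r>n\), so the logarithm series is finite.

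The standard formal power series identities \(\log\exp(x)=x\) and \(\exp\log(1+y)=1+y\) hold in \(\mathbb Q[[x]]\). They therefore hold after substituting commuting nilpotent operators, which gives \(\log\exp u=u\) and \(\exp\log g=g\).

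The step I expect to be the main obstacle is showing that \(\log g\) actually lies in \(L_X^0\), that is, that it is a derivation of the required type. Linearity, evenness and the weight \(\ge 2\) condition follow from the filtration estimate above. However, the definition of \(G_X^{\ge2}\) stipulates that the logarithm is a \((1,0)\)-derivation, so this is essentially built in. If I wanted to prove directly that \(\log\) of an algebra automorphism is a derivation, I would argue through the one-parameter family \(g^t=\exp(t\log g)\), defined by the finite binomial series. The identity \(g^t(ab)=g^t(a)g^t(b)\) is polynomial in \(t\) and holds for all integers \(t\), hence for all \(t\). Differentiating at \(t=0\) then gives the Leibniz rule for \(\log g\).

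Finally, I need \(\log g\) to be a section of \(\mathcal T^{[\ge2]}_{\widehat X^\infty,\bar0}\): an even derivation whose restriction to functions is \((1,0)\) and whose weight is at least \(2\). Decomposing it into homogeneous weights, only even weights occur, by parity, and all of them are at least \(2\). This establishes the bijection \(\exp:L_X^0\xrightarrow{\sim}G_X^{\ge2}\).
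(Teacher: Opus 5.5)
Your proposal is correct and follows the same route as the paper. Finiteness of the \(\widehat{\mathcal J}_X^\infty\)-adic filtration makes both series finite, \(\exp(u)\) is a parity-preserving automorphism trivial modulo \((\widehat{\mathcal J}_X^\infty)^2\), and the converse rests on \(\log g\in L_X^0\) being built into the definition of \(G_X^{\geq 2}\), plus the formal exp--log identities for nilpotent operators. Your parity argument that \(g-\id\) raises the filtration by two, and your one-parameter-family argument for the Leibniz rule of \(\log g\), are valid elaborations of steps the paper leaves implicit.
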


\begin{proof}
The \(\widehat{\mathcal J}_X^\infty\)-adic filtration is finite, and every
\(u\in L_X^0\) raises it by at least two exterior degrees. Hence the exponential series is
finite and defines a parity-preserving algebra automorphism acting trivially modulo
\((\widehat{\mathcal J}_X^\infty)^2\). Conversely, by definition, an element of
\(G_X^{\geq 2}\) has logarithm in \(L_X^0\), and the usual exponential-logarithm
identities hold because the relevant endomorphisms are nilpotent.
\end{proof}

\begin{definition}[Gauge action]
The \emph{gauge action} of \(u\in L_X^0\) on \(\mu\in MC(L_X)\) is defined by
\begin{equation}
\bar\partial_{e^u*\mu}
=
e^{-u}\circ(\bar\partial_0+\mu)\circ e^u.
\end{equation}
Equivalently, \(e^u*\mu\) is the unique Maurer--Cartan element whose Dolbeault
operator satisfies the displayed identity.
\end{definition}

\begin{proposition}\label{prop:MCgauge}
Let \(\mu,\nu\in MC(L_X)\). Then the following are equivalent:

\begin{enumerate}
\item \(\nu=e^u*\mu\) for some \(u\in L_X^0\);

\item the holomorphic supermanifold structures corresponding to \(\mu\) and \(\nu\) are
isomorphic by an element of the unipotent gauge group \(G_X^{\geq 2}\).
\end{enumerate}
\end{proposition}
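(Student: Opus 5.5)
The plan is to unwind both conditions into statements about Dolbeault operators on the fixed smooth algebra \(\widehat{\mathcal O}_X^\infty\) and compare them through Proposition~\ref{prop:bijection_mu}. By that proposition, \(\mu\) and \(\nu\) correspond to the holomorphic structure sheaves \(\mathcal O_\mu=\ker\bar\partial_\mu\) and \(\mathcal O_\nu=\ker\bar\partial_\nu\). An isomorphism of the two structures by \(g\in G_X^{\geq 2}\) means that \(g\), a smooth parity-preserving algebra automorphism of \(\widehat{\mathcal O}_X^\infty\), restricts to an isomorphism \(\mathcal O_\nu\xrightarrow{\sim}\mathcal O_\mu\). (The direction is fixed by the convention \(\bar\partial_{e^u*\mu}=e^{-u}\bar\partial_\mu e^u\).) The key reformulation I would aim for is
\[
g(\mathcal O_\nu)=\mathcal O_\mu \iff \bar\partial_\nu=g^{-1}\circ\bar\partial_\mu\circ g .
\]
Once this holds, both directions follow. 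For (1)\(\Rightarrow\)(2), take \(g=e^u\); the definition of the gauge action gives \(\bar\partial_\nu=e^{-u}\bar\partial_\mu e^u\), so \(e^u\) maps \(\ker\bar\partial_\nu\) onto \(\ker\bar\partial_\mu\). For (2)\(\Rightarrow\)(1), take \(u=\log g\in L_X^0\), which exists by the exponential-coordinates lemma. The displayed identity then says exactly that \(\nu=e^u*\mu\).

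Before proving the reformulation, I would check that \(e^{-u}\bar\partial_\mu e^u\) really is of the form \(\bar\partial_0+\nu'\) with \(\nu'\in MC(L_X)\). It is a \((0,1)\)-operator satisfying the Leibniz rule, because it is the conjugate of a derivation by an algebra automorphism. It squares to zero because \(\bar\partial_\mu\) does. It agrees with \(\bar\partial_0\) modulo \((\widehat{\mathcal J}^\infty_X)^2\), since \(e^u\) is the identity there. Hence it equals \(\bar\partial_0\) plus a form with values in even derivations of weight at least two. Such a derivation a priori has components in all weights \(\geq 1\). But a weight-one component is odd, so parity excludes it, and a weight-\(\geq 2\) component of odd weight is also odd. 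This leaves only even weights \(\geq 2\), i.e.\ an element of \(L_X^1\). Concretely, \(\nu'=\mu\) plus the usual series \(\sum \frac{1}{k!}\mathrm{ad}_{-u}^{k}\) terms, which is finite.

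The forward implication of the reformulation is immediate: if \(\bar\partial_\nu=g^{-1}\bar\partial_\mu g\), then \(\bar\partial_\nu f=0\) if and only if \(\bar\partial_\mu(gf)=0\). The converse is the main obstacle, since \(g(\mathcal O_\nu)=\mathcal O_\mu\) only controls the operators on holomorphic sections. To get full equality, set \(D\defeq g^{-1}\bar\partial_\mu g\). Then \(D\) and \(\bar\partial_\nu\) are both \(\bar\partial\)-type derivations of \(\widehat{\mathcal O}_X^\infty\) into \(\mathcal A^{0,1}\otimes\widehat{\mathcal O}_X^\infty\), and both annihilate \(\mathcal O_\nu\). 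Their difference \(\delta\) is therefore a \(\mathcal C^\infty\)-type derivation vanishing on \(\mathcal O_\nu\): it vanishes on the constants, and its restriction to smooth functions is a \((0,1)\)-derivation in the smooth sense.

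Locally, by the proof of Proposition~\ref{prop:bijection_mu}, \(\mathcal O_\nu\) has generators \(\widetilde z^a\,|\,\widetilde\theta^\alpha\), and \(\widehat{\mathcal O}_X^\infty\) is generated over \(\mathcal C^\infty\)-functions of \(\widetilde z,\bar{\widetilde z}\) by the \(\widetilde\theta\). A derivation which satisfies the \(\bar\partial\)-Leibniz rule with respect to \(\bar z\), and which kills all \(\widetilde z^a\) and \(\widetilde\theta^\alpha\), is determined by its values on \(\bar z^b\). On these both \(D\) and \(\bar\partial_\nu\) give \(d\bar z^b\), since \(g\) preserves \(\mathcal C^\infty_{\Xred}\) modulo nilpotents and both operators reduce to the Dolbeault operator on \(\Xred\). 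Hence \(\delta=0\). I would write this step carefully through nilpotent Taylor expansion in \(\bar z\), as in Lemma~\ref{lem:local-liftings}.
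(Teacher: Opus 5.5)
Your route is the paper's. Both directions reduce to the intertwining identity \(\bar\partial_\nu=g^{-1}\circ\bar\partial_\mu\circ g\) together with the bijection \(\exp:L_X^0\to G_X^{\geq 2}\). The paper simply takes for granted that an element of \(G_X^{\geq 2}\) carrying \(\mathcal O_\nu\) onto \(\mathcal O_\mu\) intertwines the Dolbeault operators. You correctly single this out as the real content, and your local-generator strategy is sound.

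However, the reason you give for \(D(\bar z^b)=\bar\partial_\nu(\bar z^b)=d\bar z^b\) does not work. Knowing only that \(g\) preserves \(\mathcal C^\infty_{\Xred}\) modulo nilpotents, and that both operators reduce to \(\bar\partial\) on \(\Xred\), gives \(g(\bar z^b)=\bar z^b+n\) with \(n\) nilpotent. Hence \(D(\bar z^b)=d\bar z^b+g^{-1}\bar\partial_\mu(n)\), and the last term need not vanish.

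This is not cosmetic. Work on a coordinate chart with \(\mu=\nu=0\), and take \(g=\exp\bigl(c\,\theta^1\theta^2\,\partial/\partial\bar z^1\bigr)\), where \(c\) is smooth but not holomorphic. This even automorphism is the identity modulo \((\widehat{\mathcal J}_X^\infty)^2\). It fixes every local section \(\sum_I h_I(z)\theta^I\) of \(\mathcal O_0\), so \(g(\mathcal O_0)=\mathcal O_0\). Yet \(g^{-1}\bar\partial_0 g=\bar\partial_0+(\bar\partial c)\,\theta^1\theta^2\,\partial/\partial\bar z^1\neq\bar\partial_0\). So two different \(\bar\partial\)-operators can have the same kernel, and the properties you invoke do not distinguish this \(g\) from an element of \(G_X^{\geq 2}\).

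The missing ingredient is the \((1,0)\)-condition built into \(G_X^{\geq 2}\) and \(L_X\).

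\emph{First repair.} Since \(\log g\) is a \((1,0)\)-derivation, it kills \(\bar z^b\), so \(g(\bar z^b)=\bar z^b\) exactly. Since \(\mu\) and \(\nu\) are \((1,0)\)-type, \(\bar\partial_\mu\bar z^b=\bar\partial_\nu\bar z^b=d\bar z^b\).

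\emph{Second repair.} More directly, your own well-definedness computation gives \(D=\bar\partial_0+\nu'\) with \(\nu'\in L_X^1\), so \(\delta=\nu'-\nu\in L_X^1\). Let \(\delta_{2k}\) be its lowest-weight component. Use \(\widetilde z^a\equiv z^a\) modulo \((\widehat{\mathcal J}_X^\infty)^2\) and \(\widetilde\theta^\alpha\equiv\theta^\alpha\) modulo \((\widehat{\mathcal J}_X^\infty)^3\). Then the \(\wedge^{2k}\)-component of \(\delta(\widetilde z^a)=0\) gives \(\delta_{2k}(z^a)=0\), and the \(\wedge^{2k+1}\)-component of \(\delta(\widetilde\theta^\alpha)=0\) gives \(\delta_{2k}(\theta^\alpha)=0\). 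A \((1,0)\)-type derivation is determined by these values, so \(\delta_{2k}=0\), and inductively \(\delta=0\).

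With either repair your argument is complete, and it fills a step the paper leaves implicit.
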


\begin{proof}
If \(\nu=e^u*\mu\), then
\[
\bar\partial_\nu=e^{-u}\circ\bar\partial_\mu\circ e^u,
\]
so \(e^u\) intertwines the two Dolbeault operators and therefore restricts to an
isomorphism \(\mathcal O_\nu\cong\mathcal O_\mu\). Since \(e^u\in G_X^{\geq 2}\), this
isomorphism is the identity on the first truncation.

Conversely, let \(g\in G_X^{\geq 2}\) be an isomorphism between the two holomorphic
structures. Then \(g=e^u\) for a unique \(u\in L_X^0\). The fact that \(g\) intertwines
the Dolbeault operators is exactly the identity defining \(\nu=e^u*\mu\).
\end{proof}

Consequently, the set of gauge-equivalence classes of holomorphic supermanifold structures
on the fixed \(C^\infty\) split model, preserving \(\Xred\) and \(\mathcal F_X\), is
naturally identified with
\begin{equation}
MC(L_X)/\mathrm{gauge}.
\end{equation}
More precisely, the {derived deformation object} is the Deligne--Hinich--Getzler
(Maurer--Cartan) \(\infty\)-groupoid of \(L_X\): we will now expand on this point of view.

\subsection{Formal moduli problem and splitting}

Since \(L_X\) is a finite-step nilpotent dg Lie algebra, for every local Artin dg
\(\CC\)-algebra \(A\) with maximal ideal \(\mathfrak m_A\), the dg Lie algebra
\(L_X\otimes\mathfrak m_A\) is nilpotent. Hence its Maurer--Cartan simplicial set is
well-defined. We denote it by
\[
MC_\infty(L_X\otimes\mathfrak m_A),
\]
and regard it as the Deligne--Hinich--Getzler \(\infty\)-groupoid in the $\infty$-category of \(\infty\)-groupoids, that we denote \( \mathbf{Spc}\); see
\cite{GetzlerLieTheory, HinichDGCoalgebras} and \cite[\S1]{CalaqueGrivauxFMP}.

More precisely, we use cohomological grading conventions and let
\(\mathbf{dgArt}^{\mathrm{loc}}\) denote the category of augmented local Artin
commutative dg \(\CC\)-algebras concentrated in non-positive degrees, with nilpotent
augmentation ideal. We write \(\mathfrak m_A\) for the augmentation ideal of
\(A\).
\begin{definition}[The functor $\Def_X$]
Define
\(
\Def_X:\mathbf{dgArt}^{\mathrm{loc}} \rightarrow \Spc
\)
by
\begin{equation}
\Def_X(A)
\defeq
MC_\infty(L_X\otimes\mathfrak m_A).
\end{equation}
For every \(j\geq 1\), define
\begin{equation}
\Def_X^{\geq j}(A)
\defeq
MC_\infty(F^jL_X\otimes\mathfrak m_A).
\end{equation}
\end{definition}

The functor \(\Def_X\) is the {formal moduli problem} governed by \(L_X\), while
\(\Def_X^{\geq j}\) is the formal moduli problem governed by the filtered dg Lie
subalgebra \(F^jL_X\). We shall not need to interpret \(\Def_X^{\geq j}\) as a literal
subobject of \(\Def_X\); it is the formal moduli problem induced by the inclusion of dg
Lie algebras \(F^jL_X\hookrightarrow L_X\).

\begin{proposition}
The tangent complex of \(\Def_X\) at the split point is canonically
\begin{equation}
T_0\Def_X\simeq L_X[1].
\end{equation}
In particular,
\begin{equation}
H^0(T_0\Def_X)\cong H^1(L_X),
\qquad
H^{-1}(T_0\Def_X)\cong H^0(L_X).
\end{equation}
Thus \(H^1(L_X)\) is the space of first-order deformations of the split holomorphic
structure with fixed \(\Xred\) and fixed \(\mathcal F_X\), while \(H^0(L_X)\) is the
space of infinitesimal automorphisms. Obstructions to extending first-order
deformations lie in \(H^2(L_X)\).
\end{proposition}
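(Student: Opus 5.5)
The plan is to invoke the general theory of formal moduli problems associated with (nilpotent) dg Lie algebras, and then simply read off cohomology degrees. The first step is to recall that, for any dg Lie algebra \(\mathfrak g\) whose tensor products with Artin augmentation ideals are nilpotent, the functor \(A\mapsto MC_\infty(\mathfrak g\otimes\mathfrak m_A)\) is a formal moduli problem. Its tangent complex is \(\mathfrak g[1]\); see \cite{LurieDAGX,PridhamUDDT} and \cite[\S1]{CalaqueGrivauxFMP}. Concretely, the tangent complex is computed by evaluating on the trivial square-zero extensions \(A=\CC\oplus\CC[k]\) (with \(\mathfrak m_A=\CC[k]\), \(k\ge 0\)). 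On such an algebra the bracket term vanishes, since \(\mathfrak m_A^2=0\). The Maurer--Cartan equation therefore becomes linear, \(d\mu=0\) for \(\mu\in(L_X\otimes\CC[k])^1=L_X^{1+k}\). Likewise the gauge action reduces to \(\mu\mapsto\mu+du\), and the higher simplices of the Getzler--Hinich simplicial set reduce to the Dold--Kan simplicial set of the truncated complex. Hence \(\Def_X(\CC\oplus\CC[k])\simeq\mathrm{DK}\bigl(\tau_{\le 0}(L_X[1+k])\bigr)\), whose \(\pi_i\) is \(H^{1+k-i}(L_X)\). Taking the spectrum over \(k\) identifies \(T_0\Def_X\simeq L_X[1]\) canonically. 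Nilpotency of \(L_X\), from Proposition~\ref{L_X}, guarantees that all the exponential and gauge formulas are finite, so no completion issues arise.

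The second step reads off the degrees. By definition \((L_X[1])^i=L_X^{i+1}\), so \(H^0(T_0\Def_X)=H^1(L_X)\) and \(H^{-1}(T_0\Def_X)=H^0(L_X)\).

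The third step gives the geometric interpretation, using the concrete Dolbeault description. A first-order deformation over \(\CC[\epsilon]\) is an MC element \(\epsilon\mu_1\) with \(d\mu_1=0\), since \(\epsilon^2=0\). By Proposition~\ref{prop:bijection_mu} this is an integrable operator \(\bar\partial_0+\epsilon\mu_1\) fixing \(\Xred\) and \(\mathcal F_X\). By Proposition~\ref{prop:MCgauge} with \(u=\epsilon u_1\), two such operators are isomorphic exactly when the elements differ by \(du_1\). This yields \(H^1(L_X)\). Infinitesimal automorphisms are the \(u_1\in L_X^0\) fixing \(\bar\partial_0\), i.e.\ those with \(du_1=0\). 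Modulo higher homotopies these are classes in \(H^0(L_X)\); since \(L_X^{-1}=0\), they are exactly the \(\bar\partial_0\)-closed elements of \(L_X^0\), i.e.\ global holomorphic sections of \(\mathcal T^{[\ge2]}_{\widehat X,\bar0}\).

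For obstructions, take a small extension \(0\to I\to A'\to A\to 0\) and lift an MC element \(\mu\) arbitrarily to \(\mu'\). The defect \(d\mu'+\tfrac12[\mu',\mu']\) lies in \(L_X^2\otimes I\), and the Bianchi identity shows it is closed. Changing the lift alters it by an exact term, so its class lies in \(H^2(L_X)\otimes I\) and vanishes exactly when a lift exists.

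The only genuinely delicate point is the first step: identifying the simplicial Maurer--Cartan space on square-zero algebras with the Dold--Kan space of the shifted complex, and checking that this identification is natural in \(k\), so that it really gives an equivalence of spectra. I would cite this as the standard theorem rather than reprove it, noting only that the cohomological conventions for \(\mathbf{dgArt}^{\mathrm{loc}}\) fixed above match those of \cite{CalaqueGrivauxFMP}.
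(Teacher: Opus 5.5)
Your proposal is correct and takes essentially the same route as the paper, whose proof is just a citation of the standard tangent--obstruction theory for formal moduli problems controlled by dg Lie algebras \cite{LurieDAGX}, \cite[Th.~4.1]{PridhamUDDT}, \cite[\S1]{CalaqueGrivauxFMP}. Your evaluation on the square-zero extensions \(\CC\oplus\CC[k]\), the first-order gauge computation, and the small-extension obstruction class in \(H^2(L_X)\otimes I\) spell out that same standard argument; the only wording to adjust is that on square-zero algebras the Getzler--Hinich space is weakly equivalent to, rather than literally equal to, the Dold--Kan space of \(\tau_{\le 0}(L_X[1+k])\), and that is exactly the step you already propose to cite.
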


\begin{proof}
This is the standard tangent--obstruction theory of formal moduli problems governed by
dg Lie algebras in characteristic zero; see \cite{LurieDAGX},
\cite[Th.~4.1]{PridhamUDDT}, and \cite[\S1]{CalaqueGrivauxFMP}.
\end{proof}

\begin{theorem}[Splitting as a filtered formal moduli problem]\label{thm:form_mod}
The parity-preserving split deformation problem on the fixed \(C^\infty\) split model
\(
\widehat X^\infty=\bigl(\Xred,\widehat{\mathcal O}_X^\infty\bigr)
\)
is a finite filtered formal moduli problem governed by
\(
L_X=A^{0,\bullet}\!\bigl(\Xred,\mathcal T_{\widehat X,\bar 0}^{[\geq 2]}\bigr).
\)
More precisely:

\begin{enumerate}
\item The functor
\[
A\longmapsto \Def_X(A)=MC_\infty(L_X\otimes\mathfrak m_A)
\]
is the formal moduli problem associated with \(L_X\).

\item The filtration \(F^\bullet L_X\) induces a finite tower of formal moduli problems
\begin{equation}
\Def_X=\Def_X^{\geq 1}\longleftarrow \Def_X^{\geq 2}\longleftarrow\cdots
\longleftarrow \Def_X^{\geq N}\longleftarrow \Def_X^{\geq N+1}=*,
\end{equation}
where
\[
\Def_X^{\geq j}(A)=MC_\infty(F^jL_X\otimes\mathfrak m_A),
\qquad
N=\left\lfloor\frac n2\right\rfloor.
\]

\item For every ordinary local Artin \(\CC\)-algebra \(A\),
\begin{equation}
\pi_0\Def_X(A)
\cong
MC(L_X\otimes\mathfrak m_A)/\mathrm{gauge},
\end{equation}
and similarly for \(\Def_X^{\geq j}\).

\item The tangent and obstruction theory of \(\Def_X\) is controlled by the cohomology
of \(L_X\), while that of \(\Def_X^{\geq j}\) is controlled by the cohomology of
\(F^jL_X\).
\end{enumerate}
\end{theorem}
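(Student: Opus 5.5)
The plan is to reduce each of the four assertions to standard facts about Maurer--Cartan simplicial sets of nilpotent dg Lie algebras in characteristic zero, combined with Proposition~\ref{L_X} and Propositions~\ref{prop:bijection_mu} and~\ref{prop:MCgauge}.

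\textbf{Part (1).} Since \(L_X\) is finite-step nilpotent by Proposition~\ref{L_X}, the tensor product \(L_X\otimes\mathfrak m_A\) is nilpotent for every \(A\in\mathbf{dgArt}^{\mathrm{loc}}\). Hence
\[
MC_\infty(L_X\otimes\mathfrak m_A)=MC\bigl(L_X\otimes\mathfrak m_A\otimes\Omega^\bullet(\Delta^\bullet)\bigr)
\]
is a Kan complex, by Hinich and Getzler. The functor \(A\mapsto MC_\infty(L_X\otimes\mathfrak m_A)\) is then the formal moduli problem associated with \(L_X\) under the Lurie--Pridham equivalence. Concretely, one checks three things:
\begin{itemize}
\item \(\Def_X(\CC)=*\), because \(\mathfrak m_\CC=0\);
\item the functor sends quasi-isomorphisms to weak equivalences;
\item it sends homotopy pullbacks along small extensions (surjections) to homotopy pullbacks.
\end{itemize}
These are the standard properties cited in \cite[\S1]{CalaqueGrivauxFMP} and \cite{PridhamUDDT}. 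The surjectivity and fibration property of \(MC_\infty\) along surjections of nilpotent dg Lie algebras is exactly Getzler's/Hinich's lemma, so nothing new is needed here.

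\textbf{Part (2).} Each \(F^jL_X\) is a dg Lie subalgebra, since \([F^j,F^j]\subset F^{2j}\subset F^j\) and \(d\) preserves \(F^j\). It is also nilpotent. So part (1) applies verbatim and defines \(\Def_X^{\geq j}\). The inclusions \(F^{j+1}L_X\hookrightarrow F^jL_X\) are dg Lie morphisms, and \(MC_\infty\) is functorial, which produces the tower. Finally, \(F^{N+1}L_X=0\) by Proposition~\ref{L_X}, and \(MC_\infty(0)=*\), which gives the last term.

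\textbf{Part (3).} For an ordinary Artin algebra \(A\), the algebra \(L_X\otimes\mathfrak m_A\) is nilpotent and concentrated in degrees \(\geq 0\). The standard comparison of Goldman--Millson and Getzler identifies \(\pi_0MC_\infty\) with \(MC\) modulo the gauge action of \(\exp\) of degree-zero elements. Two \(1\)-simplices \(\mu(t)+\nu(t)dt\) connect \(\mu_0\) to \(\mu_1\) exactly when \(\mu_1=e^u*\mu_0\), where \(u\) is obtained by integrating the flow \(\dot\mu=d\nu+[\mu,\nu]\). This gauge action coincides with the conjugation formula of the Gauge action definition, tensored with \(A\). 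The \(A\)-linear version of Proposition~\ref{prop:MCgauge} then gives the geometric meaning. The same argument applies to \(F^jL_X\).

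\textbf{Part (4).} This follows from the preceding proposition on \(T_0\Def_X\simeq L_X[1]\), applied to \(L_X\) and to each \(F^jL_X\). Obstructions lie in \(H^2\) by the standard obstruction map for small extensions \(0\to I\to A'\to A\to 0\): given a lift \(\tilde\mu\), the class of \(d\tilde\mu+\tfrac12[\tilde\mu,\tilde\mu]\) lies in \(H^2(L)\otimes I\).

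\textbf{Main obstacle.} The delicate point is the precise comparison in (3). First, \(\pi_0\) of the Hinich--Getzler groupoid must equal Deligne's quotient, which needs the nilpotence to exponentiate \(L^0\otimes\mathfrak m_A\). Second, the relative sign conventions between \(e^{-u}\circ\bar\partial_\mu\circ e^u\) and the path-integration gauge must be matched. I would handle this by citing Getzler's theorem and checking the conventions on the first-order term \(\mu\mapsto\mu-du+\cdots\).
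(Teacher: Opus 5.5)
Your proposal is correct and takes essentially the same route as the paper: its proof uses Proposition~\ref{L_X} to get nilpotence and the sub-dg-Lie property of each \(F^jL_X\), then cites the Lurie--Pridham correspondence, the standard identification of \(\pi_0\) with Maurer--Cartan elements modulo gauge for nilpotent dg Lie algebras, and the usual tangent--obstruction theory. Your version makes these standard inputs more explicit, with one harmless convention point: under the paper's gauge formula \(e^{-u}\circ\bar\partial_\mu\circ e^u\), the first-order term is \(\mu\mapsto\mu+\bar\partial u\) rather than \(\mu-du\), which only amounts to replacing \(u\) by \(-u\).
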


\begin{proof}
By Proposition~\ref{L_X}, \(L_X\) is a finite-step nilpotent dg Lie algebra and each
\(F^jL_X\) is a dg Lie subalgebra. Hence all Maurer--Cartan \(\infty\)-groupoids above
are well-defined. The fact that these functors are formal moduli problems is the
Lurie--Pridham equivalence between dg Lie algebras and formal moduli problems in
characteristic zero. The identification of \(\pi_0\) with Maurer--Cartan elements modulo
gauge for ordinary Artin algebras is standard for nilpotent dg Lie algebras. The final
statement is the usual cohomological tangent--obstruction theory of the same
correspondence.
\end{proof}

\subsection{The Maurer--Cartan point attached to \(X\)}

We now return to the fixed holomorphic supermanifold \(X\). Choose a smooth splitting
\begin{equation}\label{eq:smooth-splitting-choice}
\tau^\infty:\widehat X^\infty\xrightarrow{\sim}X^\infty
\end{equation}
which induces the identity on \(\Xred\) and on \(\mathcal F_X\). Transporting the
holomorphic structure of \(X\) along \(\tau^\infty\), we obtain an integrable Dolbeault
operator on \(\widehat{\mathcal O}_X^\infty\) of the form
\begin{equation}
\bar\partial_{\mu_X}=\bar\partial_0+\mu_X,
\qquad
\mu_X\in A^{0,1}\!\bigl(\Xred,\mathcal T_{\widehat X,\bar 0}^{[\geq 2]}\bigr).
\end{equation}
Thus \(\mu_X\in MC(L_X)\). If a different smooth splitting is chosen, the two splittings
differ by a smooth parity-preserving automorphism of \(\widehat X^\infty\) which is the
identity modulo \((\widehat{\mathcal J}_X^\infty)^2\). By Proposition~\ref{prop:MCgauge},
the corresponding Maurer--Cartan elements are gauge-equivalent. Therefore \(X\)
determines an intrinsic gauge class
\begin{equation}
[\mu_X]\in MC(L_X)/\mathrm{gauge}.
\end{equation}

The dg Lie algebra \(L_X\) governs the full split deformation problem. The splitting
problem for the given supermanifold \(X\) is the problem of understanding the position of
\([\mu_X]\) inside the filtration
\[
L_X=F^1L_X\supset F^2L_X\supset\cdots\supset F^{N+1}L_X=0.
\]
The following proposition is the precise link with partial splittings.

\begin{proposition}\label{muXgauge}
Let \([\mu_X]\in MC(L_X)/\mathrm{gauge}\) be the Maurer--Cartan gauge class attached
to \(X\). For every \(j\geq 1\), the following are equivalent:

\begin{enumerate}
\item \(X\) admits a \((2j-1)\)-splitting;

\item the gauge class \([\mu_X]\) contains a representative
\[
\mu^{(j)}\in MC(L_X)\cap F^jL_X^1.
\]
Equivalently, \([\mu_X]\cap F^jL_X^1\neq\varnothing\).
\end{enumerate}
\end{proposition}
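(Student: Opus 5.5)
The plan is to translate both conditions into statements about the truncated algebras \(\widehat{\mathcal O}_X^\infty/(\widehat{\mathcal J}_X^\infty)^{2j}\cong\wedge^{\le 2j-1}F_X^\infty\). Everything rests on one weight observation. Let \(\nu\) be an even derivation of homogeneous weight \(p\). By Proposition~\ref{splitTp}, \(\nu\) is determined by two pieces: its action on functions, with values in \(\wedge^pF_X^\infty\), and its action on \(F_X^\infty\), with values in \(\wedge^{p+1}F_X^\infty\). Two consequences follow.

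\begin{enumerate}
\item If \(p\ge 2j\), then \(\nu\) induces zero on \(\wedge^{\le 2j-1}F_X^\infty\).
\item Conversely, if \(p\le 2j-2\) and \(\nu\) induces zero on \(\wedge^{\le 2j-1}F_X^\infty\), then \(\nu=0\), since both \(p\) and \(p+1\) are then at most \(2j-1\).
\end{enumerate}

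Hence, for \(\mu\in L_X^1\), we have \(\mu\in F^jL_X^1\) if and only if \(\bar\partial_\mu\) and \(\bar\partial_0\) induce the same operator on \(\wedge^{\le 2j-1}F_X^\infty\).

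\emph{(2)\(\Rightarrow\)(1).} Suppose \(\mu^{(j)}=e^u*\mu_X\) lies in \(F^jL_X^1\). By Proposition~\ref{prop:MCgauge}, \(\tau^\infty\circ e^{u}\) is another smooth splitting, still the identity on \(\Xred\) and on \(\mathcal F_X\). It carries \(\bar\partial_{\mu^{(j)}}\) to the holomorphic structure of \(X\).

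By the observation, \(\bar\partial_{\mu^{(j)}}\) coincides with \(\bar\partial_0\) modulo \((\widehat{\mathcal J}_X^\infty)^{2j}\). Taking \(\bar\partial\)-kernels, the reduction of \(\tau^\infty\circ e^u\) modulo \(\mathcal J^{2j}\) is therefore a holomorphic isomorphism \(\wedge^{\le 2j-1}\mathcal F_X\xrightarrow{\sim}\mathcal O_X/\mathcal J_X^{2j}\). This isomorphism is augmentation compatible and is the identity on \(\mathcal F_X\), so it is a \((2j-1)\)-splitting.

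\emph{(1)\(\Rightarrow\)(2).} Let \(\sigma^{(2j-1)}\) be a holomorphic \((2j-1)\)-splitting. Reducing the fixed smooth splitting \(\tau^\infty\) of \eqref{eq:smooth-splitting-choice} modulo \(\mathcal J^{2j}\) gives a smooth isomorphism \(\bar\tau\). Set
\[
a\defeq\bar\tau^{-1}\circ\sigma^{(2j-1)} .
\]
Then \(a\) is a smooth parity-preserving automorphism of \(\wedge^{\le 2j-1}F_X^\infty\), and it is the identity modulo \(\mathcal J^2\). It is filtered with trivial associated graded, by the lemma on filtered isomorphisms, so \(a-\mathrm{id}\) is nilpotent. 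Hence \(u\defeq\log a\) is a finite sum of even derivations of the truncated algebra, with weight components \(u_2,u_4,\dots,u_{2j-2}\).

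Each \(u_{2q}\) is given by a pair as in Proposition~\ref{splitTp}. Since \(2q+1\le 2j-1\), this pair does not see the truncation, so it defines a weight-\(2q\) derivation \(\tilde u_{2q}\) of the full smooth split algebra. We must also check that \(\log a\) is of type \((1,0)\) on functions. This holds because \(\sigma^{(2j-1)}\) and \(\bar\tau\) both arise from algebra maps of \(\mathcal C^\infty\) functions which are Taylor-type in the \((1,0)\)-directions; this is the point to verify carefully. Set \(\tilde u=\sum_q\tilde u_{2q}\in L_X^0\), so that \(e^{\tilde u}\) reduces to \(a\).

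Now consider the smooth splitting \(\tau'\defeq\tau^\infty\circ e^{\tilde u}\). By Proposition~\ref{prop:MCgauge}, its Maurer--Cartan element is \(\mu'=e^{\tilde u}*\mu_X\), which lies in \([\mu_X]\). Modulo \(\mathcal J^{2j}\), the map \(\tau'\) equals \(\sigma^{(2j-1)}\), which is holomorphic for the split structure. Hence \(\bar\partial_{\mu'}\) induces \(\bar\partial_0\) on \(\wedge^{\le 2j-1}F_X^\infty\). By the converse half of the weight observation, all components of \(\mu'\) of weight \(\le 2j-2\) vanish, so \(\mu'\in F^jL_X^1\).

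I expect the main obstacle to be this lifting step: showing that the truncated comparison automorphism \(a\) lies in the exponential image of globally defined \((1,0)\)-type derivations, and that these derivations extend. The weight bookkeeping makes the extension formal, and the smooth category avoids any cohomological obstruction. What needs checking is the type condition for \(\log a\) together with the normalizations on \(\Xred\) and \(\mathcal F_X\).
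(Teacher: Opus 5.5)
Your proposal is correct and follows the paper's proof. For (2)\(\Rightarrow\)(1), you read off that a representative in \(F^jL_X^1\) makes the transported Dolbeault operator agree with \(\bar\partial_0\) modulo \((\widehat{\mathcal J}_X^\infty)^{2j}\). For (1)\(\Rightarrow\)(2), you lift the truncated comparison automorphism to some \(e^{u}\) with \(u\in L_X^0\) and gauge-transform \(\mu_X\); your explicit weight criterion and weight-by-weight lift of \(\log a\) just make precise what the paper does with a fineness argument.

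The \((1,0)\)-type point you flag is equally implicit in the paper, where it is already needed for \(\mu_X\in L_X^1\). It holds because \(\tau^\infty\) and the Taylor-type smooth extension of \(\sigma^{(2j-1)}\) both fix antiholomorphic functions, so \(\log a\) annihilates the \(\bar z^a\).
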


\begin{proof}
Assume first that \(X\) admits a \((2j-1)\)-splitting
\(
\sigma^{(2j-1)}:\widehat X^{(2j-1)}\xrightarrow{\sim}X^{(2j-1)}.
\)
Relative to the chosen smooth splitting \(\tau^\infty\), this truncated holomorphic
isomorphism is represented by a smooth parity-preserving automorphism of the
\((2j-1)\)-truncation of \(\widehat X^\infty\), trivial on the first truncation. Since the
\(C^\infty\)-sheaves involved are fine and the filtration is finite, this truncated
automorphism may be lifted to a smooth parity-preserving automorphism of the full split
\(C^\infty\)-algebra, still trivial modulo \((\widehat{\mathcal J}_X^\infty)^2\). By the
exponential lemma above, this automorphism is \(e^u\) for some \(u\in L_X^0\). Replacing
\(\mu_X\) by \(e^u*\mu_X\), the transported Dolbeault operator coincides with
\(\bar\partial_0\) modulo \((\widehat{\mathcal J}_X^\infty)^{2j}\). Equivalently,
\(e^u*\mu_X\in F^jL_X^1\).

Conversely, suppose that \([\mu_X]\) contains a representative
\(\mu^{(j)}\in MC(L_X)\cap F^jL_X^1\). Then
\(\bar\partial_{\mu^{(j)}}=\bar\partial_0+\mu^{(j)}\) coincides with \(\bar\partial_0\) on
\(
\widehat{\mathcal O}_X^\infty/(\widehat{\mathcal J}_X^\infty)^{2j}.
\)
Thus the corresponding \((2j-1)\)-truncation is holomorphically split. Since
\(\mu^{(j)}\) is gauge-equivalent to \(\mu_X\), Proposition~\ref{prop:MCgauge} identifies
this truncation with \(X^{(2j-1)}\). Hence \(X\) admits a \((2j-1)\)-splitting.
\end{proof}

\subsection{Splittings and adapted Maurer--Cartan representatives}

Fix, for the rest of this section, a representative
\(
\mu_X\in MC(L_X)
\) 
of the distinguished gauge class attached to \(X\), obtained from a chosen smooth
splitting as above.

\begin{definition}[Adapted Maurer--Cartan representative] \label{def:adapted-MC-representative}
Let
\(
\sigma^{(2j-1)}:\widehat X^{(2j-1)}\xrightarrow{\sim}X^{(2j-1)}
\)
be a \((2j-1)\)-splitting. A Maurer--Cartan representative
\[
\mu^{(j)}\in MC(L_X)\cap F^jL_X^1
\]
is called \emph{adapted} to \(\sigma^{(2j-1)}\) if there exists \(u^{(j)}\in L_X^0\)
such that
\[
\mu^{(j)}=e^{u^{(j)}}*\mu_X
\]
and the induced isomorphism of \((2j-1)\)-truncations
\[
\widehat X^{(2j-1)}
\xrightarrow{\ e^{u^{(j)}}\ }
(\widehat X,\bar\partial_{\mu_X})^{(2j-1)}
\xrightarrow{\ \tau^\infty\ }
X^{(2j-1)}
\]
agrees with \(\sigma^{(2j-1)}\).
\end{definition}

Thus adaptation, as defined above, records not only the filtration
condition \(\mu^{(j)}\in F^jL_X\), but also the chosen partial splitting that this
normalization realizes.

\begin{definition}[Leading Maurer--Cartan term]
Let
\[
\mu^{(j)}\in MC(L_X)\cap F^jL_X^1
\]
be adapted to a \((2j-1)\)-splitting. The \emph{leading Maurer--Cartan term} of
\(\mu^{(j)}\) is
\begin{equation}
\eta_{2j}
\defeq
\operatorname{pr}_{2j}(\mu^{(j)})
\in
F^jL_X^1/F^{j+1}L_X^1
\cong
A^{0,1}\!\bigl(\Xred,\mathcal T_{\widehat X}^{\langle 2j\rangle}\bigr).
\end{equation}
\end{definition}

\begin{proposition}\label{prop:leadingeta}
The leading term \(\eta_{2j}\) is \(\bar\partial\)-closed. Hence it defines a Dolbeault
cohomology class
\begin{equation}
[\eta_{2j}]_{\bar\partial}
\in
H_{\bar\partial}^{0,1}\!\bigl(\Xred,\mathcal T_{\widehat X}^{\langle 2j\rangle}\bigr)
\cong
H^1\!\bigl(\Xred,\mathcal T_{\widehat X}^{\langle 2j\rangle}\bigr).
\end{equation}
\end{proposition}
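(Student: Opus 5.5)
We project the Maurer--Cartan equation for \(\mu^{(j)}\) onto the lowest graded piece of the filtration. Since \(\mu^{(j)}\in F^jL_X^1\), decompose it by exterior weight as
\[
\mu^{(j)}=\eta_{2j}+\mu_{\ge 2j+2},\qquad \mu_{\ge 2j+2}\in F^{j+1}L_X^1,
\]
where \(\eta_{2j}\in A^{0,1}(\Xred,\mathcal T_{\widehat X}^{\langle 2j\rangle})\). The Maurer--Cartan equation reads
\[
d\mu^{(j)}+\tfrac12[\mu^{(j)},\mu^{(j)}]=0 .
\]

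\textbf{Step 1 (the differential is graded).} By Proposition~\ref{L_X}, the differential \(d=\bar\partial_0\) preserves each homogeneous weight piece \(\mathcal T_{\widehat X^\infty}^{\langle 2q\rangle}\). Hence \(d\mu^{(j)}\) has weight-\(2j\) component equal to \(\bar\partial_0\eta_{2j}\), and all of its other components lie in \(F^{j+1}L_X^2\).

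\textbf{Step 2 (the bracket is too deep).} The bracket satisfies \([F^jL_X,F^jL_X]\subset F^{2j}L_X\), again by Proposition~\ref{L_X}. Since \(j\ge 1\), we have \(2j\ge j+1\), so \([\mu^{(j)},\mu^{(j)}]\in F^{j+1}L_X^2\). Applying the projection \(\operatorname{pr}_{2j}:F^jL_X\to F^jL_X/F^{j+1}L_X\) to the Maurer--Cartan equation therefore gives \(\bar\partial_0\eta_{2j}=0\). Here \(\bar\partial_0\) is understood as the Dolbeault operator of the holomorphic bundle \(\mathcal T_{\widehat X}^{\langle 2j\rangle}\). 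This is consistent because the quotient \(F^jL_X/F^{j+1}L_X\) is identified with \(A^{0,\bullet}(\Xred,\mathcal T_{\widehat X}^{\langle 2j\rangle})\) as complexes: the holomorphic structure on \(\mathcal T_{\widehat X}^{\langle p\rangle}\) was defined precisely as the one induced by \(\bar\partial_0\).

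\textbf{Step 3 (Dolbeault isomorphism).} The class \([\eta_{2j}]_{\bar\partial}\) lies in \(H^{0,1}_{\bar\partial}\). Its identification with \(H^1(\Xred,\mathcal T_{\widehat X}^{\langle 2j\rangle})\) is the Dolbeault theorem for a holomorphic vector bundle: the fine resolution \(\mathcal A^{0,\bullet}\otimes E^\infty\) of \(E\) exists by the local \(\bar\partial\)-Poincaré lemma.

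The only point requiring care is Step 1's claim that \(d\) respects the direct sum decomposition by weight, not merely the filtration. This holds because \(\bar\partial_0\) is the split Dolbeault operator, which is homogeneous of weight \(0\): it is extended as a derivation from \(\mathcal O_{\Xred}\) and \(\mathcal F_X\), so the commutator \([\bar\partial_0,\nu]\) has the same weight as \(\nu\). Even without full homogeneity, filtration-compatibility of \(d\) suffices, since \(\operatorname{pr}_{2j}\) is a chain map. I therefore expect no real obstacle.
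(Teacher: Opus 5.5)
Your proposal is correct and matches the paper's proof: reduce the Maurer--Cartan equation modulo \(F^{j+1}L_X\), use \([\mu^{(j)},\mu^{(j)}]\in F^{2j}L_X^2\subset F^{j+1}L_X^2\), and identify the induced differential on \(F^jL_X/F^{j+1}L_X\) with the Dolbeault operator of \(\mathcal T_{\widehat X}^{\langle 2j\rangle}\). Your remarks on the weight-homogeneity of \(\bar\partial_0\) and on the Dolbeault isomorphism spell out what the paper takes for granted.
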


\begin{proof}
Since \(\mu^{(j)}\) satisfies the Maurer--Cartan equation,
\[
d\mu^{(j)}+\frac12[\mu^{(j)},\mu^{(j)}]=0.
\]
Because \(\mu^{(j)}\in F^jL_X^1\), we have
\[
[\mu^{(j)},\mu^{(j)}]\in F^{2j}L_X^2\subset F^{j+1}L_X^2,
\]
as \(2j\geq j+1\) for \(j\geq 1\). Reducing the Maurer--Cartan equation modulo
\(F^{j+1}L_X\), we obtain \(d\eta_{2j}=0\). On the quotient
\(F^jL_X/F^{j+1}L_X\), the differential is the Dolbeault operator on
\(A^{0,\bullet}(\Xred,\mathcal T_{\widehat X}^{\langle 2j\rangle})\). Hence
\(\bar\partial\eta_{2j}=0\).
\end{proof}

We shall compare \([\eta_{2j}]_{\bar\partial}\) with the classical obstruction cocycles
through the exact sequence \eqref{eq:tangent-obstruction-sequence}. The following lemma
makes the \v{C}ech--Dolbeault comparison explicit.

\begin{lemma}[\v{C}ech--Dolbeault comparison for the leading term]
\label{lem:CechDolbeaultLeading}
Let
\(
\mu^{(j)}\in MC(L_X)\cap F^jL_X^1
\)
be a Maurer--Cartan representative adapted to a \((2j-1)\)-splitting
\(\sigma^{(2j-1)}\). Write
\[
\eta_{2j}
\defeq
\operatorname{pr}_{2j}(\mu^{(j)})
\in
A^{0,1}\bigl(\Xred,\mathcal T_{\widehat X}^{\langle 2j\rangle}\bigr)
\]
for its leading component. Then \(\eta_{2j}\) is \(\bar\partial\)-closed.

Let \(\mathfrak U=\{U_i\}\) be a sufficiently fine Stein cover of \(\Xred\). Choose
sections
\(
u_i\in
A^{0,0}\bigl(U_i,\mathcal T_{\widehat X}^{\langle 2j\rangle}\bigr)
\)
such that
\(
\bar\partial u_i=-\eta_{2j}|_{U_i}.
\)
Then the local gauge transformations \(\exp(u_i)\) normalize
\(\mu^{(j)}\) modulo \(F^{j+1}L_X\), and the differences
\[
a_{ij}\defeq u_i-u_j
\in
\Gamma\bigl(U_{ij},\mathcal T_{\widehat X}^{\langle 2j\rangle}\bigr)
\]
form a holomorphic \v{C}ech \(1\)-cocycle. In particular, its \v{C}ech cohomology class satisfies
\(
[a_{ij}]
=
[\eta_{2j}]_{\bar\partial}
\)
under the \v{C}ech--Dolbeault isomorphism
\(
H^1\bigl(\Xred,\mathcal T_{\widehat X}^{\langle 2j\rangle}\bigr)
\cong
H^{0,1}_{\bar\partial}
\bigl(\Xred,\mathcal T_{\widehat X}^{\langle 2j\rangle}\bigr).
\)\\
Moreover, applying
\(
\rho_{2j}:
\mathcal T_{\widehat X}^{\langle 2j\rangle}
\rightarrow
\mathcal Q_X^{(2j)}
\)
to the cocycle \(\{a_{ij}\}\) gives the classical \v{C}ech cocycle defining
\(
\omega_X^{(2j)}(\sigma^{(2j-1)})
\in
H^1\bigl(\Xred,\mathcal Q_X^{(2j)}\bigr).
\)
\end{lemma}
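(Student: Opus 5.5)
The plan has three steps: closedness and the \v{C}ech--Dolbeault identification, the local normalization, and the comparison with Green's cocycle under \(\rho_{2j}\).

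\emph{Closedness and the \v{C}ech--Dolbeault identification.} That \(\eta_{2j}\) is \(\bar\partial\)-closed is exactly Proposition~\ref{prop:leadingeta}, so I will simply invoke it. Since \(\mathcal T_{\widehat X}^{\langle 2j\rangle}\) is a holomorphic vector bundle, the local \(\bar\partial\)-Poincar\'e lemma on the Stein opens \(U_i\) produces smooth sections \(u_i\) with \(\bar\partial u_i=-\eta_{2j}|_{U_i}\). On overlaps \(\bar\partial(u_i-u_j)=0\), so \(a_{ij}\) is holomorphic, and the cocycle identity \(a_{ij}+a_{jk}=a_{ik}\) is tautological. The equality \([a_{ij}]=[\eta_{2j}]_{\bar\partial}\) is then the standard zig-zag in the \v{C}ech--Dolbeault double complex, up to the sign convention fixed by \(\bar\partial u_i=-\eta_{2j}\). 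I will state this convention once and not grind through it.

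\emph{Local normalization.} I will compute \(e^{u_i}*\mu^{(j)}\) modulo \(F^{j+1}L_X\). Because \(u_i\in F^jL_X^0\), the gauge formula gives
\[
e^{u_i}*\mu^{(j)}=\mu^{(j)}+\bar\partial u_i+(\text{terms in }[u_i,\cdot\,]),
\]
up to the sign conventions of \(e^{-u}\circ\bar\partial_\mu\circ e^{u}\). The bracket terms lie in \(F^{2j}\subset F^{j+1}\), so modulo \(F^{j+1}\) the result is \(\eta_{2j}+\bar\partial u_i=0\). Hence \(e^{u_i}*\mu^{(j)}\) lies in \(F^{j+1}L_X^1\) over \(U_i\). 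By the converse direction of Proposition~\ref{muXgauge}, applied locally, the structure is split up to order \(2j\) on each \(U_i\).

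\emph{Comparison with Green's cocycle.} This is where I expect the main obstacle. Since \(\mu^{(j)}\) is adapted, composing \(\tau^\infty\circ e^{u^{(j)}}\) with \(e^{u_i}\) and truncating gives local \(2j\)-splittings \(\sigma_i^{(2j)}\) of \(X|_{U_i}\). The key check is that these lift \(\sigma^{(2j-1)}|_{U_i}\). This holds because \(e^{u_i}\) is the identity modulo \((\widehat{\mathcal J}^\infty)^{2j}\), since \(u_i\) has weight \(2j\).

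The transition maps are then
\[
(\sigma_j^{(2j)})^{-1}\circ\sigma_i^{(2j)}=e^{-u_j}e^{u_i}=\mathrm{id}+(u_i-u_j)
\]
on the \(2j\)-truncation, because products of weight-\(2j\) terms vanish there. This transition is a holomorphic automorphism of \(\wedge^{\le 2j}\mathcal F_X|_{U_{ij}}\) lying in \(\mathscr K_X^{(2j)}\).

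The remaining point is to match identifications. The isomorphism \(\mathscr K_X^{(2j)}\cong\mathcal Q_X^{(2j)}\) of the earlier proposition (the case \(k=2j-1\) odd) sends \(\mathrm{id}+\delta\) to \(\delta|_{\mathcal O}\). On the other hand, \(\rho_{2j}\) is also restriction to functions. The only genuine work is checking that truncation kills the \(\mathcal F\to\wedge^{2j+1}\) part, so that the truncated derivation is exactly \(\rho_{2j}(a_{ij})\). Granting this, \(\{\rho_{2j}(a_{ij})\}\) is precisely the cocycle defining \(\omega_X^{(2j)}(\sigma^{(2j-1)})\), with orientation matching the convention \(g_{ij}=(\sigma_j)^{-1}\sigma_i\).
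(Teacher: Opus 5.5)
Your proposal is correct and follows the same route as the paper. Closedness comes from the Maurer--Cartan equation modulo \(F^{j+1}L_X\); the local \(\bar\partial\)-primitives \(u_i\) kill the leading term under the gauge action; the holomorphic differences \(a_{ij}\) realize the \v{C}ech--Dolbeault class; and the local gauges are read as local \(2j\)-lifts of \(\sigma^{(2j-1)}\) whose transitions project under \(\rho_{2j}\) to Green's cocycle. Your explicit computation \(e^{-u_j}e^{u_i}=\mathrm{id}+(u_i-u_j)\) on the \(2j\)-truncation, together with the observation that the \(\mathcal F_X\to\wedge^{2j+1}\mathcal F_X\) component dies there, simply fills in the step the paper states as ``their transition functions have leading component \(a_{ij}\)''.
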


\begin{proof}
The Maurer--Cartan equation for \(\mu^{(j)}\) is
\[
\bar\partial\mu^{(j)}
+
\frac12[\mu^{(j)},\mu^{(j)}]=0.
\]
Since \(\mu^{(j)}\in F^jL_X\), the bracket term lies in \(F^{2j}L_X\subseteq
F^{j+1}L_X\). Taking the component of exterior weight \(2j\), one obtains
\(
\bar\partial\eta_{2j}=0.
\)

On a sufficiently fine Stein cover, the Dolbeault--Poincar\'{e} lemma gives sections
\(u_i\) with
\(
\bar\partial u_i=-\eta_{2j}|_{U_i}.
\)
At the level of the associated graded piece \(F^jL_X/F^{j+1}L_X\), the gauge action
of \(\exp(u_i)\) changes the leading term by
\(
\eta_{2j}\mapsto \eta_{2j}+\bar\partial u_i.
\)
Hence \(\exp(u_i)\) makes the weight-\(2j\) component vanish on \(U_i\), or
equivalently normalizes the representative modulo \(F^{j+1}L_X\).

On overlaps \(U_{ij}\), the differences
\(
a_{ij}=u_i-u_j
\)
are \(\bar\partial\)-closed, hence holomorphic sections of
\(\mathcal T_{\widehat X}^{\langle 2j\rangle}\). They satisfy the \v{C}ech cocycle
identity
\(
a_{ij}+a_{jk}=a_{ik},
\)
and therefore define a class in
\(
H^1\bigl(\Xred,\mathcal T_{\widehat X}^{\langle 2j\rangle}\bigr).
\)
By construction of the \v{C}ech--Dolbeault comparison map, this class is precisely the
Dolbeault class of \(\eta_{2j}\).

Finally, the local gauges \(\exp(u_i)\) give local liftings of the fixed
\((2j-1)\)-splitting to order \(2j\). Their transition functions have leading
component \(a_{ij}\). After projecting by
\(
\rho_{2j}:
\mathcal T_{\widehat X}^{\langle 2j\rangle}
\to
\mathcal Q_X^{(2j)},
\)
one obtains exactly the \v{C}ech cocycle measuring the failure of these local liftings to
glue to a global \(2j\)-splitting. This is Green's obstruction cocycle defining
\(\omega_X^{(2j)}(\sigma^{(2j-1)})\).
\end{proof}

\begin{theorem}\label{thm:sheafMC}
Let
\(
\sigma^{(2j-1)}:\widehat X^{(2j-1)}\xrightarrow{\sim}X^{(2j-1)}
\)
be a \((2j-1)\)-splitting of \(X\), and let
\[
\mu^{(j)}\in MC(L_X)\cap F^jL_X^1
\]
be an adapted Maurer--Cartan representative with leading term \(\eta_{2j}\). Then:

\begin{enumerate}
\item The image of \([\eta_{2j}]_{\bar\partial}\) under
\(
H^1\!\bigl(\Xred,\mathcal T_{\widehat X}^{\langle 2j\rangle}\bigr)
\rightarrow
H^1\!\bigl(\Xred,\mathcal Q_X^{(2j)}\bigr)
\)
is the sheaf-theoretic obstruction class
\[
\omega_X^{(2j)}\bigl(\sigma^{(2j-1)}\bigr)
\in
H^1\!\bigl(\Xred,\mathcal Q_X^{(2j)}\bigr)
\]
to lifting \(\sigma^{(2j-1)}\) to a \(2j\)-splitting.

\item Suppose that \(\omega_X^{(2j)}(\sigma^{(2j-1)})=0\), and fix a
\(2j\)-splitting
\(
\sigma^{(2j)}:\widehat X^{(2j)}\xrightarrow{\sim}X^{(2j)}
\)
lifting \(\sigma^{(2j-1)}\). Then there exists a gauge-equivalent representative
\[
\widetilde\mu^{(j)}\in MC(L_X)\cap F^jL_X^1
\]
adapted to the chosen \(2j\)-splitting, whose leading term
\(
\widetilde\eta_{2j}
\in
A^{0,1}\!\bigl(\Xred,\mathcal T_{\widehat X}^{\langle 2j\rangle}\bigr)
\)
takes values in
\(
A^{0,1}\!\bigl(\Xred,\mathcal Q_X^{(2j+1)}\bigr)
\subset
A^{0,1}\!\bigl(\Xred,\mathcal T_{\widehat X}^{\langle 2j\rangle}\bigr).
\)

\item The Dolbeault class
\[
[\widetilde\eta_{2j}]_{\bar\partial}
\in
H^1\!\bigl(\Xred,\mathcal Q_X^{(2j+1)}\bigr)
\]
is the sheaf-theoretic obstruction class
\(
\omega_X^{(2j+1)}\bigl(\sigma^{(2j)}\bigr)
\)
to lifting the chosen \(2j\)-splitting to a \((2j+1)\)-splitting.
\end{enumerate}
\end{theorem}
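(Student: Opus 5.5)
Part (1) is essentially Lemma~\ref{lem:CechDolbeaultLeading}, so the plan is to invoke it and check only naturality. Since \(\mu^{(j)}\) is adapted to \(\sigma^{(2j-1)}\), the Dolbeault operator \(\bar\partial_{\mu^{(j)}}\) agrees with \(\bar\partial_0\) modulo \((\widehat{\mathcal J}_X^\infty)^{2j}\). Moreover, this identification of truncations is exactly \(\sigma^{(2j-1)}\) composed with \(\tau^\infty\). Choose a fine Stein cover and local primitives \(u_i\) with \(\bar\partial u_i=-\eta_{2j}|_{U_i}\). Then the operators \(e^{-u_i}\circ\bar\partial_{\mu^{(j)}}\circ e^{u_i}\) agree with \(\bar\partial_0\) modulo \((\widehat{\mathcal J}_X^\infty)^{2j+1}\): the only new weight-\(2j\) term is \(\eta_{2j}+\bar\partial u_i\), while brackets with \(u_i\) land in weight \(\ge 2j+2\). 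Hence \(e^{u_i}\) furnishes local \(2j\)-splittings lifting \(\sigma^{(2j-1)}\). Their transition automorphisms on \(\wedge^{\le 2j}\) are \(\id+(u_i-u_j)\) truncated. By the Proposition identifying \(\mathscr K_X^{(2j)}\cong\mathcal Q_X^{(2j)}\), such a derivation of weight \(2j\) on the truncated algebra retains only its action on functions, which is \(\rho_{2j}(a_{ij})\); the \(\mathcal Q^{(2j+1)}\) part dies since it maps \(\mathcal F_X\) into \(\wedge^{2j+1}\), which vanishes in the truncation. So \(\rho_{2j}\{a_{ij}\}\) is Green's cocycle, and combined with \([a_{ij}]=[\eta_{2j}]_{\bar\partial}\) this gives (1).

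For (2), I will compare the chosen \(\sigma^{(2j)}\) with the local lifts. Write \(\sigma^{(2j)}|_{U_i}=(\text{local lift}_i)\circ(\id+b_i)\) with \(b_i\in\mathcal Q_X^{(2j)}(U_i)\) holomorphic. Then \(\rho_{2j}(a_{ij})=b_j-b_i\) (up to sign conventions). Lift each \(b_i\) to a holomorphic section \(\hat b_i\) of \(\mathcal T^{\langle 2j\rangle}_{\widehat X}\) over the Stein set \(U_i\); by Cartan B, \(\rho_{2j}\) is surjective on sections. Replace \(u_i\) by \(u_i'=u_i+\hat b_i\). Now \(a'_{ij}\) takes values in \(\ker\rho_{2j}=\mathcal Q^{(2j+1)}_X\). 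Next convert back to a global gauge. The global smooth section \(v\) of weight \(2j\) is obtained as follows: choose a partition of unity \(\{\chi_k\}\) and set \(c_i=\sum_k\chi_k a'_{ik}\in A^{0,0}(U_i,\mathcal Q^{(2j+1)}_X)\). Then \(v\defeq u_i'-c_i\) is independent of \(i\), and \(\rho_{2j}(v)=\rho_{2j}(u_i')\). Setting \(\widetilde\mu^{(j)}=e^{v}*\mu^{(j)}\), its leading term is \(\widetilde\eta_{2j}=\eta_{2j}+\bar\partial v=-\bar\partial c_i\), which is \(\mathcal Q^{(2j+1)}\)-valued. This holds because \(\mathcal Q^{(2j+1)}\subset\mathcal T^{\langle 2j\rangle}\) is a holomorphic subbundle, so \(\bar\partial\) preserves it. Adaptation to \(\sigma^{(2j)}\) follows because, modulo weight \(\ge 2j+1\) on \(\wedge^{\le 2j}\), \(e^{u^{(j)}+v}\) acts on functions through \(\rho_{2j}(u_i')\), matching \(\sigma^{(2j)}\). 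I expect this to be the main obstacle. One must check that \(e^{u^{(j)}}e^{v}\) (rather than a BCH-modified exponent) is the relevant element of \(\exp(L_X^0)\), and that the \(\mathcal Q^{(2j+1)}\) part of \(v\) is invisible on the \(2j\)-truncation. Both follow from the weight estimates \([L^0,L^0]\subset F^{2}\) together with the truncation argument above, but the bookkeeping of \(\tau^\infty\) and signs must be handled carefully.

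For (3), I will rerun the argument of Lemma~\ref{lem:CechDolbeaultLeading} with \(\widetilde\mu^{(j)}\) in place of \(\mu^{(j)}\), now on the \((2j+1)\)-truncation. Take local primitives \(w_i=c_i\) of \(-\widetilde\eta_{2j}\) in \(\mathcal Q^{(2j+1)}_X\). The gauges \(e^{w_i}\) give local \((2j+1)\)-splittings. These lift \(\sigma^{(2j)}\) because \(w_i\) kills functions and sends \(\mathcal F_X\to\wedge^{2j+1}\), hence acts trivially on \(\wedge^{\le 2j}\). Their transitions are \(\id+(w_i-w_j)\) with \(w_i-w_j\in\mathcal Q^{(2j+1)}_X(U_{ij})\). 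This is exactly Green's cocycle for \(\omega_X^{(2j+1)}(\sigma^{(2j)})\) under \(\mathscr K^{(2j+1)}_X\cong\mathcal Q^{(2j+1)}_X\) with \(k=2j\) even. The \v{C}ech--Dolbeault isomorphism for the holomorphic bundle \(\mathcal Q^{(2j+1)}_X\) then identifies its class with \([\widetilde\eta_{2j}]_{\bar\partial}\).
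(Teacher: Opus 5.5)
Your proposal is correct and follows the same route as the paper: (1) is Lemma~\ref{lem:CechDolbeaultLeading} followed by \(\rho_{2j}\); (2) is a global gauge \(v\in F^jL_X^0\) that kills the \(\rho_{2j}\)-image of the leading term while realizing \(\sigma^{(2j)}\) on the \(2j\)-truncation; and (3) reruns the lemma on the normalized representative. Your Stein-local holomorphic lifts \(\hat b_i\), the partition-of-unity globalization, and the \(\mathcal Q_X^{(2j+1)}\)-valued primitives \(c_i\) (so the class is computed in \(H^1(\Xred,\mathcal Q_X^{(2j+1)})\), not only in \(H^1(\Xred,\mathcal T_{\widehat X}^{\langle 2j\rangle})\)) are a more explicit version of the paper's ``lift the smooth quotient normalization through the smooth surjection'' step, and the BCH concern you flag is harmless, since \(e^{u^{(j)}}e^{v}\in G_X^{\geq 2}=\exp(L_X^0)\) agrees with \(e^{u^{(j)}+v}\) on the \(2j\)-truncation.
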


\begin{proof}
Part (1) is precisely Lemma~\ref{lem:CechDolbeaultLeading} after applying the quotient
morphism \(\rho_{2j}\) in the exact sequence
\[
0\to\mathcal Q_X^{(2j+1)}\to\mathcal T_{\widehat X}^{\langle 2j\rangle}
\overset{\rho_{2j}}{\longrightarrow}\mathcal Q_X^{(2j)}\to0.
\]

Assume now that the projected obstruction vanishes and that a lift
\(\sigma^{(2j)}\) has been fixed. By Part (1), the class of
\(\rho_{2j}(\eta_{2j})\) vanishes in \(H^1(\Xred,\mathcal Q_X^{(2j)})\). Equivalently,
a global smooth normalization may be chosen whose quotient component realizes the chosen
\(2j\)-splitting. Lifting this smooth quotient normalization through the surjection of
smooth vector bundles
\(
\mathcal T_{\widehat X^\infty}^{\langle 2j\rangle}
\rightarrow
(\mathcal Q_X^{(2j)})^\infty,
\)
we obtain \(u\in F^jL_X^0\) such that, modulo \(F^{j+1}L_X\), the gauge transform
\(e^u*\mu^{(j)}\) has leading term whose \(\rho_{2j}\)-image is zero. Replacing
\(\mu^{(j)}\) by this representative gives \(\widetilde\mu^{(j)}\), adapted to the chosen
\(\sigma^{(2j)}\), and with
\[
\widetilde\eta_{2j}\in
A^{0,1}\!\bigl(\Xred,\ker\rho_{2j}\bigr)
=
A^{0,1}\!\bigl(\Xred,\mathcal Q_X^{(2j+1)}\bigr).
\]
This proves (2).

For (3), apply Lemma~\ref{lem:CechDolbeaultLeading} to the normalized representative
\(\widetilde\mu^{(j)}\). Since its leading term lies in the kernel
\(\mathcal Q_X^{(2j+1)}\), the resulting \v{C}ech cocycle is the residual difference between
local \((2j+1)\)-lifts of the chosen \(2j\)-splitting. This is exactly the classical
obstruction cocycle defining
\(\omega_X^{(2j+1)}(\sigma^{(2j)})\).
\end{proof}

The preceding theorem shows that one leading Maurer--Cartan class controls two
successive layers of Green's obstruction tower. Its quotient part gives the obstruction in
\(\mathcal Q_X^{(2j)}\). Once this class vanishes and a \(2j\)-splitting is chosen, the
remaining kernel part gives the obstruction in \(\mathcal Q_X^{(2j+1)}\).

\begin{definition}\label{def:terminal-adapted-MC-sequence}
Let \(X\) be a complex supermanifold of odd dimension \(n\), and set
\(
N\defeq \left\lfloor\frac n2\right\rfloor .
\)
A \emph{terminal compatible sequence of adapted Maurer--Cartan representatives}
for \(X\) is a sequence
\[
\mu^{(1)},\mu^{(2)},\ldots,\mu^{(N)},\mu^{(N+1)}
\]
with the following properties.

\begin{enumerate}
\item For every \(1\leq j\leq N+1\), one has
\(
\mu^{(j)}\in MC(L_X)\cap F^jL_X^1.
\)
Since \(F^{N+1}L_X=0\), this condition implies
\(
\mu^{(N+1)}=0.
\)

\item For every \(1\leq j\leq N+1\), the Maurer--Cartan element
\(\mu^{(j)}\) represents the distinguished gauge class attached to \(X\):
\(
[\mu^{(j)}]_{\mathrm{gauge}}=[\mu_X]_{\mathrm{gauge}}.
\)
In particular, the terminal element \(0=\mu^{(N+1)}\) is required to lie in the
same gauge orbit as \(\mu_X\).

\item For every \(1\leq j\leq N\), the passage
\(
\mu^{(j)}\rightsquigarrow \mu^{(j+1)}
\)
is obtained by the two-step normalization of Theorem~\ref{thm:sheafMC}. More
explicitly, the projected leading class in \(\mathcal Q_X^{(2j)}\) is first killed,
thereby choosing a \(2j\)-splitting; after this choice, the residual kernel class in
\(\mathcal Q_X^{(2j+1)}\) is killed, thereby obtaining a \((2j+1)\)-splitting and
a representative lying in \(F^{j+1}L_X\). If \(2j+1>n\), the residual odd step is
understood to be void.
\end{enumerate}

Equivalently, a terminal compatible sequence is a chain of successive adapted
normalizations
\[
\mu_X
\sim_{\mathrm{gauge}}
\mu^{(1)}
\sim_{\mathrm{gauge}}
\mu^{(2)}
\sim_{\mathrm{gauge}}
\cdots
\sim_{\mathrm{gauge}}
\mu^{(N)}
\sim_{\mathrm{gauge}}
\mu^{(N+1)}=0,
\]
where each arrow is realized by the Green normalization procedure at the
corresponding filtration level. Thus the terminal vanishing is not an additional
formal convention: it asserts that the successive Green normalizations have reached
the zero Maurer--Cartan representative in the gauge class of \(X\).
\end{definition}

\begin{theorem}\label{thm:terminal-sequence-splitting-tower}
The supermanifold \(X\) admits a full splitting tower
\[
\sigma^{(1)}
\rightsquigarrow
\sigma^{(2)}
\rightsquigarrow
\cdots
\rightsquigarrow
\sigma^{(n)}
\]
if and only if it admits a terminal compatible sequence of adapted Maurer--Cartan
representatives
\[
\mu^{(1)},\mu^{(2)},\ldots,\mu^{(N)},\mu^{(N+1)}=0.
\]
Under this correspondence, for every \(1\leq j\leq N\), the leading term of
\(\mu^{(j)}\) recovers, through Theorem~\ref{thm:sheafMC}, the two consecutive
Green obstruction classes
\(
\omega_X^{(2j)}\bigl(\sigma^{(2j-1)}\bigr),
\omega_X^{(2j+1)}\bigl(\sigma^{(2j)}\bigr),
\)
with the convention that the second class is absent when \(2j+1>n\).
\end{theorem}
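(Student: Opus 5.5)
The plan is to prove the equivalence by induction on \(j\), using Proposition~\ref{muXgauge} and Theorem~\ref{thm:sheafMC} as the two inductive engines. Recall that a full splitting tower is the same as a compatible choice of liftings at every order; since \(\wedge^r\FF_X=0\) for \(r>n\), the tower up to \(\sigma^{(n)}\) is a global splitting. On the Maurer--Cartan side, terminality means \(0\) lies in the gauge orbit of \(\mu_X\).

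For the direction ``tower \(\Rightarrow\) sequence'', start from \(\sigma^{(1)}=\sigma^{(1)}_{\mathrm{can}}\) and \(\mu^{(1)}=\mu_X\). With the adapting gauge taken as the identity, \(\mu^{(1)}\) is adapted to \(\sigma^{(1)}_{\mathrm{can}}\), because the chosen smooth splitting \(\tau^\infty\) induces the identity on \(\Xred\) and \(\FF_X\).

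Inductively, suppose \(\mu^{(j)}\in F^jL_X^1\) is adapted to \(\sigma^{(2j-1)}\). By Theorem~\ref{thm:sheafMC}(1), the projected leading class is \(\omega_X^{(2j)}(\sigma^{(2j-1)})\), and it vanishes because the given \(\sigma^{(2j)}\) lifts \(\sigma^{(2j-1)}\) (Corollary~\ref{cor:split2}). Theorem~\ref{thm:sheafMC}(2) then produces \(\widetilde\mu^{(j)}\), adapted to \(\sigma^{(2j)}\), with leading term in \(A^{0,1}(\Xred,\mathcal Q_X^{(2j+1)})\). By Theorem~\ref{thm:sheafMC}(3) its class is \(\omega_X^{(2j+1)}(\sigma^{(2j)})\), which again vanishes because \(\sigma^{(2j+1)}\) exists.

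The key additional step is to realize the lift \(\sigma^{(2j+1)}\) by a gauge \(u\in F^jL^0_X\) of weight \(2j\) valued in \(\mathcal Q_X^{(2j+1)}\). By Proposition~\ref{splitTp}, the torsor \(H^0(\Xred,\mathcal Q_X^{(2j+1)})\) of Corollary~\ref{cor:split2} sits inside weight \(2j\), and the Dolbeault primitive of the now exact residual term is smooth. After applying this gauge, the weight-\(2j\) component is killed, so \(\mu^{(j+1)}:=e^{u}*\widetilde\mu^{(j)}\in F^{j+1}L^1_X\). Adaptation to \(\sigma^{(2j+1)}\) is exactly as in the proof of Proposition~\ref{muXgauge}. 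The converse of that proposition then confirms that \(\mu^{(j+1)}\) induces a \((2j+1)\)-splitting compatible with \(\sigma^{(2j+1)}\). At \(j=N\) we get \(\mu^{(N+1)}\in F^{N+1}L_X=0\). When \(2j+1>n\), \(\mathcal Q_X^{(2j+1)}=0\) and the residual step is void, as the statement stipulates.

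For the converse direction, a terminal sequence supplies, at each stage, the two normalizations prescribed in Definition~\ref{def:terminal-adapted-MC-sequence}(3), and these are by definition choices of \(\sigma^{(2j)}\) and \(\sigma^{(2j+1)}\) lifting the previous ones. We must check that the gauge used to pass \(\mu^{(j)}\rightsquigarrow\mu^{(j+1)}\) is trivial modulo \((\widehat{\mathcal J}^\infty_X)^{2j}\). This holds because it lies in \(\exp(F^jL^0_X)\), so the induced truncated splittings are compatible. Finally, \(\mu^{(N+1)}=0\) gauge-equivalent to \(\mu_X\) gives, via Proposition~\ref{prop:MCgauge}, an isomorphism \((\widehat X,\bar\partial_0)\cong X\). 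That is a full splitting whose truncations extend the constructed tower up to order \(n\). The identification of the leading terms with the two Green classes is then a restatement of Theorem~\ref{thm:sheafMC}(1),(3) applied at each \(j\).

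The main obstacle is the bookkeeping of adaptedness: showing that a gauge \(e^u\) with \(u\in F^jL^0_X\) acts on truncations exactly through the torsor action of \(\mathcal Q^{(2j)}_X\) and \(\mathcal Q^{(2j+1)}_X\) from Section~3. Here one must check that weight-\(2j\) smooth gauges affect only order \(\ge 2j\), and that the holomorphic part matches Green's torsor. I would verify this first in local coordinates, using \(e^u=\mathrm{id}+u\) modulo \((\widehat{\mathcal J}^\infty_X)^{2j+2}\), and then globalize by Lemma~\ref{lem:CechDolbeaultLeading}.
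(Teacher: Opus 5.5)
Your proposal is correct and follows essentially the same route as the paper: the forward direction runs the two-step normalization of Theorem~\ref{thm:sheafMC} inductively, with each obstruction class vanishing because the given tower supplies the lift. The converse reads off the successive lifts from Definition~\ref{def:terminal-adapted-MC-sequence}(3), and obtains the full splitting from the gauge equivalence \(0\sim\mu_X\) via Proposition~\ref{prop:MCgauge}.

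The only minor difference is in how \(\mu^{(N+1)}=0\) is reached. You gauge away the \(\bar\partial\)-exact kernel-valued leading term explicitly at each stage, so that \(\mu^{(N+1)}\in F^{N+1}L_X=0\) comes out automatically. The paper instead sets \(\mu^{(N+1)}=0\) by appealing to the global identification of \(X\) with \(\widehat X\) given by the full tower.
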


\begin{proof}
Assume first that a full splitting tower is given. At every odd stage
\((2j-1)\), Proposition~\ref{muXgauge} gives a Maurer--Cartan representative
\(
\mu^{(j)}\in MC(L_X)\cap F^jL_X^1
\)
adapted to the chosen \((2j-1)\)-splitting. Theorem~\ref{thm:sheafMC} identifies
the projected leading class of \(\mu^{(j)}\) with the obstruction to lifting the
\((2j-1)\)-splitting to a \(2j\)-splitting. Once this lift has been chosen, the same
theorem identifies the residual kernel class with the obstruction to lifting the
\(2j\)-splitting to a \((2j+1)\)-splitting. Since the splitting tower is assumed to
exist up to order \(n\), all these obstruction classes vanish in the prescribed order.
Inductively, the two-step normalizations produce
\(
\mu^{(1)},\mu^{(2)},\ldots,\mu^{(N)}.
\)
Finally, because the tower is full, the holomorphic structure of \(X\) is globally
identified with the split holomorphic structure of \(\widehat X\). Equivalently, the
distinguished Maurer--Cartan gauge class contains the zero representative. Hence
we set
\(
\mu^{(N+1)}=0\in F^{N+1}L_X,
\)
and this zero representative is gauge-equivalent to \(\mu_X\). Thus the sequence is
terminal in the sense of Definition~\ref{def:terminal-adapted-MC-sequence}.

Conversely, suppose that a terminal compatible sequence is given. By definition,
for every \(1\leq j\leq N\), the passage from \(\mu^{(j)}\) to \(\mu^{(j+1)}\) is
obtained by killing, in order, the projected class in \(\mathcal Q_X^{(2j)}\) and the
residual kernel class in \(\mathcal Q_X^{(2j+1)}\). By Theorem~\ref{thm:sheafMC},
these are precisely the Green obstruction classes controlling the successive lifts
\(
(2j-1)\rightarrow 2j
\text{ and }
2j\rightarrow 2j+1.
\)
Therefore the required splittings exist successively, starting from the canonical
first-order splitting.

The terminal condition gives
\[
\mu^{(N+1)}=0
\qquad\text{and}\qquad
[\mu^{(N+1)}]_{\mathrm{gauge}}=[\mu_X]_{\mathrm{gauge}}.
\]
Thus the zero Maurer--Cartan element lies in the distinguished gauge class of \(X\).
Equivalently, the transported Dolbeault operator is gauge-equivalent to the split
Dolbeault operator \(\bar\partial_0\). Hence the last normalized representative gives
a splitting beyond the final nonzero Green filtration level. Since \(2N+1\geq n\),
and since \(\mathcal J_X^{n+1}=0\), this is a full splitting of \(X\).
\end{proof}

The essential point is that the filtered Maurer--Cartan formulation we introduced in this section does not replace the
classical obstruction tower by a different theory. It packages the same finite sequence of
obstructions into the filtration of one dg Lie algebra. A representative in \(F^jL_X\) is the
Dolbeault avatar of a \((2j-1)\)-splitting, while its leading term in
\(F^jL_X/F^{j+1}L_X\) contains precisely the next even and odd obstruction classes.


\section{The minimal \(L_\infty\)-model of the splitting problem}

Section~4 reformulated the splitting problem for a fixed holomorphic supermanifold
\(X\) as a filtered Maurer--Cartan problem for the dg Lie algebra \(L_X\). The
filtration of \(L_X\) records the successive orders of the Green splitting tower, and
adapted Maurer--Cartan representatives recover the classical obstruction classes through
their leading terms. The purpose of the present section is to pass from this dg Lie
model to a cohomological model. More precisely, after choosing a filtered contraction of
\(L_X\) onto its cohomology, the homotopy transfer theorem gives a minimal filtered
\(L_\infty\)-algebra on
\[
H_X\defeq H^\bullet(L_X).
\]
This minimal model governs the same pointed formal moduli problem as \(L_X\), but its
equations are written entirely on cohomology.

There is one point of interpretation that will be important throughout the section. The
transferred minimal \(L_\infty\)-structure depends on the chosen contraction, although its
filtered \(L_\infty\)-quasi-isomorphism class does not. Thus the minimal model is
canonical only up to filtered \(L_\infty\)-isomorphism. Similarly, the higher leading
classes that recover Green's obstruction classes depend, from order \(3\) onwards, on
the chosen partial splitting tower. The canonical object is not an individual higher
coordinate, but the filtered Maurer--Cartan deformation problem together with its
successive obstruction maps.

The homotopy transfer technology used below is standard; for background on transferred
\(L_\infty\)-structures and their Maurer--Cartan deformation theory, we refer to
\cite{GetzlerLieTheory,LodayVallette}.

\subsection{The cohomology of \(L_X\)}

Recall from Section~4 that
\[
F^jL_X/F^{j+1}L_X
\cong
A^{0,\bullet}\!\bigl(\Xred,\mathcal T_{\widehat X,\bar 0}^{\langle 2j\rangle}\bigr),
\]
and that the filtration on \(L_X\) is finite. We define
\begin{equation}
H_X\defeq H^\bullet(L_X,d),
\qquad d=\bar\partial_0.
\end{equation}
Since \(d\) preserves the homogeneous exterior weight, the cohomology of \(L_X\)
decomposes as
\begin{equation}\label{eq:HX-weight-decomp}
H_X
\cong
\bigoplus_{j=1}^N H_X^{\langle 2j\rangle},
\qquad
H_X^{\langle 2j\rangle}
\defeq
H^{0,\bullet}_{\bar\partial}\!\bigl(\Xred,\mathcal T_{\widehat X,\bar 0}^{\langle 2j\rangle}\bigr),
\end{equation}
where
\[
N\defeq \Bigl\lfloor\frac n2\Bigr\rfloor,
\qquad n=\rk(\FF_X).
\]
For every cohomological degree \(q\), we write
\begin{equation} \label{eq:HX-mixed-notation}
H_X^{q,\langle 2j\rangle}
\defeq
\left(H_X^{\langle 2j\rangle}\right)^q
=
H^{0,q}_{\bar\partial}
\bigl(\Xred,\mathcal T_{\widehat X,\bar 0}^{\langle 2j\rangle}\bigr).
\end{equation}
Thus
\begin{equation}\label{eq:HX-degree-q}
H_X^q
=
\bigoplus_{j=1}^N H_X^{q,\langle 2j\rangle}.
\end{equation}
In particular,
\begin{equation}\label{eq:HX-deg-1-2}
H_X^1
=
\bigoplus_{j=1}^N
H^{0,1}_{\bar\partial}\!\bigl(\Xred,\mathcal T_{\widehat X,\bar 0}^{\langle 2j\rangle}\bigr),
\qquad
H_X^2
=
\bigoplus_{j=1}^N
H^{0,2}_{\bar\partial}\!\bigl(\Xred,\mathcal T_{\widehat X,\bar 0}^{\langle 2j\rangle}\bigr).
\end{equation}

The weight decomposition induces a finite decreasing filtration on \(H_X\):
\begin{equation}\label{eq:HX-filtration}
F^jH_X
\defeq
\bigoplus_{r\geq j}H_X^{\langle 2r\rangle}.
\end{equation}
Equivalently,
\[
F^jH_X^q
=
F^jH_X\cap H_X^q
=
\bigoplus_{r\geq j}H_X^{q,\langle 2r\rangle}.
\]
Thus \(F^1H_X=H_X\) and \(F^{N+1}H_X=0\). The degree-one part \(H_X^1\) will provide
cohomological coordinates for minimal Maurer--Cartan elements, while \(H_X^2\) will
contain the corresponding Kuranishi obstruction equations.

For even weights we shall sometimes omit the parity subscript and write
\(
\mathcal T_{\widehat X}^{\langle 2j\rangle}
\)
for
\(
\mathcal T_{\widehat X,\bar 0}^{\langle 2j\rangle}
\).

\subsection{Homotopy transfer}

Before applying homotopy transfer, one needs a contraction of the filtered dg Lie algebra
\((L_X,d)\) onto its cohomology. In the present situation such a contraction exists for a
purely linear reason. The weight decomposition of \(L_X\) is finite, and each homogeneous
piece
\(
A^{0,\bullet}\!\bigl(\Xred,\mathcal T_{\widehat X,\bar 0}^{\langle 2j\rangle}\bigr)
\)
is a complex of vector spaces over the ground field. Hence one may choose, noncanonically,
a splitting into boundaries, cohomology representatives, and a complementary subspace.
Performing this construction weight by weight gives a filtered contraction.

\begin{proposition}\label{prop:contr}
There exist filtered linear maps
\[
i:H_X\longrightarrow L_X,
\qquad
p:L_X\longrightarrow H_X,
\qquad
h:L_X\longrightarrow L_X[-1],
\]
such that
\[
p\circ i=\id_{H_X},
\qquad
\id_{L_X}-i\circ p = dh+hd,
\qquad
p\circ h=0,
\qquad
h\circ i=0,
\qquad
h^2=0.
\]
Equivalently, \((L_X,d)\) contracts onto its cohomology \((H_X,0)\) by filtered
homotopy data.
\end{proposition}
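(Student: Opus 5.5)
The plan is to build the contraction weight by weight and then assemble it by direct sum. By the weight decomposition \eqref{eq:HX-weight-decomp}, the differential \(d=\bar\partial_0\) preserves exterior weight. We therefore have a decomposition of complexes
\[
L_X=\bigoplus_{j=1}^N L_X^{\langle 2j\rangle},
\qquad
L_X^{\langle 2j\rangle}\defeq A^{0,\bullet}\!\bigl(\Xred,\mathcal T_{\widehat X,\bar 0}^{\langle 2j\rangle}\bigr),
\]
and \(F^jL_X=\bigoplus_{r\ge j}L_X^{\langle 2r\rangle}\). If we construct, for each fixed \(j\), a contraction \((i_j,p_j,h_j)\) of \(L_X^{\langle 2j\rangle}\) onto \(H_X^{\langle 2j\rangle}\) with the stated side conditions, then the direct sums \(i=\oplus i_j\), \(p=\oplus p_j\), \(h=\oplus h_j\) preserve weight. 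Hence they send \(F^jL_X\) into \(F^jL_X\) (respectively \(F^jH_X\)), and are therefore filtered. All the identities hold componentwise, so they hold for the sums. The problem thus reduces to a single cochain complex of complex vector spaces \((V,d)=(L_X^{\langle 2j\rangle},\bar\partial_0)\).

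For such a complex I would use linear algebra over \(\CC\), which requires the axiom of choice since \(V\) is infinite dimensional. In each degree \(q\), write \(B^q=\operatorname{im}(d:V^{q-1}\to V^q)\subset Z^q=\ker d\). Choose a complement \(\mathcal H^q\) with \(Z^q=B^q\oplus\mathcal H^q\), and a complement \(C^q\) with \(V^q=Z^q\oplus C^q\). Then \(d|_{C^q}:C^q\to B^{q+1}\) is an isomorphism. Identify \(\mathcal H^q\cong H^q(V)\) via the quotient map. Define \(i\) as the inclusion of \(\mathcal H\), and \(p\) as the projection onto \(\mathcal H\) along \(B\oplus C\) followed by this identification. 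Define \(h\) to be zero on \(\mathcal H\oplus C\), and on \(B^{q}\) to be \((d|_{C^{q-1}})^{-1}:B^q\to C^{q-1}\). The map \(i\) is a chain map because \(\mathcal H\subset Z\), and \(p\) is a chain map because \(p\) vanishes on \(B\) and \(d\) takes values in \(B\).

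Then I would check the identities on each summand. The relation \(p i=\id\) holds by construction. On \(\mathcal H\) we have \(dh+hd=0=\id-ip\). On \(B\), \(hd=0\) and \(dh=\id\), while \(ip=0\) there. On \(C\), \(dh=0\) and \(hd=\id\), while \(ip=0\) there. This gives \(\id-ip=dh+hd\). The side conditions \(ph=0\) (since \(h\) lands in \(C\)), \(hi=0\) (since \(h\) vanishes on \(\mathcal H\)) and \(h^2=0\) (since \(h\) vanishes on \(C\)) are immediate. Alternatively, one could start from any contraction and apply the standard modification \(h\mapsto h'=(dh+hd)h(dh+hd)\) and then \(h''=h'dh'\) to enforce the side conditions. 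This is unnecessary here.

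I expect no real obstacle: the only points requiring care are the infinite-dimensional choice of complements, which is harmless over a field, and the remark that the filtration is split by weight, so filteredness is automatic. As a remark, one could produce more canonical choices via Hodge theory when \(\Xred\) is compact Kähler, using harmonic representatives and the Green operator \(h=\bar\partial^*G\). This is not needed for the proposition.
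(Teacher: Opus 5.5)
Your proposal is correct and matches the paper's proof: the paper also splits $L_X$ into the finitely many weight complexes $A^{0,\bullet}(\Xred,\mathcal T_{\widehat X,\bar 0}^{\langle 2j\rangle})$, chooses splittings $Z_j=B_j\oplus H_j$ and $L_X^{\langle 2j\rangle}=Z_j\oplus C_j$ with $d\colon C_j\to B_j$ an isomorphism, and sums the resulting contractions, with filteredness following from weight preservation. Your explicit check of the homotopy identity and of the side conditions on the summands $\mathcal H$, $B$, $C$ fills in details that the paper leaves implicit.
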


\begin{proof}
For each \(j=1,\dots,N\), set
\[
L_X^{\langle 2j\rangle,\bullet}
\defeq
A^{0,\bullet}\!\bigl(\Xred,\mathcal T_{\widehat X,\bar 0}^{\langle 2j\rangle}\bigr),
\]
with differential \(d=\bar\partial_0\). Choose graded vector space splittings
\[
Z_j^\bullet = B_j^\bullet \oplus H_j^\bullet,
\qquad
L_X^{\langle 2j\rangle,\bullet}=Z_j^\bullet \oplus C_j^\bullet,
\]
where
\(
Z_j^\bullet=\ker d,
\) and \(
B_j^\bullet=\operatorname{im}d,
\)
and where \(H_j^\bullet\subset Z_j^\bullet\) is a choice of representatives for
\[
H^\bullet\!\bigl(L_X^{\langle 2j\rangle,\bullet},d\bigr)
=
H_X^{\langle 2j\rangle}.
\]
Since
\(
d:C_j^\bullet\rightarrow B_j^{\bullet+1}
\)
is an isomorphism, these splittings determine a contraction
\[
(H_j^\bullet,0)
\xrightarrow{i_j}
L_X^{\langle 2j\rangle,\bullet}
\xrightarrow{p_j}
(H_j^\bullet,0),
\qquad
h_j:L_X^{\langle 2j\rangle,\bullet}\longrightarrow L_X^{\langle 2j\rangle,\bullet-1},
\]
satisfying the required side conditions. Summing over the finitely many even weights gives
\[
i=\bigoplus_{j=1}^N i_j,
\qquad
p=\bigoplus_{j=1}^N p_j,
\qquad
h=\bigoplus_{j=1}^N h_j.
\]
The resulting maps preserve the homogeneous weights, hence also the induced filtration.
\end{proof}

The contraction is not canonical. The following theorem should therefore be read as
constructing a cohomological model associated with a chosen contraction; different choices
produce filtered \(L_\infty\)-isomorphic minimal models.

\begin{theorem}[Homotopy transfer]
Let
\[
(H_X,0)\xrightarrow{i}(L_X,d)\xrightarrow{p}(H_X,0),
\qquad
h:L_X\longrightarrow L_X[-1],
\]
be a filtered contraction as in Proposition~\ref{prop:contr}. Then the homotopy transfer
theorem endows \(H_X\) with a minimal filtered \(L_\infty\)-structure
\(
\bigl(H_X,\{\ell_r\}_{r\geq 2}\bigr),
\)
together with a filtered \(L_\infty\)-quasi-isomorphism
\[
I:\bigl(H_X,\{\ell_r\}_{r\geq 2}\bigr)
\rightsquigarrow
(L_X,d,[\, ,\, ]),
\]
whose linear term is \(I_1=i\).

Moreover, the transferred brackets preserve homogeneous weights additively: if
\(
u_a\in H_X^{\langle 2j_a\rangle}\) for \( a=1,\dots,r,
\)
then
\[
\ell_r(u_1,\dots,u_r)
\in
H_X^{\langle 2(j_1+\cdots+j_r)\rangle}.
\]
In particular,
\[
\ell_r\bigl(F^{j_1}H_X,\dots,F^{j_r}H_X\bigr)
\subset
F^{j_1+\cdots+j_r}H_X.
\]
\end{theorem}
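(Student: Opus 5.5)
The plan is to use the explicit tree formula for homotopy transfer. Given the contraction $(i,p,h)$ of Proposition~\ref{prop:contr}, the standard homotopy transfer theorem for dg Lie algebras (cf.\ \cite{LodayVallette,GetzlerLieTheory}) defines the transferred brackets
\[
\ell_r=\sum_{T}\pm\frac{1}{|\Aut T|}\,p\circ m_T(h;[\,,\,])\circ i^{\otimes r},
\]
summed over rooted binary trees $T$ with $r$ leaves. Each leaf is decorated by $i$, each internal vertex by the bracket of $L_X$, each internal edge by $h$, and the root by $p$. The quasi-isomorphism $I$ is given by the analogous sum with the root decorated by $h$ instead of $p$ (for $r\ge2$) and with $I_1=i$. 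Minimality, i.e.\ $\ell_1=0$, holds because $H_X$ carries the zero differential. The $L_\infty$-relations, and the fact that $I$ is an $L_\infty$-morphism, are the content of the general theorem; they use only the identities $pi=\id$ and $\id-ip=dh+hd$ together with the side conditions. I would cite these rather than reprove them. Since $I_1=i$ is a quasi-isomorphism, $I$ is an $L_\infty$-quasi-isomorphism.

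The additional content to check is the weight statement. Equip $L_X$ with the grading by exterior weight, $L_X=\bigoplus_j L_X^{\langle 2j\rangle,\bullet}$. The bracket is additive in weight by Proposition~\ref{L_X}, or more precisely by $[\mathcal T^{\langle p\rangle},\mathcal T^{\langle q\rangle}]\subset\mathcal T^{\langle p+q\rangle}$. By construction in Proposition~\ref{prop:contr}, the maps $i,p,h$ are direct sums of the maps $i_j,p_j,h_j$, so they preserve weight exactly. Every term in the tree formula is a composite of weight-preserving linear maps and weight-additive brackets. By induction on the number of vertices of $T$, the output of $m_T$ on inputs of weights $2j_1,\dots,2j_r$ therefore has weight $2(j_1+\cdots+j_r)$. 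This gives the claim for $\ell_r$ and likewise for the components $I_r$.

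The filtered statement follows at once. Since $F^jH_X=\bigoplus_{s\ge j}H_X^{\langle 2s\rangle}$, multilinearity and weight additivity give $\ell_r(F^{j_1}H_X,\dots,F^{j_r}H_X)\subset F^{j_1+\cdots+j_r}H_X$, and similarly $I_r$ maps into $F^{j_1+\cdots+j_r}L_X$. Hence $I$ is filtered. Finiteness of the filtration, with $F^{N+1}=0$, shows that $\ell_r$ vanishes on $H_X^{\otimes r}$ for $r>N$, because every input lies in $F^1$. So the $L_\infty$-structure has only finitely many nonzero brackets, and no convergence issues arise. Finally, $I$ induces quasi-isomorphisms on each graded piece $\gr^j$, since $i_j$ is a quasi-isomorphism weight by weight. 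This makes $I$ a filtered quasi-isomorphism in the strong sense.

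The main obstacle is the sign and convention bookkeeping in the general transfer theorem, specifically in verifying the $L_\infty$-relations. I would not redo this but cite it as a black box. The only genuinely new point is that the contraction is weight-homogeneous and not merely filtered, and this is exactly what Proposition~\ref{prop:contr} provides.
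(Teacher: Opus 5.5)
Your proposal is correct and follows essentially the same route as the paper: cite the standard homotopy transfer theorem for the minimal structure and the $L_\infty$-quasi-isomorphism with $I_1=i$, then read weight additivity off the tree formulas, using that $i,p,h$ are weight-homogeneous and the bracket adds weights, which yields the filtered inclusion. Your extra observations are consistent with Remark~\ref{rem:linf}: $\ell_r=0$ for $r>N$, $I$ is filtered, and $I$ is a quasi-isomorphism on each graded piece.
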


\begin{proof}
The existence of the transferred minimal \(L_\infty\)-structure and of the
\(L_\infty\)-quasi-isomorphism \(I\) is the standard homotopy transfer theorem applied
to the contraction constructed in Proposition~\ref{prop:contr}; see, for instance,
\cite[Thm.~10.3.3]{LodayVallette}.

The compatibility with the homogeneous weights follows from the transfer formulas. The
differential \(d\), the bracket \([\, ,\, ]\), and the contraction maps \(i\), \(p\), and
\(h\) all preserve the weight decomposition, while the bracket adds weights. Therefore
every rooted-tree expression appearing in the transferred brackets has total weight equal
to the sum of the input weights. This proves both assertions.
\end{proof}

\begin{remark}\label{rem:linf}
The transferred \(L_\infty\)-algebra is minimal, since \(\ell_1=0\). It is also finite
and nilpotent, because the filtration \(F^\bullet H_X\) is finite and the brackets satisfy
\[
\ell_r\bigl(F^{j_1}H_X,\dots,F^{j_r}H_X\bigr)
\subset
F^{j_1+\cdots+j_r}H_X.
\]

The filtered \(L_\infty\)-quasi-isomorphism \(I\) induces, for every local Artin dg
algebra \(A\) over the ground field with maximal ideal \(\mathfrak m_A\), an equivalence
of Maurer--Cartan \(\infty\)-groupoids
\[
MC_\infty(H_X\otimes\mathfrak m_A)
\simeq
MC_\infty(L_X\otimes\mathfrak m_A).
\]
Thus the transferred minimal filtered \(L_\infty\)-algebra governs the same pointed
formal moduli problem as \(L_X\), but in a cohomological model. As we stressed above, this model is canonical
only up to filtered \(L_\infty\)-isomorphism, because the contraction used to construct it
is not canonical.

For ordinary local Artin algebras, passing to connected components recovers the usual
Maurer--Cartan classes modulo gauge:
\[
\pi_0MC_\infty(L_X\otimes\mathfrak m_A)
\cong
MC(L_X\otimes\mathfrak m_A)/\mathrm{gauge}.
\]
See again \cite{GetzlerLieTheory} for the Maurer--Cartan \(\infty\)-groupoid, and
\cite[\S1]{CalaqueGrivauxFMP}, \cite{LurieDAGX}, \cite{PridhamUDDT}, for the formal
moduli interpretation.
\end{remark}

\subsection{Kuranishi lifts and the obstruction map}

Although the higher brackets may be written explicitly by rooted-tree formulas, for our
purposes it is useful to describe the transferred structure by means of the Kuranishi
slice associated with the chosen contraction. 

Let
\(
\alpha\in H_X^1.
\)
We want to construct a distinguished lift
\(
\tau(\alpha)\in L_X^1
\)
which projects to \(\alpha\) and satisfies the gauge-fixing condition \(h(\tau(\alpha))=0\).

\begin{definition}[Kuranishi lift]
Let \(\alpha\in H_X^1\). A \emph{Kuranishi lift} of \(\alpha\) is an element
\(\tau\in L_X^1\) satisfying
\begin{equation}\label{eqn:Kuranishi}
\tau
=
i(\alpha)-\frac12\,h[\tau,\tau].
\end{equation}
When it exists, it is denoted by \(\tau(\alpha)\).
\end{definition}

The equation is a finite nonlinear fixed-point equation with respect to the nilpotent
filtration. It can therefore be solved weight by weight.

\begin{proposition}\label{prop:KuranishiLift}
For every \(\alpha\in H_X^1\), the Kuranishi equation \eqref{eqn:Kuranishi} has a unique
solution
\(
\tau(\alpha)\in L_X^1.
\)
Moreover,
\[
p(\tau(\alpha))=\alpha,
\qquad
h(\tau(\alpha))=0.
\]
\end{proposition}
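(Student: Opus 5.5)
We solve the fixed-point equation \eqref{eqn:Kuranishi} weight by weight, using the finite weight decomposition of \(L_X\). The contraction of Proposition~\ref{prop:contr} preserves homogeneous weights and the bracket adds weights. Write \(\alpha=\sum_{j=1}^N\alpha_j\) with \(\alpha_j\in H_X^{1,\langle 2j\rangle}\), and look for \(\tau=\sum_{j=1}^N\tau_j\) with \(\tau_j\in A^{0,1}(\Xred,\mathcal T_{\widehat X}^{\langle 2j\rangle})\). Projecting \eqref{eqn:Kuranishi} onto weight \(2j\) gives the triangular system
\[
\tau_1=i(\alpha_1),\qquad
\tau_j=i(\alpha_j)-\frac12\,h\Bigl(\sum_{a+b=j,\ a,b\geq1}[\tau_a,\tau_b]\Bigr)\quad (2\le j\le N).
\]
The right-hand side at weight \(2j\) involves only \(\tau_a\) with \(a<j\), since \(a,b\geq 1\) forces \(a,b\leq j-1\). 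Hence the system defines \(\tau_1,\dots,\tau_N\) recursively, and all weights above \(2N\) vanish because \(F^{N+1}L_X=0\). This proves existence.

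For uniqueness, suppose \(\tau\) and \(\tau'\) both solve \eqref{eqn:Kuranishi}. We show by induction on \(j\) that their weight-\(2j\) components agree. The weight-\(2j\) equation determines \(\tau_j\) from the lower components, so agreement below weight \(2j\) forces \(\tau_j=\tau'_j\). Equivalently, the map \(\Phi(\tau)=i(\alpha)-\tfrac12h[\tau,\tau]\) is contracting for the filtration: if \(\tau-\tau'\in F^jL_X\), then \(\Phi(\tau)-\Phi(\tau')=-\tfrac12h[\tau-\tau',\tau+\tau']\in F^{j+1}L_X\). Since the filtration is finite, \(\tau=\tau'\). Note that \(\tau\in L_X^1\) automatically, because \(i\) preserves degree, \(h\) lowers degree by one, and the bracket of two degree-one elements has degree two.

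The identities \(p(\tau(\alpha))=\alpha\) and \(h(\tau(\alpha))=0\) follow by applying \(p\) and \(h\) to \eqref{eqn:Kuranishi} and using the side conditions of Proposition~\ref{prop:contr}. Applying \(p\) gives \(p(\tau)=p\,i(\alpha)-\tfrac12\,p\,h[\tau,\tau]=\alpha\), since \(p\circ i=\id\) and \(p\circ h=0\). Applying \(h\) gives \(h(\tau)=h\,i(\alpha)-\tfrac12h^2[\tau,\tau]=0\), since \(h\circ i=0\) and \(h^2=0\).

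The only point needing care is the bookkeeping that \(h\) and \(i\) are weight-homogeneous rather than merely filtered. Proposition~\ref{prop:contr} builds them as direct sums \(\bigoplus_j h_j\) and \(\bigoplus_j i_j\), so this holds by construction. There is no genuine obstacle, because all series are finite.
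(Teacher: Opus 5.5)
Your proposal is correct and follows the paper's proof essentially verbatim. Both solve the weight-by-weight triangular recursion \(\tau_j=i(\alpha_j)-\frac12\sum_{a+b=j}h[\tau_a,\tau_b]\) for existence and uniqueness, and both obtain \(p(\tau)=\alpha\) and \(h(\tau)=0\) from the side conditions \(pi=\id\), \(ph=0\), \(hi=0\), \(h^2=0\). Your extra filtration-contraction argument (using \([\tau,\tau]-[\tau',\tau']=[\tau-\tau',\tau+\tau']\) for degree-one elements) is a valid, slightly more explicit version of the paper's ``uniquely determined'' step.
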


\begin{proof}
Write
\(
\alpha=\alpha_1+\cdots+\alpha_N,
\) with \(
\alpha_j\in H_X^{1,\langle 2j\rangle},
\)
and set
\(
i(\alpha_j)=a_j
\in
A^{0,1}\!\bigl(\Xred,\mathcal T_{\widehat X,\bar 0}^{\langle 2j\rangle}\bigr).
\)
We look for
\[
\tau=\tau_1+\cdots+\tau_N,
\qquad
\tau_j\in
A^{0,1}\!\bigl(\Xred,\mathcal T_{\widehat X,\bar 0}^{\langle 2j\rangle}\bigr).
\]
Taking the component of weight \(2j\) in \eqref{eqn:Kuranishi} gives
\[
\tau_j
=
a_j-\frac12\sum_{a+b=j}h[\tau_a,\tau_b].
\]
For \(j=1\) the sum is empty, so \(\tau_1=a_1\). If
\(\tau_1,\dots,\tau_{j-1}\) have already been determined, the right-hand side of the
weight-\(2j\) equation depends only on the previously constructed terms and on \(a_j\).
Thus \(\tau_j\) is uniquely determined. Since there are only finitely many weights, this
constructs a unique solution.

Applying \(p\) to \eqref{eqn:Kuranishi} and using \(p i=\id\) and \(p h=0\), one obtains
\(p(\tau(\alpha))=\alpha\). Applying \(h\) and using \(h i=0\) and \(h^2=0\), one obtains
\(h(\tau(\alpha))=0\).
\end{proof}

The Kuranishi lift need not be Maurer--Cartan. Its failure to satisfy the
Maurer--Cartan equation is measured by the following curvature.

\begin{definition}[Kuranishi map]
For \(\alpha\in H_X^1\), define
\[
F(\alpha)
\defeq
d\tau(\alpha)+\frac12[\tau(\alpha),\tau(\alpha)]
\in L_X^2.
\]
The \emph{Kuranishi map} associated with the chosen contraction is
\(
\kappa:H_X^1\rightarrow H_X^2,
\)
given by
\begin{equation}\label{eqn:KuranishiMap}
\kappa(\alpha)
\defeq
p\!\left(F(\alpha)\right)
=
p\!\left(d\tau(\alpha)+\frac12[\tau(\alpha),\tau(\alpha)]\right).
\end{equation}
\end{definition}

Since the filtration is finite, \(\kappa\) is a finite polynomial map in the filtered
cohomological coordinates. Its zero locus is the Kuranishi presentation of the
Maurer--Cartan functor in the chosen slice.

\begin{proposition}\label{prop:KuranishiMC}
For every \(\alpha\in H_X^1\), the following are equivalent:
\begin{enumerate}
\item \(\tau(\alpha)\in MC(L_X)\);
\item \(\kappa(\alpha)=0\).
\end{enumerate}
Moreover, the Maurer--Cartan equation in the transferred minimal \(L_\infty\)-algebra
on \(H_X\) is
\begin{equation}\label{eqn:minimalMC}
\kappa(\alpha)
=
\sum_{r\geq 2}\frac1{r!}\,\ell_r(\alpha,\dots,\alpha)
=
0.
\end{equation}
\end{proposition}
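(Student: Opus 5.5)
The plan is the standard Kuranishi argument: show that the curvature $F(\alpha)$ of the Kuranishi lift is governed by its projection $\kappa(\alpha)$, then identify $\kappa$ with the minimal Maurer--Cartan expression via the tree formulas for the transferred brackets.

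\textbf{Step 1: $(1)\Rightarrow(2)$.} If $\tau(\alpha)\in MC(L_X)$ then $F(\alpha)=0$, so $\kappa(\alpha)=p(F(\alpha))=0$.

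\textbf{Step 2: three identities for $F$.} Write $\tau=\tau(\alpha)$ and $F=F(\alpha)$.
\begin{enumerate}
\item Bianchi identity. From the Jacobi identity, $[\tau,[\tau,\tau]]=0$, and from $d^2=0$ one gets $dF+[\tau,F]=0$.
\item $hF$ vanishes. Apply $d$ to the Kuranishi equation \eqref{eqn:Kuranishi}, using $di(\alpha)=0$ and $dh=\mathrm{id}-ip-hd$. This gives
\[
d\tau=-\tfrac12[\tau,\tau]+\tfrac12\, ip[\tau,\tau]+\tfrac12\, hd[\tau,\tau].
\]
Hence $F=\tfrac12\, ip[\tau,\tau]+\tfrac12\, hd[\tau,\tau]$.
\item Projection. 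Since $pd\tau=0$ (because $pd=0$ for a contraction onto $(H_X,0)$), we have $\kappa(\alpha)=\tfrac12\, p[\tau,\tau]$. Combined with $hi=0$ and $h^2=0$, item 2 gives $hF=0$.
\end{enumerate}
Using $\mathrm{id}=ip+dh+hd$ together with $hF=0$, we get $F=i\kappa(\alpha)+hdF$, and then $F=i\kappa(\alpha)-h[\tau,F]$ by the Bianchi identity.

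\textbf{Step 3: $(2)\Rightarrow(1)$.} Assume $\kappa(\alpha)=0$. Then $F=-h[\tau,F]$. Since $\tau\in F^1L_X$ and the bracket raises filtration by at least one, the map $G\mapsto -h[\tau,G]$ sends $F^jL_X$ into $F^{j+1}L_X$. Iterating $N+1$ times gives $F\in F^{N+1}L_X=0$. Hence $\tau(\alpha)$ is Maurer--Cartan.

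\textbf{Step 4: identification with the minimal MC equation.} Expand $\tau$ recursively as a sum over rooted binary trees: leaves carry $i(\alpha)$, internal vertices carry $-h[\ ,\ ]$. Then $\kappa(\alpha)=\tfrac12\, p[\tau,\tau]$ becomes a sum over binary trees with root decorated by $p[\ ,\ ]$. Collecting the trees with $r$ leaves, the symmetric factors combine into $\frac1{r!}\ell_r(\alpha,\dots,\alpha)$, where $\ell_r$ is the transferred bracket given by the standard Loday--Vallette tree formula. In that formula the operator $\ell_r$ is exactly this sum of binary trees with inputs $i$, internal edges $h$ and output $p$, up to the sign and normalisation conventions.

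I expect the main obstacle to be bookkeeping of the combinatorial coefficients, so that the symmetry factors of the trees match $1/r!$ with the sign conventions of the cited transfer theorem. I would sidestep this with a generating-function argument. The element $\tau(\alpha)$ is the image of $\alpha$ under the Maurer--Cartan map induced by the $L_\infty$ quasi-isomorphism $I$, namely $\sum_r\frac1{r!}I_r(\alpha,\dots,\alpha)$. The identity $\sum_r\frac1{r!}\ell_r(\alpha^{\otimes r})=p(F)$ is then the standard statement that $pI_*=\mathrm{MC}$-curvature compatibility holds for the transferred structure. Finite nilpotence guarantees that all sums are finite, so no convergence issues arise.
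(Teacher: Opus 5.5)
Your proposal is correct and follows the paper's argument: establish $hF(\alpha)=0$, use the contraction identity to get $F(\alpha)=i\kappa(\alpha)+hdF(\alpha)$, apply the Bianchi identity to obtain $F(\alpha)=i\kappa(\alpha)-h[\tau(\alpha),F(\alpha)]$, and let the finite filtration force $F(\alpha)=0$ when $\kappa(\alpha)=0$. The differences are cosmetic: you get $hF(\alpha)=0$ by applying $d$ to the Kuranishi equation, whereas the paper compares two expressions for $\tau(\alpha)-i(\alpha)$; and you sketch the identification with $\sum_r\frac1{r!}\ell_r$ (via $pI_r=0$ for $r\ge 2$, since $ph=0$), which the paper simply cites from Loday--Vallette.
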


\begin{proof}
If \(\tau(\alpha)\) is Maurer--Cartan, then \(F(\alpha)=0\), hence
\(\kappa(\alpha)=0\).

Conversely, assume \(\kappa(\alpha)=0\), so \(p(F(\alpha))=0\). We first observe that
\(h(F(\alpha))=0\). Indeed, from the contraction identity and the relations
\(p(\tau(\alpha))=\alpha\), \(h(\tau(\alpha))=0\), we obtain
\[
\tau(\alpha)-i(\alpha)=h d\tau(\alpha).
\]
On the other hand, the Kuranishi equation gives
\[
\tau(\alpha)-i(\alpha)=-\frac12h[\tau(\alpha),\tau(\alpha)].
\]
Therefore
\[
hd\tau(\alpha)
=-\frac12h[\tau(\alpha),\tau(\alpha)],
\]
and hence
\[
h(F(\alpha))
=
hd\tau(\alpha)+\frac12h[\tau(\alpha),\tau(\alpha)]
=0.
\]

Applying the contraction identity to \(F(\alpha)\) gives
\[
F(\alpha)-ip(F(\alpha))
=
dh(F(\alpha))+hd(F(\alpha)).
\]
Since \(p(F(\alpha))=0\) and \(h(F(\alpha))=0\), this becomes
\[
F(\alpha)=h d(F(\alpha)).
\]
The Bianchi identity
\[
d(F(\alpha))+[\tau(\alpha),F(\alpha)]=0
\]
then implies
\[
F(\alpha)=-h[\tau(\alpha),F(\alpha)].
\]
Because \(\tau(\alpha)\in F^1L_X^1\), the bracket is filtration-additive, and \(h\)
preserves the filtration, the last identity shows that if \(F(\alpha)\in F^mL_X^2\),
then \(F(\alpha)\in F^{m+1}L_X^2\). Starting from
\(F(\alpha)\in F^1L_X^2\) and iterating, one obtains
\(F(\alpha)\in F^mL_X^2\) for every \(m\). Since the filtration is finite, this forces
\(F(\alpha)=0\). Thus \(\tau(\alpha)\) is Maurer--Cartan.

The formula \eqref{eqn:minimalMC} is the standard Kuranishi form of the
Maurer--Cartan equation for the transferred minimal \(L_\infty\)-structure \cite{LodayVallette}.
\end{proof}

Thus the degree-one cohomology \(H_X^1\) gives the cohomological deformation
coordinates in the chosen Kuranishi slice, and the map
\(
\kappa:H_X^1\rightarrow H_X^2
\)
gives the corresponding obstruction equations. This is a cohomological presentation of
the same pointed formal moduli problem governed by \(L_X\).

\subsection{Weight decomposition of the minimal Maurer--Cartan equation}

Let
\[
\alpha=
\alpha_1+\cdots+\alpha_N\in H_X^1,
\qquad
\alpha_j\in H_X^{1,\langle 2j\rangle}.
\]
Because the transferred brackets preserve total weight, the minimal Maurer--Cartan
equation \eqref{eqn:minimalMC} splits into homogeneous weight components. The system is
triangular: the equation in weight \(2j\) depends only on
\(\alpha_1,\dots,\alpha_{j-1}\).

For every \(2\leq j\leq N\), define
\begin{equation}\label{eqn:partialMC}
\kappa_j(\alpha_1,\dots,\alpha_{j-1})
\defeq
\sum_{r\geq 2}\frac1{r!}
\sum_{\substack{i_1+\cdots+i_r=j\\ i_a\geq 1}}
\ell_r(\alpha_{i_1},\dots,\alpha_{i_r})
\in H_X^{2,\langle 2j\rangle}.
\end{equation}
Since \(r\geq 2\), every index \(i_a\) occurring in the inner sum is strictly smaller
than \(j\). Thus \(\kappa_j\) depends only on lower-weight coordinates.

For example, the first few equations are
\[
\kappa_2(\alpha_1)
=
\frac12\ell_2(\alpha_1,\alpha_1),
\]
\[
\kappa_3(\alpha_1,\alpha_2)
=
\ell_2(\alpha_1,\alpha_2)
+
\frac16\ell_3(\alpha_1,\alpha_1,\alpha_1),
\]
and
\[
\kappa_4(\alpha_1,\alpha_2,\alpha_3)
=
\ell_2(\alpha_1,\alpha_3)
+
\frac12\ell_2(\alpha_2,\alpha_2)
+
\frac12\ell_3(\alpha_1,\alpha_1,\alpha_2)
+
\frac1{24}\ell_4(\alpha_1,\alpha_1,\alpha_1,\alpha_1).
\]

\begin{proposition}[Truncated minimal Maurer--Cartan equation]\label{prop:weightMC}
Let \(1\leq m\leq N\), and set
\[
\alpha_{\leq m}
\defeq
\alpha_1+\cdots+\alpha_m,
\qquad
\alpha_j\in H_X^{1,\langle 2j\rangle}.
\]
Then \(\alpha_{\leq m}\) satisfies the minimal Maurer--Cartan equation modulo
\(F^{m+1}H_X^2\) if and only if
\[
\kappa_2(\alpha_1)=0,
\qquad
\kappa_3(\alpha_1,\alpha_2)=0,
\qquad
\dots,
\qquad
\kappa_m(\alpha_1,
\dots,
\alpha_{m-1})=0.
\]
Equivalently, these equations are the homogeneous components of the minimal
Maurer--Cartan equation of weights \(4,6,\dots,2m\).
\end{proposition}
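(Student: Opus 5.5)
The argument is a weight bookkeeping on the minimal Maurer--Cartan expression \eqref{eqn:minimalMC}, applied to the truncated element \(\alpha_{\le m}\). We use only the weight additivity of the transferred brackets from the homotopy transfer theorem and the direct sum decomposition \eqref{eq:HX-degree-q}.

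First, expand
\[
\sum_{r\ge 2}\frac1{r!}\,\ell_r(\alpha_{\le m},\dots,\alpha_{\le m})
\]
by multilinearity. The \(\alpha_i\) have degree one and \(\ell_r\) is graded symmetric with the standard Koszul conventions. The expansion is therefore a sum over ordered tuples \((i_1,\dots,i_r)\) with \(1\le i_a\le m\) of \(\frac1{r!}\ell_r(\alpha_{i_1},\dots,\alpha_{i_r})\). Since \(\ell_r\) is multilinear, no combinatorial regrouping is needed, and this matches the ordered sum in \eqref{eqn:partialMC}.

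By the homotopy transfer theorem, each such term lies in \(H_X^{2,\langle 2(i_1+\cdots+i_r)\rangle}\). Group the terms by total weight \(2j\) with \(j=i_1+\cdots+i_r\). Because \(r\ge 2\) and every \(i_a\ge 1\), we have \(j\ge 2\). The weight \(2j\) component is the sum over tuples with \(i_1+\dots+i_r=j\) and all \(i_a\le m\).

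The key observation is that for \(j\le m\) the constraint \(i_a\le m\) is automatic. Indeed, \(i_a\le j-1<m\), since each \(i_a\) is at most \(j\) minus the sum of at least one other positive index. Hence the weight \(2j\) component of \(\sum_r\frac1{r!}\ell_r(\alpha_{\le m}^{\otimes r})\) equals exactly \(\kappa_j(\alpha_1,\dots,\alpha_{j-1})\) as defined in \eqref{eqn:partialMC}. In particular, it is independent of the coordinates \(\alpha_j,\dots,\alpha_m\). Weight \(2\) contributes nothing, and all components of weight \(>2m\) lie in \(F^{m+1}H_X^2\).

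Finally, by \eqref{eq:HX-filtration} we have \(H_X^2/F^{m+1}H_X^2\cong\bigoplus_{j=1}^m H_X^{2,\langle 2j\rangle}\). An element vanishes modulo \(F^{m+1}H_X^2\) if and only if each of its homogeneous components of weight \(2j\) with \(j\le m\) vanishes. The weight \(2\) component is zero, so vanishing modulo \(F^{m+1}\) is equivalent to \(\kappa_2=\cdots=\kappa_m=0\). This also yields the reformulation as the homogeneous components of weights \(4,\dots,2m\).

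The only delicate point is that "modulo \(F^{m+1}H_X^2\)" must be read via this direct sum splitting, which is legitimate because the filtration comes from an actual grading. Checking that all terms of weight \(\le 2m\) in the full equation for \(\alpha\) coincide with those for \(\alpha_{\le m}\) is then immediate from the same index bound.
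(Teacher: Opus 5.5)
Your argument is correct and follows the paper's proof. Both identify the weight-\(2j\) component of the minimal Maurer--Cartan expression with \(\kappa_j(\alpha_1,\dots,\alpha_{j-1})\) using weight additivity of the transferred brackets, and then read off vanishing modulo \(F^{m+1}H_X^2\) componentwise. Your index bound \(i_a\le j-1<m\), which shows that truncating to \(\alpha_{\le m}\) does not affect the components of weight \(\le 2m\), is the one detail the paper leaves implicit.
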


\begin{proof}
The component of \(\kappa(\alpha)\) in weight \(2j\) is exactly
\(\kappa_j(\alpha_1,
\dots,
\alpha_{j-1})\). Therefore the minimal Maurer--Cartan equation modulo
\(F^{m+1}H_X^2\) is equivalent to the vanishing of all components of weights
\(4,6,\dots,2m\), namely the displayed equations. The case \(m=1\) is empty and says
that every degree-one class satisfies the equation modulo \(F^2H_X^2\).
\end{proof}

In particular, a full minimal Maurer--Cartan element
\(
\alpha=\alpha_1+\cdots+\alpha_N
\)
is equivalent to a finite system of equations
\begin{equation}\label{eqn:full-weight-MC}
\kappa_j(\alpha_1,
\dots,
\alpha_{j-1})=0,
\qquad
2\leq j\leq N.
\end{equation}
Thus the higher brackets of the minimal model do not themselves produce the Green
obstruction classes. Rather, they produce the intrinsic \(H_X^2\)-valued compatibility
equations that the degree-one cohomological coordinates must satisfy in order to arise
from an actual Maurer--Cartan element.

\subsection{Comparison with the sheaf-theoretic obstruction tower}

We now compare the cohomological minimal model with the sheaf-theoretic obstruction
tower. Clearly, the comparison must keep track of the choices already present in the Green tower.
The primary leading class is intrinsic, because the first-order splitting is canonical. At
higher order, the leading class depends on the chosen partial splitting.

Let \(1\leq j\leq N\), and let
\(
\sigma^{(2j-1)}:\widehat X^{(2j-1)}\xrightarrow{\sim}X^{(2j-1)}
\)
be a chosen \((2j-1)\)-splitting. Choose a Maurer--Cartan representative
\[
\mu^{(j)}\in MC(L_X)\cap F^jL_X^1
\]
adapted to \(\sigma^{(2j-1)}\) in the sense of Definition~\ref{def:adapted-MC-representative}, and write
\[
\eta_{2j}
\defeq
\operatorname{pr}_{2j}(\mu^{(j)})
\in
A^{0,1}\!\bigl(\Xred,
\mathcal T_{\widehat X}^{\langle 2j\rangle}\bigr)
\]
for its leading term. By Proposition~\ref{prop:leadingeta}, \(\eta_{2j}\) is
\(\bar\partial\)-closed.

\begin{definition}\label{def:adapted-leading-class}
The \emph{adapted leading class} associated with the chosen \((2j-1)\)-splitting is
\[
\beta_{X,j}\bigl(\sigma^{(2j-1)}\bigr)
\defeq
[\eta_{2j}]_{\bar\partial}
\in
H^1\!\bigl(\Xred,\mathcal T_{\widehat X}^{\langle 2j\rangle}\bigr)
=
H_X^{1,\langle 2j\rangle}.
\]
\end{definition}

\begin{lemma}\label{lem:adapted-leading-well-defined}
The class
\(
\beta_{X,j}\bigl(\sigma^{(2j-1)}\bigr)
\)
is independent of the adapted Maurer--Cartan representative \(\mu^{(j)}\) used to define
it. In general, for \(j\geq 2\), it depends on the chosen \((2j-1)\)-splitting.
\end{lemma}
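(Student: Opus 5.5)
The plan has two parts: independence of the representative, and dependence on the splitting.

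\emph{Independence.} Let \(\mu^{(j)}=e^{u}*\mu_X\) and \(\mu'^{(j)}=e^{u'}*\mu_X\) be two Maurer--Cartan representatives adapted to the same \((2j-1)\)-splitting \(\sigma^{(2j-1)}\). Then \(\mu'^{(j)}=e^{w}*\mu^{(j)}\), where \(e^{w}=e^{-u}e^{u'}\in G_X^{\geq 2}\). By Definition~\ref{def:adapted-MC-representative}, both composites \(\tau^\infty\circ e^{u}\) and \(\tau^\infty\circ e^{u'}\) induce \(\sigma^{(2j-1)}\) on \((2j-1)\)-truncations. Hence \(e^{w}\) induces the identity on \(\widehat{\mathcal O}_X^\infty/(\widehat{\mathcal J}_X^\infty)^{2j}\).

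The first step is to translate this into a filtration statement. Write \(w=w_1+w_2+\cdots\) with \(w_q\in A^{0,0}(\Xred,\mathcal T^{\langle 2q\rangle})\). Suppose some \(w_q\neq 0\) with \(q<j\), and take the smallest such \(q\). Then \(e^w\equiv \id+w_q\) modulo derivations of weight \(>2q\). Since \(2q+1\leq 2j-1\), the derivation \(w_q\) acts nontrivially on the truncation \(\wedge^{\le 2j-1}\), which is a contradiction. So \(w\in F^jL_X^0\).

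The second step is to compute the gauge action modulo \(F^{j+1}\). For \(w\in F^jL^0_X\) and \(\mu\in F^jL^1_X\), the formula \(e^w*\mu=\mu-\sum_{r\geq 0}\frac{\operatorname{ad}_w^{r}}{(r+1)!}(dw+[w,\mu])\) (with the sign convention \(\bar\partial_{e^u*\mu}=e^{-u}\bar\partial_\mu e^{u}\)) gives
\[
\operatorname{pr}_{2j}(e^w*\mu)=\operatorname{pr}_{2j}(\mu)\pm\bar\partial w_j .
\]
This holds because every bracket term lies in \(F^{2j}\subset F^{j+1}\). Therefore \(\eta'_{2j}-\eta_{2j}\) is \(\bar\partial\)-exact, and the Dolbeault classes coincide. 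This is the same associated-graded computation already used in the proof of Lemma~\ref{lem:CechDolbeaultLeading}.

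The main technical point is the first step: one must check that agreement on truncations really forces all components of weight below \(2j\) to vanish. I would do this with the lowest-weight argument above, which uses the exponential lemma and the fact that a nonzero even derivation of weight \(2q\leq 2j-2\) is detected on \(\wedge^{\leq 2q+1}\).

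\emph{Dependence on the splitting.} The statement says the class depends on the chosen splitting "in general", so what has to be shown is that it can change. Take two \((2j-1)\)-splittings \(\sigma,\sigma'\) lifting the same \((2j-2)\)-splitting. They differ by an element of \(H^0(\Xred,\mathcal Q_X^{(2j-1)})\), corresponding to a holomorphic \(c\in H^0(\Xred,\mathcal T^{\langle 2j-2\rangle})\) lying in the kernel part. Gauging an adapted representative by \(e^{c}\) changes the weight-\(2j\) leading term by \([c,\eta_{2j-2}]\)-type contributions, i.e.\ by the bracket of \(c\) with the lower leading data. In general this difference is nonzero in cohomology.

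For \(j=1\) the first-order splitting is canonical, so no dependence arises. This is consistent with the qualifier "for \(j\geq 2\)". For the dependence itself I would simply note the explicit formula and defer to the paper's examples, since a genuine nonvanishing instance is example-level.
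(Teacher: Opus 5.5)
Your independence argument is essentially the paper's. Two representatives adapted to the same \((2j-1)\)-splitting differ by a gauge \(e^{w}\) with \(w\in F^jL_X^0\), and modulo \(F^{j+1}L_X\) the action is \(\mu\mapsto\mu+\bar\partial w\). The paper only asserts \(w\in F^jL_X^0\); your lowest-weight argument proves it correctly. One small slip: with \(\bar\partial_{e^u*\mu}=e^{-u}\bar\partial_\mu e^u\) one gets \(\mu+dw-[w,\mu]+\cdots\), so your displayed formula has the sign of \(dw\) reversed. You only use it up to \(\pm\), so nothing breaks.

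The dependence part fails. Since \(\mu^{(j)}\in F^jL_X^1\), it has no weight-\((2j-2)\) term, so there is no ``\(\eta_{2j-2}\)'' for \(c\) to bracket with. Your \(c\) is a global holomorphic section of \(\mathcal T_{\widehat X}^{\langle 2j-2\rangle}\), so \(\bar\partial c=0\) and
\[
e^{c}*\mu^{(j)}=e^{-\operatorname{ad}_c}\mu^{(j)}=\mu^{(j)}-[c,\mu^{(j)}]+\cdots .
\]
Every bracket here has weight at least \(4j-2>2j\) when \(j\geq 2\). This representative lies in \(F^jL_X^1\) and is adapted to the modified splitting (for the appropriate sign of \(c\)). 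Its weight-\(2j\) component is exactly \(\eta_{2j}\), so the change is zero, not ``nonzero in general''.

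The same holds for any two \((2j-1)\)-splittings \(\sigma,\sigma'\). The map \(g=\sigma^{-1}\sigma'\) is a global holomorphic automorphism of \(\wedge^{\le 2j-1}\FF_X\) with \(\operatorname{gr}g=\id\). Its logarithm has components of even weights \(2q\le 2j-2\), since weights \(\ge 2j\) act trivially on the truncation. Because \(2q+1\le 2j-1\), each component is determined by values in \(\wedge^{2q}\FF_X\) and \(\wedge^{2q+1}\FF_X\) that lie inside the truncation. So each is the truncation of a global holomorphic section of \(\mathcal T^{\langle 2q\rangle}_{\widehat X}\), and the computation above gives \(\beta_{X,j}(\sigma)=\beta_{X,j}(\sigma')\).

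Carried out correctly, your argument therefore shows that \(\beta_{X,j}\) is independent of the \((2j-1)\)-splitting, not that it depends on it. The paper's own justification of that sentence is a one-line analogy with the Green classes. None of the examples in Section~\ref{sec:examples-geometric-illustrations} exhibits such dependence, so there is nothing to defer to.

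Genuine choice-dependence enters at the even stages. Two \(2j\)-splittings can differ by a weight-\(2j\) piece in \(H^0(\Xred,\mathcal Q_X^{(2j)})\) that does not lift to \(H^0(\Xred,\mathcal T_{\widehat X}^{\langle 2j\rangle})\). This shifts the residual class \(\omega_X^{(2j+1)}(\sigma^{(2j)})\) by the connecting map (compare Corollary~\ref{cor:odd-rank-three-projected-residual}) but leaves \(\beta_{X,j}\) unchanged. You should drop the claimed dependence mechanism rather than present it as a proof.
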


\begin{proof}
Let \(\mu^{(j)}\) and \(\widetilde\mu^{(j)}\) be two representatives adapted to the same
\((2j-1)\)-splitting. They differ by a gauge transformation whose generator lies in
\(F^jL_X^0\), since both representatives induce the same truncation up to order
\(2j-1\). Modulo \(F^{j+1}L_X\), the gauge action is linear:
\[
e^u*\mu^{(j)}
\equiv
\mu^{(j)}+\bar\partial u
\pmod{F^{j+1}L_X^1}.
\]
Taking the component of weight \(2j\), we obtain
\[
\widetilde\eta_{2j}-\eta_{2j}
=
\bar\partial\operatorname{pr}_{2j}(u).
\]
Hence the two leading terms determine the same Dolbeault cohomology class.

The dependence on the partial splitting for \(j\geq 2\) is the same dependence already
present in the higher Green obstruction classes: changing the preceding partial splitting
changes the normalization condition imposed on the representative.
\end{proof}

The adapted leading class recovers two consecutive levels of Green's obstruction tower:
first its quotient part gives the even obstruction, and, after choosing a further
splitting, its residual kernel part gives the next odd obstruction.

\begin{theorem}[Adapted leading classes and Green obstructions]\label{thm:minimal-green-comparison}
Let \(1\leq j\leq N\), and let
\(\sigma^{(2j-1)}\) be a chosen \((2j-1)\)-splitting. Then:
\begin{enumerate}
\item The image of the adapted leading class under
\[
H^1(\rho_{2j}):
H^1\!\bigl(\Xred,\mathcal T_{\widehat X}^{\langle 2j\rangle}\bigr)
\longrightarrow
H^1\!\bigl(\Xred,\mathcal Q_X^{(2j)}\bigr)
\]
is the even Green obstruction class
\[
H^1(\rho_{2j})
\bigl(\beta_{X,j}(\sigma^{(2j-1)})\bigr)
=
\omega_X^{(2j)}\bigl(\sigma^{(2j-1)}\bigr).
\]

\item Suppose that this class vanishes and that a \(2j\)-splitting
\(
\sigma^{(2j)}:\widehat X^{(2j)}\xrightarrow{\sim}X^{(2j)}
\)
lifting \(\sigma^{(2j-1)}\) has been fixed. Then one may choose an adapted representative
normalized by \(\sigma^{(2j)}\), so that the quotient component of its leading term
vanishes. \\
The remaining kernel-valued class lies in
\(
H^1\!\bigl(\Xred,\mathcal Q_X^{(2j+1)}\bigr),
\)
and, if \(2j+1\leq n\), it is
\(
\omega_X^{(2j+1)}\bigl(\sigma^{(2j)}\bigr).
\)\\
If \(2j+1>n\), then \(\mathcal Q_X^{(2j+1)}=0\) and there is no further odd obstruction
at this step.
\end{enumerate}
\end{theorem}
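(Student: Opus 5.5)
The plan is to reduce both parts to Theorem~\ref{thm:sheafMC}, using Lemma~\ref{lem:adapted-leading-well-defined} to remove any dependence on the chosen representative. The statement only re-expresses in minimal-model language what has already been proved at the dg Lie level. The key observation is that \(\beta_{X,j}(\sigma^{(2j-1)})\) is by definition the Dolbeault class \([\eta_{2j}]_{\bar\partial}\) of the leading term of an adapted representative. Moreover, \(H^{1,\langle 2j\rangle}_X\) is the weight-\(2j\) summand of \(H_X^1\) in \eqref{eq:HX-weight-decomp}, identified with \(H^1(\Xred,\mathcal T_{\widehat X}^{\langle 2j\rangle})\) through the Dolbeault isomorphism. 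So no homotopy-transfer data, that is, no choice of contraction, enters the statement. This should be recorded first, since the contraction of Proposition~\ref{prop:contr} preserves weights and only \(p\) restricted to weight \(2j\) is involved.

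For part (1), I fix \(\sigma^{(2j-1)}\). Proposition~\ref{muXgauge} gives \([\mu_X]\cap F^jL_X^1\neq\varnothing\), and the construction in its proof produces a representative adapted to \(\sigma^{(2j-1)}\) in the sense of Definition~\ref{def:adapted-MC-representative}. By Lemma~\ref{lem:adapted-leading-well-defined}, \(\beta_{X,j}\) does not depend on which adapted representative is used. Then Theorem~\ref{thm:sheafMC}(1), or directly the last assertion of Lemma~\ref{lem:CechDolbeaultLeading}, says that \(\rho_{2j}\) applied to the \v{C}ech cocycle \(\{a_{ij}\}\) representing \([\eta_{2j}]_{\bar\partial}\) is Green's cocycle for \(\omega_X^{(2j)}(\sigma^{(2j-1)})\). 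The map \(H^1(\rho_{2j})\) is induced by the sheaf morphism \(\rho_{2j}\), and this is compatible with the \v{C}ech--Dolbeault isomorphism because \(\rho_{2j}\) commutes with \(\bar\partial_0\). This gives the stated identity.

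For part (2), I assume \(\omega_X^{(2j)}(\sigma^{(2j-1)})=0\) and fix a lift \(\sigma^{(2j)}\). Theorem~\ref{thm:sheafMC}(2) supplies a gauge-equivalent representative \(\widetilde\mu^{(j)}\in F^jL_X^1\) adapted to \(\sigma^{(2j)}\) whose leading term \(\widetilde\eta_{2j}\) takes values in \(\ker\rho_{2j}=\mathcal Q_X^{(2j+1)}\). Its Dolbeault class therefore lives in \(H^1(\Xred,\mathcal Q_X^{(2j+1)})\), and its image in \(H^1_X{}^{,\langle 2j\rangle}\) has vanishing quotient component. Theorem~\ref{thm:sheafMC}(3) identifies this class with \(\omega_X^{(2j+1)}(\sigma^{(2j)})\) when \(2j+1\le n\). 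For \(2j+1>n\), I note that \(\wedge^{2j+1}\mathcal F_X=0\), so \(\mathcal Q_X^{(2j+1)}=\mathcal Hom(\mathcal F_X,\wedge^{2j+1}\mathcal F_X)=0\). The exact sequence \eqref{eq:tangent-obstruction-sequence} then shows that \(\rho_{2j}\) is an isomorphism, and there is nothing further to prove.

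The only delicate point is well-definedness in part (2). The kernel-valued class should be read as a class in \(H^1(\Xred,\mathcal Q_X^{(2j+1)})\), not merely as its image in \(H^1(\mathcal T_{\widehat X}^{\langle 2j\rangle})\), because the map \(H^1(\mathcal Q^{(2j+1)})\to H^1(\mathcal T^{\langle 2j\rangle})\) need not be injective. I would handle this with the argument of Lemma~\ref{lem:adapted-leading-well-defined}, applied with \(\sigma^{(2j)}\) in place of \(\sigma^{(2j-1)}\). Two representatives adapted to the same \(2j\)-splitting differ, modulo \(F^{j+1}\), by \(\bar\partial u\) with \(\rho_{2j}(u)\) holomorphic and compatible with \(\sigma^{(2j)}\). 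One can then modify \(u\) within \(F^jL^0_X\) so that \(\operatorname{pr}_{2j}(u)\) takes values in \(\mathcal Q_X^{(2j+1)}\). Once that is done, the difference of leading terms is a \(\bar\partial\)-exact form with values in the kernel sheaf, so the class in \(H^1(\Xred,\mathcal Q_X^{(2j+1)})\) is well defined. I expect verifying this normalization to be the main obstacle; everything else is a direct citation.
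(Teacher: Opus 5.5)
Your proposal is correct and is essentially the paper's own proof, which likewise just invokes Theorem~\ref{thm:sheafMC}(1)--(3) through the \v{C}ech--Dolbeault comparison of Lemma~\ref{lem:CechDolbeaultLeading}. The paper does not treat separately the well-definedness point you flag as the main obstacle, and it is easier than you expect: two representatives adapted to the same \(\sigma^{(2j)}\) differ by a gauge \(e^{u}\), \(u\in F^jL_X^0\), which is the identity modulo \((\widehat{\mathcal J}_X^\infty)^{2j+1}\), and this already forces \(\rho_{2j}(\operatorname{pr}_{2j}u)=0\), so no modification of \(u\) is needed (nor would one be legitimate: if \(\rho_{2j}(\operatorname{pr}_{2j}u)=h\) were a nonzero holomorphic section of \(\mathcal Q_X^{(2j)}\), the two kernel classes would differ by the connecting image \(\delta(h)\in H^1(\Xred,\mathcal Q_X^{(2j+1)})\), which is precisely the dependence on the choice of \(\sigma^{(2j)}\)).
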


\begin{proof}
The first assertion is exactly the comparison established in Theorem~\ref{thm:sheafMC}:
under the \v{C}ech--Dolbeault comparison of Lemma~\ref{lem:CechDolbeaultLeading},
the projected leading term
\(
\rho_{2j}(\eta_{2j})
\)
is the classical \v{C}ech cocycle measuring the failure of local \(2j\)-lifts of
\(\sigma^{(2j-1)}\) to glue.

For the second assertion, the vanishing of the projected class is equivalent to the
existence of a global \(2j\)-splitting lifting \(\sigma^{(2j-1)}\). Once such a splitting
\(\sigma^{(2j)}\) is fixed, Theorem~\ref{thm:sheafMC} allows the adapted representative
to be normalized so that its weight-\(2j\) component has zero image under \(\rho_{2j}\).
It therefore takes values in the kernel of \(\rho_{2j}\), which is
\(\mathcal Q_X^{(2j+1)}\) by the exact sequence
\[
0
\longrightarrow
\mathcal Q_X^{(2j+1)}
\longrightarrow
\mathcal T_{\widehat X}^{\langle 2j\rangle}
\overset{\rho_{2j}}{\longrightarrow}
\mathcal Q_X^{(2j)}
\longrightarrow
0.
\]
The resulting kernel-valued cohomology class is exactly the Green obstruction to lifting
\(\sigma^{(2j)}\) to order \(2j+1\), again by Theorem~\ref{thm:sheafMC}. This proves the
claim.
\end{proof}

We now relate this comparison to the transferred minimal model. Let
\(
[\mu_X]\in \pi_0MC_\infty(L_X)
\)
be the connected component determined by the holomorphic supermanifold \(X\), after the
choice of a smooth splitting as in Section~4. Through the filtered \(L_\infty\)-quasi-
isomorphism of Remark~\ref{rem:linf}, this component corresponds to a component of
\(MC_\infty(H_X)\). A representative of this component is a minimal Maurer--Cartan
coordinate
\[
\alpha_X=
\alpha_{X,1}+\cdots+\alpha_{X,N}
\in H_X^1,
\qquad
\alpha_{X,j}\in H_X^{1,\langle 2j\rangle}.
\]
Its components satisfy the finite Kuranishi equations
\begin{equation}\label{eqn:X-minimal-equations}
\kappa_j(\alpha_{X,1},\dots,\alpha_{X,j-1})=0,
\qquad
2\leq j\leq N.
\end{equation}

The components \(\alpha_{X,j}\) should not be identified with the adapted leading
classes \(\beta_{X,j}\) without a compatibility choice. If the Kuranishi representative is
chosen compatibly with a given adapted splitting tower, then its homogeneous coordinates
are represented by the corresponding adapted leading classes. For a different contraction,
Kuranishi slice, or adapted tower, the coordinates are changed by a filtered
\(L_\infty\)-coordinate transformation. What remains invariant is the induced formal moduli
problem and the comparison, through Theorem~\ref{thm:minimal-green-comparison}, with the
Green obstruction classes.

\begin{remark}
The content of Sections~4 and~5 may be summarized as follows. The dg Lie algebra \(L_X\)
governs the pointed formal moduli problem of holomorphic supermanifold structures on the
fixed \(C^\infty\)-split model \(\widehat X^\infty\), with fixed reduced space and fixed odd
conormal bundle. The given holomorphic supermanifold \(X\) determines a distinguished
Maurer--Cartan gauge class \([\mu_X]\). The filtration on \(L_X\) measures how far this
gauge class can be moved toward the split point, and its adapted leading terms recover the
classical Green obstruction tower.

After choosing a filtered contraction of \(L_X\) onto cohomology, the same deformation
problem is represented by a minimal filtered \(L_\infty\)-algebra on \(H_X\). Its degree-one
coordinates give a cohomological presentation of Maurer--Cartan classes, while its higher
brackets give the \(H_X^2\)-valued Kuranishi compatibility equations. Thus the classical
splitting tower, the filtered Maurer--Cartan theory of \(L_X\), and the minimal filtered
\(L_\infty\)-model encode the same deformation problem at three complementary levels:
sheaf-theoretic, dg Lie, and cohomological.
\end{remark}

\section{The affine Atiyah class and the obstruction tower}
\label{sec:affine-atiyah-tower}

Sections~4 and~5 reformulated the Green splitting tower in terms of adapted
Maurer--Cartan representatives, their leading cohomology classes, and a transferred
minimal filtered \(L_\infty\)-model. In those formulations the basic degree-one
classes are
\[
\beta_{X,j}\bigl(\sigma^{(2j-1)}\bigr)
\in
H^1\!\bigl(\Xred,\mathcal T_{\widehat X}^{\langle 2j\rangle}\bigr),
\qquad
1\leq j\leq N,
\qquad
N\defeq \left\lfloor\frac{\rk(\FF_X)}2\right\rfloor .
\]
They depend, for \(j\geq 2\), on the chosen partial splitting tower. Their quotient
parts are the even Green obstruction classes, and their normalized residual kernel parts
are the subsequent odd Green obstruction classes.

The purpose of this section is to explain how the same data are seen from the affine
Atiyah class. The key point is that the affine Atiyah cocycle is not itself
\(\mathcal T_{\widehat X}^{\langle 2j\rangle}\)-valued: it lives in a sheaf of
symmetric-two-form-valued tangent fields. Thus the correct comparison is obtained by
passing to the \(\mathcal J_X\)-adic filtration and then applying a (Donagi--Witten type)
symbol projection. At the primary level this recovers the classical Donagi--Witten
component of the restricted super Atiyah class \cite{DonagiWittenSuperAtiyah}. At higher levels the same construction is
relative to a chosen lower-order splitting.

\subsection{Atiyah and affine Atiyah classes}

Let \(\mathcal E\) be a locally free \(\mathcal O_X\)-module on a complex supermanifold
\(X\). Its first jet sheaf fits into the Atiyah extension
\[
0
\longrightarrow
\Omega_X^1\otimes\mathcal E
\longrightarrow
J^1(\mathcal E)
\longrightarrow
\mathcal E
\longrightarrow
0.
\]
The corresponding extension class
\[
\At(\mathcal E)
\in
\Ext^1_X\!\bigl(\mathcal E,\Omega_X^1\otimes\mathcal E\bigr)
\cong
H^1\!\bigl(X,\Omega_X^1\otimes\mathcal{E}nd_X(\mathcal E)\bigr)
\]
is the Atiyah class of \(\mathcal E\). It is the obstruction to the existence of a
global holomorphic connection on \(\mathcal E\), see the original \cite{Atiyah1957}, or the recent \cite{BettadapuraKoszulSplitting, NojaBVSupermanifolds} specifically for the super case.

We shall use the tangent case. A holomorphic affine connection on \(X\) is a holomorphic
connection on \(\mathcal T_X\). The difference of two torsion-free affine connections is
a section of
\[
\mathfrak A_X
\defeq
\left(
\Sym^2_{\mathrm{gr}}\Omega_X^1\otimes_{\mathcal O_X}\mathcal T_X
\right)_{\bar 0}
\]
where \(\Sym^2_{\mathrm{gr}}\) denotes the graded-symmetric square. Hence the sheaf of
local torsion-free holomorphic affine connections is a torsor under \(\mathfrak A_X\).
Its \v{C}ech class is the \emph{affine Atiyah class}
\[
\at_X
\in
H^1\!\bigl(X,\mathfrak A_X\bigr)
=
H^1\!\bigl(X,\Sym^2_{\mathrm{gr}}\Omega_X^1\otimes\mathcal T_X\bigr).
\]
Equivalently, if \(\{\nabla_i\}\) is a family of local torsion-free holomorphic affine
connections on an open cover \(\{V_i\}\) of \(X\), then
\[
A_{ij}\defeq \nabla_j-\nabla_i
\in
\Gamma\!\bigl(V_{ij},\mathfrak A_X\bigr)
\]
forms a \v{C}ech \(1\)-cocycle representing \(\at_X\).

\begin{remark}[Affine connections and splitting]
The vanishing of \(\at_X\) is equivalent to the existence of a global torsion-free
holomorphic affine connection on \(X\). By Koszul's splitting theorem \cite{Koszul}, the existence of a
global even holomorphic connection forces a complex supermanifold to be split. Thus the
affine Atiyah class is a natural receptacle for splitting-theoretic information, as recently observed in \cite{BettadapuraKoszulSplitting, NojaBVSupermanifolds}. Donagi and
Witten made this precise at the primary level by identifying the first splitting obstruction
with a distinguished component of the restriction of the super Atiyah class to the reduced
space; see \cite{DonagiWittenSuperAtiyah}. 
\end{remark}

\subsection{Restriction to \(\Xred\) and the Donagi--Witten component}

Let
\(
\iota:\Xred\hookrightarrow X
\)
be the canonical embedding. The restriction of \(\at_X\) to \(\Xred\) decomposes, after
using the natural filtration induced by the odd ideal, into the components described by
Donagi and Witten in \cite{DonagiWittenSuperAtiyah}. \\
In particular, the restricted class has ordinary components
corresponding to the affine Atiyah class of \(\Xred\) and to the Atiyah class of the odd
tangent bundle \(\FF_X^\vee\), and it has a further component
\[
\Pi^{\mathrm{DW}}_2(\iota^*\at_X)
\in
H^1\!\bigl(\Xred,\mathcal T_{\Xred}\otimes\wedge^2\FF_X\bigr).
\]
Here \(\Pi^{\mathrm{DW}}_2\) denotes the projection, in the Donagi--Witten
decomposition of \(\iota^*\at_X\), onto the summand
\(
\mathcal T_{\Xred}\otimes\wedge^2\mathcal F_X.
\)
The Donagi--Witten decomposition identifies this component with the primary splitting
obstruction:
\begin{equation}\label{eq:DW-primary-component}
\Pi^{\mathrm{DW}}_2(\iota^*\at_X)
=
\omega_X^{(2)}
\in
H^1\!\bigl(\Xred,\mathcal Q_X^{(2)}\bigr),
\qquad
\mathcal Q_X^{(2)}=
\mathcal T_{\Xred}\otimes\wedge^2\FF_X.
\end{equation}

The higher construction below is a filtered version of \eqref{eq:DW-primary-component}.
The affine Atiyah class will not be decomposed into a direct sum of higher obstruction
classes. Instead, after a lower-order splitting has been fixed, the first non-split symbol
of its affine Atiyah cocycle produces an obstruction class. Its projected part is the next
even obstruction, and, if that obstruction vanishes, the normalized residual part is the
next odd obstruction.

\subsection{The pure odd Hessian symbol}

The higher obstruction classes are not obtained by projecting the affine Atiyah cocycle
directly to \(\mathcal T_{\widehat X}^{\langle 2j\rangle}\). The affine Atiyah cocycle
is a section of
\(
\mathfrak A_X
=
\left(
\Sym^2_{\mathrm{gr}}\Omega_X^1\otimes_{\mathcal O_X}\mathcal T_X
\right)_{\bar 0},
\)
whereas the Green obstruction classes live in the associated quotient and kernel sheaves
of the homogeneous tangent piece
\[
0
\longrightarrow
\mathcal Q_X^{(2j+1)}
\longrightarrow
\mathcal T_{\widehat X}^{\langle 2j\rangle}
\overset{\rho_{2j}}{\longrightarrow}
\mathcal Q_X^{(2j)}
\longrightarrow
0.
\]
The bridge between these two objects is the \emph{pure odd Hessian symbol}: it is the component
of the affine Atiyah tensor in which the two covariant arguments are taken in the odd
directions.

\smallskip

We first fix the normalization intrinsically. For every \(r\geq 0\), let
\[
m_{r,2}:
\wedge^r\FF_X\otimes\wedge^2\FF_X
\longrightarrow
\wedge^{r+2}\FF_X
\]
be exterior multiplication. We denote by
\[
\Delta_{r,2}:
\wedge^{r+2}\FF_X
\longrightarrow
\wedge^r\FF_X\otimes\wedge^2\FF_X
\]
the normalized \((r,2)\)-shuffle coproduct, characterized locally as follows. If
\(s_1,\ldots,s_{r+2}\) are local sections of \(\FF_X\), then
\[
\Delta_{r,2}(s_1\wedge\cdots\wedge s_{r+2})
=
\frac{1}{\binom{r+2}{2}}
\sum_{\substack{I\sqcup J=\{1,\ldots,r+2\}\\ |I|=r,\ |J|=2}}
\varepsilon(I,J)\,
s_I\otimes s_J,
\]
where \(s_I\) and \(s_J\) denote the exterior products of the sections indexed by
\(I\) and \(J\), written in increasing order, and \(\varepsilon(I,J)\) is the Koszul sign
of the corresponding shuffle. Equivalently,
\[
m_{r,2}\circ \Delta_{r,2}
=
\operatorname{id}_{\wedge^{r+2}\FF_X}.
\]
This normalization is canonical over \(\CC\).

\smallskip

For \(1\leq j\leq N\), define
\[
\mathcal H_{\widehat X,\mathrm{ev}}^{\langle 2j\rangle}
\defeq
\mathcal T_{\Xred}\otimes\wedge^{2j-2}\FF_X\otimes\wedge^2\FF_X,
\qquad \mathcal H_{\widehat X,\mathrm{odd}}^{\langle 2j\rangle}
\defeq
\FF_X^\vee\otimes\wedge^{2j-1}\FF_X\otimes\wedge^2\FF_X.
\]
We set
\[
\mathcal H_{\widehat X}^{\langle 2j\rangle}
\defeq
\mathcal H_{\widehat X,\mathrm{ev}}^{\langle 2j\rangle}
\oplus
\mathcal H_{\widehat X,\mathrm{odd}}^{\langle 2j\rangle}.
\]
The even summand records pure odd Hessian symbols with value in
\(\mathcal T_{\Xred}\), while the odd summand records pure odd Hessian symbols with
value in the odd tangent bundle \(\FF_X^\vee\).

\smallskip

Exterior multiplication in the last two factors gives the natural projection
\begin{equation}\label{eq:Pi-even-symbol}
\Pi_{2j}^{\mathrm{ev}}:
\mathcal H_{\widehat X}^{\langle 2j\rangle}
\longrightarrow
\mathcal Q_X^{(2j)}
=
\mathcal T_{\Xred}\otimes\wedge^{2j}\FF_X,
\end{equation}
defined as \(m_{2j-2,2}\) on the even summand and as zero on the odd summand.
Similarly, multiplication in the last two factors gives
\begin{equation}\label{eq:Pi-odd-symbol}
\Pi_{2j}^{\mathrm{odd}}:
\mathcal H_{\widehat X,\mathrm{odd}}^{\langle 2j\rangle}
\longrightarrow
\mathcal Q_X^{(2j+1)}
=
\mathcal H om_{\mathcal O_{\Xred}}
\!\bigl(\FF_X,\wedge^{2j+1}\FF_X\bigr),
\end{equation}
namely
\[
\Pi_{2j}^{\mathrm{odd}}
=
\operatorname{id}_{\FF_X^\vee}\otimes m_{2j-1,2}.
\]
If \(2j+1>\rk(\FF_X)\), then \(\mathcal Q_X^{(2j+1)}=0\), and this map is the zero
map.

\smallskip

The normalized coproducts give canonical right inverses to these projections on the
relevant Green pieces. Namely, define
\[
 h_{2j}^{\mathrm{ev}}:
\mathcal Q_X^{(2j)}
=
\mathcal T_{\Xred}\otimes\wedge^{2j}\FF_X
\longrightarrow
\mathcal H_{\widehat X,\mathrm{ev}}^{\langle 2j\rangle}
\]
by
\[
 h_{2j}^{\mathrm{ev}}
\defeq
\operatorname{id}_{\mathcal T_{\Xred}}\otimes\Delta_{2j-2,2},
\]
and define
\[
 h_{2j}^{\mathrm{odd}}:
\mathcal Q_X^{(2j+1)}
=
\FF_X^\vee\otimes\wedge^{2j+1}\FF_X
\longrightarrow
\mathcal H_{\widehat X,\mathrm{odd}}^{\langle 2j\rangle}
\]
by
\[
 h_{2j}^{\mathrm{odd}}
\defeq
\operatorname{id}_{\FF_X^\vee}\otimes\Delta_{2j-1,2}.
\]
By construction,
\[
\Pi_{2j}^{\mathrm{ev}}\circ h_{2j}^{\mathrm{ev}}
=
\operatorname{id}_{\mathcal Q_X^{(2j)}},
\qquad
\Pi_{2j}^{\mathrm{odd}}\circ h_{2j}^{\mathrm{odd}}
=
\operatorname{id}_{\mathcal Q_X^{(2j+1)}}.
\]

We now record the corresponding local formula. Let \(e_1,\ldots,e_n\) be a local frame
of \(\FF_X\), let \(e^1,\ldots,e^n\) be the dual frame of \(\FF_X^\vee\), and let
\(\theta^1,\ldots,\theta^n\) be the corresponding odd coordinates. Let
\(z^1,\ldots,z^m\) be holomorphic coordinates on \(\Xred\). A homogeneous derivation
\(
v\in
\Gamma\!\bigl(U,\mathcal T_{\widehat X}^{\langle 2j\rangle}\bigr)
\)
can be written locally as
\[
v
=
\sum_a f^a(\theta)\frac{\partial}{\partial z^a}
+
\sum_\gamma g^\gamma(\theta)\frac{\partial}{\partial\theta^\gamma},
\]
with
\(
f^a\in\Gamma(U,\wedge^{2j}\FF_X),
g^\gamma\in\Gamma(U,\wedge^{2j+1}\FF_X).
\)
In this local trivialization, the corresponding pure odd Hessian representative is written as
\[
\operatorname{Hess}_{2j}^{\mathrm{loc}}(v)
=
\sum_a
\frac{\partial}{\partial z^a}\otimes
\Delta_{2j-2,2}(f^a)
+
\sum_\gamma
e^\gamma\otimes
\Delta_{2j-1,2}(g^\gamma).
\]
This formula is not a choice of two distinguished odd variables: the coproducts
\(\Delta_{2j-2,2}\) and \(\Delta_{2j-1,2}\) are the normalized shuffle coproducts on
the exterior coalgebra of \(\FF_X\).  However, the displayed expression should not be
interpreted as defining a canonical global sheaf morphism
\(
\mathcal T_{\widehat X}^{\langle 2j\rangle}
\rightarrow
\mathcal H_{\widehat X}^{\langle 2j\rangle}
\)
on the whole homogeneous tangent sheaf.  For a general
\(v\in\mathcal T_{\widehat X}^{\langle 2j\rangle}\), the invariant content of the
local expression is its projected even component.

The two invariant facts used below are the following.  First, for every local homogeneous
derivation \(v\), the even projection is canonical and satisfies
\[
\Pi_{2j}^{\mathrm{ev}}
\bigl(
\operatorname{Hess}_{2j}^{\mathrm{loc}}(v)
\bigr)
=
\rho_{2j}(v).
\]
Equivalently, this component is the canonical lift
\(
h_{2j}^{\mathrm{ev}}\bigl(\rho_{2j}(v)\bigr)
\in
\mathcal H_{\widehat X,\mathrm{ev}}^{\langle 2j\rangle}
\)
of the projected Green class.

Second, after the projected even obstruction has vanished and the representative has
been normalized into the kernel of \(\rho_{2j}\), the odd projection is canonical.  More
precisely, if
\[
v\in\ker(\rho_{2j})
=
\mathcal Q_X^{(2j+1)}
=
\FF_X^\vee\otimes\wedge^{2j+1}\FF_X,
\]
then
\[
\Pi_{2j}^{\mathrm{odd}}
\bigl(
\operatorname{Hess}_{2j}^{\mathrm{loc}}(v)
\bigr)
=
v.
\]
Equivalently, the relevant Hessian representative is the canonical lift
\(
h_{2j}^{\mathrm{odd}}(v)
\in
\mathcal H_{\widehat X,\mathrm{odd}}^{\langle 2j\rangle}.
\)

Thus the even projection of the pure odd Hessian representative recovers the projected
Green class, while, after the even obstruction has been killed and a \(2j\)-splitting has
been chosen, the odd projection recovers the residual Green class.

\begin{remark}[Invariant content of the pure odd Hessian symbol]
There is no canonical splitting of the tangent exact sequence
\[
0
\longrightarrow
\mathcal Q_X^{(2j+1)}
\longrightarrow
\mathcal T_{\widehat X}^{\langle 2j\rangle}
\overset{\rho_{2j}}{\longrightarrow}
\mathcal Q_X^{(2j)}
\longrightarrow
0.
\]
Accordingly, the construction above should not be read as a canonical decomposition of
\(\mathcal T_{\widehat X}^{\langle 2j\rangle}\) into even and odd Green pieces, nor as a
canonical global Hessian morphism defined on all of
\(\mathcal T_{\widehat X}^{\langle 2j\rangle}\).  The canonical structures used in the
comparison with the Green tower are instead:
\(
\Pi_{2j}^{\mathrm{ev}},
\Pi_{2j}^{\mathrm{odd}},
h_{2j}^{\mathrm{ev}},
h_{2j}^{\mathrm{odd}},
\)
together with the following two projected comparison rules:
\[
\Pi_{2j}^{\mathrm{ev}}
\bigl(
\operatorname{Hess}_{2j}^{\mathrm{loc}}(v)
\bigr)
=
\rho_{2j}(v),
\]
and, after normalization into \(\ker(\rho_{2j})\),
\[
\Pi_{2j}^{\mathrm{odd}}
\bigl(
\operatorname{Hess}_{2j}^{\mathrm{loc}}(v)
\bigr)
=
v.
\]
These projected identities are independent of the local coordinates and frames used to
write the displayed Hessian representative.
\end{remark}

\subsection{Filtered affine Atiyah symbols relative to partial splittings}

Fix \(1\leq j\leq N\), and suppose that a \((2j-1)\)-splitting
\(
\sigma^{(2j-1)}:
\widehat X^{(2j-1)}\xrightarrow{\sim}X^{(2j-1)}
\)
has been chosen. Let \(\mathfrak U=\{U_i\}\) be a sufficiently fine Stein cover of
\(\Xred\), and choose local coordinates on \(X|_{U_i}\) adapted to
\(\sigma^{(2j-1)}\). Thus the transition functions agree with those of the split model up
to order \(2j-1\), and the first possible non-split term has tangent weight \(2j\).

On each adapted coordinate chart, let \(\nabla_i\) be the torsion-free affine connection
with vanishing Christoffel symbols in those coordinates. Let \(\widehat\nabla_i\) be the
corresponding local flat connection on the split model, written in the same adapted
coordinates. The affine Atiyah cocycle of \(X\) and the affine Atiyah cocycle of the split
model are represented by
\[
A_{ij}\defeq\nabla_j-\nabla_i,
\qquad
\widehat A_{ij}\defeq\widehat\nabla_j-\widehat\nabla_i.
\]
Their difference
\[
B_{ij}\defeq A_{ij}-\widehat A_{ij}
\]
is a filtered affine-Atiyah-valued \v{C}ech \(1\)-cocycle. Since the coordinates are
adapted to \(\sigma^{(2j-1)}\), all non-split contributions of tangent weight lower than
\(2j\) vanish. We may therefore take the pure odd Hessian symbol of the first non-split
homogeneous component.

\begin{definition}[Filtered affine Atiyah symbol in weight \(2j\)]
\label{def:filatcoc}
The \emph{filtered affine Atiyah symbol in weight \(2j\)} relative to
\(\sigma^{(2j-1)}\) is the \v{C}ech \(1\)-cochain
\[
\mathfrak a_X^{\langle 2j\rangle}
=
\{\mathfrak a_{ij}^{\langle 2j\rangle}\}
\in
\check C^1\!\bigl(\mathfrak U,
\mathcal H_{\widehat X}^{\langle 2j\rangle}\bigr)
\]
obtained by projecting \(B_{ij}\) to the pure odd Hessian symbol sheaf
\(\mathcal H_{\widehat X}^{\langle 2j\rangle}\) in homogeneous tangent weight \(2j\).
\end{definition}

\begin{proposition}\label{prop:filtat1}
The cochain \(\mathfrak a_X^{\langle 2j\rangle}\) is a \v{C}ech \(1\)-cocycle. Its
cohomology class
\[
\mathfrak{At}^{\langle 2j\rangle}
\bigl(X;\sigma^{(2j-1)}\bigr)
\defeq
[\mathfrak a_X^{\langle 2j\rangle}]
\in
H^1\!\bigl(\Xred,\mathcal H_{\widehat X}^{\langle 2j\rangle}\bigr)
\]
is independent of the adapted cover, of the adapted coordinates, and of the local
torsion-free affine connections used to represent \(\at_X\). It depends, in general, on
the chosen \((2j-1)\)-splitting.

\smallskip

Moreover, if
\(
\mu^{(j)}\in MC(L_X)\cap F^jL_X^1
\)
is adapted to the same \((2j-1)\)-splitting and
\[
\eta_{2j}=\operatorname{pr}_{2j}(\mu^{(j)})
\in
A^{0,1}\!\bigl(\Xred,\mathcal T_{\widehat X}^{\langle 2j\rangle}\bigr)
\]
is its leading term, then the \v{C}ech representative of
\(\beta_{X,j}(\sigma^{(2j-1)})\) from Lemma~\ref{lem:CechDolbeaultLeading} is mapped,
under the normalized pure odd Hessian symbol, to
\(\mathfrak a_X^{\langle 2j\rangle}\).
\end{proposition}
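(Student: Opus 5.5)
The plan is to compute everything in adapted coordinates and reduce the three assertions to a single local identity. On each chart \(U_i\) adapted to \(\sigma^{(2j-1)}\), the transition map from \(U_j\)-coordinates to \(U_i\)-coordinates agrees with the split transition up to order \(2j-1\). Its first non-split correction is therefore a homogeneous derivation \(a_{ij}\in\Gamma(U_{ij},\mathcal T_{\widehat X}^{\langle 2j\rangle})\), acting as \(\exp(a_{ij})\) composed with the split transition. This is exactly the transition datum of local \(2j\)-lifts of \(\sigma^{(2j-1)}\) appearing in the proof of Lemma~\ref{lem:CechDolbeaultLeading}.

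\textbf{Step 1: local formula.} For flat connections \(\nabla_i,\nabla_j\) in coordinate systems related by \(y=\phi(x)\), the difference \(\nabla_j-\nabla_i\) is the Hessian tensor \(\partial^2 y/\partial x\partial x\) contracted with \(\partial/\partial y\). I will compute this tensor modulo \(\mathcal J_X\)-adic order \(2j+1\).

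\begin{itemize}
\item Subtracting the same expression \(\widehat A_{ij}\) for the split transition removes every contribution of weight \(<2j\).
\item In weight \(2j\), the surviving piece of \(B_{ij}\) is the second derivative of \(a_{ij}\) applied to the coordinates.
\item Its component with both covariant slots odd is \(\partial_{\theta^\beta}\partial_{\theta^\alpha}\) applied to \(f^a(\theta)\), respectively \(g^\gamma(\theta)\).
\end{itemize}

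Up to the combinatorial factor \(\binom{2j}{2}\), respectively \(\binom{2j+1}{2}\), this is the coproduct \(\Delta_{2j-2,2}\), respectively \(\Delta_{2j-1,2}\). Hence \(\mathfrak a_{ij}^{\langle 2j\rangle}=\operatorname{Hess}_{2j}^{\mathrm{loc}}(a_{ij})\), once the shuffle normalization is absorbed into the convention of Definition~\ref{def:filatcoc}. This identity is exactly the comparison claim, because \(\{a_{ij}\}\) represents \(\beta_{X,j}(\sigma^{(2j-1)})\) by Lemma~\ref{lem:CechDolbeaultLeading}.

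\textbf{Step 2: cocycle property.} In weight \(2j\) the composition of transition automorphisms is additive, as in the proof that \(\mathscr K^{(k+1)}_X\) is abelian, so \(a_{ij}+a_{jk}=a_{ik}\). The leading odd Hessian component is linear in \(a\) at this weight, since lower weights have been removed. It follows that \(\mathfrak a^{\langle 2j\rangle}\) is a cocycle.

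\textbf{Step 3: independence.} There are three kinds of choices to control.
\begin{itemize}
\item \emph{Connections.} Changing the local torsion-free connections by sections \(c_i\) of \(\mathfrak A_X\) changes \(A_{ij}\) by a coboundary. The same choices for \(\widehat A\), taken from the split model in identical coordinates, cancel in \(B\) up to higher weight.
\item \emph{Adapted coordinates.} Two adapted systems on \(U_i\) differ by an automorphism trivial to order \(2j-1\), that is, by \(\exp(u_i)\) with \(u_i\in\Gamma(U_i,\mathcal T^{\langle 2j\rangle}_{\widehat X})\). This changes \(a_{ij}\) by \(u_i-u_j\), and hence changes \(\mathfrak a\) by the coboundary of \(\operatorname{Hess}^{\mathrm{loc}}_{2j}(u_i)\).
\item \emph{Covers.} Independence of the cover follows by passing to common refinements.
\end{itemize}
The dependence on \(\sigma^{(2j-1)}\) is inherited from that of \(\beta_{X,j}\) recorded in Lemma~\ref{lem:adapted-leading-well-defined}.

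\textbf{Main obstacle.} The hard part is that \(\operatorname{Hess}^{\mathrm{loc}}_{2j}\) is not a canonical global sheaf morphism, as stressed in the preceding remark. It is therefore not obvious that applying it chartwise to \(a_{ij}\) and \(u_i\) gives a well-defined global class. I would first try to check directly that the odd-odd Hessian of a weight-\(2j\) vector field transforms tensorially under frame changes of \(\FF_X\) and coordinate changes of \(\Xred\) modulo weight \(>2j\). This is plausible because the split transitions are linear in \(\theta\), so their second \(\theta\)-derivatives vanish. If that check fails, I would fall back on the invariant projected statements: \(\Pi^{\mathrm{ev}}_{2j}\) always, and \(\Pi^{\mathrm{odd}}_{2j}\) after normalization into \(\ker\rho_{2j}\), both of which are independent of frames. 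I would then prove independence only for those projections.
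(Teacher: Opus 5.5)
Your plan follows the paper's proof closely. The paper also writes the adapted transition as \(g_{ij}=\widehat g_{ij}\circ\exp(v^{\langle 2j\rangle}_{ij})\) and identifies the weight-\(2j\) pure odd part of the Hessian of the coordinate change with \(\operatorname{Hess}_{2j}(v^{\langle 2j\rangle}_{ij})\). It then quotes Lemma~\ref{lem:CechDolbeaultLeading} for the comparison with \(\beta_{X,j}(\sigma^{(2j-1)})\), and gets independence by asserting that changes of adapted coordinates or of connections alter everything by coboundaries. The one structural difference is the cocycle property. The paper deduces it from \(B=A-\widehat A\) being a difference of two cocycles, with the symbol projection commuting with \(\check\delta\). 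You deduce it from \(v_{ij}+v_{jk}=v_{ik}\) and linearity of \(\operatorname{Hess}^{\mathrm{loc}}_{2j}\), which already presupposes that \(\operatorname{Hess}^{\mathrm{loc}}_{2j}\) is compatible across the frames of different charts. The paper's route hides the same issue in the word ``projection''. Your bookkeeping of the factors \(\binom{2j}{2}\) and \(\binom{2j+1}{2}\) is more careful than the paper's appeal to ``the normalized shuffle-coproduct definition''. Since the two factors differ, the rescaling has to be built into Definition~\ref{def:filatcoc} summand by summand.

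The obstacle you isolate is real, and the paper does not overcome it. Its proof simply says the Hessian symbol changes by ``the corresponding coboundary''. That uses exactly the tensoriality that its own remark on the invariant content of the pure odd Hessian symbol disclaims. Your proposed tensoriality check will in fact fail.

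\begin{itemize}
\item Two adapted systems on \(U_i\) differ not only by \(\exp(u_i)\), as your Step 3 says, but also by a split change \(z'=\phi(z)\), \(\theta'=N(z)\theta\). Under such a change \(\partial_{z^a}=\partial_a z'^b\,\partial_{z'^b}+(\partial_aN\,\theta)^\beta\partial_{\theta'^\beta}\), so the \(\partial_\theta\)-coefficient of \(v=f^a\partial_{z^a}+g^\gamma\partial_{\theta^\gamma}\) becomes \(Ng+f^a(\partial_aN)\theta\).
\item Concretely, in odd rank three, \(v=\theta^1\theta^2\partial_z\) has vanishing \(\mathcal H^{\langle 2\rangle}_{\widehat X,\mathrm{odd}}\)-component in the frame \(\theta\). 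In the frame \(\theta'\) its component is \(e'^\beta\otimes\Delta_{1,2}\bigl((\partial_zN^\beta_3)\,\theta^1\theta^2\theta^3\bigr)\), which is nonzero as soon as some \(\partial_zN^\beta_3\neq0\).
\item The same defect already appears in your Step 1. The odd--odd second derivative of \(\theta_i=M_{ij}(z_j+f)(\theta_j+g)\) contains \(\partial\partial\bigl((\partial_aM_{ij})f^a\theta_j\bigr)\) besides \(\partial\partial g\).
\end{itemize}

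Consequently, suppose \(\rho_{2j}(v_{ij})\neq0\) and the frames of \(\FF_X\) have transition matrices that are not locally constant. Then the \(\mathcal H^{\langle 2j\rangle}_{\widehat X,\mathrm{ev}}\oplus\mathcal H^{\langle 2j\rangle}_{\widehat X,\mathrm{odd}}\)-valued cochain is in general not even a cocycle: its coboundary pairs \(\rho_{2j}(v_{ij})\) with the derivatives of those transition matrices. Your Steps 2 and 3 therefore break down for the odd summand.

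What can be proved is exactly your fallback.
\begin{itemize}
\item The even component equals \(h^{\mathrm{ev}}_{2j}(\rho_{2j}(v_{ij}))\), which is canonical, so its class and \(H^1(\Pi^{\mathrm{ev}}_{2j})\) of it are well defined.
\item After normalization into \(\ker\rho_{2j}\), the \(v_{ij}\) are \(\mathcal O_{\Xred}\)-linear, so \(\operatorname{Hess}_{2j}\) reduces to the tensorial map \(h^{\mathrm{odd}}_{2j}\).
\end{itemize}
This is weaker than the proposition as stated, but it is all that Theorem~\ref{thm:filteredAtiyahObstructions} actually uses. The full claim would require a twisted, non-split target in place of the direct sum.
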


\begin{proof}
The cocycles \(A=\{A_{ij}\}\) and \(\widehat A=\{\widehat A_{ij}\}\) both represent affine
connection torsors. Hence \(B=A-\widehat A\) is a \v{C}ech \(1\)-cocycle with values in
the filtered affine Atiyah sheaf. Projection to a homogeneous associated-graded symbol
commutes with the \v{C}ech differential, so
\(
\check\delta\mathfrak a_X^{\langle 2j\rangle}=0.
\)

Let \(g_{ij}\) be the transition automorphisms in the chosen adapted coordinates, and
write their leading non-split part as
\[
g_{ij}
=
\widehat g_{ij}\circ \exp\bigl(v_{ij}^{\langle 2j\rangle}\bigr)
\quad
\text{modulo higher weights},
\]
with
\(
v_{ij}^{\langle 2j\rangle}
\in
\Gamma\!\bigl(U_{ij},\mathcal T_{\widehat X}^{\langle 2j\rangle}\bigr).
\)
By Lemma~\ref{lem:CechDolbeaultLeading}, the cocycle
\(\{v_{ij}^{\langle 2j\rangle}\}\) represents
\(\beta_{X,j}(\sigma^{(2j-1)})\).

In local coordinates, the difference between the flat affine connections attached to
\(g_{ij}\) and to \(\widehat g_{ij}\) is the Hessian of the coordinate change. Since all
non-split terms of lower weight vanish, the homogeneous weight-\(2j\) part is linear in
\(v_{ij}^{\langle 2j\rangle}\) and is precisely
\[
\operatorname{Hess}_{2j}\bigl(v_{ij}^{\langle 2j\rangle}\bigr)
\in
\Gamma\!\bigl(U_{ij},\mathcal H_{\widehat X}^{\langle 2j\rangle}\bigr).
\]
This is \(\mathfrak a_{ij}^{\langle 2j\rangle}\) by the normalized shuffle-coproduct
definition of the pure odd Hessian symbol..

Changing the adapted coordinates without changing the fixed lower-order splitting changes
\(\{v_{ij}^{\langle 2j\rangle}\}\) by a \v{C}ech coboundary in
\(\mathcal T_{\widehat X}^{\langle 2j\rangle}\), and therefore changes its Hessian symbol by
the corresponding coboundary in \(\mathcal H_{\widehat X}^{\langle 2j\rangle}\). Changing
the local torsion-free connections also changes the affine Atiyah cocycle by a coboundary.
Thus the cohomology class is independent of all auxiliary adapted choices.
\end{proof}

\begin{definition}[Filtered affine Atiyah class in weight \(2j\)]
\label{def:filtAtCl}
Under the hypotheses above, the class
\[
\mathfrak{At}^{\langle 2j\rangle}
\bigl(X;\sigma^{(2j-1)}\bigr)
\in
H^1\!\bigl(\Xred,\mathcal H_{\widehat X}^{\langle 2j\rangle}\bigr)
\]
is called the \emph{filtered affine Atiyah class in weight \(2j\)} relative to the chosen
\((2j-1)\)-splitting.
\end{definition}

\begin{remark}
The filtered affine Atiyah class is not a tangent-valued class. It is a pure odd
Hessian-symbol class extracted from the affine Atiyah cocycle. The tangent-valued classes
of Sections~4 and~5 are recovered from the same local transition data before applying the
Hessian symbol; the Green obstruction classes are obtained from the Hessian-symbol class
by applying \(\Pi_{2j}^{\mathrm{ev}}\) and, after normalization, \(\Pi_{2j}^{\mathrm{odd}}\).
\end{remark}

\subsection{Projected defects and residual classes}

We now state the comparison with the Green obstruction tower. This is the higher-order
analogue of the Donagi--Witten primary decomposition.

\begin{theorem}[Filtered affine Atiyah symbols and Green obstructions]
\label{thm:filteredAtiyahObstructions}
\label{thm:fiteredAtThm}
Let \(1\leq j\leq N\), and fix a \((2j-1)\)-splitting
\(
\sigma^{(2j-1)}:
\widehat X^{(2j-1)}\xrightarrow{\sim}X^{(2j-1)}.
\)
Let
\[
\mathfrak{At}^{\langle 2j\rangle}
\bigl(X;\sigma^{(2j-1)}\bigr)
\in
H^1\!\bigl(\Xred,\mathcal H_{\widehat X}^{\langle 2j\rangle}\bigr)
\]
be the filtered affine Atiyah class in weight \(2j\). Then:

\begin{enumerate}
\item The projected even symbol is the Green obstruction to lifting the chosen
\((2j-1)\)-splitting to a \(2j\)-splitting:
\[
H^1(\Pi_{2j}^{\mathrm{ev}})
\Bigl(
\mathfrak{At}^{\langle 2j\rangle}
\bigl(X;\sigma^{(2j-1)}\bigr)
\Bigr)
=
\omega_X^{(2j)}\bigl(\sigma^{(2j-1)}\bigr)
\]
in
\(
H^1\!\bigl(\Xred,\mathcal Q_X^{(2j)}\bigr).
\)

\item Suppose that
\(
\omega_X^{(2j)}\bigl(\sigma^{(2j-1)}\bigr)=0
\)
and that a \(2j\)-splitting
\(
\sigma^{(2j)}:
\widehat X^{(2j)}\xrightarrow{\sim}X^{(2j)}
\)
lifting \(\sigma^{(2j-1)}\) has been fixed. Then the filtered affine Atiyah symbol may be
normalized so that its even projected component vanishes. Its remaining odd symbol
component defines a class
\[
\widetilde{\mathfrak{At}}^{\langle 2j\rangle}
\bigl(X;\sigma^{(2j)}\bigr)
\in
H^1\!\bigl(\Xred,
\mathcal H_{\widehat X,\mathrm{odd}}^{\langle 2j\rangle}\bigr).
\]
If \(2j+1\leq \rk(\FF_X)\), then
\[
H^1(\Pi_{2j}^{\mathrm{odd}})
\Bigl(
\widetilde{\mathfrak{At}}^{\langle 2j\rangle}
\bigl(X;\sigma^{(2j)}\bigr)
\Bigr)
=
\omega_X^{(2j+1)}\bigl(\sigma^{(2j)}\bigr)
\]
in
\(
H^1\!\bigl(\Xred,\mathcal Q_X^{(2j+1)}\bigr).
\)\\
If \(2j+1>\rk(\FF_X)\), then \(\mathcal Q_X^{(2j+1)}=0\), and there is no residual odd
obstruction at this step.
\end{enumerate}
\end{theorem}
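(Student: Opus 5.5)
The plan is to reduce both assertions to the comparison already proved at the level of tangent-valued classes, namely Theorem~\ref{thm:minimal-green-comparison}. The bridge is the cocycle-level identification of Proposition~\ref{prop:filtat1}, combined with the two projected comparison rules for the pure odd Hessian symbol. Concretely, we fix a sufficiently fine Stein cover \(\mathfrak U\) and coordinates adapted to \(\sigma^{(2j-1)}\). We write the transition automorphisms as \(g_{ij}=\widehat g_{ij}\circ\exp(v_{ij}^{\langle 2j\rangle})\) modulo higher weights. By Lemma~\ref{lem:CechDolbeaultLeading}, the cocycle \(\{v_{ij}^{\langle 2j\rangle}\}\) represents \(\beta_{X,j}(\sigma^{(2j-1)})\). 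By Proposition~\ref{prop:filtat1}, we have \(\mathfrak a_{ij}^{\langle 2j\rangle}=\operatorname{Hess}_{2j}^{\mathrm{loc}}(v_{ij}^{\langle 2j\rangle})\) on every overlap.

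For (1), we apply \(\Pi_{2j}^{\mathrm{ev}}\) cochain-wise. The rule \(\Pi_{2j}^{\mathrm{ev}}\circ\operatorname{Hess}_{2j}^{\mathrm{loc}}=\rho_{2j}\) is coordinate independent and is just \(m_{2j-2,2}\circ\Delta_{2j-2,2}=\mathrm{id}\) applied to the \(\partial/\partial z^a\)-coefficients. It gives \(\Pi_{2j}^{\mathrm{ev}}(\mathfrak a_{ij}^{\langle 2j\rangle})=\rho_{2j}(v_{ij}^{\langle 2j\rangle})\). Since \(\Pi_{2j}^{\mathrm{ev}}\) is a morphism of sheaves, it commutes with the \v{C}ech differential, so the class-level equality follows. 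Theorem~\ref{thm:minimal-green-comparison}(1), or equivalently the last assertion of Lemma~\ref{lem:CechDolbeaultLeading}, identifies \([\rho_{2j}(v_{ij})]\) with \(\omega_X^{(2j)}(\sigma^{(2j-1)})\).

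For (2), we assume that \(\omega_X^{(2j)}(\sigma^{(2j-1)})=0\) and fix \(\sigma^{(2j)}\). We then re-choose the adapted coordinates so that they are adapted to \(\sigma^{(2j)}\). This amounts to modifying the local lifts by a \(0\)-cochain \(\{u_i\}\) in \(\mathcal T_{\widehat X}^{\langle 2j\rangle}\), as in Theorem~\ref{thm:sheafMC}(2). The new leading transition cocycle \(\widetilde v_{ij}=v_{ij}+u_i-u_j\) then satisfies \(\rho_{2j}(\widetilde v_{ij})=0\), so it lies in \(\mathcal Q_X^{(2j+1)}=\ker\rho_{2j}\). By the coboundary-compatibility in the proof of Proposition~\ref{prop:filtat1}, the Hessian cocycle changes accordingly, and its even component \(h^{\mathrm{ev}}_{2j}\rho_{2j}(\widetilde v_{ij})\) vanishes identically. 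The normalized symbol \(\widetilde{\mathfrak a}_{ij}=\operatorname{Hess}^{\mathrm{loc}}_{2j}(\widetilde v_{ij})=h^{\mathrm{odd}}_{2j}(\widetilde v_{ij})\) is therefore an \(\mathcal H^{\langle 2j\rangle}_{\widehat X,\mathrm{odd}}\)-valued cocycle, and we define \(\widetilde{\mathfrak{At}}^{\langle 2j\rangle}(X;\sigma^{(2j)})\) as its class. Applying \(\Pi^{\mathrm{odd}}_{2j}\) and using \(\Pi^{\mathrm{odd}}_{2j}\circ h^{\mathrm{odd}}_{2j}=\mathrm{id}\) returns \(\{\widetilde v_{ij}\}\). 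Theorem~\ref{thm:sheafMC}(3), or Theorem~\ref{thm:minimal-green-comparison}(2), identifies the class of \(\{\widetilde v_{ij}\}\) with \(\omega_X^{(2j+1)}(\sigma^{(2j)})\). If \(2j+1>\rk\FF_X\), then \(\wedge^{2j+1}\FF_X=0\), so the target vanishes and there is nothing to prove.

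The main obstacle is the well-definedness of \(\widetilde{\mathfrak{At}}^{\langle 2j\rangle}\). The Hessian representative is not a global sheaf morphism on all of \(\mathcal T^{\langle 2j\rangle}_{\widehat X}\), so we cannot simply push a class forward. Instead we must check that two normalizations adapted to the same \(\sigma^{(2j)}\) differ by a \(0\)-cochain valued in \(\ker\rho_{2j}=\mathcal Q^{(2j+1)}_X\); on that subsheaf \(\operatorname{Hess}^{\mathrm{loc}}_{2j}=h^{\mathrm{odd}}_{2j}\) is a genuine canonical sheaf map. I would argue this using the torsor statement: lifts of a fixed \(2j\)-splitting differ by \(\mathscr K^{(2j+1)}_X\cong\mathcal Q^{(2j+1)}_X\). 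The residual difference is then a coboundary of \(h^{\mathrm{odd}}_{2j}\)-images, and the class is independent of the choices.
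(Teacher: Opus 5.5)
Your proposal is correct and follows essentially the paper's own argument. You identify the weight-$2j$ Atiyah symbol cocycle with $\operatorname{Hess}_{2j}(v_{ij}^{\langle 2j\rangle})$ via Proposition~\ref{prop:filtat1}, use $\Pi_{2j}^{\mathrm{ev}}\circ\operatorname{Hess}_{2j}=\rho_{2j}$ and, after renormalizing the charts to $\sigma^{(2j)}$ so that the leading cocycle is $\ker\rho_{2j}$-valued, $\Pi_{2j}^{\mathrm{odd}}\circ h_{2j}^{\mathrm{odd}}=\mathrm{id}$, and then conclude with Theorems~\ref{thm:sheafMC} and~\ref{thm:minimal-green-comparison}. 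Your additional check that $\widetilde{\mathfrak{At}}^{\langle 2j\rangle}$ is well defined is correct and is not in the paper's proof: two normalizations adapted to the same $\sigma^{(2j)}$ differ in weight $2j$ by a $\mathcal Q_X^{(2j+1)}$-valued $0$-cochain, on which the Hessian is the sheaf map $h_{2j}^{\mathrm{odd}}$.
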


\begin{proof}
Choose adapted local coordinates and let
\(
v_{ij}^{\langle 2j\rangle}
\in
\Gamma\!\bigl(U_{ij},\mathcal T_{\widehat X}^{\langle 2j\rangle}\bigr)
\)
be the leading logarithmic transition cocycle, as in the proof of
Proposition~\ref{prop:filtat1}. By Lemma~\ref{lem:CechDolbeaultLeading}, this cocycle
represents the adapted leading class
\[
\beta_{X,j}\bigl(\sigma^{(2j-1)}\bigr)
\in
H^1\!\bigl(\Xred,\mathcal T_{\widehat X}^{\langle 2j\rangle}\bigr).
\]
The filtered affine Atiyah symbol is its normalized pure odd Hessian symbol:
\(
\mathfrak a_{ij}^{\langle 2j\rangle}
=
\operatorname{Hess}_{2j}\bigl(v_{ij}^{\langle 2j\rangle}\bigr).
\)
By the normalization of the symbol projection,
\[
\Pi_{2j}^{\mathrm{ev}}
\bigl(\mathfrak a_{ij}^{\langle 2j\rangle}\bigr)
=
\rho_{2j}\bigl(v_{ij}^{\langle 2j\rangle}\bigr).
\]
Passing to cohomology and using Theorem~\ref{thm:minimal-green-comparison}, or
equivalently Theorem~\ref{thm:sheafMC}, gives
\[
H^1(\Pi_{2j}^{\mathrm{ev}})
\Bigl(
\mathfrak{At}^{\langle 2j\rangle}
\bigl(X;\sigma^{(2j-1)}\bigr)
\Bigr)
=
H^1(\rho_{2j})
\bigl(\beta_{X,j}(\sigma^{(2j-1)})\bigr)
=
\omega_X^{(2j)}\bigl(\sigma^{(2j-1)}\bigr).
\]
This proves the first assertion.

Assume now that this even obstruction vanishes and choose a \(2j\)-splitting
\(\sigma^{(2j)}\). By Theorem~\ref{thm:sheafMC}, the adapted representative can be
normalized so that the leading logarithmic transition cocycle takes values in
\[
\ker(\rho_{2j})
=
\mathcal Q_X^{(2j+1)}
\subset
\mathcal T_{\widehat X}^{\langle 2j\rangle}.
\]
Equivalently, the even projected component of the Hessian-symbol cocycle vanishes, and
only the odd Hessian symbol remains. This gives a class
\[
\widetilde{\mathfrak{At}}^{\langle 2j\rangle}
\bigl(X;\sigma^{(2j)}\bigr)
\in
H^1\!\bigl(\Xred,
\mathcal H_{\widehat X,\mathrm{odd}}^{\langle 2j\rangle}\bigr).
\]
Again by the normalization of the symbol projection,
\[
\Pi_{2j}^{\mathrm{odd}}
\bigl(\operatorname{Hess}_{2j}(v_{ij}^{\langle 2j\rangle})\bigr)
=
v_{ij}^{\langle 2j\rangle}
\]
for the kernel-valued representative. The cohomology class of this kernel-valued
representative is precisely \(\omega_X^{(2j+1)}(\sigma^{(2j)})\) by
Theorem~\ref{thm:sheafMC}. This proves the second assertion.
\end{proof}

\begin{definition}[Atiyah defects and residual classes]
\label{def:defect_rescl}
Let \(1\leq j\leq N\), and fix a \((2j-1)\)-splitting
\[
\sigma^{(2j-1)}:
\widehat X^{(2j-1)}\xrightarrow{\sim}X^{(2j-1)}.
\]
The \emph{Atiyah defect in weight \(2j\)} is the class
\[
\Def_{2j}\bigl(X;\sigma^{(2j-1)}\bigr)
\defeq
H^1(\Pi_{2j}^{\mathrm{ev}})
\Bigl(
\mathfrak{At}^{\langle 2j\rangle}
\bigl(X;\sigma^{(2j-1)}\bigr)
\Bigr)
\in
H^1\!\bigl(\Xred,\mathcal Q_X^{(2j)}\bigr).
\]
If this class vanishes and a \(2j\)-splitting \(\sigma^{(2j)}\) has been fixed, the
\emph{Atiyah residual class in weight \(2j+1\)} is
\[
\Res_{2j+1}\bigl(X;\sigma^{(2j)}\bigr)
\defeq
H^1(\Pi_{2j}^{\mathrm{odd}})
\Bigl(
\widetilde{\mathfrak{At}}^{\langle 2j\rangle}
\bigl(X;\sigma^{(2j)}\bigr)
\Bigr)
\in
H^1\!\bigl(\Xred,\mathcal Q_X^{(2j+1)}\bigr),
\]
whenever \(2j+1\leq \rk(\FF_X)\). If \(2j+1>\rk(\FF_X)\), the residual class is
understood to be zero.
\end{definition}

\begin{corollary}
With the notation above,
\[
\Def_{2j}\bigl(X;\sigma^{(2j-1)}\bigr)
=
\omega_X^{(2j)}\bigl(\sigma^{(2j-1)}\bigr),
\]
and, after choosing a \(2j\)-splitting and assuming \(2j+1\leq \rk(\FF_X)\),
\[
\Res_{2j+1}\bigl(X;\sigma^{(2j)}\bigr)
=
\omega_X^{(2j+1)}\bigl(\sigma^{(2j)}\bigr).
\]
\end{corollary}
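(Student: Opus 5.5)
The corollary is a direct unpacking of Definition~\ref{def:defect_rescl} combined with Theorem~\ref{thm:filteredAtiyahObstructions}. For the first identity, I will substitute the definition of the Atiyah defect,
\[
\Def_{2j}\bigl(X;\sigma^{(2j-1)}\bigr)
=
H^1(\Pi_{2j}^{\mathrm{ev}})\Bigl(\mathfrak{At}^{\langle 2j\rangle}\bigl(X;\sigma^{(2j-1)}\bigr)\Bigr),
\]
and then invoke part~(1) of Theorem~\ref{thm:filteredAtiyahObstructions}. That part identifies the right-hand side with \(\omega_X^{(2j)}(\sigma^{(2j-1)})\) in \(H^1(\Xred,\mathcal Q_X^{(2j)})\).

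Before doing this, I will check that the left-hand side is well defined. The only input needed is that \(\mathfrak{At}^{\langle 2j\rangle}(X;\sigma^{(2j-1)})\) depends only on the chosen \((2j-1)\)-splitting, which is Proposition~\ref{prop:filtat1}. It follows that applying \(H^1(\Pi_{2j}^{\mathrm{ev}})\) yields a class attached to \(\sigma^{(2j-1)}\) alone.

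For the second identity, I will assume the first identity, so that \(\omega_X^{(2j)}(\sigma^{(2j-1)})=0\) is the same as the vanishing of the defect. I then fix a \(2j\)-splitting \(\sigma^{(2j)}\), which exists by Corollary~\ref{cor:split2}. By definition, \(\Res_{2j+1}(X;\sigma^{(2j)})\) is \(H^1(\Pi_{2j}^{\mathrm{odd}})\) applied to the normalized odd class \(\widetilde{\mathfrak{At}}^{\langle 2j\rangle}(X;\sigma^{(2j)})\). Part~(2) of Theorem~\ref{thm:filteredAtiyahObstructions}, under the hypothesis \(2j+1\le\rk(\FF_X)\), identifies this with \(\omega_X^{(2j+1)}(\sigma^{(2j)})\).

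The only delicate point is that the residual class is well defined, meaning independent of the normalization used to kill the even component. I would settle this by observing that any two normalizations adapted to the same \(\sigma^{(2j)}\) differ by a gauge generator in \(F^jL_X^0\) whose \(\rho_{2j}\)-image is zero. By the argument of Lemma~\ref{lem:adapted-leading-well-defined}, such a generator changes the kernel-valued cocycle by a coboundary in \(\mathcal Q_X^{(2j+1)}\). Since \(h^{\mathrm{odd}}_{2j}\) and \(\Pi^{\mathrm{odd}}_{2j}\) are sheaf maps, the odd Hessian class changes only by a coboundary.

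I therefore expect no real obstacle: the corollary is a restatement of the theorem in the language of Definition~\ref{def:defect_rescl}.
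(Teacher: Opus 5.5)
Your proposal is correct and is exactly how the paper obtains this corollary: the paper gives no separate proof and treats it as the immediate rewriting of parts~(1) and~(2) of Theorem~\ref{thm:filteredAtiyahObstructions} in the notation of Definition~\ref{def:defect_rescl}. Your extra check that the residual class does not depend on the normalization is consistent with the paper, since it is the argument of Lemma~\ref{lem:adapted-leading-well-defined} applied to gauge generators whose weight-$2j$ part has trivial $\rho_{2j}$-image; it is not strictly needed, though, because the theorem identifies every admissible normalization with the canonically defined Green class $\omega_X^{(2j+1)}(\sigma^{(2j)})$.
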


\begin{remark}
The affine Atiyah class contains the splitting tower in a filtered and conditional sense.
The primary obstruction is an actual component of the restricted affine Atiyah class.
Higher obstructions are obtained only after choosing the previous partial splitting,
passing to the first non-split pure odd Hessian symbol, and applying the natural even or
odd symbol projection. Thus the higher classes are not independent direct summands of
\(\at_X\); they are successive projected defects and residual classes extracted from the
same filtered affine Atiyah cocycle.
\end{remark}

The present section has therefore supplied the Atiyah-theoretic realization of the higher splitting
data: after lower-order splittings are fixed, the filtered affine Atiyah symbols recover
the successive Green obstruction classes through their projected defects and normalized
residual classes. This completes the comparison between the sheaf-theoretic obstruction
tower, the filtered Maurer--Cartan formulation, the minimal \(L_\infty\)-model, and the
affine Atiyah class.


\section{\v{C}ech coordinates, Atiyah symbols, and Kuranishi compatibilities}
\label{sec:cech-kuranishi-at-symbols}

Sections~4--6 describe the splitting problem in three complementary languages. First,
Section~4 identifies the Green tower with the filtration of the dg Lie algebra
\(
L_X.
\) 
Second, Section~5 transfers this filtered dg Lie algebra to a minimal filtered
\(L_\infty\)-model on cohomology. Third, Section~6 explains how the affine Atiyah
class sees the same obstruction data through its filtered pure odd Hessian symbols.

The purpose of the present section is to make the comparison concrete on a cover. There
is a small technical point that must be handled carefully, though: instead of the raw \v{C}ech cochain complex
of a sheaf (of Lie algebras), we will use the Thom--Whitney totalization of the semicosimplicial dg Lie algebra associated with a
Cartan--Stein cover\footnote{The reason for using the Thom--Whitney model is that the ordinary \v{C}ech
complex is the correct complex for cohomology but not, in general, the most natural
strict dg Lie object. The Alexander--Whitney cup product on \v{C}ech cochains is
not graded-commutative; consequently, combining it with the pointwise Lie bracket
of a sheaf of Lie algebras does not yield a functorial strict dg Lie structure in the
required sense. The Thom--Whitney totalization is quasi-isomorphic to the ordinary
\v{C}ech complex but carries a strict dg Lie structure, and hence is the appropriate
model for Maurer--Cartan and gauge-theoretic constructions.}. For background material about the 
Thom--Whitney functor, we refer the reader to the classical \cite{NavarroAznarHodgeDeligne}. Its use in deformation theory via semicosimplicial
dg Lie algebras is standard; see, for example,
Fiorenza--Iacono--Martinengo \cite{FiorenzaIaconoMartinengoDGLA}. The descent point of view
goes back, in this context, to Hinich--Schechtman
\cite{HinichSchechtmanDescent}.

The outcome is the following picture. For a chosen partial splitting
\(
\sigma^{(2j-1)}:
\widehat X^{(2j-1)}\xrightarrow{\sim}X^{(2j-1)},
\)
the adapted leading class
\(
\beta_{X,j}\bigl(\sigma^{(2j-1)}\bigr)
\in
H^1\!\bigl(\Xred,\mathcal T_{\widehat X}^{\langle 2j\rangle}\bigr)
\)
has ordinary \v{C}ech representatives given by the first non-split logarithmic transition
terms. The normalized pure odd Hessian symbol of this class is the filtered affine Atiyah
class
\(
\mathfrak{At}^{\langle 2j\rangle}
\bigl(X;\sigma^{(2j-1)}\bigr)
\in
H^1\!\bigl(\Xred,\mathcal H_{\widehat X}^{\langle 2j\rangle}\bigr).
\)
Its projected even symbol is the Green obstruction
\(\omega_X^{(2j)}(\sigma^{(2j-1)})\), and, after a further normalization determined by a
\(2j\)-splitting, its residual odd symbol is
\(\omega_X^{(2j+1)}(\sigma^{(2j)})\). The Kuranishi equations of the minimal
\(L_\infty\)-model are the intrinsic compatibility equations among these degree-one data.

\smallskip

The constructions developed in Sections~3--6 may be organized into the
diagram in Figure \ref{fig:master-diagram}, which summarizes the relation between the splitting tower, the filtered
Maurer--Cartan model, the minimal filtered \(L_\infty\)-model, and the filtered affine
Atiyah symbols.

\begin{figure}[h]
\centering
\resizebox{0.94\linewidth}{!}{%
\begin{tikzpicture}[
  >=Latex,
  node distance=9mm and 16mm,
  box/.style={
    draw,
    rounded corners,
    align=center,
    inner sep=3.5pt,
    font=\small,
    fill=gray!6,
    text width=3.15cm
  },
  ibox/.style={
    draw,
    rounded corners,
    align=center,
    inner sep=3.5pt,
    font=\small,
    fill=blue!4,
    text width=2.95cm
  },
  arrow/.style={->, thick},
  darrow/.style={->, thick, dashed},
  lab/.style={
    font=\scriptsize,
    fill=white,
    inner sep=1.2pt,
    align=center
  }
]

\node[ibox] (X) {$X$\\complex supermanifold};

\node[ibox, right=24mm of X] (split)
{split model\\[1mm]
$\widehat X=(\Xred,\wedge^\bullet \FF_X)$};

\node[box, below=10mm of X] (tower)
{partial splitting tower\\[1mm]
$\sigma^{(1)} \leadsto \sigma^{(2)} \leadsto \cdots \leadsto \sigma^{(n)}$};

\node[box, below=10mm of split] (gauge)
{filtered gauge class\\[1mm]
$[\mu_X]\in MC(L_X)/\mathrm{gauge}$};

\node[box, below=10mm of gauge] (adapt)
{adapted Maurer--Cartan representative\\[1mm]
$\mu^{(j)}\in MC(L_X)\cap F^jL_X^1$};

\node[box, right=24mm of adapt] (minmodel)
{minimal filtered \(L_\infty\)-model\\[1mm]
$(H_X,\{\ell_n\}_{n\ge 2})$};

\node[box, below=10mm of adapt] (beta)
{leading cohomology class\\[1mm]
$\beta_{X,j}(\sigma^{(2j-1)})$};

\node[box, right=24mm of beta] (mincoord)
{minimal Maurer--Cartan coordinates\\[1mm]
$\alpha_X^{\min}$};

\node[box, below=10mm of beta] (atiyah)
{filtered affine Atiyah symbol\\[1mm]
$\mathfrak{At}^{\langle 2j\rangle}(X;\sigma^{(2j-1)})$};

\node[box, right=18mm of atiyah] (even)
{even Green obstruction\\[1mm]
$\omega_X^{(2j)}(\sigma^{(2j-1)})$};

\node[box, below=10mm of atiyah] (res)
{normalized residual symbol\\[1mm]
$\widetilde{\mathfrak{At}}^{\langle 2j\rangle}(X;\sigma^{(2j)})$};

\node[box, right=18mm of res] (odd)
{odd Green obstruction\\[1mm]
$\omega_X^{(2j+1)}(\sigma^{(2j)})$};

\draw[arrow] (X) -- (split)
  node[midway, above=2pt, lab] {associated object};

\draw[arrow] (X) -- (tower)
  node[midway, left=4pt, lab] {splitting\\data};

\draw[arrow] (split) -- (gauge)
  node[midway, right=4pt, lab] {choose a\\\(C^\infty\)-splitting};

\draw[arrow] (tower.east) -- (adapt.west)
  node[midway, above=2pt, lab] {choose \(\sigma^{(2j-1)}\)};

\draw[arrow] (gauge) -- (adapt)
  node[midway, left=4pt, lab] {represent};

\draw[darrow] (gauge.east) -- (minmodel.west)
  node[midway, above=2pt, lab] {homotopy transfer};

\draw[darrow] (adapt.east) -- (mincoord.west)
  node[midway, above=2pt, lab] {compatible choice};

\draw[arrow] (adapt) -- (beta)
  node[midway, left=4pt, lab] {leading term\\and class};

\draw[darrow] (minmodel) -- (mincoord)
  node[midway, right=4pt, lab] {Kuranishi\\slice};

\draw[arrow] (beta) -- (atiyah)
  node[midway, left=4pt, lab] {\(H^1(\operatorname{Hess}_{2j})\)};

\draw[arrow] (atiyah.east) -- (even.west)
  node[midway, above=2pt, lab] {\(H^1(\Pi_{2j}^{\mathrm{ev}})\)};

\draw[darrow] (atiyah) -- (res)
  node[midway, left=4pt, lab] {normalization};

\draw[arrow] (res.east) -- (odd.west)
  node[midway, above=2pt, lab] {\(H^1(\Pi_{2j}^{\mathrm{odd}})\)};

\draw[darrow] (even) -- (odd)
  node[midway, right=4pt, lab] {if \(\omega_X^{(2j)}=0\)\\choose \(\sigma^{(2j)}\)};

\end{tikzpicture}%
}
\caption{Schematic organization of the constructions developed in Sections~3--6.
The diagram relates the geometric splitting tower, the filtered Maurer--Cartan model,
the minimal filtered \(L_\infty\)-model, and the filtered affine Atiyah-symbol
formalism. Solid arrows denote constructions or projections; dashed arrows denote
choices, comparison procedures, or model-dependent realizations.}
\label{fig:master-diagram}
\end{figure}

\subsection{The Thom--Whitney \v{C}ech model}

Let
\(
\mathfrak U=\{U_i\}_{i\in I}
\)
be a sufficiently fine Cartan--Stein cover of \(\Xred\), so every finite intersection
\(
U_{i_0\cdots i_p}\defeq U_{i_0}\cap\cdots\cap U_{i_p}
\)
is Stein. For every \(p\geq 0\), define
\[
\mathfrak L_X^p(\mathfrak U)
\defeq
\prod_{i_0<\cdots<i_p}
A^{0,\bullet}\!\bigl(
U_{i_0\cdots i_p},
\mathcal T_{\widehat X,\bar 0}^{[\geq 2]}
\bigr).
\]
The coface maps are the restrictions obtained by omitting one index; their alternating
sum gives the usual \v{C}ech differential after totalization. The Dolbeault differential and
the pointwise commutator bracket of derivations make
\(
[p]\mapsto \mathfrak L_X^p(\mathfrak U)
\)
a semicosimplicial filtered dg Lie algebra. The filtration is
\[
F^j\mathfrak L_X^p(\mathfrak U)
\defeq
\prod_{i_0<\cdots<i_p}
A^{0,\bullet}\!\bigl(
U_{i_0\cdots i_p},
\mathcal T_{\widehat X,\bar 0}^{[\geq 2j]}
\bigr).
\]

\begin{definition}[Thom--Whitney \v{C}ech controller]
\label{def:TW-cech-controller}
The \emph{Thom--Whitney \v{C}ech controller} of the splitting problem on
\(\mathfrak U\) is
\[
\check L_X^{\mathrm{TW}}(\mathfrak U)
\defeq
\operatorname{Tot}_{\mathrm{TW}}\bigl(\mathfrak L_X^\bullet(\mathfrak U)\bigr).
\]
It is endowed with the Thom--Whitney differential
\(
d_{\mathrm{TW}}
\)
and with the induced dg Lie bracket, denoted
\([\,\cdot,\cdot\,]_{\mathrm{TW}}\). Its filtration is the totalization of the
filtrations \(F^j\mathfrak L_X^p(\mathfrak U)\).
\end{definition}

\begin{proposition}\label{prop:TW-dgla-model}
The complex \(\check L_X^{\mathrm{TW}}(\mathfrak U)\) is a finite-step filtered dg Lie
algebra, and its filtration satisfies
\[
\bigl[
F^i\check L_X^{\mathrm{TW}}(\mathfrak U),
F^j\check L_X^{\mathrm{TW}}(\mathfrak U)
\bigr]_{\mathrm{TW}}
\subset
F^{i+j}\check L_X^{\mathrm{TW}}(\mathfrak U).
\]
Moreover,
\[
F^{N+1}\check L_X^{\mathrm{TW}}(\mathfrak U)=0,
\qquad
N\defeq \left\lfloor\frac{\rk(\FF_X)}2\right\rfloor .
\]
\end{proposition}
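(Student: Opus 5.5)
We will verify the three assertions by unwinding the Thom--Whitney totalization and reducing each to the corresponding property of the semicosimplicial pieces \(\mathfrak L_X^p(\mathfrak U)\), which are controlled exactly as in Proposition~\ref{L_X}.

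\textbf{Recall of the totalization.} Recall that, with \(\Omega_p\) the commutative dg algebra of polynomial differential forms on the standard \(p\)-simplex, one has
\[
\operatorname{Tot}_{\mathrm{TW}}\bigl(\mathfrak L_X^\bullet(\mathfrak U)\bigr)
=\Bigl\{(x_p)_{p\geq 0}\in\prod_{p}\Omega_p\otimes\mathfrak L_X^p(\mathfrak U)\ \Big|\ (\delta^k\otimes\id)x_p=(\id\otimes\partial_k)x_{p-1}\ \forall k,p\Bigr\},
\]
where the \(\delta^k\) are the face maps of \(\Omega_\bullet\) and the \(\partial_k\) the coface maps of \(\mathfrak L_X^\bullet\). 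The differential \(d_{\mathrm{TW}}\) is \(d_{\Omega}\otimes\id\pm\id\otimes\bar\partial_0\), and the bracket is \([\omega\otimes a,\omega'\otimes a']=\pm\,\omega\wedge\omega'\otimes[a,a']\).

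\textbf{Step 1: dg Lie structure.} First, each \(\mathfrak L_X^p(\mathfrak U)\) is a dg Lie algebra, by the argument of Proposition~\ref{L_X} applied on each \(U_{i_0\cdots i_p}\). The coface maps are restrictions, so they are dg Lie morphisms. Moreover \(\Omega_p\) is graded commutative, so \(\Omega_p\otimes\mathfrak L_X^p\) is a dg Lie algebra. The equalizer conditions are preserved by the differential and the bracket, because \(\delta^k\) and \(\partial_k\) are morphisms of dg algebras and of dg Lie algebras respectively. This is the standard fact that \(\operatorname{Tot}_{\mathrm{TW}}\) carries semicosimplicial dg Lie algebras to dg Lie algebras (Navarro Aznar; Fiorenza--Iacono--Martinengo), and we would simply cite it.

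\textbf{Step 2: filtration.} Define \(F^j\check L_X^{\mathrm{TW}}(\mathfrak U)\) as the subspace of those \((x_p)\) with \(x_p\in\Omega_p\otimes F^j\mathfrak L_X^p(\mathfrak U)\) for every \(p\). Since the cofaces preserve weight, \(F^j\mathfrak L_X^\bullet\) is a semicosimplicial dg Lie subalgebra, and \(F^j\check L_X^{\mathrm{TW}}\) is exactly its totalization. The only point to check is that \(\Omega_p\otimes F^j\mathfrak L_X^p\) really is a subspace of \(\Omega_p\otimes\mathfrak L_X^p\); this holds because, in each fixed weight, the filtration is a split direct summand \(\prod A^{0,\bullet}(U,\mathcal T^{\langle 2q\rangle})\) (we sum over \(q\geq j\), finitely many terms). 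Now \(\bar\partial_0\) preserves weight, so \(d_{\mathrm{TW}}\) preserves \(F^j\). For the bracket, bracketing \(\omega\otimes a\) with \(\omega'\otimes a'\), where \(a\in F^i\) and \(a'\in F^j\), produces \(\omega\wedge\omega'\otimes[a,a']\), and \([a,a']\in F^{i+j}\) holds pointwise on each intersection by weight additivity \([\mathcal T^{\langle 2i\rangle},\mathcal T^{\langle 2j\rangle}]\subset\mathcal T^{\langle 2i+2j\rangle}\).

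\textbf{Step 3: vanishing.} Since \(2(N+1)>\rk(\FF_X)\), the sheaf \(\mathcal T^{[\geq 2N+2]}_{\widehat X,\bar0}\) is zero, so \(F^{N+1}\mathfrak L_X^p(\mathfrak U)=0\) for every \(p\). Hence every component \(x_p\) of an element of \(F^{N+1}\check L_X^{\mathrm{TW}}(\mathfrak U)\) vanishes. Finiteness of the filtration then follows.

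\textbf{Expected obstacle.} The main subtlety is bookkeeping with infinite products over \(p\) and over multi-indices, since \(\Omega_p\otimes\prod\) is not the same as \(\prod\Omega_p\otimes\). We expect this to be harmless here, because the finite direct-sum weight decomposition commutes with all products and tensor products. To make this airtight, we would first decompose the whole semicosimplicial object by weight, \(\mathfrak L_X^\bullet=\bigoplus_{q=1}^N\mathfrak L_X^{\bullet,\langle 2q\rangle}\), which is a finite sum of semicosimplicial complexes. Then \(\operatorname{Tot}_{\mathrm{TW}}\) commutes with this finite sum, and \(F^j\) is the sum of the summands with \(q\geq j\).
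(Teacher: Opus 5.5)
Your proposal is correct and follows essentially the same route as the paper's proof. Both invoke the standard fact that the Thom--Whitney totalization of a semicosimplicial dg Lie algebra is a dg Lie algebra, obtain \([F^i,F^j]_{\mathrm{TW}}\subset F^{i+j}\) by applying the sheaf-level weight additivity \([\mathcal T_{\widehat X}^{\langle 2i\rangle},\mathcal T_{\widehat X}^{\langle 2j\rangle}]\subset\mathcal T_{\widehat X}^{\langle 2i+2j\rangle}\) componentwise, and deduce \(F^{N+1}=0\) from the absence of even derivations of weight \(\geq 2(N+1)>\rk(\FF_X)\). Your extra bookkeeping — identifying \(F^j\check L_X^{\mathrm{TW}}(\mathfrak U)\) with the totalization of the semicosimplicial subobject \(F^j\mathfrak L_X^\bullet(\mathfrak U)\), and splitting into finitely many weight summands — makes explicit what the paper's brief argument leaves implicit.
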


\begin{proof}
The Thom--Whitney totalization of a semicosimplicial dg Lie algebra is again a dg Lie
algebra. The filtration is finite because the odd rank of \(X\) is finite. The bracket
compatibility follows from the corresponding statement on the sheaf of homogeneous
derivations, that is
\[
\bigl[
\mathcal T_{\widehat X,\bar 0}^{[\geq 2i]},
\mathcal T_{\widehat X,\bar 0}^{[\geq 2j]}
\bigr]
\subset
\mathcal T_{\widehat X,\bar 0}^{[\geq 2(i+j)]}.
\]
Since there are no even tangent weights greater than \(2N\), the filtration vanishes at
\(F^{N+1}\).
\end{proof}

\begin{proposition}[Comparison with the Dolbeault controller]
\label{prop:TW-quasi-iso-LX}
There is a natural filtered quasi-isomorphism of dg Lie algebras
\[
\check L_X^{\mathrm{TW}}(\mathfrak U)
\simeq
L_X.
\]
Consequently \(\check L_X^{\mathrm{TW}}(\mathfrak U)\) and \(L_X\) govern equivalent
pointed filtered formal moduli problems.
\end{proposition}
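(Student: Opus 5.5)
The natural route is to compare both dg Lie algebras with a third one that already contains the global Dolbeault algebra: the Thom--Whitney totalization of the semicosimplicial dg Lie algebra of \emph{global-restricted} Dolbeault forms. Concretely, write \(\Omega^\bullet_p\) for the polynomial de Rham forms on the standard \(p\)-simplex, so that
\[
\check L_X^{\mathrm{TW}}(\mathfrak U)\subset\prod_p \Omega^\bullet_p\otimes\mathfrak L_X^p(\mathfrak U)
\]
is the equalizer of the coface compatibilities. Since \(\mathfrak L_X^p(\mathfrak U)\) is built from \(A^{0,\bullet}(U_{i_0\cdots i_p},\mathcal T^{[\geq 2]}_{\widehat X,\bar 0})\), there is a natural morphism of dg Lie algebras
\[
L_X\longrightarrow \check L_X^{\mathrm{TW}}(\mathfrak U),
\qquad \xi\longmapsto (1\otimes \xi|_{U_{i_0\cdots i_p}})_p .
\]
It is defined by restriction of a global form to all intersections, tensored with the constant function \(1\in\Omega^0_p\). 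This map is compatible with cofaces, so it lands in the totalization. It is a dg Lie morphism because \(1\) is central and closed, and restriction commutes with \(\bar\partial_0\) and with the commutator. It preserves the filtration since restriction maps \(F^jL_X\) into \(F^j\mathfrak L^p_X\). This direct map is the one I will show is a filtered quasi-isomorphism, so there is no zigzag.

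Because both filtrations are finite (Proposition~\ref{L_X} and Proposition~\ref{prop:TW-dgla-model}) and the map is filtered, it suffices by the five lemma, applied inductively along \(0\to F^{j+1}\to F^j\to F^j/F^{j+1}\to 0\), to prove that the induced map on each graded piece is a quasi-isomorphism. Moreover, the filtration is split by the exterior weight decomposition, which is preserved by everything. So we reduce to showing, for each \(1\le j\le N\) and the holomorphic vector bundle \(E=\mathcal T^{\langle 2j\rangle}_{\widehat X}\), that
\[
A^{0,\bullet}(\Xred,E)\longrightarrow \operatorname{Tot}_{\mathrm{TW}}\bigl(p\mapsto \textstyle\prod A^{0,\bullet}(U_{i_0\cdots i_p},E)\bigr)
\]
is a quasi-isomorphism of complexes. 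The brackets play no role at this stage.

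For this I will use the Whitney integration theorem of Navarro Aznar (the Thom--Whitney totalization is naturally quasi-isomorphic to the ordinary \v{C}ech totalization \(\operatorname{Tot}\bigl(\check C^\bullet(\mathfrak U, A^{0,\bullet}(E))\bigr)\) via integration over simplices). The composite \(L_X\to\check L^{\mathrm{TW}}\to\operatorname{Tot}\check C\) is the usual augmentation \(\xi\mapsto(\xi|_{U_i})\). This augmentation is a quasi-isomorphism because the double complex has exact rows: for each fixed form degree \(q\), the sheaf \(\mathcal A^{0,q}(E)\) is fine, so its \v{C}ech complex on any cover is a resolution of global sections. A standard spectral sequence argument in the \v{C}ech direction then concludes, and the finiteness of the cover direction is not even needed since the rows are acyclic termwise. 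The Stein hypothesis is not logically needed here, but it does give compatibility with the holomorphic \v{C}ech complex through the columns, which is useful later. By two-out-of-three the original map is a quasi-isomorphism on each weight piece, hence filtered. Naturality in refinements of \(\mathfrak U\) is clear from the construction.

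Finally, the consequence for formal moduli problems follows because a filtered quasi-isomorphism of nilpotent dg Lie algebras induces equivalences \(MC_\infty(-\otimes\mathfrak m_A)\) for every \(A\) (Goldman--Millson/Getzler), compatibly with each \(F^j\). This gives equivalent towers as in Theorem~\ref{thm:form_mod}. The main obstacle is the interplay between the Thom--Whitney and \v{C}ech totalizations when the index set is infinite, since one works with products rather than sums. I would address this by noting that the Whitney integration quasi-isomorphism is proved degreewise and is compatible with products, or else by passing to a locally finite countable cover of the (paracompact) \(\Xred\).
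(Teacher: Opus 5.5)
Your proposal is correct and follows the same route as the paper: you work weight by weight, use that $\mathcal A^{0,\bullet}_{\Xred}\otimes\mathcal T^{\langle 2j\rangle}_{\widehat X}$ is a fine resolution so that the Thom--Whitney \v{C}ech totalization computes the same cohomology as the global Dolbeault complex, and then invoke invariance of $MC_\infty$ under filtered quasi-isomorphisms of nilpotent dg Lie algebras. Your version is more explicit than the paper's in two places: you write down the strict dg Lie morphism $\xi\mapsto(1\otimes\xi|_{U_{i_0\cdots i_p}})_p$, which makes bracket compatibility automatic where the paper only asserts it, and you justify the quasi-isomorphism via Whitney integration and the exactness of \v{C}ech complexes of fine sheaves (what makes the product totalization harmless is that the Dolbeault degree is bounded, rather than acyclicity of the rows alone).
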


\begin{proof}
For each homogeneous weight \(2j\), the sheaf
\(
\mathcal A_{\Xred}^{0,\bullet}\otimes
\mathcal T_{\widehat X,\bar 0}^{\langle 2j\rangle}
\)
is a fine Dolbeault resolution of the holomorphic vector bundle
\(\mathcal T_{\widehat X,\bar 0}^{\langle 2j\rangle}\). The Thom--Whitney totalization of
the \v{C}ech semicosimplicial complex of a fine resolution computes the same derived
global sections as the global Dolbeault complex. Hence, for each \(j\), we have a
quasi-isomorphism
\[
\operatorname{Tot}_{\mathrm{TW}}
\left(
[p]\longmapsto
\prod_{i_0<\cdots<i_p}
A^{0,\bullet}\!\bigl(U_{i_0\cdots i_p},
\mathcal T_{\widehat X,\bar 0}^{\langle 2j\rangle}\bigr)
\right)
\simeq
A^{0,\bullet}\!\bigl(\Xred,
\mathcal T_{\widehat X,\bar 0}^{\langle 2j\rangle}\bigr).
\]
Summing over the finitely many even weights gives a filtered quasi-isomorphism to
\(L_X\). Compatibility with the Lie bracket follows because both brackets are induced by
the same sheaf-level commutator bracket of derivations. The last assertion follows from
invariance of Maurer--Cartan \(\infty\)-groupoids under quasi-isomorphisms of nilpotent
dg Lie algebras.
\end{proof}

\begin{remark}[Ordinary \v{C}ech representatives]
\label{rem:ordinary-cech-reps}
Although the strict dg Lie algebra used below is the Thom--Whitney totalization, ordinary
holomorphic \v{C}ech cocycles still represent its cohomology. In particular, for every
weight \(2j\), the usual \v{C}ech--Dolbeault comparison identifies
\(
H^1\!\bigl(\Xred,
\mathcal T_{\widehat X}^{\langle 2j\rangle}\bigr)
\)
with Dolbeault and Thom--Whitney cohomology in degree one. Thus the cocycles appearing in
Lemma~\ref{lem:CechDolbeaultLeading} may be regarded as ordinary \v{C}ech representatives
of classes in the Thom--Whitney model.
\end{remark}

\subsection{Minimal coordinates in the \v{C}ech model}

By Proposition~\ref{prop:TW-quasi-iso-LX}, the Thom--Whitney controller has the same
cohomology as \(L_X\). We continue to denote this cohomology by
\[
H_X
=
H^\bullet(L_X)
\cong
H^\bullet\!\bigl(\check L_X^{\mathrm{TW}}(\mathfrak U)\bigr).
\]
It carries the weight decomposition and filtration described in Section~5:
\[
H_X
=
\bigoplus_{j=1}^N H_X^{\langle 2j\rangle},
\qquad
F^jH_X=
\bigoplus_{r\geq j}H_X^{\langle 2r\rangle}.
\]

\begin{proposition}[Filtered contraction in the \v{C}ech model]
\label{prop:cech-contraction}
There exist filtered linear maps
\[
i:H_X\longrightarrow \check L_X^{\mathrm{TW}}(\mathfrak U),
\qquad
p:\check L_X^{\mathrm{TW}}(\mathfrak U)\longrightarrow H_X,
\qquad
h:\check L_X^{\mathrm{TW}}(\mathfrak U)
\longrightarrow
\check L_X^{\mathrm{TW}}(\mathfrak U)[-1],
\]
satisfying
\[
p\circ i=\operatorname{id}_{H_X},
\qquad
\operatorname{id}-i\circ p=d_{\mathrm{TW}}h+h d_{\mathrm{TW}},
\qquad
ph=0,
\qquad
hi=0,
\qquad
h^2=0.
\]
\end{proposition}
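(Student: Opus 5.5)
The plan is to repeat the proof of Proposition~\ref{prop:contr} weight by weight, now for the Thom--Whitney complex. First, decompose the controller by homogeneous weight. The sheaves \(\mathcal T_{\widehat X,\bar 0}^{[\geq 2]}\) split as the finite direct sum of the \(\mathcal T_{\widehat X,\bar 0}^{\langle 2j\rangle}\). Restrictions, the Dolbeault differential and the Thom--Whitney construction (tensoring with polynomial differential forms on simplices, then taking the end over the semicosimplicial category) are all additive. Hence, as a complex,
\[
\check L_X^{\mathrm{TW}}(\mathfrak U)
=
\bigoplus_{j=1}^N \check L_j^\bullet,
\qquad
\check L_j^\bullet
\defeq
\operatorname{Tot}_{\mathrm{TW}}\Bigl([p]\mapsto \prod_{i_0<\cdots<i_p}A^{0,\bullet}\bigl(U_{i_0\cdots i_p},\mathcal T_{\widehat X,\bar 0}^{\langle 2j\rangle}\bigr)\Bigr),
\]
with \(d_{\mathrm{TW}}\) preserving each summand. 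The filtration \(F^j\) is \(\bigoplus_{r\geq j}\check L_r^\bullet\), so any weight-preserving maps will be filtered. By Proposition~\ref{prop:TW-quasi-iso-LX}, applied weight by weight, \(H^\bullet(\check L_j^\bullet)\cong H_X^{\langle 2j\rangle}\).

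Second, on each \(\check L_j^\bullet\), a complex of vector spaces over \(\CC\), I will use linear algebra exactly as in Proposition~\ref{prop:contr}. Choose graded splittings \(Z_j=B_j\oplus H_j\) and \(\check L_j=Z_j\oplus C_j\), where \(Z_j=\ker d_{\mathrm{TW}}\) and \(B_j=\operatorname{im} d_{\mathrm{TW}}\). These complements exist by the axiom of choice, since the spaces may be infinite-dimensional and no topology is required. Then define the maps as follows.
\begin{itemize}
\item \(i_j\) is the inclusion of \(H_j\), composed with the identification \(H_j\cong H_X^{\langle 2j\rangle}\).
\item \(p_j\) is the projection onto \(H_j\) along \(B_j\oplus C_j\).
\item \(h_j\) is zero on \(H_j\oplus C_j\) and equals \(-(d|_{C_j})^{-1}\) on \(B_j\), up to the sign convention fixed so that \(\id-ip=dh+hd\).
\end{itemize}

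Third, I will check the identities on each summand \(H_j\), \(B_j\), \(C_j\) separately.
\begin{itemize}
\item On \(H_j\): both sides of \(\id-ip=dh+hd\) vanish.
\item On \(B_j\): \(dh\) acts as the identity and \(hd=0\).
\item On \(C_j\): \(hd\) acts as the identity and \(dh=0\).
\end{itemize}
The side conditions \(ph=0\), \(hi=0\) and \(h^2=0\) hold because \(h\) maps into \(C_j\), vanishes on \(H_j\), and kills \(C_j\). Finally, take \(i=\bigoplus i_j\), \(p=\bigoplus p_j\) and \(h=\bigoplus h_j\). These are weight-preserving, hence filtered.

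I expect the main obstacle to be the first step: one must justify carefully that the Thom--Whitney totalization really is a direct sum over the finitely many weights, compatibly with \(d_{\mathrm{TW}}\). I would argue this from the fact that the Thom--Whitney end commutes with finite direct sums, because finite products and coproducts of vector spaces coincide, and from the fact that the bracket plays no role in a linear contraction. If the identification of \(H_X\) with the cohomology of \(\check L_X^{\mathrm{TW}}(\mathfrak U)\) must be realized explicitly, I would transport the chosen representatives along the quasi-isomorphism of Proposition~\ref{prop:TW-quasi-iso-LX}, which also preserves weights.
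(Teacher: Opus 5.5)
Your proposal is correct and takes the same approach as the paper: the paper's proof splits the Thom--Whitney controller into its finitely many homogeneous weight complexes, chooses complements to boundaries and cycles weight by weight exactly as in Proposition~\ref{prop:contr}, and notes that the resulting weight-preserving maps are filtered. One small correction: with the identity \(\operatorname{id}-ip=d_{\mathrm{TW}}h+hd_{\mathrm{TW}}\) as stated, \(h\) must equal \(+(d|_{C_j})^{-1}\) on \(B_j\), not \(-(d|_{C_j})^{-1}\); your own check that \(dh\) is the identity on \(B_j\) already uses this sign.
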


\begin{proof}
The proof is the same as the proof of Proposition~\ref{prop:contr}. The Thom--Whitney
controller decomposes into finitely many homogeneous weight complexes, each of which is a
complex of vector spaces. Choose complements to boundaries and cycles weight by weight.
The resulting contraction preserves homogeneous weights and hence the filtration.
\end{proof}

A choice of contraction as in Proposition~\ref{prop:cech-contraction} transfers the dg Lie
structure to a minimal filtered \(L_\infty\)-structure
\(
\bigl(H_X,\{\ell_r\}_{r\geq 2}\bigr),
\)
together with a filtered \(L_\infty\)-quasi-isomorphism
\[
I:
\bigl(H_X,\{\ell_r\}_{r\geq 2}\bigr)
\rightsquigarrow
\check L_X^{\mathrm{TW}}(\mathfrak U).
\]
As before, this minimal model depends on the contraction, but its filtered
\(L_\infty\)-quasi-isomorphism type is independent of that choice.

Let
\(
[\mu_X]\in \pi_0MC_\infty(L_X)
\)
be the filtered Maurer--Cartan class determined by the holomorphic supermanifold \(X\),
after the choice of a smooth splitting as in Section~4. Via the equivalence
\[
MC_\infty\!\bigl(H_X\bigr)
\simeq
MC_\infty\!\bigl(L_X\bigr),
\]
this class is represented, after choosing the above contraction and Kuranishi slice, by a
minimal Maurer--Cartan element
\[
\alpha_X^{\min}
=
\alpha_{X,1}^{\min}+\cdots+\alpha_{X,N}^{\min}
\in H_X^1,
\qquad
\alpha_{X,j}^{\min}\in H_X^{1,\langle 2j\rangle}.
\]
This representative is not canonical. Changing the contraction or the Kuranishi slice
changes \(\alpha_X^{\min}\) by a filtered \(L_\infty\)-coordinate transformation. The
intrinsic object is the filtered Maurer--Cartan class \([\mu_X]\).

\subsection{Kuranishi lifts and \v{C}ech curvature cocycles}

Let
\(
\alpha=
\alpha_1+\cdots+\alpha_N\in H_X^1,
\) with \(
\alpha_j\in H_X^{1,\langle 2j\rangle}.
\)
The Kuranishi lift in the Thom--Whitney model is the unique solution
\[
\tau(\alpha)
=
i(\alpha)-\frac12 h[\tau(\alpha),\tau(\alpha)]_{\mathrm{TW}}
\]
in \(\check L_X^{\mathrm{TW}}(\mathfrak U)^1\). Writing
\[
\tau(\alpha)=\tau_1+\cdots+\tau_N,
\qquad
\tau_j\in
\check L_X^{\mathrm{TW}}(\mathfrak U)^{1,\langle 2j\rangle},
\]
one obtains the recursive formula
\begin{equation}\label{eq:cech-kuranishi-recursion}
\tau_j
=
i(\alpha_j)
-
\frac12
\sum_{a+b=j}h[\tau_a,\tau_b]_{\mathrm{TW}}.
\end{equation}
Thus \(\tau_j\) is the weight-\(2j\) cochain obtained from a representative of
\(\alpha_j\) after subtracting all nonlinear contributions forced by lower weights.

Define the Kuranishi curvature
\begin{equation}\label{eq:cech-kuranishi-curvature}
F(\alpha)
\defeq
d_{\mathrm{TW}}\tau(\alpha)
+
\frac12[\tau(\alpha),\tau(\alpha)]_{\mathrm{TW}}
\in
\check L_X^{\mathrm{TW}}(\mathfrak U)^2.
\end{equation}
Its weight-\(2j\) component is denoted
\[
F_j(\alpha_1,
\dots,
\alpha_{j-1})
\defeq
\operatorname{pr}^{\langle 2j\rangle}F(\alpha),
\qquad
2\leq j\leq N.
\]
Its cohomology class is the \(j\)-th Kuranishi obstruction
\[
\kappa_j(\alpha_1,
\dots,
\alpha_{j-1})
\in
H_X^{2,\langle 2j\rangle}.
\]
The minimal Maurer--Cartan equation is the finite system
\begin{equation}\label{eq:cech-minimal-equations}
\kappa_j(\alpha_1,
\dots,
\alpha_{j-1})=0,
\qquad
2\leq j\leq N.
\end{equation}

\begin{remark}[Ordinary \v{C}ech form of the first curvature cocycles]
\label{rem:cech-curvature-formulas}
Choose ordinary holomorphic \v{C}ech representatives
\(
a_j\in
\check Z^1\!\bigl(\mathfrak U,
\mathcal T_{\widehat X}^{\langle 2j\rangle}\bigr)
\)
for the classes \(\alpha_j\), and choose the contraction so that the linear term
\(i(\alpha_j)\) is represented by \(a_j\). In ordinary \v{C}ech representatives, suppressing
the standard comparison maps between the Thom--Whitney and \v{C}ech models, the first
curvature cocycles have the expected form
\[
F_2
=
\frac12[a_1,a_1],
\]
\[
F_3
=
[a_1,a_2]
-
\frac12\,\check\delta h[a_1,a_1],
\]
and higher \(F_j\)'s are obtained from iterated cup--Lie brackets of the lower
representatives, with the homotopy operator \(h\) inserted along internal edges of rooted
trees. These formulas should be understood as \v{C}ech representatives of the exact
Thom--Whitney Kuranishi operations above. They are useful for computations, whereas the
Thom--Whitney formulation supplies the strict dg Lie model.
\end{remark}

\subsection{Adapted leading classes and affine Atiyah symbols}

We now relate the \v{C}ech representatives of the previous subsection to the actual
supermanifold \(X\). Let \(1\leq j\leq N\), and suppose that a \((2j-1)\)-splitting
\(
\sigma^{(2j-1)}:
\widehat X^{(2j-1)}\xrightarrow{\sim}X^{(2j-1)}
\)
has been chosen. By Definition~\ref{def:adapted-leading-class}, this splitting determines
the adapted leading class
\(
\beta_{X,j}\bigl(\sigma^{(2j-1)}\bigr)
\in
H^1\!\bigl(\Xred,
\mathcal T_{\widehat X}^{\langle 2j\rangle}\bigr).
\)
By Lemma~\ref{lem:CechDolbeaultLeading}, on the cover \(\mathfrak U\) this class is
represented by the holomorphic \v{C}ech cocycle
\[
v_{ab}^{\langle 2j\rangle}
\in
\Gamma\!\bigl(U_{ab},
\mathcal T_{\widehat X}^{\langle 2j\rangle}\bigr)
\]
obtained from the first non-split logarithmic transition terms in coordinates adapted to
\(\sigma^{(2j-1)}\).

The affine Atiyah formalism of Section~6 assigns to the same partial splitting the
filtered affine Atiyah class
\[
\mathfrak{At}^{\langle 2j\rangle}
\bigl(X;\sigma^{(2j-1)}\bigr)
\in
H^1\!\bigl(\Xred,
\mathcal H_{\widehat X}^{\langle 2j\rangle}\bigr).
\]
It is the pure odd Hessian-symbol class of the same leading
logarithmic transition cocycle. In \v{C}ech representatives, this means that the cocycle
\begin{equation} \label{eq:hessian-beta-at}
\mathfrak a_{ab}^{\langle 2j\rangle}
=
\operatorname{Hess}_{2j}\bigl(v_{ab}^{\langle 2j\rangle}\bigr)
\end{equation}
represents
\( 
\mathfrak{At}^{\langle 2j\rangle}
\bigl(X;\sigma^{(2j-1)}\bigr).
\)
Here \(\operatorname{Hess}_{2j}\) denotes the normalized pure odd Hessian symbol from
Section~6, applied to the adapted logarithmic transition representative.

The Green obstruction classes are then recovered by the symbol projections:
\begin{equation}\label{eq:at-symbol-even-obstruction}
H^1(\Pi_{2j}^{\mathrm{ev}})
\left(
\mathfrak{At}^{\langle 2j\rangle}
\bigl(X;\sigma^{(2j-1)}\bigr)
\right)
=
\omega_X^{(2j)}\bigl(\sigma^{(2j-1)}\bigr).
\end{equation}
If this class vanishes and a \(2j\)-splitting
\(
\sigma^{(2j)}:
\widehat X^{(2j)}\xrightarrow{\sim}X^{(2j)}
\)
has been fixed, then the adapted representative may be normalized so that the even
projected component vanishes. The remaining odd Hessian-symbol class is
\[
\widetilde{\mathfrak{At}}^{\langle 2j\rangle}
\bigl(X;\sigma^{(2j)}\bigr)
\in
H^1\!\bigl(\Xred,
\mathcal H_{\widehat X,\mathrm{odd}}^{\langle 2j\rangle}\bigr),
\]
and, if \(2j+1\leq \rk(\FF_X)\),
\begin{equation}\label{eq:at-symbol-odd-obstruction}
H^1(\Pi_{2j}^{\mathrm{odd}})
\left(
\widetilde{\mathfrak{At}}^{\langle 2j\rangle}
\bigl(X;\sigma^{(2j)}\bigr)
\right)
=
\omega_X^{(2j+1)}\bigl(\sigma^{(2j)}\bigr).
\end{equation}
If \(2j+1>\rk(\FF_X)\), the odd obstruction sheaf is zero.

\begin{remark}
The representative relation \eqref{eq:hessian-beta-at}, together with
\eqref{eq:at-symbol-even-obstruction} and \eqref{eq:at-symbol-odd-obstruction}, is the
precise form of the slogan that the affine
Atiyah class contains the Green tower. 
After a
lower-order splitting has been chosen, the first non-split logarithmic transition cocycle
has a tangent-valued leading class \(\beta_{X,j}\). 
The affine Atiyah cocycle records the
pure odd Hessian symbol of that class. The Green obstructions are obtained only after
applying \(\Pi_{2j}^{\mathrm{ev}}\) and, when appropriate, \(\Pi_{2j}^{\mathrm{odd}}\).
\end{remark}

\subsection{The combined comparison theorem}

The preceding constructions can be summarized in one stagewise statement. The statement is
stagewise because the classes in weight \(2j\) are defined relative to a chosen
\((2j-1)\)-splitting. If such a partial splitting does not exist, the corresponding adapted
leading class and filtered affine Atiyah symbol are not defined at that stage.

Before stating the comparison theorem, we summarize the objects that enter the
construction. For \(1\le j\le N\), and after fixing the relevant lower-order splitting
data, the notation is as follows:

\smallskip

\[
\begin{array}{c|c|c}
\text{object} & \text{target} & \text{meaning} \\
\hline
\eta_{2j} &
A^{0,1}\bigl(\Xred,\mathcal T_{\widehat X}^{\langle 2j\rangle}\bigr) &
\text{leading term of an adapted Maurer--Cartan representative}
\\[3pt]
\beta_{X,j}(\sigma^{(2j-1)}) &
H^1\bigl(\Xred,\mathcal T_{\widehat X}^{\langle 2j\rangle}\bigr) &
\text{adapted leading cohomology class}
\\[3pt]
\mathfrak{At}^{\langle 2j\rangle}(X;\sigma^{(2j-1)}) &
H^1\bigl(\Xred,\mathcal H_{\widehat X}^{\langle 2j\rangle}\bigr) &
\text{pure odd Hessian symbol of the affine Atiyah class}
\\[3pt]
\omega_X^{(2j)}(\sigma^{(2j-1)}) &
H^1\bigl(\Xred,\mathcal Q_X^{(2j)}\bigr) &
\text{even Green obstruction}
\\[3pt]
\omega_X^{(2j+1)}(\sigma^{(2j)}) &
H^1\bigl(\Xred,\mathcal Q_X^{(2j+1)}\bigr) &
\text{residual odd Green obstruction}
\end{array}
\]

\smallskip


\begin{theorem}[\v{C}ech coordinates, Atiyah symbols, and Kuranishi compatibilities]
\label{thm:cech-at-symbol-kuranishi}
Let \(X\) be a complex supermanifold, and let
\(
N=\left\lfloor {\rk(\FF_X)}/2\right\rfloor .
\)
Fix a Cartan--Stein cover \(\mathfrak U\), a Thom--Whitney \v{C}ech controller
\(\check L_X^{\mathrm{TW}}(\mathfrak U)\), and a filtered contraction of this controller
onto \(H_X\). Then the following statements hold.

\begin{enumerate}
\item The filtered Maurer--Cartan class of \(X\),
\(
[\mu_X]\in \pi_0MC_\infty(L_X),
\)
is represented in the chosen minimal model by a minimal Maurer--Cartan element
\[
\alpha_X^{\min}
=
\alpha_{X,1}^{\min}+\cdots+\alpha_{X,N}^{\min}\in H_X^1.
\]
This representative depends on the chosen contraction and Kuranishi slice, but its
filtered \(L_\infty\)-equivalence class represents the intrinsic gauge class \([\mu_X]\).

\item For every \(1\leq j\leq N\) for which a \((2j-1)\)-splitting
\(\sigma^{(2j-1)}\) has been chosen, the adapted leading class
\(
\beta_{X,j}\bigl(\sigma^{(2j-1)}\bigr)
\in
H_X^{1,\langle 2j\rangle}
\)
is represented by the first non-split logarithmic \v{C}ech transition terms in coordinates
adapted to \(\sigma^{(2j-1)}\). \\
If the Kuranishi slice is chosen compatibly with these
representatives, then the weight-\(2j\) minimal coordinate is represented by this class.
For a different slice, it is transformed by a filtered \(L_\infty\)-coordinate change.

\item With the same choice of \(j\) and \(\sigma^{(2j-1)}\), the filtered affine Atiyah
symbol is represented by applying the normalized pure odd Hessian symbol to an adapted
\v{C}ech representative of the leading class
\(
\beta_{X,j}\bigl(\sigma^{(2j-1)}\bigr).
\)
Equivalently, if \(\{v_{ab}^{\langle 2j\rangle}\}\) represents this leading class, then
\(\{\operatorname{Hess}_{2j}(v_{ab}^{\langle 2j\rangle})\}\) represents
\(
\mathfrak{At}^{\langle 2j\rangle}
\bigl(X;\sigma^{(2j-1)}\bigr).
\)

\item With the same choice of \(j\) and \(\sigma^{(2j-1)}\), the even projected
symbol is the Green obstruction to lifting \(\sigma^{(2j-1)}\) to order \(2j\):
\[
H^1(\Pi_{2j}^{\mathrm{ev}})
\left(
\mathfrak{At}^{\langle 2j\rangle}
\bigl(X;\sigma^{(2j-1)}\bigr)
\right)
=
\omega_X^{(2j)}\bigl(\sigma^{(2j-1)}\bigr).
\]
If this obstruction vanishes and a \(2j\)-splitting \(\sigma^{(2j)}\) is fixed, then the
normalized residual odd symbol satisfies
\[
H^1(\Pi_{2j}^{\mathrm{odd}})
\left(
\widetilde{\mathfrak{At}}^{\langle 2j\rangle}
\bigl(X;\sigma^{(2j)}\bigr)
\right)
=
\omega_X^{(2j+1)}\bigl(\sigma^{(2j)}\bigr)
\]
whenever \(2j+1\leq \rk(\FF_X)\).

\item The Kuranishi equations
\[
\kappa_j(\alpha_1,
\dots,
\alpha_{j-1})=0,
\qquad
2\leq j\leq N,
\]
are the intrinsic \(H_X^2\)-valued compatibility equations for any system of degree-one
coordinates representing an actual Maurer--Cartan class. In \v{C}ech representatives they are
built from iterated brackets of the lower representatives, corrected by the homotopy
operator of the chosen contraction.
\end{enumerate}
\end{theorem}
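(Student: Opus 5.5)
The proof will be an assembly argument: each of the five items follows from results already established, once they are transported to the Thom--Whitney model. The first step is to fix the comparison data. By Proposition~\ref{prop:TW-quasi-iso-LX}, there is a filtered quasi-isomorphism \(\check L_X^{\mathrm{TW}}(\mathfrak U)\simeq L_X\) inducing an equivalence of Maurer--Cartan \(\infty\)-groupoids. Composing it with the filtered \(L_\infty\)-quasi-isomorphism \(I\) obtained from the contraction of Proposition~\ref{prop:cech-contraction} gives \(MC_\infty(H_X)\simeq MC_\infty(L_X)\).

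Item (1) follows by transporting \([\mu_X]\) along this equivalence and taking its Kuranishi representative. Proposition~\ref{prop:KuranishiLift} and Proposition~\ref{prop:KuranishiMC}, whose proofs use only the contraction identities and finiteness of the filtration and therefore carry over verbatim to \(\check L_X^{\mathrm{TW}}(\mathfrak U)\), show that every Maurer--Cartan class meets the slice \(h\tau=0\) in an element \(\tau(\alpha)\) with \(\kappa(\alpha)=0\). This yields \(\alpha_X^{\min}\). Independence up to filtered \(L_\infty\)-coordinate change is the uniqueness of minimal models up to filtered \(L_\infty\)-isomorphism.

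For item (2), Lemma~\ref{lem:CechDolbeaultLeading} and Remark~\ref{rem:ordinary-cech-reps} show that \(\beta_{X,j}(\sigma^{(2j-1)})\) is represented by the first non-split logarithmic transition cocycle \(\{v_{ab}^{\langle 2j\rangle}\}\). Choosing the contraction so that \(i\) sends this class to the Thom--Whitney image of that cocycle makes the weight-\(2j\) coordinate of the adapted Kuranishi representative equal to it. To see this, work modulo \(F^{j+1}\). Gauging \(\mu_X\) to the adapted \(\mu^{(j)}\in F^j\) kills all lower weights, and by the recursion \eqref{eq:cech-kuranishi-recursion} the weight-\(2j\) component of \(\tau\) is then \(i(\alpha_j)\) up to \(h\)-exact terms. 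Applying \(p\) therefore returns \(\beta_{X,j}\).

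Items (3) and (4) are restatements of Proposition~\ref{prop:filtat1}, of \eqref{eq:hessian-beta-at}, and of Theorem~\ref{thm:filteredAtiyahObstructions}. Item (5) follows from Proposition~\ref{prop:weightMC} together with Remark~\ref{rem:cech-curvature-formulas}: the weight-\(2j\) part of the Kuranishi map is \(\kappa_j\), and its curvature representative \(F_j\) is a tree sum of \(TW\)-brackets with \(h\) on internal edges.

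The main obstacle is the compatibility claim in item (2). The coordinate \(\alpha_{X,j}^{\min}\) is defined only after lower-weight coordinates have been fixed, and it is a priori altered by nonlinear contributions coming from \(\alpha_{X,<j}\). The way around this is to gauge first to \(\mu^{(j)}\in F^jL_X\). Then every lower coordinate vanishes in the compatible slice, the nonlinear terms in weight \(2j\) drop out, and the statement becomes linear. The only remaining check is that the Thom--Whitney--\v{C}ech comparison map is compatible with this gauge. This holds because gauge action modulo \(F^{j+1}\) is \(\mu\mapsto\mu+d u\), as in Lemma~\ref{lem:adapted-leading-well-defined}.
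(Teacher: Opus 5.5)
Your proposal is correct and follows essentially the same route as the paper: an item-by-item assembly of Proposition~\ref{prop:TW-quasi-iso-LX} with the transferred minimal model for (1), Lemma~\ref{lem:CechDolbeaultLeading} for (2), Proposition~\ref{prop:filtat1} and Theorem~\ref{thm:filteredAtiyahObstructions} for (3)--(4), and the weight decomposition of Section~5 together with Remark~\ref{rem:cech-curvature-formulas} for (5); your gauge argument for the slice compatibility in (2) is more explicit than the paper's one-line remark. Two attributions should be fixed. First, the fact that every Maurer--Cartan class meets the slice \(h\tau=0\) comes from the equivalence \(MC_\infty(H_X)\simeq MC_\infty(L_X)\) you already invoked; Propositions~\ref{prop:KuranishiLift} and~\ref{prop:KuranishiMC} only identify the Maurer--Cartan elements of the slice with the zeros of \(\kappa\). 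Second, the vanishing of the lower coordinates in (2) follows by applying that same equivalence to the filtration step \(F^j\), which is legitimate because the contraction is built weight by weight and so restricts to \(F^j\); this also places the connecting gauge in \(F^j\), which is what your linear gauge formula needs.
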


\begin{proof}
The first assertion is the equivalence of Maurer--Cartan \(\infty\)-groupoids induced by the
filtered \(L_\infty\)-quasi-isomorphism from the minimal model to the Thom--Whitney
controller, together with Proposition~\ref{prop:TW-quasi-iso-LX}.

The second assertion follows from Lemma~\ref{lem:CechDolbeaultLeading} and
Definition~\ref{def:adapted-leading-class}: the ordinary \v{C}ech cocycle of leading
logarithmic transition terms represents the adapted leading class. Compatibility with a
chosen Kuranishi slice is simply the choice of cohomology representatives in the linear term
of the contraction.

The third assertion is Proposition~\ref{prop:filtat1}, rewritten in cohomological form. The
fourth assertion is Theorem~\ref{thm:filteredAtiyahObstructions}, equivalently the
definitions of the Atiyah defects and residual classes in Definition~\ref{def:defect_rescl}.

Finally, the fifth assertion is the weight decomposition of the minimal Maurer--Cartan
equation from Section~5, written in the Thom--Whitney \v{C}ech model by
\eqref{eq:cech-kuranishi-curvature}. The ordinary \v{C}ech formulas of
Remark~\ref{rem:cech-curvature-formulas} describe the corresponding representatives.
\end{proof}

Thus the three types of data are related by the chain
\[
\beta_{X,j}\bigl(\sigma^{(2j-1)}\bigr)
\xrightarrow{\operatorname{Hess}_{2j}}
\mathfrak{At}^{\langle 2j\rangle}
\bigl(X;\sigma^{(2j-1)}\bigr)
\xrightarrow{\Pi_{2j}^{\mathrm{ev}}}
\omega_X^{(2j)}\bigl(\sigma^{(2j-1)}\bigr),
\]
and, after the even obstruction vanishes and a \(2j\)-splitting is chosen,
\[
\widetilde{\mathfrak{At}}^{\langle 2j\rangle}
\bigl(X;\sigma^{(2j)}\bigr)
\xrightarrow{\Pi_{2j}^{\mathrm{odd}}}
\omega_X^{(2j+1)}\bigl(\sigma^{(2j)}\bigr).
\]
The higher brackets of the minimal \(L_\infty\)-model are not additional Green obstruction
classes; they are the compatibility equations ensuring that the degree-one data arise from a
single filtered Maurer--Cartan class.

\subsection{Exotic atlases and intrinsic obstruction data}

The preceding discussion also clarifies the role of exotic atlases, see \cite{BettadapuraHigherObstructions, DonagiWittenNotProjected}. In the classical Green
construction, once one passes beyond the primary obstruction, the \v{C}ech cocycles obtained
from a chosen partial splitting tower depend on the chosen tower. They may therefore fail to
be invariants of the underlying supermanifold by themselves.

The intrinsic object attached to \(X\) is the filtered Maurer--Cartan gauge class
\(
[\mu_X]\in \pi_0MC_\infty(L_X).
\)
After choosing a filtered minimal model and a Kuranishi slice, this class is represented by a
minimal Maurer--Cartan element
\(
\alpha_X^{\min}\in H_X^1.
\)
The adapted \v{C}ech cocycles, filtered affine Atiyah symbols, defects, residual classes,
and Kuranishi equations are coordinate expressions of this intrinsic gauge class relative to
chosen lower-order splitting data and a chosen cohomological slice.

If \(X\) is split, then \([\mu_X]=0\). Nevertheless, one may choose nontrivial higher
transition data representing the trivial filtered gauge class; such data can produce nonzero
naive higher \v{C}ech cocycles. This is the mechanism behind the exotic-atlas phenomenon.
It does not mean that the intrinsic obstruction theory fails. It means that the atlas-level
higher cocycles are not final invariants until they are interpreted modulo filtered gauge and
through the compatible obstruction maps described above.

Conversely, when a Green obstruction class obtained from the adapted tower is nonzero in the
appropriate cohomology group, it detects a genuine failure to continue that tower. The
filtered Maurer--Cartan and Atiyah-symbol formalisms keep track of which parts of the
\v{C}ech data are coordinate choices and which parts survive as intrinsic obstruction classes.


\section{Green--Onishchik moduli and formal splitting theory}
\label{sec:green-onishchik-fixed-retract-moduli}

The preceding sections developed the splitting problem for a fixed complex supermanifold
\(X\) in several equivalent languages.  Starting from Green's classical obstruction
tower, we passed to a filtered Dolbeault Maurer--Cartan model, then to a minimal
filtered \(L_\infty\)-model, and finally to the affine Atiyah-symbol interpretation of
the same obstruction data.  In all these constructions the reduced space \(\Xred\) and
the odd conormal bundle \(\FF_X\) were fixed, and the central question was whether the
filtered structure sheaf \(\mathcal O_X\) could be identified, compatibly with its
augmentation and filtration, with the split exterior algebra
\(
\bigwedge^\bullet \FF_X .
\)
Thus Sections~3--7 gave an absolute deformation-theoretic description of one
supermanifold relative to its split model.

The purpose of the present section is to place that absolute picture in families.  We
fix a smooth proper holomorphic family
\(
\pi:Y\rightarrow S
\)
and a locally free \(\mathcal O_Y\)-module \(F\), regarded as the prescribed odd conormal
bundle. The split relative model is
\(
A_{\spl}\defeq \bigwedge_{\mathcal O_Y}^{\bullet}F .
\)
We study filtered relative superalgebras whose associated graded algebra is identified
with \(A_{\spl}\).  Equivalently, we pass from the splitting theory of one
supermanifold to a \emph{relative moduli problem of supermanifold structures with fixed
reduced family and fixed odd conormal bundle}.

There is a basic subtlety in formulating this fixed-retract problem.  The structure
sheaf of a non-projected supermanifold is not naturally an algebra over the structure
sheaf of its reduced space.  Requiring an \(\mathcal O_Y\)-algebra structure would
therefore amount to choosing a projection onto the reduced family, and would remove
exactly the phenomena measured by the Green obstruction tower.  For this reason, the
objects below are filtered \(\pi^{-1}\mathcal O_S\)-superalgebras, not filtered
\(\mathcal O_Y\)-superalgebras.  They are required to have reduced quotient
\(\mathcal O_Y\) and associated graded algebra \(\bigwedge^\bullet F\), but no splitting
\(\mathcal O_Y\to\mathcal A\) is part of the data.

The terminology ``Green--Onishchik'' reflects the two classical sources behind this
construction.  Green's work on holomorphic graded manifolds introduced the
non-abelian cohomological description of supermanifolds with fixed retract and the
associated obstruction sheaves \cite{Green1982}.  Onishchik developed the
classification of complex analytic supermanifolds with fixed retract by means of
non-abelian cochain complexes and constructed finite-dimensional moduli varieties in
the compact case, in analogy with Kuranishi theory
\cite{OnishchikClassification}.\footnote{Related foundational viewpoints are also
present in Palamodov's work on invariants of analytic \(\mathbb Z_2\)-manifolds
\cite{PalamodovInvariants} and in the standard treatments of complex and algebraic supergeometry by
Manin and others \cite{ManinGaugeField,VoronovManinPenkovElements}.}
The relative stack introduced below is a formal, filtered, and relative version of this
Green--Onishchik fixed-retract classification problem.

The infinitesimal symmetries of the split relative model are encoded by the sheaf of
even relative derivations
\(
\mathcal T_{A_{\spl}/S,\bar0}.
\)
The Green filtration selects the nilpotent Lie subsheaf
\[
\mathcal T^{[\geq 2]}_{A_{\spl}/S,\bar0}
\subset
\mathcal T_{A_{\spl}/S,\bar0},
\]
consisting of even derivations which raise the odd Green filtration by at least two.
Its Dolbeault resolution is the relative Green controller
\[
\mathfrak t_{F/S}
\defeq
\mathcal A_Y^{0,\bullet}
\bigl(
\mathcal T^{[\geq 2]}_{A_{\spl}/S,\bar0}
\bigr).
\]
For \(S=\mathrm{pt}\), this is exactly the filtered dg Lie algebra used in
Section~4.  Thus the construction below is not a new obstruction theory, but the
relative form of the same filtered Maurer--Cartan mechanism.

The comparison with the classical Green tower is already visible at the level of the
homogeneous pieces of the relative derivation sheaf.  For every \(j\ge1\), there is a
natural short exact sequence
\[
0
\longrightarrow
\mathcal Q^{(2j+1)}_{F/S}
\longrightarrow
\mathcal T^{\langle 2j\rangle}_{A_{\spl}/S,\bar 0}
\overset{\rho_{2j}}{\longrightarrow}
\mathcal Q^{(2j)}_{F/S}
\longrightarrow
0,
\]
where
\[
\mathcal Q^{(r)}_{F/S}
=
\begin{cases}
\mathcal T_{Y/S}\otimes \bigwedge^r F, & r \text{ even},\\[3pt]
\mathcal H\!om_{\mathcal O_Y}(F,\bigwedge^r F), & r \text{ odd}.
\end{cases}
\]
Consequently, one homogeneous Green coordinate of even weight \(2j\) contains two
classical pieces: the projected obstruction in \(\mathcal Q^{(2j)}_{F/S}\), and, after
this obstruction has vanished and a \(2j\)-splitting has been chosen, the residual
obstruction in \(\mathcal Q^{(2j+1)}_{F/S}\).  This is the relative counterpart of the
absolute comparison developed in Sections~4--7.

The formal part of the theory is intrinsic.  Fibrewise, the derived formal
neighbourhood of the split object is governed by
\(
R\Gamma\bigl(Y_T,\mathfrak t_{F_T/T}\bigr)
\)
and therefore defines a formal moduli problem in the sense of derived deformation
theory.  We use the language of formal moduli problems and dg Lie controllers in the
sense of Lurie and Pridham \cite{LurieDAGX,PridhamUDDT}, as well as the relative and
stack-theoretic formulations developed by Hennion, Calaque--Grivaux, and Nuiten
\cite{HennionTangentLie,CalaqueGrivauxFMP,Nuiten}.  The general background on derived
stacks and geometric stacks is provided by the framework of To\"en--Vezzosi
\cite{ToenVezzosiHAGII}, see also \cite{VezzosiDZL}.  When one passes from formal
moduli problems to algebraized derived moduli spaces, further finiteness and
representability hypotheses enter; this is the same philosophy underlying the
relationship between derived enhancements and obstruction theories in the sense of
Behrend--Fantechi and its derived-geometric refinements
\cite{BehrendFantechiIntrinsicNormalCone,SchurgToenVezzosiDerived}.

For this reason we separate the logical status of the results in this section into
three levels.

\begin{enumerate}
\item[\textup{(F)}] \textbf{Fibrewise formal theory.}  For every test space
\(T\to S\), the formal neighbourhood of the split object is governed by the finite-step
filtered dg Lie algebra
\(
R\Gamma\bigl(Y_T,\mathfrak t_{F_T/T}\bigr).
\)
This is the intrinsic formal statement.  It does not require a relative base-change
theorem.

\item[\textup{(B)}] \textbf{Relative base-change form.}  If the derived pushforward
\(
\mathfrak g_{F/S}
\defeq
R\pi_*\mathfrak t_{F/S}
\)
commutes with the base changes under consideration, the same fibrewise formal moduli
problem can be written compactly as
\(
\Def_S(\mathfrak g_{F/S}).
\)
This notation is useful, but it is conditional on an explicit derived base-change
assumption.

\item[\textup{(K)}] \textbf{Kuranishi and algebraization criteria.}  Passing from the
formal object to local or global derived Kuranishi charts requires additional
finite-model, amplitude, algebraizability, and descent hypotheses.  These hypotheses
are stated as sufficient criteria.  We do not claim that a canonical global algebraic
or analytic Green--Onishchik derived stack exists without them.
\end{enumerate}

\subsection{Fixed-retract objects and the Green controller}
\label{subsec:GO-moduli-controller}

We begin by defining the classical fixed-retract stack.  The data fixed once and for
all are the reduced family \(Y/S\) and the odd conormal bundle \(F\).  The split
model is the exterior algebra \(\wedge^\bullet F\), and the objects to be classified
are filtered superalgebras locally isomorphic to this split model and with associated
graded algebra globally identified with it.

\begin{definition}[The relative split model]
\label{def:relative-split-model}
Let \(\pi:Y\to S\) be a proper holomorphic submersion and let \(F\) be a holomorphic
vector bundle of rank \(n\) on \(Y\).  The associated split relative supermanifold is
\[
\widehat Y_{\spl}\defeq (Y,A_{\spl}),
\qquad
A_{\spl}\defeq \bigwedge_{\mathcal O_Y}^{\bullet}F .
\]
Its odd ideal is
\(
\mathcal J_{\spl}\defeq \bigoplus_{i\ge 1}\wedge^i F,
\)
and the Green filtration on \(A_{\spl}\) is
\[
F^rA_{\spl}\defeq \mathcal J_{\spl}^r
=
\bigoplus_{i\ge r}\wedge^iF,
\qquad
F^{n+1}A_{\spl}=0.
\]
The quotient \(A_{\spl}/F^1A_{\spl}\) is canonically \(\mathcal O_Y\), and
\(
\operatorname{gr}_F A_{\spl}
\cong
\bigwedge_{\mathcal O_Y}^{\bullet}F
\)
as graded \(\mathcal O_Y\)-superalgebras.
\end{definition}

\begin{definition}[Filtered relative superalgebras with fixed retract]
\label{def:filtered-relative-superalgebra-fixed-retract}
Let \(T\to S\) be a complex analytic test space and set
\[
Y_T\defeq Y\times_S T,
\qquad
\pi_T:Y_T\to T,
\qquad
p_T:Y_T\to Y,
\qquad
F_T\defeq p_T^*F.
\]
A \emph{filtered relative superalgebra on \(Y_T/T\) with associated graded
\(\wedge^\bullet F_T\)} is a sheaf \(\mathcal A\) of
\(\pi_T^{-1}\mathcal O_T\)-supercommutative algebras on \(Y_T\), endowed with a
finite decreasing filtration by ideals
\[
\mathcal A=F^0\mathcal A\supset F^1\mathcal A\supset\cdots\supset F^{n+1}\mathcal A=0,
\]
such that:
\begin{enumerate}
\item \(F^r\mathcal A\cdot F^s\mathcal A\subset F^{r+s}\mathcal A\);
\item there is a fixed identification
\(
\mathcal A/F^1\mathcal A \cong \mathcal O_{Y_T}
\)
as \(\pi_T^{-1}\mathcal O_T\)-algebras;
\item there is an isomorphism of graded \(\mathcal O_{Y_T}\)-superalgebras
\(
\iota:\operatorname{gr}_F\mathcal A
\overset{\sim}{\longrightarrow}
\bigwedge_{\mathcal O_{Y_T}}^{\bullet}F_T.
\)
\end{enumerate}
No \(\mathcal O_{Y_T}\)-algebra structure on \(\mathcal A\) is part of the data.
\end{definition}

\begin{remark}[Why \(\mathcal O_{Y_T}\)-linearity is not imposed]
\label{rem:no-OY-linearity}
The quotient \(\mathcal A\to \mathcal A/F^1\mathcal A\cong\mathcal O_{Y_T}\) is
canonical in the fixed-retract problem.  A splitting of this quotient,
\(
\mathcal O_{Y_T}\rightarrow \mathcal A,
\)
would be precisely a projection of the corresponding supermanifold onto its reduced
space.  Since non-projected supermanifolds are one of the main objects of interest,
Definition~\ref{def:filtered-relative-superalgebra-fixed-retract} deliberately avoids
requiring \(\mathcal A\) to be an \(\mathcal O_{Y_T}\)-algebra.
\end{remark}

\begin{definition}[The classical Green--Onishchik stack]
\label{def:GO-stack}
The \emph{classical Green--Onishchik stack with fixed retract \((Y/S,F)\)} is the
stack in groupoids \(\Tw^{\cl}_{F/S}\) whose value on \(T\to S\) is the groupoid of
pairs \((\mathcal A,\iota)\), where \(\mathcal A\) is a filtered relative superalgebra on
\(Y_T/T\) with associated graded \(\wedge^\bullet F_T\), and \(\iota\) is the chosen
identification
\[
\operatorname{gr}_F\mathcal A\cong \wedge^\bullet F_T.
\]
Morphisms are isomorphisms of filtered
\(\pi_T^{-1}\mathcal O_T\)-superalgebras compatible with the quotient to
\(\mathcal O_{Y_T}\) and with the associated graded identification.
\end{definition}

\begin{definition}[The split section]
\label{def:split-section-GO}
The split model defines a distinguished object over every \(T\to S\), namely
\[
s_{\spl}(T)
\defeq
\bigl(\wedge^\bullet F_T,\operatorname{id}_{\wedge^\bullet F_T}\bigr).
\]
These objects are compatible with pullback and define a section
\[
s_{\spl}:S\longrightarrow \Tw^{\cl}_{F/S},
\]
called the \emph{split section}.
\end{definition}

We next identify the automorphism sheaf whose non-abelian cohomology describes the
same fixed-retract problem.  Since the objects are not \(\mathcal O_Y\)-algebras, the
automorphisms must not be required to be \(\mathcal O_Y\)-linear.  They are relative
over the base only.

\begin{definition}[The Green automorphism sheaf]
\label{def:Green-automorphism-sheaf}
Let \(U\subset Y\) be open.  The \emph{Green automorphism sheaf} of the split model is
the sheaf of groups \(G^{(2)}_{F/S}\) defined by
\[
G^{(2)}_{F/S}(U)
\defeq
\left\{
\varphi\in
\Aut_{\pi^{-1}\mathcal O_S,\bar 0}
\bigl(A_{\spl}|_U\bigr)
\;\middle|\;
\varphi(F^rA_{\spl}|_U)\subset F^rA_{\spl}|_U
\text{ for all }r,
\quad
\operatorname{gr}_F(\varphi)=\operatorname{id}
\right\}.
\]
Equivalently, \(\varphi\in G^{(2)}_{F/S}(U)\) if and only if it is an even
\(\pi^{-1}\mathcal O_S\)-algebra automorphism of \(A_{\spl}|_U\) satisfying
\(
\varphi\equiv \operatorname{id}
\pmod{\mathcal J_{\spl,U}^2}.
\)
The equivalence follows because a filtered algebra automorphism is determined on
functions and on \(F=\mathcal J_{\spl}/\mathcal J_{\spl}^2\), and the associated graded
algebra is generated by these pieces.
\end{definition}

\begin{remark}[Green cocycles]
\label{rem:Green-cocycles}
After choosing local identifications of an object of \(\Tw^{\cl}_{F/S}\) with the split
model, its transition functions are sections of \(G^{(2)}_{F/S}\).  Conversely, a
\(G^{(2)}_{F/S}\)-valued non-abelian \v{C}ech cocycle glues the local split models to a
filtered relative superalgebra with associated graded \(\wedge^\bullet F\).  Thus
\(\Tw^{\cl}_{F/S}\) is equivalently the stack of Green torsors, or split forms, of the
fixed split model.
\end{remark}

The infinitesimal object associated with \(G^{(2)}_{F/S}\) is the sheaf of relative
filtered derivations.  This is the sheaf-theoretic precursor of the dg Lie algebra
\(L_X\) of the previous sections.

\begin{definition}[Relative derivations of the split model]
\label{def:relative-derivation-sheaf}
Let
\[
\mathcal T_{A_{\spl}/S,\bar 0}
\defeq
\Der_{\pi^{-1}\mathcal O_S,\bar 0}(A_{\spl},A_{\spl})
\]
be the sheaf of parity-preserving derivations of \(A_{\spl}=\wedge^\bullet F\) relative
to \(S\).  For \(q\ge -1\), let
\(\mathcal T^{\langle q\rangle}_{A_{\spl}/S}\) be the subsheaf of homogeneous derivations
of weight \(q\), namely
\[
\mathcal T^{\langle q\rangle}_{A_{\spl}/S}(U)
\defeq
\left\{
D\in\Der_{\pi^{-1}\mathcal O_S}(A_{\spl}|_U,A_{\spl}|_U)
\;\middle|\;
D(\wedge^pF|_U)\subset \wedge^{p+q}F|_U
\text{ for all }p\ge 0
\right\}.
\]
Thus \(\mathcal T^{\langle q\rangle}_{A_{\spl}/S}\) has parity \(q\bmod 2\), and
\[
\bigl[
\mathcal T^{\langle q\rangle}_{A_{\spl}/S},
\mathcal T^{\langle q'\rangle}_{A_{\spl}/S}
\bigr]
\subset
\mathcal T^{\langle q+q'\rangle}_{A_{\spl}/S}.
\]
For \(j\ge 1\), define the even Green tails by
\[
\mathcal T^{[\ge 2j]}_{A_{\spl}/S,\bar 0}
\defeq
\bigoplus_{q\ge j}
\mathcal T^{\langle 2q\rangle}_{A_{\spl}/S}.
\]
In particular,
\(
\mathcal T^{[\ge 2]}_{A_{\spl}/S,\bar 0}
=
\bigoplus_{q\ge 1}
\mathcal T^{\langle 2q\rangle}_{A_{\spl}/S}
\)
is a sheaf of nilpotent Lie algebras.
\end{definition}

\begin{proposition}[Homogeneous derivations and Green sheaves]
\label{prop:homogeneous-derivations-Green-exact}
For every \(j\ge1\), there is a natural short exact sequence
\begin{equation}
\label{eq:homogeneous-Green-exact}
0
\longrightarrow
\mathcal H\!om_{\mathcal O_Y}\!\bigl(F,\wedge^{2j+1}F\bigr)
\longrightarrow
\mathcal T^{\langle 2j\rangle}_{A_{\spl}/S}
\overset{\rho_{2j}}{\longrightarrow}
\mathcal T_{Y/S}\otimes_{\mathcal O_Y}\wedge^{2j}F
\longrightarrow 0.
\end{equation}
Equivalently,
\[
0
\longrightarrow
\mathcal Q^{(2j+1)}_{F/S}
\longrightarrow
\mathcal T^{\langle 2j\rangle}_{A_{\spl}/S}
\overset{\rho_{2j}}{\longrightarrow}
\mathcal Q^{(2j)}_{F/S}
\longrightarrow 0,
\]
with the convention \(\mathcal Q^{(2j+1)}_{F/S}=0\) when \(2j+1>\operatorname{rk}F\).
\end{proposition}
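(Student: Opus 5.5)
The argument is the relative, holomorphic analogue of Proposition~\ref{splitTp}. For \(D\in\mathcal T^{\langle 2j\rangle}_{A_{\spl}/S}(U)\) we would define \(\rho_{2j}(D)\) as the restriction \(D|_{\mathcal O_Y}:\mathcal O_Y|_U\to\wedge^{2j}F|_U\). Since \(D\) is a \(\pi^{-1}\mathcal O_S\)-linear derivation, this restriction is a \(\pi^{-1}\mathcal O_S\)-linear derivation of \(\mathcal O_Y\) with values in the \(\mathcal O_Y\)-module \(\wedge^{2j}F\). Because \(\pi\) is a submersion, \(\Omega_{Y/S}\) is locally free, and the universal property of relative Kähler differentials gives
\[
\Der_{\pi^{-1}\mathcal O_S}\bigl(\mathcal O_Y,\wedge^{2j}F\bigr)
\cong
\mathcal H\!om_{\mathcal O_Y}\bigl(\Omega_{Y/S},\wedge^{2j}F\bigr)
\cong
\mathcal T_{Y/S}\otimes\wedge^{2j}F .
\]
This defines \(\rho_{2j}\). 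Naturality in \(U\) is clear.

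\emph{Kernel.} If \(D|_{\mathcal O_Y}=0\), the Leibniz rule \(D(fs)=fD(s)\) for \(f\in\mathcal O_Y\) and \(s\in F\) shows that \(D|_F:F\to\wedge^{2j+1}F\) is \(\mathcal O_Y\)-linear. Since \(A_{\spl}\) is generated over \(\mathcal O_Y\) by \(F\), this restriction determines \(D\). Conversely, any \(\mathcal O_Y\)-linear \(\phi:F\to\wedge^{2j+1}F\) extends uniquely to an even derivation of \(\wedge^\bullet F\) vanishing on \(\mathcal O_Y\), by the universal property of the exterior algebra: \(\phi\) raises degree by an even amount and is odd-to-odd, so the extension satisfies the sign rules. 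This identifies \(\ker\rho_{2j}\) with \(\mathcal Q^{(2j+1)}_{F/S}\). It vanishes when \(2j+1>\operatorname{rk}F\), since then \(\wedge^{2j+1}F=0\).

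\emph{Surjectivity.} This is the main point, because it is not automatic. Derivations of \(\mathcal O_Y\) do not canonically act on \(F\), so a lift requires a choice. I would prove surjectivity locally, which is enough for exactness of a sequence of sheaves. Work on a chart with relative coordinates \(z^1,\dots,z^m\) along the fibres (plus base coordinates) and a frame \(\theta^\alpha\) of \(F\). Given \(\xi=\sum_a f^a\,\partial_{z^a}\) with \(f^a\in\wedge^{2j}F\), define \(D\) by \(D(g)=\sum_a f^a\,\partial g/\partial z^a\) on functions and \(D(\theta^\alpha)=0\), extended by the Leibniz rule using the local isomorphism \(A_{\spl}|_U\cong\mathcal O_U\otimes\wedge^\bullet(\theta)\). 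Then \(D\) is even, \(\pi^{-1}\mathcal O_S\)-linear because it kills base coordinates, of weight \(2j\), and \(\rho_{2j}(D)=\xi\). Equivalently, any local holomorphic connection on \(F\) supplies the lift, and two lifts differ by an element of the kernel.

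Finally, the equivalent formulation is just the definitions of \(\mathcal Q^{(r)}_{F/S}\). As a consistency check, for \(S=\mathrm{pt}\) the sequence reduces to \eqref{eq:tangent-obstruction-sequence}.
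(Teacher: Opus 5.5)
Your proposal is correct and follows the paper's proof essentially step for step: \(\rho_{2j}\) is restriction to functions, identified with \(\mathcal T_{Y/S}\otimes\wedge^{2j}F\) through relative derivations; the kernel consists of the \(\mathcal O_Y\)-linear derivations determined by their restriction \(F\to\wedge^{2j+1}F\); and surjectivity is checked locally by letting the lift vanish on a local odd frame. Your additional details make explicit what the paper leaves implicit, namely the extension of an \(\mathcal O_Y\)-linear map \(F\to\wedge^{2j+1}F\) to an even derivation, and the remark that any local relative connection on \(F\) supplies a lift.
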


\begin{proof}
A homogeneous derivation of weight \(2j\) is determined by its restriction to
\(\mathcal O_Y\) and to \(F\).  Restriction to functions gives the morphism
\(\rho_{2j}\):
\[
D|_{\mathcal O_Y}:\mathcal O_Y\longrightarrow \wedge^{2j}F.
\]
Because the derivations are relative over \(S\), this is a relative derivation, hence a
section of
\[
\Der_{\pi^{-1}\mathcal O_S}(\mathcal O_Y,\wedge^{2j}F)
\cong
\mathcal T_{Y/S}\otimes\wedge^{2j}F.
\]
Surjectivity is local on \(Y\): after trivializing \(F\), any relative derivation
\(\mathcal O_Y\to\wedge^{2j}F\) extends to a derivation of \(\wedge^\bullet F\) by
declaring its value on the odd generators to be zero.  The kernel consists of
homogeneous derivations of weight \(2j\) vanishing on functions.  Such derivations are
\(\mathcal O_Y\)-linear and are uniquely determined by their restriction
\(
F\rightarrow \wedge^{2j+1}F.
\)
This gives the left-hand term and proves exactness.
\end{proof}

In general \eqref{eq:homogeneous-Green-exact} is not canonically split.  This is the
reason why the formal tower naturally sees the whole homogeneous derivation sheaf
\(\mathcal T^{\langle2j\rangle}_{A_{\spl}/S}\), rather than the two Green sheaves
\(\mathcal Q^{(2j)}_{F/S}\) and \(\mathcal Q^{(2j+1)}_{F/S}\) separately.

\begin{definition}[The Dolbeault dg Lie sheaf and the global controller]
\label{def:Dolbeault-dgLie-sheaf}
Let
\[
\mathfrak t_{F/S}
\defeq
\mathcal A_Y^{0,\bullet}
\bigl(\mathcal T^{[\ge 2]}_{A_{\spl}/S,\bar 0}\bigr)
\]
be the Dolbeault resolution of the holomorphic sheaf
\(\mathcal T^{[\ge 2]}_{A_{\spl}/S,\bar 0}\).  The differential is the Dolbeault
differential \(\bar\partial\), and the bracket is the graded commutator of
relative derivation-valued forms.  The \emph{relative Green dg Lie controller} is
\[
\mathfrak g_{F/S}
\defeq
R\pi_*\mathfrak t_{F/S}.
\]
Thus, for an open set \(V\subset S\),
\[
\mathfrak g_{F/S}(V)
\simeq
R\Gamma\bigl(Y_V,
\mathfrak t_{F_V/V}\bigr),
\qquad
Y_V\defeq \pi^{-1}(V),
\]
where \(F_V\) denotes the pullback of \(F\) to \(Y_V\).
\end{definition}

\begin{remark}[The role of \(\mathfrak g_{F/S}\)]
\label{rem:role-of-global-Lie-object}
The sheaf \(\mathfrak t_{F/S}\) is the infinitesimal Lie algebra of the Green
automorphism sheaf.  Its fibrewise derived global sections control the formal
fixed-retract problem.  The pushforward \(\mathfrak g_{F/S}=R\pi_*\mathfrak t_{F/S}\)
packages these fibrewise controllers over the base, but it represents the fibrewise
controller after base change only under the explicit hypothesis below.  In the absolute
case \(S=\Spec\mathbb C\), it reduces to the filtered dg Lie algebra \(L_X\) used earlier
in the paper.
\end{remark}

\begin{assumption}[Derived base change for the Green controller]
\label{ass:green-derived-base-change}
Let \(f:T\to S\) be a test space, set
\[
Y_T\defeq Y\times_S T,
\qquad
\pi_T:Y_T\to T,
\qquad
F_T\defeq f_Y^*F,
\]
where \(f_Y:Y_T\to Y\) is the natural projection.  We say that the Green controller
satisfies \emph{derived base change along \(T\to S\)} if the canonical base-change morphism
of filtered dg Lie objects
\[
Lf^* R\pi_*\mathfrak t_{F/S}
\longrightarrow
R\pi_{T*}\mathfrak t_{F_T/T}
\]
is a quasi-isomorphism.  We say that derived base change holds on a class of test spaces if
this condition holds for every test space in that class and is compatible with further
base change.

Whenever a statement below is written in terms of \(R\pi_*\mathfrak t_{F/S}\), or in
terms of derived pushforwards of the Green tails and quotient sheaves, this assumption is
part of the statement.  The fibrewise formal statements, formulated directly with
\(R\Gamma(Y_T,\mathfrak t_{F_T/T})\), do not require it.
\end{assumption}

\subsection{\v{C}ech--Deligne description of the formal neighbourhood}
\label{subsec:Cech-Deligne-GO}

We now pass from the classical non-abelian Green cocycles to a derived formal moduli
problem \cite{LurieDAGX}.  Locally near the split section, all transition functions reduce to the identity;
hence they take values in a nilpotent subgroup of \(G^{(2)}_{F/S}\).  In characteristic
zero, this nilpotent group is obtained by exponentiating the nilpotent Lie algebra of
relative filtered derivations. Therefore infinitesimal Green cocycles are equivalently
Maurer--Cartan elements in the Thom--Whitney totalization of the \v{C}ech
semicosimplicial dg Lie algebra associated with \(\mathfrak t_{F/S}\) -- see for example \cite{NavarroAznarHodgeDeligne,FiorenzaIaconoMartinengoDGLA,GetzlerLieTheory,HinichDGCoalgebras}. 

Since Thom--Whitney totalization was already introduced in
Section~\ref{sec:cech-kuranishi-at-symbols}, we only recall the notation needed in the
relative setting.  Ordinary \v{C}ech cocycles will still be used as representatives; the
strict dg Lie structure is placed on the Thom--Whitney totalization.

\begin{definition}[Classical formal completion at the split section]
\label{def:classical-formal-completion-split}
Let \(T\to S\) be a complex analytic test space, and let \(A\) be a local Artinian
\(\mathbb C\)-algebra with maximal ideal \(\mathfrak m_A\).  Write \(T_A\) for the
nilpotent thickening of \(T\) with structure sheaf \(\mathcal O_T\otimes_{\mathbb C}A\).
The classical formal completion of \(\Tw^{\cl}_{F/S}\) at the split section is the
functor
\(
\widehat{\Tw^{\cl}_{F/S}}{}_{\,s_{\spl}}
\)
whose value on \((T,A)\) is the full subgroupoid of \(\Tw^{\cl}_{F/S}(T_A)\)
consisting of those objects whose reduction modulo \(\mathfrak m_A\) is identified with
the split object \(s_{\spl}(T)\).
\end{definition}

\begin{definition}[Relative Thom--Whitney \v{C}ech controllers]
\label{def:relative-TW-Cech-controller}
Let \(T\to S\) be a test space.  A cover
\(\mathfrak U_T=\{U_i\}_{i\in I}\) of \(Y_T\) will be called sufficiently fine if all
finite intersections
\[
U_{i_0\cdots i_p}\defeq U_{i_0}\cap\cdots\cap U_{i_p}
\]
are Stein and trivialize \(F_T\).  Such a cover determines a semicosimplicial dg Lie
algebra
\(
\check C^\bullet(\mathfrak U_T,\mathfrak t_{F_T/T}),\) with
\[
\check C^p(\mathfrak U_T,\mathfrak t_{F_T/T})
\defeq
\prod_{i_0<\cdots<i_p}
\Gamma(U_{i_0\cdots i_p},\mathfrak t_{F_T/T}),
\]
and coface maps given by restriction.  Using the Thom--Whitney totalization of
Section~\ref{sec:cech-kuranishi-at-symbols}, we set
\[
\check{\mathfrak g}^{\mathrm{TW}}_{F_T/T}(\mathfrak U_T)
\defeq
\Tot_{\mathrm{TW}}
\bigl(
\check C^\bullet(\mathfrak U_T,\mathfrak t_{F_T/T})
\bigr).
\]
\end{definition}

\begin{definition}[The relative \v{C}ech--Deligne functor]
\label{def:Cech-Deligne-FMP}
Let \(B\) be a small augmented connective Artin cdga over \(\mathbb C\), with
augmentation ideal \(\mathfrak m_B\).  The \emph{relative \v{C}ech--Deligne functor}
attached to \(\mathfrak U_T\) is
\[
\Def_{\mathfrak U_T}(\mathfrak t_{F_T/T})(B)
\defeq
\MC_\infty\!\left(
\check{\mathfrak g}^{\mathrm{TW}}_{F_T/T}(\mathfrak U_T)
\otimes \mathfrak m_B
\right).
\]
Here \(\MC_\infty\) denotes the simplicial Maurer--Cartan functor.  For a nilpotent
dg Lie algebra \(\mathfrak h\), it is the Kan complex whose \(p\)-simplices are
Maurer--Cartan elements of \(\mathfrak h\otimes\Omega^\bullet(\Delta^p)\), where \(\Omega^\bullet(\Delta^p)\) denotes the cdga of polynomial differential forms
on the standard simplex. 
\end{definition}
Here we used the standard Thom--Whitney model for
semicosimplicial dg Lie algebras and the associated simplicial Maurer--Cartan
formalism; see
\cite{NavarroAznarHodgeDeligne,FiorenzaIaconoMartinengoDGLA,GetzlerLieTheory,HinichDGCoalgebras} as above.
The interpretation in terms of formal moduli problems is the one of
\cite{LurieDAGX,PridhamUDDT}.

\begin{remark}[Infinitesimal Green cocycles]
\label{rem:infinitesimal-Green-cocycles}
An infinitesimal Green cocycle over an Artin thickening is a system of transition
functions
\(
g_{ij}\in G^{(2)}_{F_T/T}(U_{ij})
\)
which reduces to the identity modulo the Artin maximal ideal and satisfies the usual
cocycle identity on triple overlaps.  Since the Green filtration and the Artin ideal are
nilpotent, one may write
\[
g_{ij}=\exp(\gamma_{ij}),
\qquad
\gamma_{ij}\in
\Gamma(U_{ij},\mathfrak t_{F_T/T})\otimes \mathfrak m_A .
\]
The Baker--Campbell--Hausdorff formula translates the non-abelian cocycle equation
for the \(g_{ij}\)'s into the Maurer--Cartan equation in the totalized \v{C}ech dg Lie
algebra.  Changes of local trivializations give the corresponding gauge equivalences.
This is the \v{C}ech--Deligne interpretation of Green's non-abelian cohomology, see \cite{DonagiWittenNotProjected, Green1982}.
\end{remark}

\begin{proposition}[\v{C}ech--Deligne identification]
\label{prop:Cech-Deligne-identification}
Let \(T\to S\) be a test space and let \(\mathfrak U_T\) be a sufficiently fine Stein
cover of \(Y_T\).  On ordinary Artin algebras, there is a natural equivalence of
pointed groupoids
\[
\widehat{\Tw^{\cl}_{F/S}}{}_{\,s_{\spl}}(T,-)
\simeq
\Def_{\mathfrak U_T}(\mathfrak t_{F_T/T})(-).
\]
Consequently, \(\Def_{\mathfrak U_T}(\mathfrak t_{F_T/T})\) is a derived enhancement of
the classical formal completion at the split section.
\end{proposition}

\begin{proof}
Over the cover \(\mathfrak U_T\), every object in the formal neighbourhood of the split
section is locally isomorphic to the split model.  Since its reduction is the split object,
all transition functions lie in the kernel of the reduction map to the central fibre,
hence in the nilpotent identity neighbourhood of \(G^{(2)}_{F_T/T}\).  The exponential
map identifies this nilpotent group with
\(\mathfrak t_{F_T/T}\otimes\mathfrak m_A\).  The descent condition for the transition
functions is the non-abelian \v{C}ech cocycle equation, and the standard
\v{C}ech--Deligne correspondence identifies such cocycles, together with their gauge
transformations, with the Deligne groupoid of
\(
\check{\mathfrak g}^{\mathrm{TW}}_{F_T/T}(\mathfrak U_T)\otimes\mathfrak m_A .
\)
This is the truncation of the simplicial Maurer--Cartan functor appearing in
Definition~\ref{def:Cech-Deligne-FMP}.  Hence the two pointed groupoids agree on
ordinary Artin algebras, and the latter functor gives the derived enhancement.
\end{proof}

\begin{lemma}[The \v{C}ech model computes derived global sections]
\label{lem:Cech-computes-global}
For every sufficiently fine Stein cover \(\mathfrak U_T\) of \(Y_T\), there is a natural
quasi-isomorphism of dg Lie algebras
\[
\check{\mathfrak g}^{\mathrm{TW}}_{F_T/T}(\mathfrak U_T)
\simeq
R\Gamma(Y_T,\mathfrak t_{F_T/T}).
\]
\end{lemma}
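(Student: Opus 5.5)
The plan is to follow the proof of Proposition~\ref{prop:TW-quasi-iso-LX}, working weight by weight and then keeping track of brackets and the filtration. The first step is a reduction. The sheaf \(\mathfrak t_{F_T/T}=\mathcal A^{0,\bullet}_{Y_T}\bigl(\mathcal T^{[\ge 2]}_{A_{\spl,T}/T,\bar0}\bigr)\) is a complex of fine sheaves. By Proposition~\ref{prop:homogeneous-derivations-Green-exact}, it splits as a finite direct sum of homogeneous pieces \(\mathcal A^{0,\bullet}(\mathcal T^{\langle 2q\rangle})\), and each piece is a Dolbeault resolution of a locally free sheaf. Fineness gives \(R\Gamma(Y_T,\mathfrak t_{F_T/T})\simeq\Gamma(Y_T,\mathfrak t_{F_T/T})\), and this global Dolbeault complex is a strict dg Lie algebra with the commutator bracket. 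It therefore suffices to build a natural filtered dg Lie quasi-isomorphism between \(\check{\mathfrak g}^{\mathrm{TW}}_{F_T/T}(\mathfrak U_T)\) and \(\Gamma(Y_T,\mathfrak t_{F_T/T})\).

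Second, I will write down the comparison map. The global sections form a constant semicosimplicial dg Lie algebra. Restriction to the opens of the cover gives a coaugmentation
\[
\Gamma(Y_T,\mathfrak t_{F_T/T})\longrightarrow \check C^\bullet(\mathfrak U_T,\mathfrak t_{F_T/T}),
\]
which is a morphism of semicosimplicial dg Lie algebras. Thom--Whitney totalization is functorial. The totalization of a constant object contains the original algebra via \(x\mapsto x\otimes 1\), using the unit of the polynomial forms on the simplices. Composing these gives a dg Lie morphism
\[
\Gamma(Y_T,\mathfrak t_{F_T/T})\to\check{\mathfrak g}^{\mathrm{TW}}_{F_T/T}(\mathfrak U_T).
\]
It preserves the homogeneous weights and the Green filtration, because restriction and tensoring with forms both do.

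Third, I will show this map is a quasi-isomorphism. By the Whitney integration map (Navarro Aznar, and Fiorenza--Iacono--Martinengo), \(\Tot_{\mathrm{TW}}\) is quasi-isomorphic as a complex to the ordinary \v{C}ech totalization \(\Tot\check C^\bullet\), compatibly with the coaugmentations. So it is enough that the augmented \v{C}ech double complex is acyclic. Each row \(\mathcal A^{0,q}(\mathcal T^{\langle 2r\rangle})\) is fine, so its \v{C}ech complex on any cover resolves its global sections (partition of unity). A double-complex spectral sequence, filtered by form degree, then shows that \(\Gamma\to\Tot\check C^\bullet\) is a quasi-isomorphism. The Stein hypothesis is not needed here; it only ensures that ordinary holomorphic \v{C}ech cocycles also compute the same cohomology, as in Remark~\ref{rem:ordinary-cech-reps}. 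Since the weight decomposition is finite and preserved, the map is a filtered quasi-isomorphism on each graded piece.

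The main obstacle I expect is bookkeeping rather than substance. The integration map is not a Lie morphism, so the quasi-isomorphism property must be checked on underlying complexes while the Lie compatibility comes from the other direction, the coaugmentation. A second point needing care is naturality in \(T\) and the cover: refining \(\mathfrak U_T\) must give compatible maps. I would handle this by noting that the coaugmentation commutes with the refinement maps of semicosimplicial objects.
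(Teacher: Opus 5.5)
Your proposal is correct and follows essentially the paper's route: fine Dolbeault resolution, \v{C}ech totalization computing derived global sections, and Thom--Whitney totalization as the strict dg Lie model. You also make explicit what the paper's short proof leaves implicit, namely the coaugmentation as the actual dg Lie morphism and the Whitney integration map used only to check the quasi-isomorphism on underlying complexes; your remark that fineness of the Dolbeault-valued cochains, rather than the Stein condition the paper invokes, is what makes the augmented \v{C}ech complex acyclic is accurate, since Steinness only matters if one wants holomorphic \v{C}ech cocycles to compute the same cohomology.
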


\begin{proof}
The Dolbeault complex
\(
\mathfrak t_{F_T/T}
=
\mathcal A_{Y_T}^{0,\bullet}
\bigl(\mathcal T^{[\ge2]}_{A_{\spl,T}/T,\bar 0}\bigr)
\)
is a fine resolution of the coherent sheaf
\(\mathcal T^{[\ge2]}_{A_{\spl,T}/T,\bar 0}\).  Thus its derived global sections may be
computed either by global sections of the fine resolution or by any acyclic
\v{C}ech resolution.  Since the chosen cover is Stein on all finite intersections, the
Thom--Whitney totalization of the \v{C}ech semicosimplicial dg Lie algebra gives a dg
Lie model for \(R\Gamma(Y_T,\mathfrak t_{F_T/T})\).
\end{proof}

\begin{corollary}[Independence of the cover]
\label{cor:independence-cover}
The derived formal moduli problem
\(\Def_{\mathfrak U_T}(\mathfrak t_{F_T/T})\) is independent, up to canonical equivalence,
of the sufficiently fine Stein cover \(\mathfrak U_T\).
\end{corollary}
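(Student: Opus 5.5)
The plan is to deduce the statement from Lemma~\ref{lem:Cech-computes-global} together with the homotopy invariance of Maurer--Cartan $\infty$-groupoids. Fix two sufficiently fine Stein covers $\mathfrak U_T$ and $\mathfrak V_T$ of $Y_T$. Their intersection cover $\mathfrak W_T=\{U_i\cap V_a\}$ is again sufficiently fine: finite intersections of Stein opens in a complex manifold are Stein, and they still trivialize $F_T$. It also refines both covers. Since any two covers have such a common refinement, it suffices to compare a cover with a refinement and to check that the resulting comparisons are compatible.

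For a refinement $\mathfrak W_T\to\mathfrak U_T$ with refinement map $\lambda$, restriction of sections along $\lambda$ gives a morphism of semicosimplicial dg Lie algebras. The paper's convention uses ordered indices $i_0<\cdots<i_p$, so I would either work with the full (non-alternating) semicosimplicial object or choose $\lambda$ order-compatible; the totalizations agree up to quasi-isomorphism. Applying $\Tot_{\mathrm{TW}}$ gives a strict morphism of dg Lie algebras
\[
r_\lambda:\check{\mathfrak g}^{\mathrm{TW}}_{F_T/T}(\mathfrak U_T)\longrightarrow \check{\mathfrak g}^{\mathrm{TW}}_{F_T/T}(\mathfrak W_T).
\]
Both sides are quasi-isomorphic to $R\Gamma(Y_T,\mathfrak t_{F_T/T})$ by Lemma~\ref{lem:Cech-computes-global}. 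Concretely, both receive the natural augmentation from global sections $\Gamma(Y_T,\mathfrak t_{F_T/T})$, viewed as the constant semicosimplicial object, and $r_\lambda$ is compatible with these maps. Hence $r_\lambda$ is a quasi-isomorphism by two-out-of-three. The same argument shows that $r_\lambda$ preserves the weight filtration and is a filtered quasi-isomorphism, since the argument works weight by weight.

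Since all these dg Lie algebras are nilpotent (the filtration stops at $F^{N+1}$), tensoring with $\mathfrak m_B$ preserves quasi-isomorphism, and the Goldman--Millson/Getzler--Hinich invariance gives a weak equivalence
\[
\MC_\infty(\check{\mathfrak g}(\mathfrak U_T)\otimes\mathfrak m_B)\simeq\MC_\infty(\check{\mathfrak g}(\mathfrak W_T)\otimes\mathfrak m_B),
\]
natural in $B$. Equivalently, under Lurie--Pridham, quasi-isomorphic controllers give equivalent formal moduli problems.

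The main obstacle is \emph{canonicity}, meaning independence of the choice of $\lambda$ and coherence over chains of refinements. My first approach would be to note that the augmentation maps from $\Gamma(Y_T,\mathfrak t_{F_T/T})$ make every $\check{\mathfrak g}(\mathfrak U_T)$ an object under a fixed dg Lie algebra, with all $r_\lambda$ compatible. This gives equivalences to the single formal moduli problem $\Def(\Gamma(Y_T,\mathfrak t_{F_T/T}))$, and those equivalences do not depend on $\lambda$. The comparison $\mathfrak U_T\leftrightarrow\mathfrak V_T$ is then the composite through this common target, and is therefore canonical.
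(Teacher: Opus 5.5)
Your proposal is correct and follows essentially the same route as the paper: you take a common refinement, use Lemma~\ref{lem:Cech-computes-global} to identify every Thom--Whitney \v{C}ech controller with $R\Gamma(Y_T,\mathfrak t_{F_T/T})$, and invoke invariance of $\MC_\infty$ under quasi-isomorphisms of nilpotent dg Lie algebras. Your use of the augmentation from the global Dolbeault algebra $\Gamma(Y_T,\mathfrak t_{F_T/T})$, which is a quasi-isomorphism because $\mathfrak t_{F_T/T}$ is a complex of fine sheaves, makes the paper's word ``canonical'' precise and in fact makes the common-refinement step unnecessary.
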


\begin{proof}
Any two sufficiently fine covers admit a common refinement.  By
Lemma~\ref{lem:Cech-computes-global}, all corresponding Thom--Whitney totalizations
are quasi-isomorphic to the same dg Lie algebra
\(R\Gamma(Y_T,\mathfrak t_{F_T/T})\).  The simplicial Maurer--Cartan functor is invariant
under quasi-isomorphisms of nilpotent dg Lie algebras.  Hence the associated formal
moduli problems are canonically equivalent.
\end{proof}

\subsection{Intrinsic formal completion and absolute specialization}
\label{subsec:formal-GO-absolute}

The \v{C}ech construction is local on \(Y\) and depends on a cover only up to canonical
equivalence.  The intrinsic object is therefore the fibrewise formal moduli problem
attached to the dg Lie algebras
\(
R\Gamma(Y_T,\mathfrak t_{F_T/T}).
\)
This is the formal Green--Onishchik enhancement. Note that it is formal rather than algebraized:
no claim is made here that it is represented by an analytic or algebraic derived stack.
Such an algebraization requires additional finiteness, amplitude, and automorphism
hypotheses and is addressed separately.

\begin{definition}[The fibrewise formal Green--Onishchik enhancement]
\label{def:formal-GO-enhancement}
The \emph{formal Green--Onishchik enhancement} of the fixed-retract moduli problem is
the pointed formal moduli problem
\(
\widehat\Tw_{F/S}
\)
whose value over a test space \(T\to S\) and a small augmented Artin cdga \(B\), with
augmentation ideal \(\mathfrak m_B\), is
\[
\widehat\Tw_{F/S}(T,B)
\defeq
\MC_\infty\!\left(
R\Gamma(Y_T,\mathfrak t_{F_T/T})\otimes\mathfrak m_B
\right).
\]
Its base point is the zero Maurer--Cartan element, corresponding to the split section.
\end{definition}

\begin{definition}[Formal moduli problem attached to a relative dg Lie algebra]
\label{def:relative-Def-functor}
Let \(\mathfrak g\) be a filtered nilpotent dg Lie object over \(S\).  We denote by
\(
\Def_S(\mathfrak g)
\)
the pointed formal moduli problem over \(S\) whose value over a test space
\(f:T\to S\) and a small augmented Artin cdga \(B\), with augmentation ideal
\(\mathfrak m_B\), is
\[
\Def_S(\mathfrak g)(T,B)
\defeq
\MC_\infty\!\bigl(R\Gamma(T,Lf^*\mathfrak g)\otimes\mathfrak m_B\bigr).
\]
This notation will be used for the fixed-retract problem only when
\(Lf^*\mathfrak g\) computes the intended fibrewise controller.  In particular, under
Assumption~\ref{ass:green-derived-base-change}, with
\(\mathfrak g=\mathfrak g_{F/S}=R\pi_*\mathfrak t_{F/S}\), one has
\[
R\Gamma(T,Lf^*\mathfrak g_{F/S})
\simeq
R\Gamma(Y_T,\mathfrak t_{F_T/T}),
\]
and therefore
\[
\widehat\Tw_{F/S}(T,B)
\simeq
\Def_S(\mathfrak g_{F/S})(T,B).
\]
\end{definition}

\begin{theorem}[Fibrewise formal Green--Onishchik theorem]
\label{thm:formal-GO}
There is a canonical equivalence of pointed fibrewise formal moduli problems over \(S\)
\[
\widehat{\Tw^{\cl}_{F/S}}{}_{\,s_{\spl}}^{\,\mathrm{der}}
\simeq
\widehat\Tw_{F/S}.
\]
Equivalently, for every test space \(T\to S\), the derived \v{C}ech--Deligne enhancement
of the classical formal completion at the split object over \(T\) is controlled by the dg
Lie algebra
\(
R\Gamma(Y_T,\mathfrak t_{F_T/T}).
\)
Hence its tangent complex at the split point is
\(
R\Gamma(Y_T,\mathfrak t_{F_T/T})[1].
\)

\smallskip

This fibrewise statement is unconditional.
\end{theorem}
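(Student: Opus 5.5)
The plan is to assemble the statement from the three results of \S\ref{subsec:Cech-Deligne-GO}, working one test space at a time. First, for a fixed test space \(T\to S\), we need to define precisely the left-hand side. Choose a sufficiently fine Stein cover \(\mathfrak U_T\) of \(Y_T\). By Proposition~\ref{prop:Cech-Deligne-identification}, the classical formal completion \(\widehat{\Tw^{\cl}_{F/S}}{}_{\,s_{\spl}}(T,-)\), restricted to ordinary Artin algebras, is equivalent to the truncation of \(\Def_{\mathfrak U_T}(\mathfrak t_{F_T/T})\). We therefore take the derived \v{C}ech--Deligne enhancement \(\widehat{\Tw^{\cl}_{F/S}}{}_{\,s_{\spl}}^{\,\mathrm{der}}(T,-)\) to mean \(\Def_{\mathfrak U_T}(\mathfrak t_{F_T/T})\). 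Corollary~\ref{cor:independence-cover} shows that this is independent of \(\mathfrak U_T\) up to canonical equivalence. To make that independence canonical, we pass to the filtered colimit over refinements, so that the left-hand side is a well-defined functor on small Artin cdgas.

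Second, Lemma~\ref{lem:Cech-computes-global} supplies a filtered quasi-isomorphism
\[
\check{\mathfrak g}^{\mathrm{TW}}_{F_T/T}(\mathfrak U_T)\simeq R\Gamma(Y_T,\mathfrak t_{F_T/T}).
\]
Tensoring with \(\mathfrak m_B\) preserves this quasi-isomorphism, and both sides stay nilpotent because the Green filtration stops at \(F^{N+1}\) and \(\mathfrak m_B\) is nilpotent. Goldman--Millson/Hinich invariance of \(\MC_\infty\) under quasi-isomorphisms of nilpotent dg Lie algebras then gives an equivalence of Kan complexes
\[
\MC_\infty\bigl(\check{\mathfrak g}^{\mathrm{TW}}\otimes\mathfrak m_B\bigr)\simeq \widehat\Tw_{F/S}(T,B).
\]
This equivalence sends \(0\) to \(0\), so it is pointed. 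Since the constructions are functorial in \(B\), we obtain an equivalence of formal moduli problems for each fixed \(T\). The tangent complex claim is then the standard identification \(T_0\MC_\infty(\mathfrak g\otimes -)\simeq\mathfrak g[1]\), as in the proof of Theorem~\ref{thm:form_mod}.

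Third, we need naturality in \(T\), so that the pieces assemble into an equivalence of fibrewise formal moduli problems over \(S\). For \(T'\to T\) over \(S\), pull back covers: the preimage of a Stein cover in \(Y_{T'}\) can be refined to a Stein cover. The restriction maps on Thom--Whitney \v{C}ech controllers are then compatible with the pullback maps \(R\Gamma(Y_T,\mathfrak t)\to R\Gamma(Y_{T'},\mathfrak t)\) on Dolbeault models. No base-change hypothesis enters here, because both sides are defined fibrewise over the same \(Y_T\); this is the sense in which the statement is unconditional.

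The main obstacle is making the comparison in Lemma~\ref{lem:Cech-computes-global} strictly functorial in both the cover and \(T\), so that it is not just a zigzag chosen independently for each \(T\). I would do this by using the intermediate Thom--Whitney totalization of the \v{C}ech--Dolbeault double complex. This object receives natural dg Lie quasi-isomorphisms both from the holomorphic \v{C}ech model and from the global Dolbeault algebra, the latter via constant cochains. Since all the maps in this zigzag are natural in refinements and pullbacks, coherence follows.
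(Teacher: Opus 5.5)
Your proposal is correct and follows essentially the same route as the paper's proof. Both use Proposition~\ref{prop:Cech-Deligne-identification} to identify the classical formal completion with the \v{C}ech--Deligne functor, Lemma~\ref{lem:Cech-computes-global} to compare its controller with \(R\Gamma(Y_T,\mathfrak t_{F_T/T})\), invariance of \(\MC_\infty\) under quasi-isomorphisms of nilpotent dg Lie algebras, and the standard tangent-complex identification.

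Your extra bookkeeping (canonical independence of the cover and naturality in \(T\)) goes beyond what the paper writes. It becomes simpler once you note that \(\check{\mathfrak g}^{\mathrm{TW}}_{F_T/T}(\mathfrak U_T)\) is already the Thom--Whitney totalization of the \v{C}ech--Dolbeault complex, because \(\mathfrak t_{F_T/T}\) is itself the Dolbeault resolution. Hence the global Dolbeault algebra maps into every such controller by constant cochains, compatibly with refinement and pullback, and the colimit over refinements is not needed.
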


\begin{proof}
By Proposition~\ref{prop:Cech-Deligne-identification}, the formal neighbourhood of the
split section is described, on each test space \(T\to S\), by the \v{C}ech--Deligne
formal moduli problem associated with any sufficiently fine Stein cover of \(Y_T\).  By
Lemma~\ref{lem:Cech-computes-global}, this \v{C}ech dg Lie algebra is quasi-isomorphic
to
\(
R\Gamma(Y_T,\mathfrak t_{F_T/T}).
\)
Invariance of the Maurer--Cartan formal moduli functor under quasi-isomorphisms of
nilpotent dg Lie algebras identifies the resulting formal moduli problem with
\(\widehat\Tw_{F/S}(T,B)\).  The tangent-complex statement is the standard correspondence
between formal moduli problems and dg Lie algebras in characteristic zero.
\end{proof}

\begin{corollary}[Relative base-change form]
\label{cor:formal-GO-basechange-form}
Assume that the Green controller satisfies derived base change in the sense of
Assumption~\ref{ass:green-derived-base-change} for the test spaces under consideration.
Then the fibrewise equivalence of Theorem~\ref{thm:formal-GO} may be written as an
equivalence of relative pointed formal moduli problems
\[
\widehat{\Tw^{\cl}_{F/S}}{}_{\,s_{\spl}}^{\,\mathrm{der}}
\simeq
\Def_S(\mathfrak g_{F/S}),
\qquad
\mathfrak g_{F/S}=R\pi_*\mathfrak t_{F/S}.
\]
In this relative form, the tangent complex along the split section is
\[
\mathcal T_{\widehat\Tw_{F/S}/S,s_{\spl}}
\simeq
\mathfrak g_{F/S}[1]
\simeq
R\pi_*\mathfrak t_{F/S}[1]
\simeq
R\pi_*\mathcal T^{[\ge2]}_{A_{\spl}/S,\bar0}[1].
\]
\end{corollary}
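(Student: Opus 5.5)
The plan is to deduce the relative form directly from Theorem~\ref{thm:formal-GO}, using Assumption~\ref{ass:green-derived-base-change} to rewrite the fibrewise controller as the derived pullback of one object over \(S\). First, fix a test space \(f:T\to S\). By Theorem~\ref{thm:formal-GO}, the derived enhancement of the classical formal completion, evaluated on \((T,B)\), is canonically equivalent to
\[
\MC_\infty\bigl(R\Gamma(Y_T,\mathfrak t_{F_T/T})\otimes\mathfrak m_B\bigr).
\]
The assumption says that \(Lf^*R\pi_*\mathfrak t_{F/S}\to R\pi_{T*}\mathfrak t_{F_T/T}\) is a filtered quasi-isomorphism. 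Apply \(R\Gamma(T,-)\) to it and compose with \(R\Gamma(T,R\pi_{T*}-)\simeq R\Gamma(Y_T,-)\). This gives a filtered quasi-isomorphism of nilpotent dg Lie algebras
\[
R\Gamma(T,Lf^*\mathfrak g_{F/S})\simeq R\Gamma(Y_T,\mathfrak t_{F_T/T}).
\]
This is exactly the identification recorded in Definition~\ref{def:relative-Def-functor}. Since \(\MC_\infty(-\otimes\mathfrak m_B)\) is invariant under quasi-isomorphisms of nilpotent dg Lie algebras, we get \(\widehat\Tw_{F/S}(T,B)\simeq\Def_S(\mathfrak g_{F/S})(T,B)\).

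The second step is naturality in \((T,B)\). For a morphism \(g:T'\to T\) over \(S\), the compatibility of derived base change with further base change (part of the assumption) gives a commuting square. Its entries are the base-change maps for \(f\), for \(f\circ g\), and for \(\pi_T\) along \(g\), and the square commutes up to coherent homotopy. Naturality in \(B\) is immediate, because everything is tensored with \(\mathfrak m_B\). Hence the objectwise equivalences assemble into an equivalence of relative pointed formal moduli problems. They preserve base points, since the zero Maurer--Cartan element maps to zero.

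For the tangent complex, use that a formal moduli problem controlled by a dg Lie algebra \(\mathfrak h\) has tangent complex \(\mathfrak h[1]\). In the relative setting this reads \(\mathfrak g_{F/S}[1]=R\pi_*\mathfrak t_{F/S}[1]\). Finally, \(\mathfrak t_{F/S}\) is a fine Dolbeault resolution of \(\mathcal T^{[\ge2]}_{A_{\spl}/S,\bar0}\), so \(R\pi_*\mathfrak t_{F/S}\simeq R\pi_*\mathcal T^{[\ge2]}_{A_{\spl}/S,\bar0}\) as complexes. The Lie structure is irrelevant for the underlying tangent complex.

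The main obstacle is the coherence of the base-change identifications. They must be equivalences of dg Lie algebras, not merely of complexes, and they must be functorial in \(T\). I would handle this by working with Thom--Whitney \v{C}ech models (Lemma~\ref{lem:Cech-computes-global}) on covers pulled back from \(Y\). There the comparison maps are strict morphisms of semicosimplicial dg Lie algebras, and the assumption only needs to be applied to show that they are quasi-isomorphisms.
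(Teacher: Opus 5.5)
Your proposal is correct and follows the paper's proof: applying \(R\Gamma(T,-)\) to the base-change quasi-isomorphism identifies \(R\Gamma(T,Lf^*\mathfrak g_{F/S})\) with \(R\Gamma(Y_T,\mathfrak t_{F_T/T})\). This identification is then substituted into Definition~\ref{def:relative-Def-functor} together with Theorem~\ref{thm:formal-GO}, and the tangent complex comes from the standard dg Lie/formal moduli correspondence. Your extra points go beyond what the paper writes out, namely naturality in \(T\), the fine-resolution justification of the last equivalence, and the Thom--Whitney handling of coherence.
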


\begin{proof}
Under Assumption~\ref{ass:green-derived-base-change}, for every \(f:T\to S\) one has
\[
R\Gamma(T,Lf^*\mathfrak g_{F/S})
\simeq
R\Gamma(Y_T,\mathfrak t_{F_T/T}).
\]
Substituting this identification into Definition~\ref{def:relative-Def-functor} gives
\(\widehat\Tw_{F/S}\simeq\Def_S(\mathfrak g_{F/S})\).  The tangent-complex formula follows
from the same dg-Lie/formal-moduli correspondence used in Theorem~\ref{thm:formal-GO}.
\end{proof}

\begin{remark}[Analytic and algebraic readings]
\label{rem:analytic-algebraic-readings}
In the analytic formulation above, \(\mathfrak t_{F/S}\) is a Dolbeault dg Lie model.
Algebraically, one should replace it by the coherent sheaf
\(\mathcal T^{[\ge2]}_{A_{\spl}/S,\bar0}\) and write the controller as
\(
R\pi_*\mathcal T^{[\ge2]}_{A_{\spl}/S,\bar0}.
\)
We remark that the two descriptions agree after analytification, because the Dolbeault complex is a
fine resolution of the holomorphic sheaf of relative filtered derivations.
\end{remark}

We finally check that this relative formal moduli problem specializes to the intrinsic
formal moduli problem constructed earlier from a single supermanifold \(X\).

\begin{lemma}[Identification of the absolute controller]
\label{lem:absolute-controller-identification}
Let \(X\) be a complex supermanifold, let
\[
S=\Spec\mathbb C,
\qquad
Y=\Xred,
\qquad
F=\FF_X,
\qquad
A_{\spl}=\wedge^\bullet\FF_X.
\]
Then the relative Green controller specializes to
\[
\mathfrak g_{F/S}
\simeq
R\Gamma\!\left(
\Xred,
\mathcal A_{\Xred}^{0,\bullet}
\bigl(\mathcal T^{[\ge2]}_{\widehat X,\bar0}\bigr)
\right).
\]
With the notation of the preceding sections, this dg Lie algebra is exactly
\[
L_X=A^{0,\bullet}\!\left(\Xred,
\mathcal T^{[\ge2]}_{\widehat X,\bar0}\right).
\]
\end{lemma}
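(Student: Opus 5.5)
I will reduce the statement to three identifications when $S=\Spec\mathbb C$: of the derivation sheaves, of the derived pushforward, and of the filtered dg Lie structures. The first step is the sheaf-level identification. When $S$ is a point, $\pi^{-1}\mathcal O_S=\underline{\mathbb C}$, so the relative derivations $\Der_{\pi^{-1}\mathcal O_S,\bar 0}(A_{\spl},A_{\spl})$ of Definition~\ref{def:relative-derivation-sheaf} are the even $\mathbb C$-linear derivations of $\wedge^\bullet\FF_X$. These are exactly the even derivations of the split model $\widehat{\mathcal O}_X$. The homogeneous weight decomposition $\mathcal T^{\langle q\rangle}_{A_{\spl}/S}$ uses the same exterior-degree condition as the sheaves $\mathcal T^{\langle q\rangle}_{\widehat X}$ of Section~4. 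Hence $\mathcal T^{[\ge 2j]}_{A_{\spl}/S,\bar0}=\mathcal T^{[\ge 2j]}_{\widehat X,\bar0}$ for every $j$, and this is an equality of filtered sheaves of Lie algebras, since both brackets are the supercommutator. As a consistency check, $\mathcal T_{Y/S}=\mathcal T_{\Xred}$, so the exact sequences of Proposition~\ref{prop:homogeneous-derivations-Green-exact} and Proposition~\ref{splitTp} (holomorphic version) coincide.

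The second step is to compare the Dolbeault resolutions. Section~4 defines $\mathcal T^{\langle p\rangle}_{\widehat X}$ as the holomorphic bundle underlying the $(1,0)$-derivations of the smooth split model, with $\bar\partial_0$ preserving weights. I must check that the holomorphic sections of this bundle are exactly the holomorphic derivations of $\wedge^\bullet\FF_X$. I will do this locally in coordinates $z^a\,|\,\theta^\alpha$ using the decomposition of Proposition~\ref{splitTp}: a $\bar\partial_0$-closed section has holomorphic coefficients $f^a(\theta)$, $g^\gamma(\theta)$. Given this, $\mathfrak t_{F/S}=\mathcal A^{0,\bullet}_{\Xred}(\mathcal T^{[\ge2]}_{\widehat X,\bar0})$ as sheaves of filtered dg Lie algebras.

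The third step is the pushforward. Here $\pi$ is the map to a point, so $R\pi_*$ is $R\Gamma(\Xred,-)$. Because $\mathfrak t_{F/S}$ consists of fine sheaves (modules over $\mathcal C^\infty_{\Xred}$), $R\Gamma$ is computed by ordinary global sections, which gives $A^{0,\bullet}(\Xred,\mathcal T^{[\ge2]}_{\widehat X,\bar0})=L_X$. The filtration, differential and bracket on $L_X$ are those induced from $\mathfrak t_{F/S}$, so the identification is filtered and Lie. To keep it a strict dg Lie statement I would use Lemma~\ref{lem:Cech-computes-global} together with Proposition~\ref{prop:TW-quasi-iso-LX}, which relate the Thom--Whitney \v{C}ech model to global Dolbeault sections compatibly with brackets.

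I expect the main obstacle to be making ``$R\Gamma$ of a sheaf of dg Lie algebras'' precise enough that the quasi-isomorphism respects the Lie structure and the filtration, rather than holding only at the level of complexes. I would handle it exactly as in Proposition~\ref{prop:TW-quasi-iso-LX}: the natural inclusion of global sections into the Thom--Whitney totalization is a map of filtered dg Lie algebras, and fineness of each weight piece makes it a quasi-isomorphism weight by weight.
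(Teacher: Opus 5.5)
Your proposal is correct and follows the paper's own argument. Over $\Spec\mathbb C$ the relative derivations of $A_{\spl}$ are just the $\mathbb C$-linear derivations of $\wedge^\bullet\FF_X=\widehat{\mathcal O}_X$, so the Green tails coincide, and taking the Dolbeault resolution followed by global sections of these fine sheaves gives $L_X$ by definition. Your coordinate check that the holomorphic bundle of Section~4 has exactly these derivations as its holomorphic sections, and your optional Thom--Whitney comparison for the strict dg Lie structure, only make explicit points that the paper's proof leaves implicit.
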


\begin{proof}
When \(S=\Spec\mathbb C\), relative derivations over \(S\) are simply
\(\mathbb C\)-linear derivations.  The split model \(A_{\spl}=\wedge^\bullet\FF_X\) is the
split model \(\widehat{\mathcal O}_X\) used earlier in the paper.  Hence
\(
\mathcal T^{[\ge2]}_{A_{\spl}/S,\bar0}
=
\mathcal T^{[\ge2]}_{\widehat X,\bar0}.
\)
Taking the Dolbeault resolution and derived global sections gives exactly the definition
of \(L_X\) from the previous sections.
\end{proof}

\begin{corollary}[Absolute specialization]
\label{cor:absolute-formal-GO}
Under the specialization
\[
S=\Spec\mathbb C,
\qquad
Y=\Xred,
\qquad
F=\FF_X,
\]
there is a canonical equivalence of pointed formal moduli problems
\[
\widehat{\Tw^{\cl}_{F/S}}{}_{\,s_{\spl}}^{\,\mathrm{der}}
\simeq
\Def(L_X)
=
\Def_X.
\]
\end{corollary}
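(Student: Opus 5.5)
The proof combines two earlier results: Theorem~\ref{thm:formal-GO}, specialized to the one-point base, and Lemma~\ref{lem:absolute-controller-identification}. Take \(S=\Spec\mathbb C\), \(Y=\Xred\), \(F=\FF_X\). The only test space over \(S\) that matters is \(T=S\) itself, which gives \(Y_T=\Xred\) and \(F_T=\FF_X\). Theorem~\ref{thm:formal-GO} is stated as unconditional, so it applies directly and identifies the derived \v{C}ech--Deligne enhancement of the formal completion of \(\Tw^{\cl}_{F/S}\) at the split object with the pointed formal moduli problem
\[
B\longmapsto \MC_\infty\bigl(R\Gamma(\Xred,\mathfrak t_{\FF_X/\mathbb C})\otimes\mathfrak m_B\bigr).
\]
Over a point, no derived base-change hypothesis is needed: \(Lf^*\) is the identity, so Assumption~\ref{ass:green-derived-base-change} holds trivially, and \(\Def_S(\mathfrak g_{F/S})\) agrees with the fibrewise functor.

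The second step identifies the controller with \(L_X\) through Lemma~\ref{lem:absolute-controller-identification}. Relative derivations over \(\mathbb C\) are simply \(\mathbb C\)-linear derivations, so \(\mathcal T^{[\ge2]}_{A_{\spl}/S,\bar0}=\mathcal T^{[\ge2]}_{\widehat X,\bar0}\), with the same bracket and the same Green filtration. The Dolbeault complex \(\mathcal A^{0,\bullet}_{\Xred}\) is a fine resolution, so its global sections compute \(R\Gamma\), and this gives a filtered quasi-isomorphism
\[
R\Gamma(\Xred,\mathfrak t_{\FF_X/\mathbb C})\simeq A^{0,\bullet}\bigl(\Xred,\mathcal T^{[\ge2]}_{\widehat X,\bar0}\bigr)=L_X .
\]
This has to be a quasi-isomorphism of dg Lie algebras, not merely of complexes. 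To keep it strict, I would model \(R\Gamma\) by global Dolbeault forms, exactly as in the proof of Proposition~\ref{prop:TW-quasi-iso-LX}, where the Thom--Whitney \v{C}ech controller is compared with \(L_X\) as dg Lie algebras.

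The third step passes from the quasi-isomorphism to the equivalence of functors. Both dg Lie algebras are finite-step nilpotent, with \(F^{N+1}=0\), so tensoring with \(\mathfrak m_B\) preserves nilpotency, and the simplicial Maurer--Cartan functor is invariant under filtered quasi-isomorphisms of nilpotent dg Lie algebras (Goldman--Millson/Getzler invariance, as already used in Corollary~\ref{cor:independence-cover}). This yields
\[
\MC_\infty\bigl(R\Gamma(\Xred,\mathfrak t)\otimes\mathfrak m_B\bigr)\simeq \MC_\infty(L_X\otimes\mathfrak m_B)=\Def_X(B),
\]
naturally in \(B\). The equivalence is pointed because both quasi-isomorphisms send \(0\) to \(0\), and the base point corresponds to the split section \(s_{\spl}\).

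The main obstacle is canonicity. The identification passes through a choice of Stein cover and through Thom--Whitney comparison maps, and one must check that the resulting equivalence does not depend on these choices. I would handle this with Corollary~\ref{cor:independence-cover}: any two covers admit a common refinement, and the comparison maps to the global Dolbeault model commute with refinement. Together these give a contractible space of choices. It also has to be checked that, when \(S\) is a point, the test-space variable \(T\) in the fibrewise formal moduli problem collapses to \(T=S\), so that the fibrewise problem really is the pointed problem \(\Def_X\) on \(\mathbf{dgArt}^{\mathrm{loc}}\).
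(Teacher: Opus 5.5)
Your proposal is correct and follows the paper's own argument: Theorem~\ref{thm:formal-GO} at the one-point base, read at \(T=S\), identifies the derived formal completion at the split section with the Maurer--Cartan problem of \(R\Gamma(\Xred,\mathfrak t_{F/S})\), and Lemma~\ref{lem:absolute-controller-identification} identifies this controller with \(L_X\), which gives \(\Def(L_X)=\Def_X\). Your extra remarks on the strict global Dolbeault model, the quasi-isomorphism invariance of \(\MC_\infty\), and independence of the cover only spell out what the paper leaves implicit in the word ``canonical''.
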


\begin{proof}
By Theorem~\ref{thm:formal-GO}, the derived formal completion of the classical
Green--Onishchik stack at the split section is controlled, in the absolute case, by
\(
R\Gamma(\Xred,\mathfrak t_{F/S}).
\)
By Lemma~\ref{lem:absolute-controller-identification}, this controller is precisely
\(L_X\).  Therefore the corresponding pointed formal moduli problem is
\(\Def(L_X)=\Def_X\).
\end{proof}

\subsection{The classical Green obstruction tower}
\label{subsec:relative-classical-obstruction-tower}

We now return to Green's classical lifting tower we considered in the previous sections, but in the relative fixed-retract
setting introduced above.  The point of this subsection is deliberately classical: given a
superalgebra with associated graded \(\wedge^\bullet F\), and given a splitting up to a
fixed order in the odd filtration, one asks whether this splitting can be lifted by one
more order.  The answer is measured by a cohomology class with values in one of the
classical Green obstruction sheaves.  The precise formulation is slightly delicate,
because the filtered algebra is not an \(\mathcal O_Y\)-algebra.  Thus all splittings and
all automorphisms below are relative over \(\pi^{-1}\mathcal O_S\), and are required only
to be compatible with the reduced quotient and the associated graded identification.

\begin{definition}[Truncated split models]
\label{def:truncated-split-models-relative}
Let \(T\to S\) be a test space.  For \(0\le k\le n\), set
\[
A_{\spl,T}^{\le k}
\defeq
A_{\spl,T}/F^{k+1}A_{\spl,T}
=
\bigl(\wedge^\bullet F_T\bigr)\big/F^{k+1}\bigl(\wedge^\bullet F_T\bigr).
\]
Thus \(A_{\spl,T}^{\le k}\) is the split model truncated modulo terms of odd degree
\(\ge k+1\).
\end{definition}

\begin{definition}[The stacks of \(k\)-splittings]
\label{def:classical-k-split-forms}
For \(1\le k\le n\), define \(\mathfrak U_k^{\cl}\) to be the stack whose value on
\(T\to S\) is the groupoid of triples
\(
(\mathcal A,\iota,\sigma_k),
\)
where \((\mathcal A,\iota)\in \Tw^{\cl}_{F/S}(T)\), and
\[
\sigma_k:
\mathcal A/F^{k+1}\mathcal A
\overset{\sim}{\longrightarrow}
A_{\spl,T}^{\le k}
\]
is an isomorphism of filtered \(\pi_T^{-1}\mathcal O_T\)-superalgebras such that:
\begin{enumerate}
\item it induces the identity on the reduced quotient
\[
\mathcal A/F^1\mathcal A\cong \mathcal O_{Y_T}
\cong
A_{\spl,T}^{\le k}/F^1A_{\spl,T}^{\le k};
\]
\item the induced map on associated graded algebras agrees with the fixed
identification \(\iota:\operatorname{gr}_F\mathcal A\cong \wedge^\bullet F_T\).
\end{enumerate}
A morphism in \(\mathfrak U_k^{\cl}(T)\) is a morphism in \(\Tw^{\cl}_{F/S}(T)\) whose
truncation to order \(k\) is compatible with the chosen \(k\)-splittings.
\end{definition}

\begin{remark}
\label{rem:k-splitting-not-projection}
Note that a \(k\)-splitting is not a projection \(\mathcal O_{Y_T}\to\mathcal A\).  It is only a
truncated identification of the filtered relative superalgebra with the split model.
In particular, the existence of splittings for all \(k\) is equivalent to splitness of the
fixed-retract object, whereas the existence of a projection is a weaker and different
condition in the usual supergeometric terminology, as we discussed.
\end{remark}

\begin{definition}[The classical Green obstruction sheaves]
\label{def:relative-Green-obstruction-sheaves}
For every integer \(r\ge2\), define
\[
\mathcal Q^{(r)}_{F/S}
\defeq
\begin{cases}
\mathcal T_{Y/S}\otimes_{\mathcal O_Y}\wedge^r F, & r \text{ even},\\[4pt]
\mathcal H\!om_{\mathcal O_Y}\!\bigl(F,\wedge^r F\bigr), & r \text{ odd}.
\end{cases}
\]
Here \(\wedge^rF=0\) for \(r>n\).  The first case records the possibility of changing
even functions by terms of even exterior degree \(r\); the second records the possibility
of changing odd generators by terms of odd degree \(r\).
\end{definition}

\begin{proposition}[Relative Green obstruction for one-step liftings]
\label{prop:classical-relative-obstruction}
Let \(T\to S\) be a test space, let \(1\le k\le n-1\), and let
\(
(\mathcal A,\iota,\sigma_k)
\in
\mathfrak U_k^{\cl}(T).
\)
Then there is a canonically associated class
\[
\omega_{k+1}(\mathcal A,\iota,\sigma_k)
\in
H^1\bigl(Y_T,\mathcal Q^{(k+1)}_{F_T/T}\bigr)
\]
with the following properties:
\begin{enumerate}
\item it is functorial in \(T\) and depends only on the isomorphism class of
\((\mathcal A,\iota,\sigma_k)\);
\item it vanishes if and only if \(\sigma_k\) extends to a \((k+1)\)-splitting
\[
\sigma_{k+1}:
\mathcal A/F^{k+2}\mathcal A
\overset{\sim}{\longrightarrow}
A_{\spl,T}^{\le k+1};
\]
\item whenever it vanishes, the set of isomorphism classes of extensions
\(\sigma_{k+1}\) of the fixed \(k\)-splitting \(\sigma_k\) is a torsor under
\(
H^0\bigl(Y_T,\mathcal Q^{(k+1)}_{F_T/T}\bigr).
\)
\end{enumerate}
\end{proposition}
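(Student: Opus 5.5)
The plan is to transpose the absolute argument of Section~3 (the identification \(\mathscr K_X^{(k+1)}\cong\mathcal Q_X^{(k+1)}\), the torsor proposition, Lemma~\ref{lem:local-liftings}, and the obstruction proposition with Corollary~\ref{cor:split2}) to the relative fixed-retract setting. Every occurrence of \(\CC\)-linearity is replaced by \(\pi_T^{-1}\mathcal O_T\)-linearity, and \(\mathcal T_{\Xred}\) is replaced by \(\mathcal T_{Y_T/T}\). The first step is to define the relative kernel sheaf \(\mathscr K^{(k+1)}_{F_T/T}\). Its sections are the even \(\pi_T^{-1}\mathcal O_T\)-algebra automorphisms \(g\) of \(A^{\le k+1}_{\spl,T}\) that are the identity on the reduced quotient, on \(\operatorname{gr}_F\), and modulo \(F^{k+1}\). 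Writing \(g=\mathrm{id}+\delta\), the map \(\delta\) takes values in the square-zero ideal \(\wedge^{k+1}F_T\). Hence \(\delta\) is an even \(\pi_T^{-1}\mathcal O_T\)-linear derivation of homogeneous weight \(k+1\), and the composition of two such automorphisms corresponds to the addition of their derivations.

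The parity argument of Section~3 then applies. If \(k+1\) is odd, \(\delta\) kills functions and is an \(\mathcal O_{Y_T}\)-linear map \(F_T\to\wedge^{k+1}F_T\). If \(k+1\) is even, \(\delta\) kills \(F_T\) and is a relative derivation \(\mathcal O_{Y_T}\to\wedge^{k+1}F_T\), which is a section of \(\mathcal T_{Y_T/T}\otimes\wedge^{k+1}F_T\); this is exactly the computation in Proposition~\ref{prop:homogeneous-derivations-Green-exact}. Together these give \(\mathscr K^{(k+1)}_{F_T/T}\cong\mathcal Q^{(k+1)}_{F_T/T}\) as sheaves of abelian groups. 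Along the way one should note that \(\mathcal Q^{(k+1)}_{F_T/T}\) is the pullback of \(\mathcal Q^{(k+1)}_{F/S}\), since \(\mathcal T_{Y_T/T}=p_T^*\mathcal T_{Y/S}\) for a submersion. Local liftings of \(\sigma_k\) then form a torsor under this sheaf, by the same argument as the torsor proposition.

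The key step, and the main obstacle, is local existence of liftings, which is the relative analogue of Lemma~\ref{lem:local-liftings}. The difficulty is that \(\mathcal A\) is not an \(\mathcal O_{Y_T}\)-algebra, so one cannot expand in a frame of \(F_T\) over a given copy of \(\mathcal O_{Y_T}\). The approach is to use the defining local condition that \(\mathcal A\) is locally isomorphic to the split model, as in Remark~\ref{rem:Green-cocycles}, and to choose local relative fibre coordinates \(z^a\) of \(Y_T\to T\) on small opens. Composing \(\sigma_k\) with a local splitting reduces the problem to lifting a filtered automorphism \(a^{(k)}\) of \(A^{\le k}_{\spl,T}\) to \(A^{\le k+1}_{\spl,T}\). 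For this, lift the images of the fibre coordinates \(z^a\) and of the odd generators \(\theta^\alpha\), and extend to functions by nilpotent Taylor expansion in the fibre variables only. The coefficients from \(\mathcal O_T\) are fixed, which is exactly \(\pi_T^{-1}\mathcal O_T\)-linearity. Bijectivity follows as before from the associated graded being the identity. The point to verify carefully is that this Taylor expansion in fibre directions is well defined over a general analytic base \(T\); this needs the holomorphic submersion to be locally a product \(T\times\) polydisc.

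With local lifts in hand, the remaining parts are routine. Compare local lifts on overlaps to obtain a \v{C}ech \(1\)-cocycle with values in \(\mathcal Q^{(k+1)}_{F_T/T}\); its class is independent of the choices, by the coboundary argument of Section~3. The class vanishes precisely when the local lifts can be modified so that they glue, which gives property (2). When it vanishes, two global extensions differ by a global section of \(\mathscr K^{(k+1)}\cong\mathcal Q^{(k+1)}\), which gives property (3). For property (1), an isomorphism of triples transports local lifts to local lifts, so the cocycle is preserved and the class is unchanged. Functoriality in \(T\) holds because pulling back the local lifts along \(T'\to T\) gives local lifts for the base-changed triple, and the identification \(\mathscr K\cong\mathcal Q\) commutes with this pullback. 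Finally, the class is intrinsic in the sense that it is canonically attached to \((\mathcal A,\iota,\sigma_k)\) and does not depend on the cover, since any two covers admit a common refinement.
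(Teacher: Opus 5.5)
Your proposal is correct and follows the paper's proof: take local lifts on a fine Stein cover, read off the leading term of the transition automorphisms by parity as a \(\mathcal Q^{(k+1)}_{F_T/T}\)-valued \v{C}ech cocycle, then run the usual coboundary, gluing and torsor arguments. The paper only asserts that local lifts of \(\sigma_k\) exist and does not address property (1), so your relative version of Lemma~\ref{lem:local-liftings} (Taylor expansion in fibre coordinates on local products \(T\times\) polydisc) and your base-change argument fill in points the paper leaves implicit; one small correction is that for \(k+1\) odd the derivation \(\delta\) raises exterior degree by \(k\), not \(k+1\), though your parity analysis already treats that case correctly.
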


\begin{proof}
Choose a Stein cover \(\mathfrak U_T=\{U_i\}\) of \(Y_T\), sufficiently fine so that
\(F_T\) is trivial and \((\mathcal A,\iota)\) is locally isomorphic to the split model.
Since \(\sigma_k\) is fixed globally, its restrictions admit local lifts
\[
\widetilde\sigma_{i,k+1}:
(\mathcal A/F^{k+2}\mathcal A)|_{U_i}
\overset{\sim}{\longrightarrow}
A_{\spl,T}^{\le k+1}|_{U_i}.
\]
On overlaps set
\(
g_{ij}
\defeq
\widetilde\sigma_{i,k+1}\circ\widetilde\sigma_{j,k+1}^{-1}.
\)
Then \(g_{ij}\) is an even filtered automorphism of \(A_{\spl,T}^{\le k+1}|_{U_{ij}}\)
which is the identity modulo \(F^{k+1}\).  Its first possible nonzero term therefore has
order \(k+1\).  If \(k+1\) is even, this term is obtained by restricting \(g_{ij}-1\) to
functions and is a section of
\(\mathcal T_{Y_T/T}\otimes\wedge^{k+1}F_T\).  If \(k+1\) is odd, parity forces the
function part to vanish, and the first nonzero term is the induced
\(\mathcal O_{Y_T}\)-linear map
\(F_T\to\wedge^{k+1}F_T\).  Thus in both cases one obtains a \v{C}ech
\(1\)-cochain
\(
q_{ij}\in \Gamma(U_{ij},\mathcal Q^{(k+1)}_{F_T/T}).
\)
The cocycle condition for the transition functions \(g_{ij}\) implies that
\((q_{ij})\) is a \v{C}ech \(1\)-cocycle.  Changing the local lifts
\(\widetilde\sigma_{i,k+1}\) changes \((q_{ij})\) by a \v{C}ech coboundary.  Hence its
cohomology class is well-defined; this is
\(\omega_{k+1}(\mathcal A,\iota,\sigma_k)\).

The class vanishes exactly when the local lifts may be modified so that the corrected
transition functions are trivial to order \(k+1\).  This is precisely the condition for the
local lifts to glue to a global \((k+1)\)-splitting.  When this is possible, two such global
extensions differ by an automorphism of the split \((k+1)\)-truncation which is the
identity modulo \(F^{k+1}\), equivalently through order \(k\), hence by a global section of
\(\mathcal Q^{(k+1)}_{F_T/T}\).  This gives the stated torsor description.
\end{proof}

\begin{definition}[Classical and formal vector stacks]
\label{def:classical-formal-vector-stacks}
If \(E\) is a holomorphic vector bundle on \(S\), we write
\(
\mathbb V^{\cl}_S(E)
\)
for its total space, viewed as the functor
\(
T\mapsto H^0(T,E_T).
\)

If \(K\) is a complex on \(S\), we write
\(
\mathbb V^{\mathrm{for}}_S(K)
\)
for the formal moduli problem attached to the abelian Lie algebra object \(K[-1]\).
Thus, if \(M\) is an abelian dg Lie algebra object over \(S\), then
\[
\Def_S(M)=\mathbb V^{\mathrm{for}}_S(M[1]).
\]
When \(K\) is perfect, this is represented by the usual formal vector stack with tangent
complex \(K\).  This convention means that a section of \(E\) is a point of
\(\mathbb V(E)\).\footnote{Consequently no duals appear in the Kuranishi zero loci below.}  When
no confusion is possible, the subscript \(S\) will be omitted.

If \(\mathcal M\) is a coherent sheaf, or a bounded complex, on \(Y\), we also use the
fibrewise notation
\[
\mathbb V^{\mathrm{for}}_{F/S}(\mathcal M[1])(T,B)
\defeq
\MC_\infty\!\left(R\Gamma(Y_T,\mathcal M_T)\otimes\mathfrak m_B\right),
\]
where \(\mathcal M_T\) denotes the pullback to \(Y_T\), regarded as an abelian dg Lie
algebra.  If \(R\pi_*\mathcal M\) satisfies derived base change, then
\[
\mathbb V^{\mathrm{for}}_{F/S}(\mathcal M[1])
\simeq
\mathbb V^{\mathrm{for}}_S\!\bigl(R\pi_*\mathcal M[1]\bigr).
\]
\end{definition}

\begin{corollary}[Classical obstruction morphism and its zero locus]
\label{cor:classical-obstruction-morphism}
Fix \(1\le k\le n-1\).  Assume that
\(
R^0\pi_*\mathcal Q^{(k+1)}_{F/S}
\) and \(
R^1\pi_*\mathcal Q^{(k+1)}_{F/S}
\)
are locally free and commute with base change.  Then the classes of
Proposition~\ref{prop:classical-relative-obstruction} assemble into a natural morphism
\[
\omega_{k+1}^{\cl}:
\mathfrak U_k^{\cl}
\longrightarrow
\mathbb V^{\cl}\!\bigl(R^1\pi_*\mathcal Q^{(k+1)}_{F/S}\bigr).
\]
Let \(Z(\omega_{k+1}^{\cl})\subset \mathfrak U_k^{\cl}\) denote its zero locus.  Then the
forgetful morphism
\(
\mathfrak U_{k+1}^{\cl}
\rightarrow
Z(\omega_{k+1}^{\cl})
\)
is a torsor under the vector group
\(
\mathbb V^{\cl}\!\bigl(R^0\pi_*\mathcal Q^{(k+1)}_{F/S}\bigr).
\)

In particular, \(Z(\omega_{k+1}^{\cl})\) is the stack of \(k\)-splittings which are
extendable to order \(k+1\), whereas \(\mathfrak U_{k+1}^{\cl}\) remembers the additional
choice of such an extension.
\end{corollary}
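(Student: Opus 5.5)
The plan is to globalize Proposition~\ref{prop:classical-relative-obstruction} over the base, using cohomology and base change to turn fibrewise cohomology classes into sections of vector bundles on test spaces. First I construct \(\omega_{k+1}^{\cl}\) on \(T\)-points. Given \(f:T\to S\) and \((\mathcal A,\iota,\sigma_k)\in\mathfrak U_k^{\cl}(T)\), Proposition~\ref{prop:classical-relative-obstruction} produces a class in \(H^1(Y_T,\mathcal Q^{(k+1)}_{F_T/T})\). Since \(\mathcal Q^{(k+1)}_{F/S}\) is built from \(\mathcal T_{Y/S}\), \(F\) and exterior powers, it is locally free, hence flat over \(S\), and its pullback to \(Y_T\) is \(\mathcal Q^{(k+1)}_{F_T/T}\). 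I will work locally on \(T\), which I may take Stein. Then the Leray spectral sequence together with the base-change hypothesis gives
\[
H^1(Y_T,\mathcal Q^{(k+1)}_{F_T/T})\cong H^0\bigl(T,R^1\pi_{T*}\mathcal Q^{(k+1)}_{F_T/T}\bigr)\cong H^0\bigl(T,f^*R^1\pi_*\mathcal Q^{(k+1)}_{F/S}\bigr),
\]
because \(H^1(T,\pi_{T*}\mathcal Q)=0\) on Stein \(T\) (the sheaf \(f^*R^0\pi_*\mathcal Q\) is locally free). This is exactly a \(T\)-point of \(\mathbb V^{\cl}(R^1\pi_*\mathcal Q^{(k+1)}_{F/S})\). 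Functoriality in \(T\) (part (1) of the proposition) together with compatibility of the base-change isomorphisms shows these assignments glue over a Stein cover of \(T\) and define a morphism of stacks. Invariance under isomorphisms in \(\mathfrak U_k^{\cl}\) is also part (1).

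Next I identify the zero locus. By definition \(Z(\omega^{\cl}_{k+1})(T)\) consists of objects whose class vanishes. By part (2) of the proposition, applied Stein-locally on \(T\), vanishing is equivalent to the existence of a lift \(\sigma_{k+1}\), at least locally on \(T\). So the forgetful map \(\mathfrak U_{k+1}^{\cl}\to Z(\omega_{k+1}^{\cl})\) factors through the zero locus and is locally essentially surjective.

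For the torsor structure, the action is given by composing \(\sigma_{k+1}\) with \(\mathrm{id}+\delta_s\) for a global section \(s\in H^0(Y_T,\mathcal Q^{(k+1)}_{F_T/T})\), using the identification \(\mathscr K^{(k+1)}\cong\mathcal Q^{(k+1)}\) (in its relative form, as in the proof of Proposition~\ref{prop:classical-relative-obstruction}). Base change in degree \(0\) gives
\[
H^0(Y_T,\mathcal Q_T)=H^0\bigl(T,f^*R^0\pi_*\mathcal Q^{(k+1)}_{F/S}\bigr),
\]
so this is an action of \(\mathbb V^{\cl}(R^0\pi_*\mathcal Q)\). Part (3) says the action is simply transitive on the fibres; freeness holds because \(\mathrm{id}+\delta_s\) is trivial only if \(s=0\). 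Combined with the local existence from the previous step, the forgetful map is a torsor.

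The main obstacle is stack-theoretic bookkeeping. The objects of \(\mathfrak U_{k+1}^{\cl}\) carry automorphisms, so "torsor" must be read as a statement about fibres of the forgetful map. I must check that an automorphism of \((\mathcal A,\iota,\sigma_k)\) acts on extensions through the same vector group, so that the fibre over a \(T\)-point is the discrete torsor of extensions and not a quotient of it. I would handle this by noting that morphisms in \(\mathfrak U_{k+1}^{\cl}\) are required to be compatible with the chosen \((k+1)\)-splittings, so the fibre is exactly the set of extensions. The remaining care is with the locally-free hypotheses, which are needed so that \(\mathbb V^{\cl}\) is a vector group whose formation commutes with base change.
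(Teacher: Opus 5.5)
Your proposal is correct and follows the same route as the paper. Base change identifies the obstruction group $H^1(Y_T,\mathcal Q^{(k+1)}_{F_T/T})$ and the ambiguity group $H^0(Y_T,\mathcal Q^{(k+1)}_{F_T/T})$ with $T$-points of $\mathbb V^{\cl}(R^1\pi_*\mathcal Q^{(k+1)}_{F/S})$ and $\mathbb V^{\cl}(R^0\pi_*\mathcal Q^{(k+1)}_{F/S})$, and parts (2) and (3) of Proposition~\ref{prop:classical-relative-obstruction} then give the zero locus and the torsor structure. Your Stein-local Leray argument (note that surjectivity of the edge map also uses $H^2(T,\pi_{T*}\mathcal Q)=0$) and your check that the fibres of the forgetful map are discrete make explicit what the paper's proof leaves implicit when it says that $H^1(Y_T,\mathcal Q)$ is ``functorially the fibre'' of $R^1\pi_*\mathcal Q$.
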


\begin{proof}
By the base-change hypothesis, for every \(T\to S\) the group
\(H^1(Y_T,\mathcal Q^{(k+1)}_{F_T/T})\) is functorially the fibre of
\(R^1\pi_*\mathcal Q^{(k+1)}_{F/S}\) over \(T\).  Therefore the obstruction classes
assemble into the displayed morphism.  Proposition~\ref{prop:classical-relative-obstruction}
identifies the zero locus with the substack of \(\mathfrak U_k^{\cl}\) consisting of
extendable \(k\)-splittings.  The same proposition identifies the choices of extensions
with a torsor under
\(H^0(Y_T,\mathcal Q^{(k+1)}_{F_T/T})\), which is the fibre of
\(R^0\pi_*\mathcal Q^{(k+1)}_{F/S}\) by the base-change hypothesis.
\end{proof}

\subsection{The formal Green tower and its classical projections}
\label{subsec:canonical-formal-Green-tower}

We now explain how the classical one-step obstruction tower is reflected in the formal
Green--Onishchik enhancement.  The key point is that the formal tower is not indexed by
the individual sheaves \(\mathcal Q^{(r)}_{F/S}\).  The dg Lie algebra controlling the
formal moduli problem consists of even derivations, and its associated graded pieces are
therefore the even homogeneous derivation sheaves
\(
\mathcal T^{\langle 2j\rangle}_{A_{\spl}/S}.
\)
Each such homogeneous piece contains two successive classical Green sheaves: the quotient
\(\mathcal Q^{(2j)}_{F/S}\), which measures the obstruction to obtaining a
\(2j\)-splitting, and the kernel \(\mathcal Q^{(2j+1)}_{F/S}\), which gives the residual
obstruction to obtaining a \((2j+1)\)-splitting after the even obstruction has vanished and
a \(2j\)-splitting has been chosen. This is reminiscent of the absolute theory developed in the previous sections. 

\smallskip

Set
\(
N\defeq \bigl\lfloor n/2\bigr\rfloor .
\)
For \(1\le j\le N\), write
\(
\mathcal E_j
\defeq
\mathcal T^{\langle 2j\rangle}_{A_{\spl}/S}.
\)
By Proposition~\ref{prop:homogeneous-derivations-Green-exact}, there is a natural short
exact sequence
\begin{equation}
\label{eq:E-j-Q-exact}
0
\longrightarrow
\mathcal Q^{(2j+1)}_{F/S}
\longrightarrow
\mathcal E_j
\overset{\rho_{2j}}{\longrightarrow}
\mathcal Q^{(2j)}_{F/S}
\longrightarrow
0,
\end{equation}
where \(\mathcal Q^{(2j+1)}_{F/S}=0\) if \(2j+1>n\).

\begin{definition}[Formal Green stages]
\label{def:formal-Green-stages}
For \(1\le j\le N+1\), define the fibrewise formal Green stage
\(\widehat{\mathfrak U}^{[j]}_{F/S}\) by
\[
\widehat{\mathfrak U}^{[j]}_{F/S}(T,B)
\defeq
\MC_\infty\!\left(
R\Gamma\bigl(Y_T,
\mathcal A^{0,\bullet}_{Y_T}
(\mathcal T^{[\ge 2j]}_{A_{\spl,T}/T,\bar0})
\bigr)
\otimes\mathfrak m_B
\right),
\]
where \(T\to S\) is a test space and \(B\) is a small augmented Artin cdga.  We use the
convention
\(
\mathcal T^{[\ge 2(N+1)]}_{A_{\spl}/S,\bar0}=0.
\)
Thus
\[
\widehat{\mathfrak U}^{[1]}_{F/S}=\widehat\Tw_{F/S},
\qquad
\widehat{\mathfrak U}^{[N+1]}_{F/S}=S
\]
as pointed fibrewise formal moduli problems.

If derived base change holds for the Green tail
\(R\pi_*\mathcal T^{[\ge 2j]}_{A_{\spl}/S,\bar0}\), then this same stage may be written in
relative notation as
\[
\widehat{\mathfrak U}^{[j]}_{F/S}
\simeq
\Def_S\!\bigl(\mathfrak g^{[j]}_{F/S}\bigr),
\qquad
\mathfrak g^{[j]}_{F/S}
\defeq
R\pi_*\mathcal T^{[\ge 2j]}_{A_{\spl}/S,\bar0}.
\]
\end{definition}

\begin{remark}[Meaning of the index]
\label{rem:formal-index-meaning}
The stage \(\widehat{\mathfrak U}^{[j]}_{F/S}\) is the formal counterpart of having made
all Green terms of weights \(<2j\) vanish in an adapted presentation.  In the absolute
Dolbeault language, this means that one works with Maurer--Cartan representatives lying
in the filtration step corresponding to derivations of weight at least \(2j\).  Classically,
this is the formal analogue of having chosen a \((2j-1)\)-splitting, as discussed in the previous sections.
\end{remark}

\begin{theorem}[Canonical formal Green tower]
\label{thm:formal-Green-tower}
For every \(1\le j\le N\), there is a canonical morphism of pointed fibrewise formal
moduli problems over \(S\)
\[
\widetilde\Omega_j^{\mathrm{for}}:
\widehat{\mathfrak U}^{[j]}_{F/S}
\longrightarrow
\mathbb V^{\mathrm{for}}_{F/S}\!\bigl(\mathcal E_j[1]\bigr),
\]
where \(\mathbb V^{\mathrm{for}}_{F/S}(\mathcal E_j[1])\) denotes the fibrewise formal
vector stack whose value over \(T\to S\) is attached to the abelian dg Lie algebra
\(
R\Gamma(Y_T,\mathcal E_{j,T}).
\)
Fibrewise, the map is induced by the quotient
\[
\mathcal T^{[\ge 2j]}_{A_{\spl,T}/T,\bar0}
\longrightarrow
\mathcal E_{j,T}.
\]
Moreover there is a canonical fibre sequence
\[
\widehat{\mathfrak U}^{[j+1]}_{F/S}
\longrightarrow
\widehat{\mathfrak U}^{[j]}_{F/S}
\xrightarrow{\ \widetilde\Omega_j^{\mathrm{for}}\ }
\mathbb V^{\mathrm{for}}_{F/S}\!\bigl(\mathcal E_j[1]\bigr).
\]

If the derived base-change assumption holds for the relevant tails and for
\(\mathcal E_j\), then the target can be written as
\[
\mathbb V^{\mathrm{for}}_S\!\bigl(R\pi_*\mathcal E_j[1]\bigr)
=
\Def_S\!\bigl(R\pi_*\mathcal E_j\bigr),
\]
and the displayed fibre sequence is a fibre sequence of relative formal moduli problems
over \(S\).
\end{theorem}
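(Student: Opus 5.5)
The plan is to prove everything fibrewise, working with the Dolbeault models, and to reduce the fibre sequence to a short exact sequence of nilpotent dg Lie algebras. Fix a test space \(T\to S\) and write
\(M_j\defeq A^{0,\bullet}\bigl(Y_T,\mathcal T^{[\ge 2j]}_{A_{\spl,T}/T,\bar0}\bigr)\), which is a model for \(R\Gamma(Y_T,\mathcal A^{0,\bullet}_{Y_T}(\mathcal T^{[\ge 2j]}_{A_{\spl,T}/T,\bar0}))\).

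\emph{Step 1: a surjection with abelian target.} By the relative analogue of Proposition~\ref{L_X}, with brackets additive in weight as in Definition~\ref{def:relative-derivation-sheaf}, one has \([M_j,M_j]\subset M_{2j}\subset M_{j+1}\). Hence \(M_{j+1}\) is a dg Lie ideal in \(M_j\), and the quotient \(M_j/M_{j+1}\) is abelian. The sheaf-level sequence
\[
0\to\mathcal T^{[\ge 2j+2]}\to\mathcal T^{[\ge 2j]}\to\mathcal E_j\to0
\]
is split as a sequence of vector bundles by the weight decomposition. It therefore stays exact after tensoring with the fine sheaves \(\mathcal A^{0,\bullet}\) and taking global sections. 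This gives a short exact sequence of dg Lie algebras
\[
0\to M_{j+1}\to M_j\to A^{0,\bullet}(Y_T,\mathcal E_{j,T})\to0,
\]
whose last term is abelian and models \(R\Gamma(Y_T,\mathcal E_{j,T})\). The projection is a dg Lie morphism, since its target is abelian and the bracket of \(M_j\) lands in the kernel. Define \(\widetilde\Omega_j^{\mathrm{for}}\) as the induced map on \(\MC_\infty(-\otimes\mathfrak m_B)\). By Definition~\ref{def:classical-formal-vector-stacks}, the target is exactly \(\mathbb V^{\mathrm{for}}_{F/S}(\mathcal E_j[1])(T,B)\).

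\emph{Step 2: canonicity and functoriality.} Pullback of Dolbeault forms along \(Y_{T'}\to Y_T\) commutes with the weight decomposition and with the quotient maps. The construction is therefore natural in \(T\) and in \(B\). It is independent of the model for \(R\Gamma\) by Corollary~\ref{cor:independence-cover} together with quasi-isomorphism invariance of \(\MC_\infty\). Since every weight piece is preserved, the comparison maps of Lemma~\ref{lem:Cech-computes-global} are compatible with the filtration.

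\emph{Step 3: the fibre sequence.} This is the step I expect to be the main obstacle, because one needs a \emph{homotopy} fibre rather than a strict fibre. I would invoke Getzler's theorem: a surjection of nilpotent dg Lie algebras induces a Kan fibration of simplicial Maurer--Cartan sets. After tensoring with \(\mathfrak m_B\) and with \(\Omega^\bullet(\Delta^p)\), the map \(M_j\otimes\mathfrak m_B\to (M_j/M_{j+1})\otimes\mathfrak m_B\) is still a surjection of nilpotent dg Lie algebras, because \(\mathfrak m_B\) is nilpotent and the filtration is finite. The strict fibre over the base point \(0\) consists of the Maurer--Cartan elements of \(M_j\) with vanishing weight-\(2j\) component, namely \(\MC_\infty(M_{j+1}\otimes\mathfrak m_B)\). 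Since the map is a Kan fibration, this strict fibre computes the homotopy fibre. Because limits of formal moduli problems are computed objectwise, this yields the fibre sequence \(\widehat{\mathfrak U}^{[j+1]}\to\widehat{\mathfrak U}^{[j]}\to\mathbb V^{\mathrm{for}}_{F/S}(\mathcal E_j[1])\). As a fallback, if the Kan fibration statement is not available in the required form, I would check the equivalence on tangent complexes instead: the sequence \(M_{j+1}[1]\to M_j[1]\to M_j/M_{j+1}[1]\) is a fibre sequence, and the Lurie--Pridham equivalence reflects fibre sequences.

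\emph{Step 4: the relative form.} Assume derived base change (Assumption~\ref{ass:green-derived-base-change}) for the tails \(\mathcal T^{[\ge 2j]}\), \(\mathcal T^{[\ge 2j+2]}\) and for \(\mathcal E_j\). Applying \(R\pi_*\) to the split short exact sequence gives an exact triangle, and \(Lf^*R\pi_*\) recovers the fibrewise complexes used above. Definitions~\ref{def:formal-Green-stages} and~\ref{def:relative-Def-functor} then rewrite each term as \(\Def_S(-)\). The fibre sequence transports along these equivalences, which produces the relative statement, with target \(\Def_S(R\pi_*\mathcal E_j)=\mathbb V^{\mathrm{for}}_S(R\pi_*\mathcal E_j[1])\).
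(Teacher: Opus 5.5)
Your proposal is correct and follows essentially the same route as the paper. The weight decomposition makes \(M_{j+1}\subset M_j\) a dg Lie ideal with abelian quotient \(A^{0,\bullet}(Y_T,\mathcal E_{j,T})\), the Maurer--Cartan functor turns this short exact sequence into the fibre sequence, and the relative form follows from derived base change. The only difference is that you justify the fibre-sequence step explicitly, either via Getzler's theorem that surjections of nilpotent dg Lie algebras induce Kan fibrations of Maurer--Cartan simplicial sets, or via the tangent-complex check; the paper simply asserts this step.
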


\begin{proof}
The Green filtration on
\(\mathcal T^{[\ge2]}_{A_{\spl}/S,\bar0}\) is a finite filtration by Lie ideals.  For
each \(j\), the quotient of consecutive tails is
\[
\mathcal T^{[\ge 2j]}_{A_{\spl}/S,\bar0}
\big/
\mathcal T^{[\ge 2(j+1)]}_{A_{\spl}/S,\bar0}
\cong
\mathcal T^{\langle 2j\rangle}_{A_{\spl}/S}
=
\mathcal E_j.
\]
This quotient is abelian: the bracket of two derivations of weight \(2j\) has weight
\(4j\), which lies in the next tail \(\mathcal T^{[\ge 2(j+1)]}_{A_{\spl}/S,\bar0}\) for
\(j\ge1\).  Hence, after pulling back to any test space \(T\to S\), one obtains a short
exact sequence of nilpotent dg Lie sheaves, or equivalently of their Dolbeault
resolutions,
\[
0
\longrightarrow
\mathcal T^{[\ge 2(j+1)]}_{A_{\spl,T}/T,\bar0}
\longrightarrow
\mathcal T^{[\ge 2j]}_{A_{\spl,T}/T,\bar0}
\longrightarrow
\mathcal E_{j,T}
\longrightarrow
0,
\]
with abelian quotient.  Applying derived global sections on \(Y_T\) gives a fibre
sequence of dg Lie algebras.  The Maurer--Cartan formal moduli functor sends such a
sequence with abelian quotient to the displayed fibre sequence of pointed formal moduli
problems.  The final relative formulation follows by applying the stated derived
base-change hypothesis.
\end{proof}

\begin{definition}[Projected and residual formal components]
\label{def:projected-residual-formal-components}
The composition of \(\widetilde\Omega_j^{\mathrm{for}}\) with the morphism induced
fibrewise by
\(
\rho_{2j}:\mathcal E_j\rightarrow \mathcal Q^{(2j)}_{F/S}
\)
is denoted
\[
\Def_{2j}^{\mathrm{for}}:
\widehat{\mathfrak U}^{[j]}_{F/S}
\longrightarrow
\mathbb V^{\mathrm{for}}_{F/S}
\bigl(\mathcal Q^{(2j)}_{F/S}[1]\bigr)
\]
and is called the \emph{formal projected defect} in weight \(2j\).  Explicitly, over a
test space \(T\to S\), its target is the formal vector stack attached to the abelian
complex
\(
R\Gamma\bigl(Y_T,\mathcal Q^{(2j)}_{F_T/T}\bigr)[1].
\)

If derived base change holds for \(\mathcal Q^{(2j)}_{F/S}\), then the same target may be
written in relative pushforward notation as
\[
\mathbb V^{\mathrm{for}}_{F/S}
\bigl(\mathcal Q^{(2j)}_{F/S}[1]\bigr)
\simeq
\mathbb V^{\mathrm{for}}_S
\bigl(R\pi_*\mathcal Q^{(2j)}_{F/S}[1]\bigr).
\]

If \(2j+1\le n\), and if this projected defect vanishes at a point and a lift to the
homotopy fibre is chosen, the remaining component of
\(\widetilde\Omega_j^{\mathrm{for}}\) lies in the kernel term
\(
\mathcal Q^{(2j+1)}_{F/S}
\)
of \eqref{eq:E-j-Q-exact}.  This is the corresponding \emph{formal residual class}.
Fibrewise over \(T\to S\), it is a class controlled by
\(
R\Gamma\bigl(Y_T,\mathcal Q^{(2j+1)}_{F_T/T}\bigr)[1].
\)\\
If \(2j+1>n\), the kernel term is zero and there is no residual obstruction at this
weight.
\end{definition}

\begin{proposition}[Comparison with the classical Green obstructions]
\label{prop:formal-classical-comparison-Green-tower}
Let \(1\le j\le N\).  Fibrewise over every test space \(T\to S\), and on ordinary
Artin thickenings after passing to \(\pi_0\), the formal leading map
\[
\widetilde\Omega_{j,T}^{\mathrm{for}}:
\widehat{\mathfrak U}^{[j]}_{F/S}(T)
\longrightarrow
\mathbb V^{\mathrm{for}}_{F_T/T}\!\bigl(\mathcal E_{j,T}[1]\bigr)
\]
recovers the two successive classical Green obstruction steps as follows.

\begin{enumerate}
\item The projected component
\(
\Def_{2j,T}^{\mathrm{for}}
\)
is represented by the classical obstruction class
\[
\omega_{2j}(\mathcal A_T,\iota_T,\sigma^{(2j-1)}_T)
\in
H^1\bigl(Y_T,\mathcal Q^{(2j)}_{F_T/T}\bigr)
\]
to lifting a \((2j-1)\)-splitting to a \(2j\)-splitting.

\item If \(2j+1\le n\), and if this obstruction vanishes and a \(2j\)-splitting
\(\sigma_T^{(2j)}\) is chosen, the normalized kernel component of
\(\widetilde\Omega_{j,T}^{\mathrm{for}}\) is represented by the classical obstruction
class
\[
\omega_{2j+1}(\mathcal A_T,\iota_T,\sigma_T^{(2j)})
\in
H^1\bigl(Y_T,\mathcal Q^{(2j+1)}_{F_T/T}\bigr)
\]
to lifting the chosen \(2j\)-splitting to a \((2j+1)\)-splitting.
\end{enumerate}

If, in addition, the sheaves
\(
R^1\pi_*\mathcal Q^{(2j)}_{F/S},
\) and \(
R^1\pi_*\mathcal Q^{(2j+1)}_{F/S}
\)
are locally free and commute with base change, then these fibrewise obstruction classes
assemble into the classical relative obstruction morphisms
\[
\omega_{2j}^{\cl}:
\mathfrak U_{2j-1}^{\cl}
\longrightarrow
\mathbb V^{\cl}_S\!\bigl(R^1\pi_*\mathcal Q^{(2j)}_{F/S}\bigr),
\]
and, after vanishing of \(\omega_{2j}^{\cl}\) and choice of a \(2j\)-splitting,
\[
\omega_{2j+1}^{\cl}:
\mathfrak U_{2j}^{\cl}
\longrightarrow
\mathbb V^{\cl}_S\!\bigl(R^1\pi_*\mathcal Q^{(2j+1)}_{F/S}\bigr).
\]
\end{proposition}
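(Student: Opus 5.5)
The plan is to reduce the relative statement to the absolute comparison already proved, working fibre by fibre. Fix a test space \(T\to S\) and an ordinary Artin thickening, and pass to \(\pi_0\). By Theorem~\ref{thm:formal-GO} and Proposition~\ref{prop:Cech-Deligne-identification}, a point of \(\widehat{\mathfrak U}^{[j]}_{F/S}(T)\) is then a Maurer--Cartan element of the Thom--Whitney \v{C}ech controller lying in filtration \(F^j\), modulo gauge. Equivalently, it is a Green cocycle \(g_{ab}=\exp(\gamma_{ab})\) whose logarithm has weight \(\ge 2j\), i.e.\ an object \((\mathcal A_T,\iota_T)\) together with local trivializations adapted to a \((2j-1)\)-splitting \(\sigma^{(2j-1)}_T\); this is Remark~\ref{rem:formal-index-meaning} made precise.

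The first step is to compute \(\widetilde\Omega^{\mathrm{for}}_{j,T}\) on such a point. By the proof of Theorem~\ref{thm:formal-Green-tower}, this map is induced by the quotient \(\mathcal T^{[\ge 2j]}\to\mathcal E_j\), which is a strict morphism of dg Lie algebras with abelian target. On \(\pi_0\) it therefore sends the Maurer--Cartan element to the class of its weight-\(2j\) leading component. In \v{C}ech terms this is the class of the cocycle \(\{\mathrm{pr}_{2j}\gamma_{ab}\}\in \check Z^1(\mathfrak U_T,\mathcal E_{j,T})\). The cocycle identity in weight \(2j\) is linear, because BCH corrections have weight \(\ge 4j\). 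This is the relative analogue of Lemma~\ref{lem:CechDolbeaultLeading}, and the argument transfers verbatim, with \(\mathcal T_{Y_T/T}\) replacing \(\mathcal T_{\Xred}\).

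For part (1), I will compose with \(\rho_{2j}\) and compare with the cocycle built in the proof of Proposition~\ref{prop:classical-relative-obstruction} for \(k=2j-1\). There one takes local lifts \(\widetilde\sigma_{a,2j}\), which can be chosen to be the truncations of the local adapted trivializations. Then \(g_{ab}\) modulo \(F^{2j+1}\) equals \(\mathrm{id}+\mathrm{pr}_{2j}\gamma_{ab}\), whose function part is exactly \(\rho_{2j}(\mathrm{pr}_{2j}\gamma_{ab})\). Hence the two classes agree.

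For part (2), suppose \(\omega_{2j}=0\) and a \(2j\)-splitting \(\sigma^{(2j)}_T\) is chosen. I will re-choose the local trivializations to be adapted to \(\sigma^{(2j)}_T\). The resulting change is a gauge transformation by a \(0\)-cochain in \(\mathcal T^{[\ge 2j]}\), which is precisely the choice of a lift into the homotopy fibre of \(\Def^{\mathrm{for}}_{2j}\). After this change \(\rho_{2j}(\mathrm{pr}_{2j}\gamma_{ab})=0\), so the leading cocycle is valued in \(\mathcal Q^{(2j+1)}_{F_T/T}\). The kernel part of \(g_{ab}\) modulo \(F^{2j+2}\) is then exactly the cocycle \(q_{ab}\) of Proposition~\ref{prop:classical-relative-obstruction} for \(k=2j\). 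This mirrors Theorem~\ref{thm:sheafMC}(2)--(3).

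The \textbf{main obstacle} is making this step precise: the kernel component is not a well-defined function on \(\widehat{\mathfrak U}^{[j]}\) itself, only on the homotopy fibre with its chosen nullhomotopy. I must check that changing the nullhomotopy by a global section of \(\mathcal Q^{(2j)}\) changes the residual class in a way matching the change of \(\sigma^{(2j)}_T\) through the torsor structure. I would first try the long exact sequence attached to \eqref{eq:E-j-Q-exact}. Its connecting map \(H^0(\mathcal Q^{(2j)})\to H^1(\mathcal Q^{(2j+1)})\) should account exactly for this dependence.

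Finally, under the local freeness and base-change hypotheses on \(R^1\pi_*\mathcal Q^{(2j)}_{F/S}\) and \(R^1\pi_*\mathcal Q^{(2j+1)}_{F/S}\), the fibrewise classes are functorial in \(T\) by Proposition~\ref{prop:classical-relative-obstruction}(1). They therefore glue to the morphisms \(\omega^{\cl}_{2j}\) and \(\omega^{\cl}_{2j+1}\) exactly as in Corollary~\ref{cor:classical-obstruction-morphism}.
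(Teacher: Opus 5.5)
Your proposal is correct and follows essentially the same route as the paper's proof. You work in the \v{C}ech--Deligne model, take the weight-\(2j\) leading cocycle of the Green cocycle, identify its \(\rho_{2j}\)-projection (and, after gauge-normalizing to a chosen \(2j\)-splitting, its kernel part) with the cocycles of Proposition~\ref{prop:classical-relative-obstruction}, and then assemble the fibrewise classes by base change; your further check that changing the nullhomotopy, equivalently the \(2j\)-splitting, by \(s\in H^0(Y_T,\mathcal Q^{(2j)}_{F_T/T})\) shifts the residual class by the connecting map \(\delta(s)\) is also correct, and makes explicit a well-definedness point the paper's proof leaves implicit.
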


\begin{proof}
The statement is fibrewise on \(S\) and local on \(Y\), so we may use the
\v{C}ech--Deligne model of Subsection~\ref{subsec:Cech-Deligne-GO}.  A point of
\(\widehat{\mathfrak U}^{[j]}_{F/S}(T)\) is represented by a Maurer--Cartan element whose
first possible nonzero Green term has weight \(2j\).  Its leading component is a
\v{C}ech--Dolbeault \(1\)-cocycle with values in
\(
\mathcal E_{j,T}
=
\mathcal T^{\langle 2j\rangle}_{A_{\spl,T}/T}.
\)
Projection along
\(
\rho_{2j}:\mathcal E_{j,T}\to \mathcal Q^{(2j)}_{F_T/T}
\)
gives a cocycle with values in \(\mathcal Q^{(2j)}_{F_T/T}\).  By the construction of
Proposition~\ref{prop:classical-relative-obstruction}, this is precisely the classical
cocycle measuring the obstruction to lifting a \((2j-1)\)-splitting to order \(2j\).
Changing the Maurer--Cartan representative by gauge changes the projected cocycle by a
coboundary, so the induced cohomology class is the classical obstruction class.

Assume now that \(2j+1\le n\).  If the projected class vanishes, one may choose a
\(2j\)-splitting.  In the \v{C}ech--Deligne model this is the same as choosing a gauge
normalization for which the leading \(\mathcal E_{j,T}\)-valued cocycle has zero image in
\(\mathcal Q^{(2j)}_{F_T/T}\).  The leading cocycle then takes values in the kernel of
\(\rho_{2j}\), namely \(\mathcal Q^{(2j+1)}_{F_T/T}\).  The resulting kernel-valued
cohomology class is, again by Proposition~\ref{prop:classical-relative-obstruction}, the
obstruction to lifting the chosen \(2j\)-splitting to order \(2j+1\).

The final assertion is exactly the usual base-change assembly of fibrewise cohomology
classes into morphisms to the vector bundles
\(R^1\pi_*\mathcal Q^{(2j)}_{F/S}\) and
\(R^1\pi_*\mathcal Q^{(2j+1)}_{F/S}\), under the stated local-freeness and base-change
hypotheses.
\end{proof}

\begin{remark}[What the formal tower does and does not say]
\label{rem:formal-tower-correction}
Theorem~\ref{thm:formal-Green-tower} is the formal analogue of Green's tower.  Its
successive fibrewise quotients are governed by
\(
R\Gamma (Y_T,\mathcal T^{\langle 2j\rangle}_{A_{\spl,T}/T} ),
\)
not directly by the individual Green obstruction sheaves
\(\mathcal Q^{(r)}_{F_T/T}\).  Under derived base change, this fibrewise quotient may be
written relatively as
\(
R\pi_*\mathcal T^{\langle 2j\rangle}_{A_{\spl}/S}.
\)

The individual Green obstruction sheaves are recovered from the exact sequence
\eqref{eq:E-j-Q-exact}: first by projecting to
\(\mathcal Q^{(2j)}_{F/S}\), and then, after vanishing of this projected obstruction and
a choice of \(2j\)-splitting, by taking the residual kernel class in
\(\mathcal Q^{(2j+1)}_{F/S}\).

This is why a formal derived obstruction tower exists canonically, while a tower whose
successive formal quotients are the individual classical Green sheaves is not canonical
without additional choices.
\end{remark}

\subsection{Low odd rank: linearity and completeness of the primary obstruction}
\label{subsec:low-odd-rank-linearity}

The formal Green tower has an immediate consequence in small odd dimension.  The
first possible nonlinear term in the Maurer--Cartan equation is the bracket of two
weight-two derivations.  This bracket has weight four.  Hence, if the odd rank is at
most three, there is no room for such a term.  In this range the fixed-retract formal
moduli problem is therefore linear: the whole deformation class is its first Green
coordinate.  This gives a useful test case for the preceding formalism and explains why
odd rank two examples are governed entirely by their primary obstruction class.

\begin{theorem}[Low odd rank linearity]
\label{thm:low-odd-rank-linearity}
Let \(\pi:Y\to S\) be as above, let \(F\) be locally free of rank \(n\leq 3\), and set
\(A_{\spl}=\wedge^\bullet F\).  Then
\(
\mathcal T^{[\ge 4]}_{A_{\spl}/S,\bar 0}=0.
\)
Consequently the fibrewise Green controller is abelian and the split formal
neighbourhood is linear:
\[
\widehat\Tw_{F/S}(T,B)
\simeq
\MC_\infty\!\left(
R\Gamma\bigl(Y_T,\mathcal T^{\langle 2\rangle}_{A_{\spl,T}/T}\bigr)
\otimes\mathfrak m_B
\right),
\]
with the right-hand side regarded as the formal vector stack attached to the abelian
complex
\(
R\Gamma\bigl(Y_T,\mathcal T^{\langle 2\rangle}_{A_{\spl,T}/T}\bigr)[1].
\)

If derived base change holds for
\(\mathcal T^{\langle 2\rangle}_{A_{\spl}/S}\), then this fibrewise equivalence may be
written relatively as
\[
\widehat\Tw_{F/S}
\simeq
\mathbb V^{\mathrm{for}}_S\!\left(
R\pi_*\mathcal T^{\langle 2\rangle}_{A_{\spl}/S}[1]
\right).
\]
In particular, in odd rank at most three, the fixed-retract formal moduli problem has no
nonlinear Kuranishi equations.
\end{theorem}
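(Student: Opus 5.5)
The plan is to reduce everything to a weight count in the exterior degree, and then to apply the formal machinery already set up: the fibrewise Green--Onishchik theorem (Theorem~\ref{thm:formal-GO}) together with the conventions of Definition~\ref{def:classical-formal-vector-stacks}.

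\emph{Step 1: the vanishing \(\mathcal T^{[\ge 4]}_{A_{\spl}/S,\bar0}=0\).} A homogeneous derivation of weight \(q\) is determined by its restrictions \(\mathcal O_Y\to\wedge^qF\) and \(F\to\wedge^{q+1}F\), as in the proof of Proposition~\ref{prop:homogeneous-derivations-Green-exact}. For \(q\ge 4\) and \(n\le 3\) both targets vanish. Hence every \(\mathcal T^{\langle 2q\rangle}_{A_{\spl}/S}\) with \(q\ge2\) is zero, and so is the tail. The same argument works after any base change \(T\to S\), since \(F_T\) still has rank \(n\).

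\emph{Step 2: the controller is abelian.} By Step~1, \(\mathcal T^{[\ge2]}_{A_{\spl,T}/T,\bar0}=\mathcal T^{\langle 2\rangle}_{A_{\spl,T}/T}\). The bracket of two weight-two derivations lands in weight four, which vanishes by Step~1, so the bracket is identically zero. The wedge product on \(\mathcal A^{0,\bullet}\) does not add exterior weight, so the Dolbeault resolution \(\mathfrak t_{F_T/T}\) is also an abelian dg Lie sheaf. Therefore \(R\Gamma(Y_T,\mathfrak t_{F_T/T})\) is an abelian dg Lie algebra quasi-isomorphic to \(R\Gamma(Y_T,\mathcal T^{\langle2\rangle}_{A_{\spl,T}/T})\). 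Both the Dolbeault and the Thom--Whitney \v{C}ech models (Lemma~\ref{lem:Cech-computes-global}) inherit abelian brackets, because the brackets are induced from the sheaf-level bracket. Substituting into Definition~\ref{def:formal-GO-enhancement} gives the displayed fibrewise equivalence.

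\emph{Step 3: identifying the formal vector stack.} For an abelian dg Lie algebra \(M\), \(\MC_\infty(M\otimes\mathfrak m_B)\) is the linear formal moduli problem with tangent complex \(M[1]\). Here the Maurer--Cartan equation reduces to \(d\mu=0\) and gauge acts by translation. This is exactly \(\mathbb V^{\mathrm{for}}(M[1])\) in the convention of Definition~\ref{def:classical-formal-vector-stacks}. The relative form follows by applying derived base change to \(\mathcal T^{\langle2\rangle}_{A_{\spl}/S}\), in the same way as in Corollary~\ref{cor:formal-GO-basechange-form}. Finally, Kuranishi equations are absent for two reasons. In any minimal model obtained by homotopy transfer from an abelian dg Lie algebra, all transferred brackets \(\ell_r\) with \(r\ge2\) vanish, since every tree formula contains at least one bracket. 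Moreover the weight decomposition leaves only \(N=1\), so the system \eqref{eqn:full-weight-MC} is empty.

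The main obstacle I expect is the care needed in Step~2 to ensure that the quasi-isomorphism to the abelian model is one of dg Lie algebras and not merely of complexes. This would follow because the models involved are literally abelian, not just homotopy abelian. Beyond that, the statement is mostly bookkeeping.
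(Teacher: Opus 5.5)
Your proposal is correct and follows essentially the same argument as the paper. The weight count kills \(\mathcal T^{[\ge 4]}\) and forces the bracket of two weight-two derivations to vanish, and the formal moduli problem of the resulting abelian controller is then identified with a formal vector stack, with the relative form following from derived base change. Your extra remarks (that the Dolbeault and Thom--Whitney models are literally abelian, and that all transferred brackets \(\ell_r\), \(r\ge 2\), vanish) only make explicit what the paper leaves implicit; the one small slip is that for \(n\le 1\) one has \(N=0\) rather than \(N=1\), which changes nothing since the Kuranishi system is empty either way.
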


\begin{proof}
A homogeneous even derivation of Green weight \(2j\) is determined by its two components
\[
\mathcal O_Y\longrightarrow \wedge^{2j}F,
\qquad
F\longrightarrow \wedge^{2j+1}F.
\]
If \(\operatorname{rk}F\leq 3\), then \(\wedge^rF=0\) for every \(r\geq 4\).  Hence no
nonzero homogeneous even derivation of weight at least four exists, and therefore
\(
\mathcal T^{[\ge 2]}_{A_{\spl}/S,\bar0}
=
\mathcal T^{\langle 2\rangle}_{A_{\spl}/S}.
\)
The bracket of two weight-two derivations has weight four.  Since the weight-four part
vanishes, the bracket on \(\mathcal T^{\langle 2\rangle}_{A_{\spl}/S}\) is zero.
Therefore the fibrewise dg Lie controller is abelian.  The formal moduli problem attached
to an abelian dg Lie algebra \(K\) is the formal vector stack with tangent complex \(K[1]\).
The displayed relative expression follows when
\(\mathcal T^{\langle 2\rangle}_{A_{\spl}/S}\) satisfies derived base change.
\end{proof}

\begin{corollary}[Odd rank two]
\label{cor:odd-rank-two-primary-complete}
Assume \(\operatorname{rk}F=2\).  Then
\[
\mathcal T^{\langle 2\rangle}_{A_{\spl}/S}
\cong
\mathcal Q^{(2)}_{F/S}
=
\mathcal T_{Y/S}\otimes \wedge^2F.
\]
Consequently, fibrewise over \(T\to S\),
\[
\widehat\Tw_{F/S}(T,B)
\simeq
\MC_\infty\!\left(
R\Gamma\bigl(Y_T,\mathcal T_{Y_T/T}\otimes\wedge^2F_T\bigr)
\otimes\mathfrak m_B
\right).
\]
If \(\mathcal T_{Y/S}\otimes\wedge^2F\) satisfies derived base change, this may be
written relatively as
\[
\widehat\Tw_{F/S}
\simeq
\mathbb V^{\mathrm{for}}_S\!\left(
R\pi_*(\mathcal T_{Y/S}\otimes \wedge^2F)[1]
\right).
\]

In particular, if \(S=\operatorname{Spec}\mathbb C\), then the set of fixed-retract
isomorphism classes in the split formal neighbourhood is
\(
H^1\bigl(Y,\mathcal T_Y\otimes\wedge^2F\bigr),
\)
and the class of an odd rank two supermanifold \(X\) is precisely its primary Green
obstruction
\[
\omega_2(X)
\in
H^1\bigl(Y,\mathcal T_Y\otimes\wedge^2F\bigr).
\]
Thus, in odd rank two, the primary obstruction is complete: it vanishes if and only if
\(X\) is split within the fixed retract.
\end{corollary}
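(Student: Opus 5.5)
The argument will specialise Theorem~\ref{thm:low-odd-rank-linearity} and then read off the absolute statement from the identification of weight-two derivations with the primary Green sheaf.

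\textbf{Step 1: the sheaf identification.} Take $\operatorname{rk}F=2$ and $j=1$ in Proposition~\ref{prop:homogeneous-derivations-Green-exact}. The kernel term is $\mathcal Q^{(3)}_{F/S}=\mathcal H\!om_{\mathcal O_Y}(F,\wedge^3F)$. Since $\wedge^3F=0$, this kernel vanishes. The short exact sequence therefore collapses, and $\rho_2$ is an isomorphism $\mathcal T^{\langle 2\rangle}_{A_{\spl}/S}\cong\mathcal T_{Y/S}\otimes\wedge^2F$. Concretely, a weight-two even derivation is determined by its restriction to functions, because its value on $F$ would land in $\wedge^3F=0$.

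\textbf{Step 2: the fibrewise and relative formulas.} Theorem~\ref{thm:low-odd-rank-linearity} applies because $2\le 3$. Substituting the isomorphism of Step 1, pulled back to $Y_T$, gives the fibrewise formula. The isomorphism commutes with base change because $\rho_2$ is natural and both sheaves are locally free, formed from $F$ and $\mathcal T_{Y/S}$. The relative form then follows verbatim from the theorem under the stated derived base-change hypothesis.

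\textbf{Step 3: the absolute statement.} Put $S=\operatorname{Spec}\mathbb C$. The controller is abelian, with cohomology $H^\bullet(Y,\mathcal T_Y\otimes\wedge^2F)$. Taking $\pi_0$ of $\MC_\infty$ of an abelian dg Lie algebra $K$ tensored with $\mathfrak m_A$, over an ordinary Artin algebra, gives $H^1(K)\otimes\mathfrak m_A$, since gauge acts by adding coboundaries. This holds formally, i.e.\ after $\otimes\mathfrak m_A$; for the statement about actual isomorphism classes I would instead argue directly, as follows.

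By Proposition~\ref{prop:bijection_mu} and Proposition~\ref{prop:MCgauge}, fixed-retract structures up to the unipotent gauge group $G_X^{\geq 2}$ are $MC(L_X)/\mathrm{gauge}$. Because $L_X$ is abelian and $F^2L_X=0$, the Maurer--Cartan equation reduces to $d\mu=0$, and gauge acts by $\mu\mapsto\mu+\bar\partial u$. Hence $MC(L_X)/\mathrm{gauge}=H^{0,1}(Y,\mathcal T_Y\otimes\wedge^2F)$. The remaining point is that the exponential gauge action collapses to a linear one. To see this, I would check that for $u\in L_X^0$ the conjugation $e^{-u}\bar\partial_\mu e^u$ truncates after the linear term, since every double commutator has weight at least four and hence vanishes.

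\textbf{Step 4: identifying the class with $\omega_2(X)$.} The only adapted representative needed is $\mu^{(1)}=\mu_X$, with $j=1$ and the canonical $1$-splitting, whose leading term is $\mu_X$ itself. Theorem~\ref{thm:sheafMC}(1), together with $\rho_2$ being an isomorphism, identifies $[\mu_X]$ with $\omega^{(2)}_X$. By Proposition~\ref{muXgauge} with $j=2$, this class vanishes exactly when $X$ admits a $3$-splitting; since $\mathcal J_X^3=0$ in odd rank two, a $3$-splitting is a full splitting. Equivalently, by Corollary~\ref{cor:split2}, $\omega^{(2)}_X=0$ iff a $2$-splitting exists, and a $2$-splitting is a splitting when $n=2$.

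\textbf{Main obstacle.} The delicate point is matching the abstract ``set of isomorphism classes in the split formal neighbourhood'' with the global, non-infinitesimal quotient $MC(L_X)/\mathrm{gauge}$. This is a global quotient, not a formal neighbourhood. I would justify it using the linearity from Step 3 and the nilpotence of $L_X$, which together show that there is no difference between formal and actual points in this case.
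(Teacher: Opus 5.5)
Your proof is correct. Steps 1 and 2 match the paper: it also collapses \eqref{eq:E-j-Q-exact} for $j=1$ using $\wedge^3F=0$ and then invokes Theorem~\ref{thm:low-odd-rank-linearity}.

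You diverge in the absolute part. The paper argues in three moves:
\begin{itemize}
\item it takes the connected components of the formal vector stack attached to $R\Gamma(Y,\mathcal T_Y\otimes\wedge^2F)[1]$ to be $H^1(Y,\mathcal T_Y\otimes\wedge^2F)$;
\item it cites Proposition~\ref{prop:formal-classical-comparison-Green-tower} for $j=1$ to identify the class with $\omega_2(X)$;
\item it reads vanishing as ``the point is the split point''.
\end{itemize}
You instead work with the global quotient $MC(L_X)/\mathrm{gauge}$ of Section~4 and linearize the gauge action by a weight count. You then identify the class with $\omega^{(2)}_X$ via Theorem~\ref{thm:sheafMC}(1) applied to $\mu^{(1)}=\mu_X$. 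Completeness comes directly from Corollary~\ref{cor:split2}, since a $2$-splitting is a full splitting when $\mathcal J_X^3=0$.

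The paper's route is shorter and stays inside the formal framework of Section~8. But, as you note, $\pi_0$ of the formal problem on an Artin algebra $A$ is $H^1\otimes\mathfrak m_A$, so ``set of isomorphism classes'' is only heuristic there. Your route makes it a precise statement about actual fixed-retract structures on the fixed $C^\infty$ model modulo $G^{\geq 2}_X$. The price is reliance on Propositions~\ref{prop:bijection_mu} and~\ref{prop:MCgauge}, which carry the non-formal content.

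Your ``main obstacle'' is in fact already settled by Step~3. The nilpotence of $L_X$ is exactly what makes $MC(L_X)$ meaningful without an Artin coefficient, so no separate formal-versus-actual comparison is needed.

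One small wording fix in Step~3: $[\mu,u]$ is a single commutator, not a double one, but it too has weight four and vanishes. Likewise $u\circ u=0$, so $e^u=1+u$, and the only surviving term is $[\bar\partial_0,u]=\bar\partial u$.
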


\begin{proof}
If \(\operatorname{rk}F=2\), then \(\wedge^3F=0\).  The exact sequence
\eqref{eq:E-j-Q-exact} for \(j=1\) becomes
\[
0
\longrightarrow
0
\longrightarrow
\mathcal T^{\langle 2\rangle}_{A_{\spl}/S}
\longrightarrow
\mathcal T_{Y/S}\otimes\wedge^2F
\longrightarrow
0.
\]
The fibrewise formal equivalence follows from
Theorem~\ref{thm:low-odd-rank-linearity}.  The relative vector-stack expression follows
under the stated derived base-change hypothesis.

In the absolute case, the group of connected components of the formal vector stack
associated to
\(
R\Gamma(Y,\mathcal T_Y\otimes\wedge^2F)[1]
\)
is
\(
H^1(Y,\mathcal T_Y\otimes\wedge^2F).
\)
The comparison with the classical Green obstruction is
Proposition~\ref{prop:formal-classical-comparison-Green-tower} for \(j=1\).  Vanishing
of the class means that the corresponding point of the fixed-retract formal moduli problem
is the split point.
\end{proof}

\begin{corollary}[Odd rank three]
\label{cor:odd-rank-three-projected-residual}
Assume \(\operatorname{rk}F=3\).  Then the fixed-retract formal moduli problem is still
linear.  Fibrewise over \(T\to S\),
\[
\widehat\Tw_{F/S}(T,B)
\simeq
\MC_\infty\!\left(
R\Gamma\bigl(Y_T,\mathcal T^{\langle 2\rangle}_{A_{\spl,T}/T}\bigr)
\otimes\mathfrak m_B
\right).
\]
If \(\mathcal T^{\langle 2\rangle}_{A_{\spl}/S}\) satisfies derived base change, this may
be written relatively as
\[
\widehat\Tw_{F/S}
\simeq
\mathbb V^{\mathrm{for}}_S\!\left(
R\pi_*\mathcal T^{\langle 2\rangle}_{A_{\spl}/S}[1]
\right).
\]

The first Green coordinate sits in the exact sequence
\begin{equation}
\label{eq:rank-three-first-Green-coordinate}
0
\longrightarrow
\mathcal H\!om(F,\wedge^3F)
\longrightarrow
\mathcal T^{\langle 2\rangle}_{A_{\spl}/S}
\longrightarrow
\mathcal T_{Y/S}\otimes\wedge^2F
\longrightarrow
0.
\end{equation}
For an absolute fixed-retract supermanifold \(X\) over \((Y,F)\), let
\(
\widetilde\omega_2(X)
\in
H^1 (Y,\mathcal T^{\langle 2\rangle}_{A_{\spl}} )
\)
denote its linear Green class.  Its image
\[
\omega_2^{\proj}(X)
\in
H^1\bigl(Y,\mathcal T_Y\otimes\wedge^2F\bigr)
\]
is the primary projectedness obstruction.  If \(\omega_2^{\proj}(X)=0\), then the
remaining class is canonically represented by an element
\[
\omega_3^{\res}(X)
\in
\operatorname{coker}\!\left(
H^0(Y,\mathcal T_Y\otimes\wedge^2F)
\xrightarrow{\ \delta\ }
H^1(Y,\mathcal H\!om(F,\wedge^3F))
\right),
\]
where \(\delta\) is the connecting morphism associated with
\eqref{eq:rank-three-first-Green-coordinate}.  This residual class measures the failure
of the projected object to be split.
\end{corollary}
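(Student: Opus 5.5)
The linearity assertions reduce to Theorem~\ref{thm:low-odd-rank-linearity} with \(n=3\). In that case \(\mathcal T^{[\ge 4]}_{A_{\spl}/S,\bar0}=0\), so the Green controller is the abelian dg Lie algebra \(\mathcal A^{0,\bullet}(\mathcal T^{\langle2\rangle}_{A_{\spl}/S})\). The fibrewise equivalence and, under derived base change, the relative vector-stack description are then exactly the displayed conclusions of that theorem. The exact sequence \eqref{eq:rank-three-first-Green-coordinate} is the case \(j=1\), \(n=3\) of Proposition~\ref{prop:homogeneous-derivations-Green-exact}, since \(\wedge^3F\neq 0\) while \(\wedge^4F=0\).

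For the absolute statements, set \(S=\Spec\mathbb C\). By Corollary~\ref{cor:absolute-formal-GO}, \(X\) determines a point of \(\pi_0\) of the formal vector stack attached to \(R\Gamma(Y,\mathcal T^{\langle2\rangle}_{A_{\spl}})[1]\). Since \(L_X\) is abelian, gauge acts by translation: \(e^u*\mu=\mu+\bar\partial u\). The gauge class \([\mu_X]\) is therefore simply the Dolbeault class \(\widetilde\omega_2(X)\in H^1(Y,\mathcal T^{\langle2\rangle}_{A_{\spl}})\). It is intrinsic, because the first-order splitting is canonical, and it coincides with \(\beta_{X,1}\) of Definition~\ref{def:adapted-leading-class}. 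Theorem~\ref{thm:sheafMC}(1), or Proposition~\ref{prop:formal-classical-comparison-Green-tower}(1) with \(j=1\), identifies its image under \(H^1(\rho_2)\) with \(\omega_X^{(2)}\), which is \(\omega_2^{\proj}(X)\).

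For the residual class, take the long exact cohomology sequence of \eqref{eq:rank-three-first-Green-coordinate}:
\[
H^0(Y,\mathcal T_Y\otimes\wedge^2F)\xrightarrow{\delta}H^1(Y,\mathcal H om(F,\wedge^3F))\xrightarrow{\ \iota\ }H^1(Y,\mathcal T^{\langle2\rangle}_{A_{\spl}})\xrightarrow{H^1(\rho_2)}H^1(Y,\mathcal T_Y\otimes\wedge^2F).
\]
If \(\omega_2^{\proj}(X)=0\), exactness gives \(\widetilde\omega_2(X)=\iota(c)\) for some \(c\), and \(c\) is unique modulo \(\operatorname{im}\delta\). Define \(\omega_3^{\res}(X)\) to be the class of \(c\) in \(\operatorname{coker}\delta\). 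It is canonical because it depends only on \(\widetilde\omega_2(X)\).

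It remains to interpret this class. By Theorem~\ref{thm:sheafMC}(2)--(3), choosing a \(2\)-splitting \(\sigma^{(2)}\) yields a normalized kernel-valued representative whose class is \(\omega_X^{(3)}(\sigma^{(2)})\), which is one such lift \(c\). By Corollary~\ref{cor:split2}, changing \(\sigma^{(2)}\) by \(s\in H^0(Y,\mathcal Q^{(2)})\) shifts \(\omega_X^{(3)}(\sigma^{(2)})\) by \(\pm\delta(s)\). So the residual class vanishes if and only if some \(2\)-splitting extends to a \(3\)-splitting, which, since \(n=3\), means \(X\) is split.

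The main obstacle is this last compatibility: showing that the torsor action of \(H^0(\mathcal Q^{(2)})\) on \(2\)-splittings changes \(\omega^{(3)}\) by exactly the connecting map \(\delta\). I would verify it on \v{C}ech cocycles. Lift a global section \(s\) locally to derivations \(\tilde s_i\in\mathcal T^{\langle2\rangle}(U_i)\). Modifying the local lifts by \(e^{\tilde s_i}\) changes the transition cocycle by \(\tilde s_i-\tilde s_j\), and this difference is \(\ker\rho_2\)-valued and represents \(\delta(s)\). In rank three all brackets vanish, so this computation involves no nonlinear corrections.
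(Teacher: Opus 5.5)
Your proposal is correct. Up to the definition of the residual class it follows the paper's proof exactly: linearity from Theorem~\ref{thm:low-odd-rank-linearity}, the exact sequence as the case $j=1$ of Proposition~\ref{prop:homogeneous-derivations-Green-exact}, the projected component via the comparison results, and the cokernel class from the long exact sequence.

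The two arguments differ only in the final interpretive step. The paper argues purely by exactness: a lift $c$ lies in $\operatorname{im}\delta$ if and only if $\iota(c)=\widetilde\omega_2(X)=0$, and in the linear range this means the point is the split point.

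You instead show that replacing $\sigma^{(2)}$ by its translate under $s\in H^0(Y,\mathcal Q_X^{(2)})$ shifts $\omega_X^{(3)}(\sigma^{(2)})$ by $\pm\delta(s)$. Your \v{C}ech check is sound. With $\sigma'_i=\sigma_i\circ(1+\tilde s_i)$, the new transition is $(1-\tilde s_j)(1+\omega_{ij})(1+\tilde s_i)=1+\omega_{ij}+\tilde s_i-\tilde s_j$, because products of two weight-two operators vanish on $\wedge^{\le 3}F$. Combined with the fact that every $\omega_X^{(3)}(\sigma^{(2)})$ lifts $\widetilde\omega_2(X)$, this shows the cokernel class is exactly the orbit of the classical residual obstructions over all $2$-splittings.

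What your route buys is an explicit identification of the canonical class with the choice-dependent Green classes $\omega_X^{(3)}(\sigma^{(2)})$. It also shows that the $\operatorname{im}\delta$ ambiguity is precisely the effect of re-choosing the $2$-splitting, which the paper leaves implicit.

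For the vanishing criterion alone, your ``main obstacle'' is unnecessary. You have already shown $[\mu_X]=\widetilde\omega_2(X)$, so exactness gives: the coset vanishes $\iff$ $\widetilde\omega_2(X)=0$ $\iff$ $\mu_X$ is gauge equivalent to $0$ $\iff$ $X$ is split, by Proposition~\ref{muXgauge} with $j=2$ and $F^2L_X=0$. This is the shorter route the paper takes.
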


\begin{proof}
The fibrewise linearity statement is Theorem~\ref{thm:low-odd-rank-linearity}.  The
relative vector-stack expression follows under the stated derived base-change hypothesis.
Since \(\operatorname{rk}F=3\), the exact sequence \eqref{eq:E-j-Q-exact} for \(j=1\)
becomes \eqref{eq:rank-three-first-Green-coordinate}.  The image of the linear Green
class in
\(
H^1(Y,\mathcal T_Y\otimes\wedge^2F)
\)
is the projected component of the first Green coordinate, hence the primary projectedness
obstruction by Proposition~\ref{prop:formal-classical-comparison-Green-tower}.

If this image vanishes, exactness of the long cohomology sequence associated with
\eqref{eq:rank-three-first-Green-coordinate} shows that
\(\widetilde\omega_2(X)\) lies in the image of
\[
H^1(Y,\mathcal H\!om(F,\wedge^3F))
\longrightarrow
H^1(Y,\mathcal T^{\langle 2\rangle}_{A_{\spl}}).
\]
The ambiguity in choosing such a preimage is exactly the image of the connecting map
\[
\delta:
H^0(Y,\mathcal T_Y\otimes\wedge^2F)
\longrightarrow
H^1(Y,\mathcal H\!om(F,\wedge^3F)).
\]
Thus the residual class is a canonical element of the displayed cokernel.  It vanishes
precisely when the first Green class itself is zero, after the projected component has
been normalized away; equivalently, the projected fixed-retract object is split.
\end{proof}

\begin{remark}[The first nonlinear threshold]
\label{rem:first-nonlinear-threshold}
The preceding results separate the low-rank cases from the genuinely nonlinear range.
For \(\operatorname{rk}F=2\), the primary obstruction is the whole formal invariant.  For
\(\operatorname{rk}F=3\), the formal theory is still linear, but projectedness and
splitness separate through the exact sequence
\eqref{eq:rank-three-first-Green-coordinate}.  Nonlinear Green--Kuranishi relations can
appear only from odd rank four onward, where the bracket of two weight-two derivations may
have a nonzero weight-four component.
\end{remark}

\begin{problem}[The first nonlinear Green relation]
\label{prob:first-nonlinear-Green-relation}
Assume \(\operatorname{rk}F\geq 4\).  Describe explicitly the first nonlinear Kuranishi
relation in the Green tower.

\smallskip

More precisely, fibrewise over a test space \(T\to S\), let
\(
\alpha^{\langle2\rangle}
\in
\left(
R\Gamma\bigl(Y_T,\mathcal T^{\langle 2\rangle}_{A_{\spl,T}/T}\bigr)
\right)^1
\)
be the degree-one weight-two component of a Maurer--Cartan representative, after
undoing the formal shift. Then the first quadratic curvature term determines a degree-two
cohomology class
\[
\left[
\frac{1}{2}
[\alpha^{\langle2\rangle},\alpha^{\langle2\rangle}]
\right]
\in
H^2\!\left(
Y_T,
\mathcal T^{\langle 4\rangle}_{A_{\spl,T}/T}
\right).
\]
Under derived base change for
\(\mathcal T^{\langle4\rangle}_{A_{\spl}/S}\), these fibrewise classes may be written in
relative pushforward notation as classes in
\(
H^2\!\left(
T,
Lf^*R\pi_*\mathcal T^{\langle 4\rangle}_{A_{\spl}/S}
\right).
\)
In the absolute case \(S=\operatorname{Spec}\mathbb C\), this reduces to
\(
\left[
\frac{1}{2}
[\alpha^{\langle2\rangle},\alpha^{\langle2\rangle}]
\right]
\in
H^2\!\left(
Y,
\mathcal T^{\langle 4\rangle}_{A_{\spl}}
\right).
\)

Determine this class intrinsically and compute its projected component under
\[
\mathcal T^{\langle 4\rangle}_{A_{\spl,T}/T}
\longrightarrow
\mathcal Q^{(4)}_{F_T/T}
=
\mathcal T_{Y_T/T}\otimes\wedge^4F_T.
\]
Equivalently, compute its image in
\(
H^2\!(
Y_T,
\mathcal Q^{(4)}_{F_T/T} ),
\)
or, under derived base change for \(\mathcal Q^{(4)}_{F/S}\), in
\(
H^2\! (
T,
Lf^*R\pi_*\mathcal Q^{(4)}_{F/S}
).
\)
The projection is governed by the exact sequence
\[
0
\longrightarrow
\mathcal Q^{(5)}_{F/S}
\longrightarrow
\mathcal T^{\langle 4\rangle}_{A_{\spl}/S}
\longrightarrow
\mathcal Q^{(4)}_{F/S}
\longrightarrow
0,
\]
where
\(
\mathcal Q^{(5)}_{F/S}
=
\mathcal H\!om(F,\wedge^5F).
\)
If the projected degree-two class vanishes, describe the residual ambiguity governed
fibrewise by
\(
H^2 (
Y_T,
\mathcal Q^{(5)}_{F_T/T}
),
\)
or relatively, under derived base change, by
\(
H^2 (
T,
Lf^*R\pi_*\mathcal Q^{(5)}_{F/S}).
\)

\smallskip

Equivalently, one seeks an intrinsic description of the first Kuranishi compatibility
relation among degree-one classical Green coordinates.  This relation lives in degree two;
it is not itself a one-step Green obstruction class, which lives in degree one.
\end{problem}

\subsection{Local Kuranishi models}
\label{subsec:local-derived-kuranishi-algebraization}

The formal Green--Onishchik theorem \ref{thm:formal-GO} is a statement about formal moduli problems.  It
does not by itself produce an algebraic or analytic derived moduli space.  To obtain a
Kuranishi chart over an open set \(U\subset S\), we impose three additional requirements:
\begin{enumerate}
\item the relative controller is represented over \(U\) by a finite filtered dg Lie model;
\item this finite model admits filtered contraction data, so that homotopy transfer gives a
minimal filtered \(L_\infty\)-model on cohomology;
\item the resulting finite Maurer--Cartan formal moduli problem is algebraizable, or else
we are in the two-term situation where it is explicitly a derived zero locus.
\end{enumerate}
The derived zero locus \(\kappa_U^{-1}(0)\) is therefore used only in the two-term case.
In general the local object is the finite Maurer--Cartan derived stack associated with the
minimal filtered \(L_\infty\)-model, provided such a stack has been algebraized.

\begin{definition}[Finite filtered Kuranishi datum]
\label{def:finite-filtered-kuranishi-datum}
Let \(U\to S\) be an open, or analytic, chart on which \(\mathfrak g_{F/S}\) satisfies the
base-change convention above.  A \emph{finite filtered Kuranishi datum} for
\(\mathfrak g_{F/S}|_U\) consists of:
\begin{enumerate}
\item a bounded filtered dg Lie algebra \((E_U^\bullet,d,[\ ,\ ])\) of vector bundles on
\(U\), representing \(\mathfrak g_{F/S}|_U\) in the derived category of filtered Lie
algebra objects;

\item a graded vector bundle
\[
\mathcal H_U
\defeq
\bigoplus_i \mathcal H_U^i,
\qquad
\mathcal H_U^i\cong \mathcal H^i(\mathfrak g_{F/S}|_U),
\]
endowed with the induced finite Green filtration;

\item filtered strong deformation retract data
\[
(\mathcal H_U,0)
\;\substack{\xrightarrow{\ i\ }\\[-0.6ex]\xleftarrow[\ p\ ]{}}\;
(E_U^\bullet,d),
\qquad
\id_{E_U}-i\circ p=d h+h d,
\]
where \(i,p,h\) preserve the Green filtration.
\end{enumerate}
Applying homotopy transfer to these data gives a minimal filtered \(L_\infty\)-algebra
\(
(\mathcal H_U,\{\ell_m\}_{m\ge2})
\)
and a filtered \(L_\infty\)-quasi-isomorphism
\(
(\mathcal H_U,\{\ell_m\}_{m\ge2})\xrightarrow{\ \sim\ }\mathfrak g_{F/S}|_U.
\)
\end{definition}

\begin{remark}[Why the datum is an assumption]
\label{rem:kuranishi-datum-assumption}
Perfectness of \(\mathfrak g_{F/S}|_U\) and local freeness of its cohomology sheaves often
allow one, after shrinking \(U\), to represent the underlying complex by a bounded complex
of vector bundles and to split it locally.  However, the arguments below require a
filtered dg Lie model and filtered contraction data.  It is therefore cleaner to make the
finite filtered Kuranishi datum explicit.  Different choices give equivalent formal moduli
problems, but they need not give canonically isomorphic algebraic Kuranishi charts.  This
is the source of the descent problem treated below.
\end{remark}

\begin{proposition}[Local minimal filtered model]
\label{prop:local-minimal-filtered-model}
Assume that a finite filtered Kuranishi datum for \(\mathfrak g_{F/S}|_U\) is given.  Then:
\begin{enumerate}
\item homotopy transfer produces a minimal filtered \(L_\infty\)-algebra
\(
(\mathcal H_U,\{\ell_m\}_{m\ge2}),\)
with \( \ell_1=0;
\)

\item each bracket \(\ell_m\) is an algebraic multilinear morphism of vector bundles on
\(U\) and preserves the Green filtration;

\item because the Green filtration is finite, the Maurer--Cartan series on degree-one
inputs is finite;

\item the pointed formal moduli problem attached to this minimal filtered
\(L_\infty\)-algebra is canonically equivalent to the restriction of the formal
Green--Onishchik enhancement:
\[
\Def_U(\mathcal H_U,\{\ell_m\})
\simeq
\widehat\Tw_{F/S}|_U.
\]
\end{enumerate}
\end{proposition}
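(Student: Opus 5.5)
The plan is to run the homotopy transfer argument of Section~5 in families over \(U\), working with the finite filtered Kuranishi datum rather than a global Dolbeault complex. For (1), I would apply the homotopy transfer theorem (as in \cite[Thm.~10.3.3]{LodayVallette}) to the filtered strong deformation retract \((i,p,h)\) between \((\mathcal H_U,0)\) and \((E_U^\bullet,d)\). The transfer formulas are sums over rooted trees, with leaves decorated by \(i\), internal vertices by the bracket \([\ ,\ ]\), internal edges by \(h\), and the root by \(p\). They make sense verbatim for sheaves of \(\mathcal O_U\)-modules, and they produce brackets \(\ell_m\) together with an \(L_\infty\)-quasi-isomorphism \(I\) whose linear term is \(i\). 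Minimality, \(\ell_1=0\), holds because the differential on \(\mathcal H_U\) is zero.

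For (2), the key point is that \(d\), \([\ ,\ ]\), \(i\), \(p\), \(h\) are all morphisms of vector bundles, or bilinear bundle maps in the case of the bracket, that preserve the Green filtration, with the bracket additive in filtration degree. Each tree expression is therefore an algebraic multilinear bundle map. Exactly as in the weight argument of the homotopy transfer theorem of Section~5, it satisfies \(\ell_m(F^{j_1},\dots,F^{j_m})\subset F^{j_1+\cdots+j_m}\). Since \(F^{N+1}\mathcal H_U=0\) and every input lies in \(F^1\), only trees with \(m\le N\) leaves contribute. This finiteness also settles (3): on degree-one inputs \(\alpha\), the series \(\sum_m \tfrac1{m!}\ell_m(\alpha,\dots,\alpha)\) is a finite polynomial map. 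Its weight components are the triangular \(\kappa_j\) of Proposition~\ref{prop:weightMC}, now with coefficients in \(\mathcal O_U\).

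For (4), I would compose three equivalences. The filtered \(L_\infty\)-quasi-isomorphism \(I\), together with the fact that \(E_U\) represents \(\mathfrak g_{F/S}|_U\) by hypothesis, gives a filtered \(L_\infty\)-quasi-isomorphism \((\mathcal H_U,\{\ell_m\})\rightsquigarrow\mathfrak g_{F/S}|_U\). Next, quasi-isomorphism invariance of \(\MC_\infty\) for nilpotent filtered \(L_\infty\)-algebras (Remark~\ref{rem:linf}) identifies the associated Maurer--Cartan functors after tensoring with \(\mathfrak m_B\) and base changing along each \(T\to U\). Finally, the base-change convention built into the datum and Definition~\ref{def:relative-Def-functor} identify \(\Def_U(\mathfrak g_{F/S}|_U)\) with \(\widehat\Tw_{F/S}|_U\), as in Corollary~\ref{cor:formal-GO-basechange-form}.

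I expect the main obstacle to be the base-change compatibility in (4). The transfer has to commute with pullback \(Lf^*\) along test spaces \(T\to U\), so that the minimal model over \(T\) is the pullback of the one over \(U\). I would handle this by noting that all the data are bundle maps, so the strong deformation retract pulls back to a strong deformation retract and the tree formulas commute with pullback. The remaining point is that pulling back \(\mathcal H_U\) still computes the cohomology of the pulled-back controller. For this I would use local freeness of \(\mathcal H^i(\mathfrak g_{F/S}|_U)\) together with the existence of the retract: the retract exhibits \(E_U\) as \(\mathcal H_U\) plus a contractible bundle complex, and this decomposition is preserved under any pullback. That is what makes the equivalence canonical up to the choice of datum.
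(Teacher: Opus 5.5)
Your proposal is correct and follows the paper's proof essentially step by step. The paper's argument likewise runs through homotopy transfer along the filtered strong deformation retract, the filtration compatibility and algebraicity of the tree formulas, finiteness of the Maurer--Cartan series from the finite Green filtration, and invariance of \(\MC_\infty\) under the filtered \(L_\infty\)-quasi-isomorphism, combined with the base-change convention, to identify the result with \(\widehat\Tw_{F/S}|_U\). Your check that the retract and the transferred brackets commute with pullback along \(T\to U\) is extra care that the paper leaves implicit. One small caveat on your aside in (3): the datum is only filtration-preserving, not weight-homogeneous, so the Maurer--Cartan equation is triangular with respect to the filtration but need not split literally into the weight-homogeneous \(\kappa_j\) of Proposition~\ref{prop:weightMC}.
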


\begin{proof}
The homotopy transfer theorem applied to the filtered strong deformation retract in
Definition~\ref{def:finite-filtered-kuranishi-datum} gives a minimal
\(L_\infty\)-structure on \(\mathcal H_U\) and an \(L_\infty\)-quasi-isomorphism to the
dg Lie algebra \(E_U^\bullet\).  Since the contraction data and the original Lie bracket
preserve the Green filtration, the transferred brackets preserve the induced filtration.
The brackets are algebraic morphisms of vector bundles because the transfer formulas are
finite sums of compositions of the algebraic maps \(i,p,h,d\) and the bracket on
\(E_U^\bullet\).

The Green filtration is finite because \(A_{\spl}=\wedge^\bullet F\) has nilpotent odd
ideal.  Therefore only finitely many brackets can contribute to the Maurer--Cartan
series on degree-one inputs.  Finally, the Maurer--Cartan formal moduli problem is
invariant under filtered \(L_\infty\)-quasi-isomorphism.  Since \(E_U^\bullet\) represents
\(\mathfrak g_{F/S}|_U\), the formal moduli problem attached to the transferred minimal
model is \(\Def_U(\mathfrak g_{F/S}|_U)=\widehat\Tw_{F/S}|_U\).
\end{proof}

The preceding proposition is still formal.  To obtain an honest derived chart, one needs an
algebraic representative of the Maurer--Cartan functor of the finite minimal
\(L_\infty\)-algebra.  The following formulation separates the general finite
\(L_\infty\)-case from the simpler two-term case. As we shall see, the general finite \(L_\infty\)-case relies on the assumption of local algebraizability
in the following sense.\footnote{We formulate this as an assumption rather than as a definition because, in the
present paper, we do not prove the existence of such algebraizations in general.  The
subsequent arguments use only the consequences listed below; whenever these conditions
are satisfied in a given geometric situation, the construction produces the corresponding
local derived Maurer--Cartan stack.}

\begin{assumption}[Algebraizable local Maurer--Cartan stack]
\label{ass:algebraizable-local-MC-stack}
Let \((\mathcal H_U,\{\ell_m\})\) be the minimal filtered \(L_\infty\)-algebra obtained from
a finite filtered Kuranishi datum.  We say that it is \emph{locally algebraizable} if
there exists a pointed derived Artin stack
\(
\mathfrak K(\mathcal H_U)\rightarrow U
\)
locally of finite presentation, with zero section \(0:U\to\mathfrak K(\mathcal H_U)\),
such that the formal completion of \(\mathfrak K(\mathcal H_U)\) along the zero section is
canonically equivalent to the Maurer--Cartan formal moduli problem of
\((\mathcal H_U,\{\ell_m\})\).
\end{assumption}

\begin{theorem}[Local derived Kuranishi chart]
\label{thm:local-derived-kuranishi-chart}
Assume that \(\mathfrak g_{F/S}|_U\) admits a finite filtered Kuranishi datum and that the
resulting minimal filtered \(L_\infty\)-algebra is locally algebraizable in the sense of
Assumption~\ref{ass:algebraizable-local-MC-stack}.  Set
\(
\mathfrak K_{F/U}
\defeq
\mathfrak K(\mathcal H_U).
\)
Then there is a canonical equivalence of pointed formal derived stacks
\[
\widehat{\mathfrak K_{F/U}}{}_{\,0}
\simeq
\widehat\Tw_{F/S}|_U.
\]
Thus \(\mathfrak K_{F/U}\) is a local derived algebraization of the formal
Green--Onishchik enhancement near the split section.
\end{theorem}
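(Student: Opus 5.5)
The plan is to compose two equivalences that are already available and then check that they respect base points. The first is Assumption~\ref{ass:algebraizable-local-MC-stack}. The second is Proposition~\ref{prop:local-minimal-filtered-model}(4). Taken together, the theorem is essentially a transitivity statement.

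First, apply Proposition~\ref{prop:local-minimal-filtered-model} to the given finite filtered Kuranishi datum for \(\mathfrak g_{F/S}|_U\). This produces the minimal filtered \(L_\infty\)-algebra \((\mathcal H_U,\{\ell_m\})\) together with a filtered \(L_\infty\)-quasi-isomorphism to \(E_U^\bullet\), and hence to \(\mathfrak g_{F/S}|_U\). Part (4) then yields a canonical equivalence
\[
\Def_U(\mathcal H_U,\{\ell_m\})\simeq\widehat\Tw_{F/S}|_U .
\]
Here I would spell out the underlying reasoning once. The simplicial Maurer--Cartan functor is invariant under filtered \(L_\infty\)-quasi-isomorphisms of nilpotent algebras; nilpotence holds because the Green filtration is finite. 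The identification of \(\Def_U(\mathfrak g_{F/S}|_U)\) with \(\widehat\Tw_{F/S}|_U\) uses the base-change convention imposed on the chart \(U\) in Definition~\ref{def:finite-filtered-kuranishi-datum}, together with Definition~\ref{def:relative-Def-functor} and Corollary~\ref{cor:formal-GO-basechange-form}.

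Second, the algebraizability assumption gives a pointed derived Artin stack \(\mathfrak K(\mathcal H_U)\to U\) with zero section. Its formal completion along that section is canonically equivalent to the Maurer--Cartan formal moduli problem of \((\mathcal H_U,\{\ell_m\})\), which is exactly \(\Def_U(\mathcal H_U,\{\ell_m\})\). Composing the two equivalences gives \(\widehat{\mathfrak K_{F/U}}{}_{\,0}\simeq\widehat\Tw_{F/S}|_U\). For pointedness, the zero section corresponds to the zero Maurer--Cartan element of the minimal model. Since \(I_1=i\) and \(I(0)=0\), this element maps to the zero Maurer--Cartan element of \(\mathfrak g_{F/S}|_U\), which is the split section by Definition~\ref{def:formal-GO-enhancement}.

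The main obstacle is the word ``canonical''. Both \(L_\infty\)-transfer and algebraization depend on choices: the contraction data \(i,p,h\) and the algebraic model. I would argue canonicity in the same sense the paper uses elsewhere. Once the Kuranishi datum and the algebraization are fixed, the composite equivalence is determined by those data. Any two choices of contraction yield filtered \(L_\infty\)-isomorphic minimal models, whose induced equivalences of formal moduli problems agree up to a contractible space of homotopies, since the space of \(L_\infty\)-quasi-isomorphisms inverting a fixed one is contractible in this nilpotent setting. I would state explicitly that canonicity is claimed only at the formal level. This is consistent with Remark~\ref{rem:kuranishi-datum-assumption}, which says that different data give equivalent formal problems but not canonically isomorphic algebraic charts. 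Finally, I would note that in the two-term case \(\mathfrak K_{F/U}\) can be taken to be the derived zero locus of \(\kappa_U\), so the assumption is automatically satisfied there.
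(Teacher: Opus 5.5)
Your proposal is correct and follows the paper's proof: the paper composes two equivalences, namely the identification of the formal completion of \(\mathfrak K(\mathcal H_U)\) with the Maurer--Cartan formal moduli problem of \((\mathcal H_U,\{\ell_m\})\) from Assumption~\ref{ass:algebraizable-local-MC-stack}, and the equivalence of Proposition~\ref{prop:local-minimal-filtered-model}(4). Your extra checks on base points and on the scope of ``canonical'' agree with the paper and make explicit what its two-line argument leaves implicit.
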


\begin{proof}
By Assumption~\ref{ass:algebraizable-local-MC-stack}, the formal completion of
\(\mathfrak K(\mathcal H_U)\) is the Maurer--Cartan formal moduli problem of the minimal
filtered \(L_\infty\)-algebra \((\mathcal H_U,\{\ell_m\})\).  By
Proposition~\ref{prop:local-minimal-filtered-model}, this formal moduli problem is
canonically equivalent to \(\widehat\Tw_{F/S}|_U\).  This proves the assertion.
\end{proof}

\begin{corollary}[Two-term local derived Kuranishi neighbourhood]
\label{cor:relative-local-derived-kuranishi}
Assume that \(\mathfrak g_{F/S}|_U\) admits a finite filtered Kuranishi datum and that the
minimal filtered model satisfies
\(
\mathcal H_U^0=0,\) and \(
\mathcal H_U^i=0\text{ for }i\ge3.
\)
Equivalently, the only nonzero deformation and obstruction bundles are
\(\mathcal H_U^1\) and \(\mathcal H_U^2\).  Define
\[
\kappa_U:
\mathbb V_U(\mathcal H_U^1)
\longrightarrow
\mathbb V_U(\mathcal H_U^2)
\]
by
\[
\kappa_U(x)
\defeq
\sum_{m\ge2}\frac{1}{m!}\,\ell_m(x,\ldots,x).
\]
Because the Green filtration is finite, this is a polynomial morphism of vector bundles.
Let
\[
\mathfrak K_{F/U}
\defeq
\mathbb V_U(\mathcal H_U^1)
\times^{h}_{\mathbb V_U(\mathcal H_U^2)}
U,
\]
where \(U\to\mathbb V_U(\mathcal H_U^2)\) is the zero section.  Then:
\begin{enumerate}
\item \(\mathfrak K_{F/U}\) is a quasi-smooth derived Artin stack over \(U\);

\item its formal completion along the zero section is canonically equivalent to the formal
Green--Onishchik enhancement over \(U\):
\[
\widehat{\mathfrak K_{F/U}}{}_{\,0}
\simeq
\widehat\Tw_{F/S}|_U;
\]

\item the truncation \(t_0(\mathfrak K_{F/U})\) carries the canonical perfect obstruction
theory, in the sense of Behrend--Fantechi \cite{BehrendFantechiIntrinsicNormalCone}, induced by the quasi-smooth derived extension
\[
t_0(\mathfrak K_{F/U})\hookrightarrow\mathfrak K_{F/U}.
\]
\end{enumerate}
\end{corollary}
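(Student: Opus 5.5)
The plan is to realize the two-term minimal model as a derived zero locus, and then show that its formal completion is the Maurer--Cartan formal moduli problem of \((\mathcal H_U,\{\ell_m\})\). That is exactly the algebraization required in Assumption~\ref{ass:algebraizable-local-MC-stack}, so Theorem~\ref{thm:local-derived-kuranishi-chart} will give (2).

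First, I check that \(\kappa_U\) is a well-defined polynomial morphism. Each \(\ell_m\) is an algebraic multilinear bundle map by Proposition~\ref{prop:local-minimal-filtered-model}(2). By the filtration estimate \(\ell_m(F^{j_1},\dots,F^{j_m})\subset F^{j_1+\cdots+j_m}\) and \(F^{N+1}=0\), only the terms with \(m\le N\) survive on degree-one inputs. Hence \(\kappa_U\) is a finite sum of homogeneous polynomial maps \(\mathbb V_U(\mathcal H_U^1)\to\mathbb V_U(\mathcal H_U^2)\), and it preserves zero sections because every term has degree \(m\ge2\).

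For (1), the homotopy fibre product of \(\kappa_U\) with the zero section of \(\mathbb V_U(\mathcal H^2_U)\) is locally \(\Spec\) of the Koszul cdga \(\Sym_{\mathcal O}((\mathcal H^1_U)^\vee)\otimes\Sym(\mathcal H^{2\vee}_U[1])\), with differential induced by \(\kappa_U\). This is a derived scheme, hence Artin, locally of finite presentation over \(U\). Its cotangent complex is the two-term complex \([\mathcal H^{2\vee}_U\to\mathcal H^{1\vee}_U]\) with differential \(d\kappa_U^\vee\), so it has Tor-amplitude in \([-1,0]\), i.e. \(\mathfrak K_{F/U}\) is quasi-smooth.

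For (2), I compute the functor of points of the formal completion along \(0\) on a test \(T\to U\) and small Artin \(B\). Since \(\mathcal H^0_U=0\) and \(\mathcal H^{\ge3}_U=0\), the Maurer--Cartan \(\infty\)-groupoid of \(\mathcal H_U\otimes\mathfrak m_B\) has no gauge directions and no higher-degree equations. Its simplices are degree-one elements \(x\) of \((\mathcal H_U\otimes\mathfrak m_B)\otimes\Omega^\bullet(\Delta^\bullet)\) satisfying \(\kappa(x)=0\), together with the derived contributions of \(\mathfrak m_B\) in negative degrees. I would identify this with maps \(\Spec B\times T\to\mathfrak K_{F/U}\) lifting the zero section, i.e. with the homotopy fibre of \(\kappa\) on \(\mathfrak m_B\)-points. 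This is the standard comparison of the Maurer--Cartan functor of a minimal \(L_\infty\)-algebra concentrated in degrees \(1,2\) with the derived critical/zero locus. Proposition~\ref{prop:local-minimal-filtered-model}(4) then gives \(\widehat{\mathfrak K_{F/U}}{}_{\,0}\simeq\widehat\Tw_{F/S}|_U\).

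The main obstacle is making this comparison rigorous for dg Artin \(B\), beyond ordinary \(B\). I would argue via tangent complexes: both sides are formal moduli problems, the natural map between them induces the identity \(\mathcal H_U[1]\) on tangent complexes, and it is therefore an equivalence by Lurie--Pridham.

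For (3), I apply the Behrend--Fantechi construction: for quasi-smooth \(\mathfrak K\), the restriction \(L_{\mathfrak K}|_{t_0}\to L_{t_0(\mathfrak K)}\) is a perfect obstruction theory of amplitude \([-1,0]\). Here it is explicitly \([\mathcal H^{2\vee}_U\to\mathcal H^{1\vee}_U]|_{t_0}\to L_{t_0}\), which is an isomorphism on \(h^0\) and surjective on \(h^{-1}\) by the standard truncation argument of Schürg--Toën--Vezzosi.
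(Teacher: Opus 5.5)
Your proposal is correct and follows essentially the same route as the paper: you realize the Maurer--Cartan stack of the two-term minimal model as the derived zero locus of \(\kappa_U\) and read off quasi-smoothness, identify its formal completion along the zero section with the Maurer--Cartan formal moduli problem of \((\mathcal H_U,\{\ell_m\})\) and conclude with Proposition~\ref{prop:local-minimal-filtered-model}, and then take the standard obstruction theory of a quasi-smooth derived enhancement. Your version is more explicit than the paper's (Koszul model, cotangent complex, tangent-complex check), and the comparison map you need in (2) comes for free once you note that the completed Koszul algebra of \(\kappa_U\) along the zero section is exactly the Chevalley--Eilenberg algebra of the minimal model.
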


\begin{proof}
Under the stated vanishing assumptions, the Maurer--Cartan equation of the minimal model
has degree two and is exactly the equation \(\kappa_U(x)=0\) for
\(x\in\mathcal H_U^1\).  Since \(\mathcal H_U^0=0\), there is no infinitesimal gauge group
to quotient by.  Since \(\mathcal H_U^i=0\) for \(i\ge3\), there are no higher obstruction
directions beyond \(\mathcal H_U^2\).  Therefore the Maurer--Cartan derived stack is the
derived zero locus of \(\kappa_U\).

The derived zero locus of a morphism between smooth vector bundle stacks is
quasi-smooth.  Its formal completion at the zero section is the Maurer--Cartan formal
moduli problem of the minimal filtered \(L_\infty\)-model, hence is equivalent to
\(\widehat\Tw_{F/S}|_U\) by Proposition~\ref{prop:local-minimal-filtered-model}.  The
perfect obstruction theory on the truncation is the standard one associated with a
quasi-smooth derived enhancement, see \cite{BehrendFantechiIntrinsicNormalCone,SchurgToenVezzosiDerived}.
\end{proof}

\begin{remark}[Automorphisms and higher obstruction spaces]
\label{rem:automorphisms-higher-obstructions}
The hypotheses in Corollary~\ref{cor:relative-local-derived-kuranishi} are not cosmetic.
If \(\mathcal H_U^0\neq0\), then the local model has infinitesimal automorphisms and the
correct object is a derived quotient, or more intrinsically the Maurer--Cartan derived
stack of the full minimal \(L_\infty\)-algebra.  If \(\mathcal H_U^i\neq0\) for some
\(i\ge3\), then a single map \(\mathcal H_U^1\to\mathcal H_U^2\) cannot encode the full
formal moduli problem.  In that case one must retain the complete finite
\(L_\infty\)-model, or equivalently its Chevalley--Eilenberg algebra, rather than replacing
it by a two-term Kuranishi equation.
\end{remark}

\begin{corollary}[Absolute finite Kuranishi specialization]
\label{cor:absolute-derived-Kuranishi-enhancement}
Let
\[
S=\Spec\mathbb C,
\qquad
Y=\Xred,
\qquad
F=\FF_X,
\]
and assume that \(\Xred\) is compact.  Let \(H_X\) be a finite minimal filtered
\(L_\infty\)-model of the absolute controller \(L_X\).  Then the Maurer--Cartan formal
moduli problem of \(H_X\) is canonically equivalent to \(\Def_X\).

If, moreover,
\(
H_X^0=0,
\) and \(
H_X^i=0\quad\text{for }i\ge3,
\)
then the absolute formal moduli problem is algebraized by the derived affine Kuranishi
scheme
\[
\mathfrak K_X
\defeq
H_X^1\times^h_{H_X^2}\{0\},
\qquad
\kappa_X(x)=\sum_{m\ge2}\frac{1}{m!}\ell_m(x,\ldots,x),
\]
where \(H_X^1\) and \(H_X^2\) are finite-dimensional vector spaces.  Its formal completion
at the origin is \(\Def_X\), and its truncation carries the canonical perfect obstruction
theory.  The minimal Maurer--Cartan coordinate vector associated with \(X\), after choosing
the minimal Kuranishi model, satisfies the Kuranishi equation and therefore defines a
\(t_0\)-point of \(\mathfrak K_X\).
\end{corollary}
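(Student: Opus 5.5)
The plan is to specialize the relative local theory to $S=\Spec\mathbb C$ and then check that the two-term hypothesis makes the Maurer--Cartan stack an honest derived zero locus.

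\textbf{Step 1: the first claim.} By Lemma~\ref{lem:absolute-controller-identification}, the relative controller $\mathfrak g_{F/S}$ is exactly $L_X$. Compactness of $\Xred$ and finite rank of $\FF_X$ make each weight piece $H^{0,q}_{\bar\partial}(\Xred,\mathcal T^{\langle 2j\rangle}_{\widehat X})$ finite-dimensional, by Cartan--Serre finiteness for coherent sheaves. Since $N$ is finite, $H_X$ is a finite-dimensional graded vector space. Proposition~\ref{prop:contr} supplies filtered contraction data. Homotopy transfer then gives the minimal filtered model $(H_X,\{\ell_m\})$, and Remark~\ref{rem:linf} gives the equivalence $MC_\infty(H_X\otimes\mathfrak m_A)\simeq MC_\infty(L_X\otimes\mathfrak m_A)=\Def_X(A)$. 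Equivalently, this is Proposition~\ref{prop:local-minimal-filtered-model} with $U=\mathrm{pt}$: over a point every graded vector space is a trivial vector bundle, so the finite filtered Kuranishi datum of Definition~\ref{def:finite-filtered-kuranishi-datum} always exists. One then composes with Corollary~\ref{cor:absolute-formal-GO}.

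\textbf{Step 2: the two-term case.} Assume $H_X^0=0$ and $H_X^{\ge3}=0$. By Proposition~\ref{prop:weightMC}, $\kappa_X$ is a polynomial map $H_X^1\to H_X^2$; it is a finite sum because $\ell_m$ is filtration-additive and $F^{N+1}H_X=0$, so $\ell_m$ vanishes on degree-one inputs for $m>N$. Define $\mathfrak K_X$ as the derived fibre product $\Spec\bigl(\Sym (H_X^1)^\vee\otimes \mathrm{Kos}(\kappa_X)\bigr)$, the Koszul cdga of $\kappa_X$. This is a derived affine scheme, and it is quasi-smooth.

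Next, identify its formal completion at $0$ with the Maurer--Cartan formal moduli problem of $H_X$. The point is that the Chevalley--Eilenberg cdga $C^\bullet(H_X)$ is $\widehat{\Sym}\bigl((H_X^1)^\vee\oplus (H_X^2)^\vee[-1]\bigr)$, with differential dual to $\kappa_X$. The generators in degree $0$ come from $H_X^1$ and those in degree $-1$ from $H_X^2$. The vanishing of $H_X^0$ and $H_X^{\ge3}$ means there are no generators in other degrees and no further differential components. So $C^\bullet(H_X)$ is exactly the completed Koszul algebra of $\kappa_X$. Under Lurie's correspondence, $C^\bullet(H_X)$ pro-represents the formal moduli problem, and this gives the identification. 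This is the argument of Corollary~\ref{cor:relative-local-derived-kuranishi} over a point, and the algebraizability assumption is automatic here.

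The perfect obstruction theory on $t_0\mathfrak K_X$ comes from the two-term cotangent complex $[(H_X^2)^\vee\otimes\mathcal O\xrightarrow{d\kappa^\vee}(H_X^1)^\vee\otimes\mathcal O]$, by the standard Behrend--Fantechi / Sch\"urg--To\"en--Vezzosi argument.

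\textbf{Step 3: the coordinate of $X$.} The gauge class $[\mu_X]$ corresponds, through the $L_\infty$-quasi-isomorphism and the Kuranishi slice of Proposition~\ref{prop:KuranishiMC}, to some $\alpha_X\in H_X^1$ with $\kappa_X(\alpha_X)=0$. Hence $\alpha_X$ is a $\mathbb C$-point of $\kappa_X^{-1}(0)=t_0\mathfrak K_X$.

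The main obstacle is this last step. The statement ``Maurer--Cartan elements correspond to $\kappa=0$'' is a statement about the nilpotent dg Lie algebra $L_X$ itself, not about $L_X\otimes\mathfrak m_A$. I would justify it directly. Proposition~\ref{prop:KuranishiMC} applies with $A=\mathbb C$, because $L_X$ is already nilpotent. In addition, every Maurer--Cartan element is gauge equivalent to one in the slice $h\mu=0$; I would prove this by a weight-by-weight induction using the finite filtration and the fact that $H_X^0=0$ makes the gauge fixing solvable.
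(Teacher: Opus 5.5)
Your proposal is correct and follows the same route as the paper. That route is: invariance of the Maurer--Cartan functor under the filtered \(L_\infty\) quasi-isomorphism together with Corollary~\ref{cor:absolute-formal-GO}; then the two-term derived zero locus; then the Kuranishi relation for the coordinate of \(X\). You spell out the Chevalley--Eilenberg/Koszul identification and the slice gauge-fixing, which the paper only asserts.

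One minor correction: gauge-fixing into \(h\mu=0\) does not need \(H_X^0=0\). At each weight, \(u_j=-h\mu'_j\) works because \(hdh=h\). The vanishing of \(H_X^0\) only removes residual gauge freedom inside the slice.
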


\begin{proof}
By Corollary~\ref{cor:absolute-formal-GO}, the absolute formal Green--Onishchik problem
is \(\Def_X=\Def(L_X)\).  Since \(H_X\) is a minimal filtered
\(L_\infty\)-model quasi-isomorphic to \(L_X\), invariance of the Maurer--Cartan formal
moduli problem gives \(\Def(H_X)\simeq\Def_X\).

Compactness of \(\Xred\) and coherence of the Green coefficient sheaves imply finite
dimensionality of the cohomology groups appearing in \(H_X\).  Under the two-term
hypothesis, the Maurer--Cartan equation is the finite polynomial equation
\(\kappa_X(x)=0\) between finite-dimensional affine spaces.  Hence the derived zero locus
\(H_X^1\times^h_{H_X^2}\{0\}\) is a derived affine scheme of finite type, and its formal
completion at the origin is the Maurer--Cartan formal moduli problem of \(H_X\), hence
\(\Def_X\).  Quasi-smoothness and the induced perfect obstruction theory follow from the
fact that \(\mathfrak K_X\) is a derived zero locus between smooth affine schemes.  Finally,
the Kuranishi compatibility relations from the minimal model say precisely that the chosen
minimal Maurer--Cartan coordinate vector satisfies the Maurer--Cartan equation.
\end{proof}

\subsection{Global Kuranishi data and sufficient algebraization criteria}
\label{subsec:globalization-natural-hypotheses}

The local Kuranishi construction depends on choices: a finite dg Lie model, a filtered
contraction, and, in the general case, an algebraization of the associated finite
Maurer--Cartan functor.  The formal completion is independent of these choices, because it
is always the intrinsic formal Green--Onishchik enhancement \(\widehat\Tw_{F/S}\).  The
algebraic or analytic derived charts themselves, however, need not glue automatically.
This subsection records natural hypotheses under which they do glue.  We warn the reader that these statements are
intended as positive criteria, not as a general solution to the algebraization problem.

\begin{theorem}[Sufficient criterion: global derived Kuranishi stack from a global finite model]
\label{thm:global-derived-kuranishi-from-contraction}
Assume that \(\mathfrak g_{F/S}=R\pi_*\mathfrak t_{F/S}\) is perfect, satisfies
Assumption~\ref{ass:green-derived-base-change} on \(S\), and admits a finite filtered
Kuranishi datum globally on \(S\), with transferred minimal
filtered \(L_\infty\)-algebra
\(
(\mathcal H_S,\{\ell_m\}_{m\ge2}).
\)
Assume moreover that this finite minimal model is algebraizable by a pointed derived Artin
stack
\(
\mathfrak K(\mathcal H_S)\longrightarrow S
\)
in the sense of Assumption~\ref{ass:algebraizable-local-MC-stack}.  

Then
\(
\Tw^{\der}_{F/S}
=
\mathfrak K(\mathcal H_S)
\)
is a derived Artin stack over \(S\), equipped with a split section
\(s_{\spl}:S\to\Tw^{\der}_{F/S}\), and there is a canonical equivalence of pointed formal
stacks
\[
\widehat{\Tw^{\der}_{F/S}}{}_{\,s_{\spl}}
\simeq
\widehat\Tw_{F/S}.
\]

If in addition
\(
\mathcal H_S^0=0,
\) and \(
\mathcal H_S^i=0\text{ for }i\ge3,
\)
then \(\Tw^{\der}_{F/S}\) may be taken to be the global derived zero locus
\(
\mathbb V_S(\mathcal H_S^1)
\times^h_{\mathbb V_S(\mathcal H_S^2)}
S
\)
of the global Kuranishi map
\[
\kappa_S(x)=\sum_{m\ge2}\frac{1}{m!}\ell_m(x,\ldots,x).
\]
In this two-term case \(\Tw^{\der}_{F/S}\) is quasi-smooth near the split section, and
its truncation carries the induced perfect obstruction theory.
\end{theorem}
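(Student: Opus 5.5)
The plan is to reduce the global statement to the local results already established, replacing $U$ by $S$ throughout. The hypotheses are exactly those of Definition~\ref{def:finite-filtered-kuranishi-datum} and Assumption~\ref{ass:algebraizable-local-MC-stack}, but imposed globally on $S$. So the first step is to apply Proposition~\ref{prop:local-minimal-filtered-model} with $U=S$. This requires checking that nothing in its proof used smallness of $U$, only the existence of the finite filtered datum and the derived base change convention. Both are assumed here: Assumption~\ref{ass:green-derived-base-change} holds on all of $S$, and $\mathfrak g_{F/S}$ is perfect. The proposition then yields a canonical equivalence
\[
\Def_S(\mathcal H_S,\{\ell_m\})\simeq \Def_S(\mathfrak g_{F/S}).
\]
By Corollary~\ref{cor:formal-GO-basechange-form}, the right-hand side is canonically equivalent to $\widehat\Tw_{F/S}$.

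Next I define $\Tw^{\der}_{F/S}\defeq\mathfrak K(\mathcal H_S)$. The algebraizability assumption says directly that this is a pointed derived Artin stack, locally of finite presentation over $S$, with a zero section. I take this zero section to be $s_{\spl}$. This choice is justified because the zero Maurer--Cartan element of the minimal model is sent by the $L_\infty$-quasi-isomorphism to the zero Maurer--Cartan element of $\mathfrak g_{F/S}$, which is the base point of $\widehat\Tw_{F/S}$ in Definition~\ref{def:formal-GO-enhancement}, namely the split object. The formal completion statement then follows exactly as in Theorem~\ref{thm:local-derived-kuranishi-chart}: compose the equivalence of the completion with the Maurer--Cartan formal moduli problem of $(\mathcal H_S,\{\ell_m\})$ with the equivalences of the first step.

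For the two-term case, I would repeat the argument of Corollary~\ref{cor:relative-local-derived-kuranishi} globally. The finiteness of the Green filtration makes $\kappa_S$ a polynomial morphism of vector bundles $\mathbb V_S(\mathcal H^1_S)\to\mathbb V_S(\mathcal H^2_S)$. Since $\mathcal H^0_S=0$ there is no gauge quotient, and since $\mathcal H^i_S=0$ for $i\ge3$ there are no higher obstruction directions. Hence the Maurer--Cartan stack is the derived zero locus, and that zero locus itself serves as an algebraization in the sense of Assumption~\ref{ass:algebraizable-local-MC-stack}. Quasi-smoothness holds because the zero locus is cut out by a section of a vector bundle on a smooth $S$-scheme; if $S$ is not smooth, I would phrase this relatively over $S$. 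The perfect obstruction theory on $t_0$ then follows from \cite{BehrendFantechiIntrinsicNormalCone,SchurgToenVezzosiDerived}.

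The main obstacle is the canonicity of the identification of formal completions, in particular its independence of the chosen contraction. This independence is needed so that the equivalence is canonical and not merely an equivalence for each choice. I would handle it by noting that two filtered contractions yield filtered $L_\infty$-isomorphic minimal models. These isomorphisms are compatible with the quasi-isomorphisms to $\mathfrak g_{F/S}$ up to homotopy, and formal moduli problems only see this homotopy class. Canonicity is therefore claimed only for the formal completion; the derived stack $\Tw^{\der}_{F/S}$ itself depends on the chosen datum and algebraization.
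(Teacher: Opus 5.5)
Your proposal is correct and follows essentially the paper's own argument: the completion of $\mathfrak K(\mathcal H_S)$ is the Maurer--Cartan problem of $(\mathcal H_S,\{\ell_m\})$ by the algebraizability assumption. This is $\Def_S(\mathfrak g_{F/S})$ by homotopy transfer and invariance under filtered $L_\infty$-quasi-isomorphisms, hence $\widehat\Tw_{F/S}$ by derived base change, and in the two-term case the Maurer--Cartan stack is the derived zero locus of $\kappa_S$. Your extra remarks are sound refinements that the paper leaves implicit: the zero section is the split point because $L_\infty$-morphisms send $0$ to $0$, and quasi-smoothness and the obstruction theory should be read relative to $S$ when $S$ is not smooth.
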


\begin{proof}
The proof is the global version of Theorem~\ref{thm:local-derived-kuranishi-chart}.  The
formal completion of \(\mathfrak K(\mathcal H_S)\) is, by assumption, the
Maurer--Cartan formal moduli problem of \((\mathcal H_S,\{\ell_m\})\).  By homotopy
transfer and invariance under filtered \(L_\infty\)-quasi-isomorphisms, this formal
moduli problem is \(\Def_S(\mathfrak g_{F/S})\), hence
\(\widehat\Tw_{F/S}\) by the base-change hypothesis.  This gives the formal
identification.

Under the two-term hypothesis there are no infinitesimal automorphisms and no obstruction
spaces above degree two.  Thus the Maurer--Cartan derived stack is represented by the
derived zero locus of \(\kappa_S\).  Such a derived zero locus is quasi-smooth, and the
perfect obstruction theory on its truncation is the canonical one associated with a
quasi-smooth derived enhancement.
\end{proof}

\begin{proposition}[Criterion for a global filtered contraction]
\label{prop:cohomological-global-filtered-contraction}
Let \((E^\bullet,d)\) be a bounded filtered complex of vector bundles on \(S\) representing
the underlying filtered complex of \(\mathfrak g_{F/S}\).  Assume that the differential has
locally constant rank, so that
\[
Z^i(E^\bullet)=\ker(d:E^i\to E^{i+1}),
\qquad
B^i(E^\bullet)=\operatorname{im}(d:E^{i-1}\to E^i)
\]
are vector bundles, and assume that the cohomology sheaves
\(\mathcal H^i(\mathfrak g_{F/S})\) are vector bundles.  If the short exact sequences
\[
0\longrightarrow B^i(E^\bullet)
\longrightarrow Z^i(E^\bullet)
\longrightarrow \mathcal H^i(\mathfrak g_{F/S})
\longrightarrow 0
\]
and
\[
0\longrightarrow Z^i(E^\bullet)
\longrightarrow E^i
\longrightarrow B^{i+1}(E^\bullet)
\longrightarrow 0
\]
split globally in the category of filtered vector bundles for every \(i\), then
\(\mathfrak g_{F/S}\) admits a global finite filtered Kuranishi datum.

A sufficient cohomological condition for these splittings is the vanishing of the
corresponding filtered extension classes, for instance
\[
H^1\!\left(S,
\mathcal H\!om^{\fil}(\mathcal H^i(\mathfrak g_{F/S}),B^i(E^\bullet))
\right)=0
\quad \text{
and}
\quad 
H^1\!\left(S,
\mathcal H\!om^{\fil}(B^{i+1}(E^\bullet),Z^i(E^\bullet))
\right)=0
\]
for every \(i\).
\end{proposition}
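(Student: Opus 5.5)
The plan is to build the filtered contraction data $(i,p,h)$ of Definition~\ref{def:finite-filtered-kuranishi-datum} directly from the two families of global filtered splittings, imitating the pointwise construction in the proof of Proposition~\ref{prop:contr} but now in the category of filtered vector bundles on \(S\). The first step is to fix a global filtered splitting \(s_i:\mathcal H^i(\mathfrak g_{F/S})\to Z^i(E^\bullet)\) of the first sequence and a global filtered splitting \(t_i:B^{i+1}(E^\bullet)\to E^i\) of the second. These give filtered decompositions
\[
E^i=B^i\oplus s_i(\mathcal H^i)\oplus C^i,\qquad C^i\defeq t_i(B^{i+1}),
\]
and \(d|_{C^i}:C^i\to B^{i+1}\) is an isomorphism of filtered bundles, with inverse \(t_i\).

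With this decomposition in hand, we set \(i\defeq s\) and let \(p\) be the projection onto the \(\mathcal H\)-summand. We define \(h\) to vanish on \(s(\mathcal H)\oplus C\) and to equal \(t\circ\) (inclusion) on \(B\), viewed as a map \(B^{i}\to C^{i-1}\). The identities \(pi=\id\), \(\id-ip=dh+hd\), \(ph=0\), \(hi=0\) and \(h^2=0\) are then verified summand by summand, exactly as in Proposition~\ref{prop:contr}. For instance, on \(B^i\) we have \(dh=\id\) and \(hd=0\); on \(C^i\) we have \(hd=\id\) and \(dh=0\); and on \(s(\mathcal H)\) both sides vanish. Since every map involved is a composite of filtered splittings and inclusions, \(i\), \(p\) and \(h\) preserve the Green filtration. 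Boundedness of \(E^\bullet\) makes all of this finite.

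The delicate point is that the datum requires a filtered \emph{dg Lie} model and not merely a filtered complex. The statement only specifies \((E^\bullet,d)\) as representing the underlying filtered complex. I would therefore read the hypothesis as saying that \(E^\bullet\) carries the filtered dg Lie structure representing \(\mathfrak g_{F/S}\), so that homotopy transfer (as in Proposition~\ref{prop:local-minimal-filtered-model}) uses only linear contraction data on a dg Lie object. If only a complex is available, I would first transport the bracket along the quasi-isomorphism, using the transfer theorem to produce a filtered \(L_\infty\)-structure on \(E^\bullet\), and note that this suffices for the subsequent arguments. This identification is where I expect the main care to be needed.

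For the sufficient cohomological condition, the argument is standard. A short exact sequence \(0\to A\to B\to C\to0\) of filtered bundles (strict, with filtered pieces vector bundles) has its obstruction to a filtered splitting in \(\Ext^1_{\fil}(C,A)=H^1(S,\mathcal H\!om^{\fil}(C,A))\). Concretely, filtered splittings exist locally, their differences form a \(\mathcal H\!om^{\fil}(C,A)\)-torsor, and the vanishing of \(H^1\) yields a global section. Applying this to the two displayed sequences gives the splittings used above.
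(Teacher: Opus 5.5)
Your proposal is correct and takes essentially the same approach as the paper. The paper also uses the two families of filtered splittings to write \(E^\bullet\) as its cohomology plus a contractible complex, reads off filtered maps \(i,p,h\) with \(\id_E-i\circ p=dh+hd\), and cites the standard identification of the splitting obstructions with classes in \(H^1(S,\mathcal H\!om^{\fil}(-,-))\), so your summand-by-summand check on \(B^i\oplus s_i(\mathcal H^i)\oplus C^i\) and your caveat that \(E^\bullet\) must carry the filtered dg Lie structure (which the paper's proof tacitly assumes) only make explicit what the paper leaves implicit.
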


\begin{proof}
The displayed exact sequences express \(E^\bullet\) as the direct sum of its cohomology
and a contractible complex precisely when they split.  Choosing such splittings gives maps
\[
(\mathcal H_S,0)
\;\substack{\xrightarrow{\ i\ }\\[-0.6ex]\xleftarrow[\ p\ ]{}}\;
(E^\bullet,d)
\]
and a homotopy \(h\) satisfying \(\id_E-i\circ p=dh+hd\).  Because the splittings are
filtered, the maps \(i,p,h\) preserve the Green filtration.  Thus they form a global
finite filtered Kuranishi datum.  The final assertion is the standard description of the
extension classes of the two short exact sequences in the indicated \(H^1\)-groups.
\end{proof}

\begin{corollary}[Affine or Stein bases]
\label{cor:affine-stein-unconditional-gluing}
Assume the set-up of Proposition~\ref{prop:cohomological-global-filtered-contraction}.
Assume moreover that either:
\begin{enumerate}
\item \(S\) is affine in the algebraic setting; or
\item \(S\) is Stein in the complex-analytic setting.
\end{enumerate}
Then the cohomological vanishing conditions of
Proposition~\ref{prop:cohomological-global-filtered-contraction} hold for coherent
filtered \(\mathcal H\!om\)-sheaves.  Consequently \(\mathfrak g_{F/S}\) admits a global
finite filtered Kuranishi datum.  If the resulting minimal model is algebraizable, then
Theorem~\ref{thm:global-derived-kuranishi-from-contraction} applies.

In particular, under the additional two-term hypothesis
\(\mathcal H_S^0=0\) and \(\mathcal H_S^i=0\) for \(i\ge3\), the split formal
Green--Onishchik enhancement is algebraized over \(S\) by the global derived zero locus of
its Kuranishi map.
\end{corollary}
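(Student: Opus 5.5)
The plan is to verify the two $H^1$-vanishing hypotheses of Proposition~\ref{prop:cohomological-global-filtered-contraction} directly from Cartan's Theorem~B (Stein case) or Serre's vanishing theorem (affine case). Then we invoke that proposition and afterwards Theorem~\ref{thm:global-derived-kuranishi-from-contraction}.

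\emph{Coherence of the relevant sheaves.} In the set-up of Proposition~\ref{prop:cohomological-global-filtered-contraction}, the bundles $Z^i(E^\bullet)$, $B^i(E^\bullet)$ and $\mathcal H^i(\mathfrak g_{F/S})$ are vector bundles with finite filtrations. Since $d$ has locally constant rank and is compatible with the filtration, the graded pieces should also be vector bundles; we will assume this, as it is implicit in the phrase ``filtered vector bundles''. The sheaf $\mathcal H\!om^{\fil}(V,W)$ of filtration-preserving homomorphisms is then the kernel of a morphism of locally free sheaves,
\[
\mathcal H\!om(V,W)\longrightarrow \bigoplus_p \mathcal H\!om(F^pV,\,W/F^pW).
\]
It is therefore coherent, and in fact locally free once the graded pieces are locally free.

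\emph{Vanishing.} Theorem~B on a Stein space, or Serre's theorem on an affine scheme, gives $H^1(S,\mathcal G)=0$ for every coherent $\mathcal G$. Applied to the two filtered $\mathcal H\!om$-sheaves, this yields the displayed vanishing conditions.

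\emph{Splitting the sequences.} This is the step needing care, because Proposition~\ref{prop:cohomological-global-filtered-contraction} calls the vanishing ``sufficient'' via extension classes, and for \emph{filtered} sequences I should check this. The first approach: given a filtered short exact sequence $0\to A\to B\to C\to 0$ that is strict, apply $\mathcal H\!om^{\fil}(C,-)$. This gives
\[
0\to\mathcal H\!om^{\fil}(C,A)\to\mathcal H\!om^{\fil}(C,B)\to\mathcal H\!om^{\fil}(C,C)\to 0 .
\]
Right exactness holds locally, because locally a strict filtered sequence of filtered bundles splits filtered-wise, using adapted frames. Taking global sections and using $H^1(S,\mathcal H\!om^{\fil}(C,A))=0$, the identity of $C$ lifts to a global filtered section. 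Strictness of the two sequences of Proposition~\ref{prop:cohomological-global-filtered-contraction} should follow from the Green filtration being a direct sum of weight pieces, with $d$ weight-preserving. If that fails, I would replace strictness by working weight by weight: the Green filtration comes from the weight grading, so every object splits into homogeneous summands and the filtered problem reduces to ordinary splittings of vector bundle sequences on $S$.

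\emph{Conclusion.} With these splittings, Proposition~\ref{prop:cohomological-global-filtered-contraction} produces a global finite filtered Kuranishi datum. If the minimal model is algebraizable, Theorem~\ref{thm:global-derived-kuranishi-from-contraction} applies verbatim. Under the two-term hypothesis, the same theorem's final clause gives the global derived zero locus of $\kappa_S$ as the algebraization of the split formal Green--Onishchik enhancement.
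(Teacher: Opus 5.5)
Your proposal is correct and follows the paper's argument. The chain is the same: the filtered $\mathcal H\!om$-sheaves are coherent because the filtration is finite with locally free graded pieces; their $H^1$ vanishes by Serre's affine vanishing or Cartan's Theorem~B; the filtered extension classes therefore vanish and the splittings exist; one then applies Proposition~\ref{prop:cohomological-global-filtered-contraction} and Theorem~\ref{thm:global-derived-kuranishi-from-contraction}. Your lifting of $\mathrm{id}_C$ through $\mathcal H\!om^{\fil}(C,B)\to\mathcal H\!om^{\fil}(C,C)$, using local filtered splittings of strict sequences, only spells out the step ``the extension classes vanish and the required splittings exist,'' which the paper leaves implicit.
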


\begin{proof}
The filtered \(\mathcal H\!om\)-sheaves appearing in
Proposition~\ref{prop:cohomological-global-filtered-contraction} are coherent, because the
filtration is finite and all graded pieces are vector bundles.  On an affine scheme, higher
cohomology of quasi-coherent sheaves vanishes.  On a Stein analytic space, higher
cohomology of coherent analytic sheaves vanishes by Cartan's theorem~B.  Thus the
extension classes vanish and the required splittings exist.  The remaining conclusions are
those of Theorem~\ref{thm:global-derived-kuranishi-from-contraction}.
\end{proof}

\begin{theorem}[Compatible local Kuranishi data glue]
\label{thm:functorial-filtered-contractions-imply-gluing}
Let \(\{U_i\to S\}_{i\in I}\) be a cover stable under finite intersections.  Assume that
for every finite intersection
\[
U_I=U_{i_0}\times_S\cdots\times_S U_{i_p}
\]
there is a finite filtered Kuranishi datum for \(\mathfrak g_{F/S}|_{U_I}\), and assume
that these data are compatible with restriction in the following strict sense: whenever
\(J\supset I\), the datum on \(U_J\) is the restriction of the datum on \(U_I\).
Assume also that the associated finite minimal \(L_\infty\)-models are algebraized by
pointed derived Artin stacks \(\mathfrak K_{F/U_I}\), compatibly with restriction.

Then the derived stacks \(\mathfrak K_{F/U_i}\) form a strict descent datum and glue to a
derived Artin stack
\(
\Tw^{\der}_{F/S}\rightarrow S
\)
with a split section \(s_{\spl}:S\to\Tw^{\der}_{F/S}\).  Its formal completion is
canonically the formal Green--Onishchik enhancement:
\[
\widehat{\Tw^{\der}_{F/S}}{}_{\,s_{\spl}}
\simeq
\widehat\Tw_{F/S}.
\]
If the local charts are two-term derived zero loci as in
Corollary~\ref{cor:relative-local-derived-kuranishi}, then \(\Tw^{\der}_{F/S}\) is
quasi-smooth near the split section and its truncation carries the perfect obstruction
theory obtained by gluing the local ones.
\end{theorem}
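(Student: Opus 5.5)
The plan is to glue in three stages: first the underlying derived stacks, then the formal completions, then the obstruction theories in the two-term case. The strictness assumption will carry most of the weight. Since the local data are required to restrict \emph{on the nose}, the assignment \(U_I\mapsto \mathfrak K_{F/U_I}\) should define a strict functor from the Čech nerve of the cover into pointed derived Artin stacks over \(S\). For each inclusion \(J\supset I\), the identity \(\mathfrak K_{F/U_I}\times_{U_I}U_J\simeq\mathfrak K_{F/U_J}\) is then an equivalence, not just a choice of one. Consequently the cocycle conditions on triple and higher overlaps hold strictly, and all higher coherences are identities.

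For the gluing itself, I would invoke descent for derived Artin stacks in the étale or analytic topology on \(S\), in the sense of To\"en--Vezzosi \cite{ToenVezzosiHAGII}. The colimit over the Čech nerve of the diagram \(\{\mathfrak K_{F/U_I}\}\) is a derived stack \(\Tw^{\der}_{F/S}\to S\) whose restriction to each \(U_i\) is \(\mathfrak K_{F/U_i}\). Being Artin and locally of finite presentation are both local on \(S\), so \(\Tw^{\der}_{F/S}\) inherits them. The zero sections \(0:U_I\to\mathfrak K_{F/U_I}\) are compatible with restriction by assumption, so they glue to a section \(s_{\spl}\).

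For the formal completion, I would observe that completing along a section commutes with restriction to opens of \(S\). Hence \(\widehat{\Tw^{\der}_{F/S}}{}_{s_{\spl}}\) restricts on \(U_I\) to \(\widehat{\mathfrak K_{F/U_I}}{}_0\), which Theorem~\ref{thm:local-derived-kuranishi-chart} identifies with \(\widehat\Tw_{F/S}|_{U_I}\). These local equivalences must be compatible on overlaps. This follows because they are built from the homotopy-transfer quasi-isomorphisms of Proposition~\ref{prop:local-minimal-filtered-model}, and the data \(i,p,h\) restrict strictly, so the transferred \(L_\infty\)-morphisms restrict strictly as well. The fibrewise problem \(\widehat\Tw_{F/S}\) is itself local on \(S\), since it is defined test space by test space via \(R\Gamma(Y_T,\mathfrak t_{F_T/T})\). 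Descending the compatible local equivalences therefore gives the global one.

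In the two-term case, Corollary~\ref{cor:relative-local-derived-kuranishi} makes each local chart quasi-smooth. Quasi-smoothness is local, so \(\Tw^{\der}_{F/S}\) is quasi-smooth near \(s_{\spl}\). The obstruction theory on \(t_0\) is the map \(\mathbb L_{\Tw^{\der}}|_{t_0}\to\mathbb L_{t_0}\), which is functorial and compatible with restriction. It therefore glues and agrees locally with the Behrend--Fantechi obstruction theories of the charts.

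The step I expect to be the main obstacle is the compatibility of the formal identifications. Theorem~\ref{thm:local-derived-kuranishi-chart} calls its equivalence canonical, but it passes through Assumption~\ref{ass:algebraizable-local-MC-stack}. There I would need to read ``compatibly with restriction'' as also covering the identification of each formal completion with the local Maurer--Cartan problem. If that reading is not available, it would have to be added as part of the hypothesis.
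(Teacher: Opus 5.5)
Your proposal is correct and follows essentially the same route as the paper. Strict restriction of the data gives a strict descent datum, and effective descent for derived Artin stacks glues the charts and their zero sections. The formal completions are then identified locally with \(\widehat\Tw_{F/S}|_{U_i}\), compatibly on overlaps, and quasi-smoothness and the obstruction theory are local. The step you flag as the main obstacle is the one the paper handles by asserting the overlap identifications are compatible ``by construction'', which amounts to adopting your reading of ``compatibly with restriction''.
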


\begin{proof}
Strict compatibility of the filtered contraction data implies compatibility of the
transferred \(L_\infty\)-brackets, because the homotopy transfer formulas are functorial
with respect to restriction of the data \(i,p,h\) and the bracket.  Hence the associated
Maurer--Cartan derived stacks, or in the two-term case the derived zero loci of the
Kuranishi maps, restrict identically on overlaps.  This gives a strict descent datum for
the local derived Artin stacks.  Since derived Artin stacks satisfy descent for the chosen
covering topology, the local charts glue to a derived Artin stack over \(S\).  The zero
sections are compatible and therefore glue to the split section.

The formal completion of each local chart is \(\widehat\Tw_{F/S}|_{U_i}\).  These
identifications are compatible on overlaps by construction, hence the formal completion of
the glued stack is \(\widehat\Tw_{F/S}\).  Quasi-smoothness and the associated perfect
obstruction theory are local properties in the two-term case and therefore glue.
\end{proof}

The results of this subsection give sufficient conditions for an algebraized derived
Green--Onishchik enhancement.  We insist that they do not assert that such an enhancement exists without
choices.  What is canonical without further hypotheses is the formal object
\(\widehat\Tw_{F/S}\).  A global derived stack algebraizing it is obtained either from a
global finite Kuranishi datum, or from local finite data whose algebraizations are
compatible on overlaps.  When these compatibility data are not available, one is left with
the descent and algebraization problem discussed in the next subsection.

\subsection{Descent and the remaining algebraization problem}
\label{subsec:general-descent-open-problems}

We finish the section by isolating the precise global issue which remains after the
local Kuranishi construction.  The formal Green--Onishchik enhancement
\(
\widehat\Tw_{F/S}
\)
is canonical.  By contrast, an algebraic or analytic derived stack whose completion is
\(\widehat\Tw_{F/S}\) is not canonical in general, and may not exist without additional
hypotheses.  Locally, one may choose finite filtered Kuranishi data and obtain derived
charts
\(
\mathfrak K_{F/U}\rightarrow U.
\)
Any two such charts have canonically equivalent formal completions, because both
complete to the restriction of the same intrinsic formal moduli problem.  The non-formal
question is whether these canonical formal equivalences can be algebraized and made
compatible on multiple overlaps.

The correct language for this question is descent for derived stacks; see
\cite{ToenVezzosiHAGII,LurieDAGV}.  Since descent is
homotopy-coherent rather than merely set-theoretic, we formulate the criterion in terms
of the \v{C}ech nerve of a cover.  In applications one often has strict pairwise
transition equivalences; the homotopy-coherent formulation below is the invariant one.

\begin{definition}[Local Kuranishi atlas]
\label{def:local-kuranishi-atlas}
A \emph{local Kuranishi atlas for the split formal Green--Onishchik problem} consists
of:
\begin{enumerate}
\item a cover \(\{U_i\to S\}_{i\in I}\) in the chosen analytic or algebraic topology;
\item for each \(i\), a pointed derived Artin stack
\(
0_i:U_i\longrightarrow \mathfrak K_i
\)
locally of finite presentation over \(U_i\);
\item a formal identification
\(
\eta_i:
\widehat{\mathfrak K_i}_{\,0_i}
\xrightarrow{\sim}
\widehat\Tw_{F/S}|_{U_i}
\)
of pointed formal moduli problems over \(U_i\).
\end{enumerate}
We say that the atlas is \emph{Kuranishi} if each \(\mathfrak K_i\) is obtained from a
finite filtered Kuranishi datum in the sense of
Definition~\ref{def:finite-filtered-kuranishi-datum} and
Theorem~\ref{thm:local-derived-kuranishi-chart}.  In the two-term case of
Corollary~\ref{cor:relative-local-derived-kuranishi}, the charts are the derived zero
loci of the local Kuranishi maps.
\end{definition}

\begin{proposition}[Canonical formal descent datum]
\label{prop:canonical-formal-descent-datum}
Let \(\{\mathfrak K_i,\eta_i\}\) be a local Kuranishi atlas.  For every pair
\(i,j\), set
\[
U_{ij}\defeq U_i\times_S U_j,
\qquad
\mathfrak K_{i,ij}\defeq \mathfrak K_i\times_{U_i}U_{ij},
\qquad
\mathfrak K_{j,ij}\defeq \mathfrak K_j\times_{U_j}U_{ij}.
\]
Then there is a canonical equivalence of pointed formal derived stacks over \(U_{ij}\),
\[
\widehat\phi_{ij}:
\widehat{\mathfrak K_{i,ij}}{}_{\,0_i}
\xrightarrow{\sim}
\widehat{\mathfrak K_{j,ij}}{}_{\,0_j},
\qquad
\widehat\phi_{ij}
\defeq
\eta_j^{-1}\circ\eta_i.
\]
On triple and higher overlaps these equivalences satisfy the canonical
homotopy-coherent cocycle identities.  Equivalently, the formal completions
\(\widehat{\mathfrak K_i}_{0_i}\) form a descent datum which is canonically identified
with the descent datum obtained by restricting the single formal stack
\(\widehat\Tw_{F/S}\) to the cover.
\end{proposition}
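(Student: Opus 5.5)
The plan is to reduce the whole statement to the fact that every \(\eta_i\) is an equivalence onto restrictions of one and the same object, \(\widehat\Tw_{F/S}\). There are three steps.

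\emph{Pairwise equivalences.} Base change the chart identifications to \(U_{ij}\). Formal completion along a zero section commutes with pullback along \(U_{ij}\to U_i\). So restricting \(\eta_i\) gives an equivalence
\[
\eta_i|_{U_{ij}}:\widehat{\mathfrak K_{i,ij}}{}_{\,0_i}\xrightarrow{\sim}\widehat\Tw_{F/S}|_{U_{ij}},
\]
and likewise for \(\eta_j\). For this I will use that \(\widehat\Tw_{F/S}\) is defined fibrewise over test spaces \(T\to S\) (Definition~\ref{def:formal-GO-enhancement}). Its restriction to \(U_{ij}\) is therefore literally the functor evaluated on test spaces factoring through \(U_{ij}\), and the restrictions from \(U_i\) and from \(U_j\) agree on the nose. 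Define \(\widehat\phi_{ij}=\eta_j^{-1}\circ\eta_i\), with \(\eta_j^{-1}\) a chosen homotopy inverse. The resulting morphism is well defined up to a contractible space of choices, because inverses of equivalences in an \(\infty\)-category form a contractible space. Both \(\eta_i\) and \(\eta_j\) preserve the base points (zero sections versus the zero Maurer--Cartan element), so \(\widehat\phi_{ij}\) is pointed.

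\emph{Cocycle identities.} For the homotopy-coherent identities I will package all charts into one diagram rather than check triple overlaps by hand. Let \(U_\bullet\) be the \v{C}ech nerve of the cover. For each \(U_I\), take the pullbacks of the \(\widehat{\mathfrak K}_{i}\), \(i\in I\), together with \(\widehat\Tw_{F/S}|_{U_I}\), and regard them as objects of the \(\infty\)-category of pointed formal moduli problems over \(U_I\). The maps \(\eta_i\) exhibit each chart as an object of the slice over the terminal target \(\widehat\Tw_{F/S}|_{U_I}\), in the full subcategory of equivalences. That full subcategory is contractible, being equivalent to the core of the slice at the identity. So the restricted \(\eta\)'s define a cosimplicial object equivalent to the constant descent datum \(I\mapsto\widehat\Tw_{F/S}|_{U_I}\). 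Transporting along the \(\eta\)'s then supplies all higher coherences: the \(2\)-cells \(\widehat\phi_{jk}\circ\widehat\phi_{ij}\simeq\widehat\phi_{ik}\) come from the canonical homotopies \(\eta_k^{-1}\eta_j\eta_j^{-1}\eta_i\simeq\eta_k^{-1}\eta_i\), and similarly in higher degrees.

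\emph{Canonicity.} To show the identification with the restricted descent datum of \(\widehat\Tw_{F/S}\) is canonical, I will note that this equivalence of descent data is induced by \(\eta\) itself. The only choices made were homotopy inverses and composition witnesses, and each of those ranges over a contractible space.

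The main obstacle is making the first and last steps precise. One must verify that formal completion along a section, and the fibrewise \(\widehat\Tw_{F/S}\), form a stack (a sheaf of \(\infty\)-categories) over \(S\) in the chosen topology. That is exactly what makes the restrictions strictly compatible and lets the contractibility argument run. I would try to establish it from the fibrewise definition via \(R\Gamma(Y_T,\mathfrak t_{F_T/T})\), together with Corollary~\ref{cor:independence-cover} and base change of \(Y_T\).
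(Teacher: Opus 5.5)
Your proposal is correct and follows the paper's argument. You restrict $\eta_i,\eta_j$ to $U_{ij}$, compose through the common target $\widehat\Tw_{F/S}|_{U_{ij}}$, get the triple-overlap identities from $\eta_k^{-1}\eta_j\eta_j^{-1}\eta_i\simeq\eta_k^{-1}\eta_i$, and handle higher overlaps by the same transport. Using contractibility of the space of equivalences into the fixed target $\widehat\Tw_{F/S}|_{U_I}$ simply makes precise what the paper calls ``the same argument on higher intersections.'' Note that the ``main obstacle'' you flag is stronger than this statement needs: it is enough that restriction is functorial and that $\widehat\Tw_{F/S}$ gives a compatible family of restrictions; full descent (the stack property) for $\widehat\Tw_{F/S}$ is not required.
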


\begin{proof}
Restrict the formal identifications \(\eta_i\) and \(\eta_j\) to the overlap
\(U_{ij}\).  Their composition through the common target
\(\widehat\Tw_{F/S}|_{U_{ij}}\) gives \(\widehat\phi_{ij}\).  On a triple overlap
\(U_{ijk}\), the two composites
\(
\widehat\phi_{jk}\circ\widehat\phi_{ij}
\text{ and }
\widehat\phi_{ik}
\)
are both equal, up to the canonical associativity homotopy, to
\(\eta_k^{-1}\circ\eta_i\).  The same argument on higher intersections gives the full
homotopy-coherent \v{C}ech descent datum.  Since all identifications are obtained by
restriction from \(\widehat\Tw_{F/S}\), this formal descent datum is canonical.
\end{proof}

\begin{definition}[Algebraization of the formal descent datum]
\label{def:algebraization-formal-descent-datum}
Let \(\{\mathfrak K_i,\eta_i\}\) be a local Kuranishi atlas.  An
\emph{algebraization of the canonical formal descent datum} consists of a
homotopy-coherent descent datum for the derived Artin stacks \(\mathfrak K_i\) over the
\v{C}ech nerve of the cover \(\{U_i\to S\}\), with transition equivalences
\[
\phi_{ij}:
\mathfrak K_{i,ij}\xrightarrow{\sim}\mathfrak K_{j,ij}
\]
preserving the zero sections, together with higher coherence data on multiple overlaps,
such that after formal completion along the zero sections one recovers the canonical
formal descent datum of Proposition~\ref{prop:canonical-formal-descent-datum}.

Equivalently, in the strict situation, this means that the pairwise equivalences
\(\phi_{ij}\) satisfy the usual cocycle condition on triple overlaps and that
\(
\widehat{\phi}_{ij}=\eta_j^{-1}\circ\eta_i.
\)
The strict formulation is sufficient in many concrete examples, but the
homotopy-coherent formulation is the natural one for derived stacks.
\end{definition}

\begin{theorem}[Descent criterion for a global derived algebraization]
\label{thm:descent-criterion-global-enhancement}
Let \(\{\mathfrak K_i,\eta_i\}\) be a local Kuranishi atlas and assume that its canonical
formal descent datum admits an algebraization in the sense of
Definition~\ref{def:algebraization-formal-descent-datum}.  Then:
\begin{enumerate}
\item the local derived stacks \(\mathfrak K_i\) glue to a derived Artin stack
\(
\Tw^{\der}_{F/S}\rightarrow S,
\)
together with a section
\(
s_{\spl}:S\rightarrow \Tw^{\der}_{F/S};
\)

\item for every \(i\) there is an equivalence
\[
\Tw^{\der}_{F/S}\times_S U_i\simeq \mathfrak K_i
\]
compatible with the local zero sections;

\item the formal completion of \(\Tw^{\der}_{F/S}\) along \(s_{\spl}\) is canonically
equivalent to the intrinsic formal Green--Onishchik enhancement:
\[
\widehat{\Tw^{\der}_{F/S}}{}_{\,s_{\spl}}
\simeq
\widehat\Tw_{F/S};
\]

\item if the local charts \(\mathfrak K_i\) are quasi-smooth near their zero sections,
then \(\Tw^{\der}_{F/S}\) is quasi-smooth near \(s_{\spl}\).  In the algebraic setting,
the truncation \(t_0(\Tw^{\der}_{F/S})\) then carries the perfect obstruction theory
induced by this quasi-smooth derived enhancement, at least in a neighbourhood of the
split section.
\end{enumerate}
\end{theorem}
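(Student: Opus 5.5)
The plan is to apply homotopy-coherent descent for derived Artin stacks directly to the algebraized descent datum, and then to check each of the four assertions against the canonical formal descent datum of Proposition~\ref{prop:canonical-formal-descent-datum}.

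\emph{Gluing.} The algebraized datum consists of the charts \(\mathfrak K_i\), the transition equivalences \(\phi_{ij}\) and the higher coherences. It defines a cartesian functor from the \v{C}ech nerve of \(\{U_i\to S\}\) to the \(\infty\)-category of pointed derived stacks over the corresponding overlaps. Derived stacks form a sheaf of \(\infty\)-categories for the chosen topology, by \cite{ToenVezzosiHAGII,LurieDAGV}. Hence the limit of this diagram exists; we call it \(\Tw^{\der}_{F/S}\to S\). Its restriction to \(U_i\) is \(\mathfrak K_i\), which gives (2).

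The property of being a derived Artin stack, locally of finite presentation, is local for the topology in question. Since each \(\mathfrak K_i\) has this property, so does \(\Tw^{\der}_{F/S}\), which gives (1). The zero sections \(0_i\) are preserved by the \(\phi_{ij}\) and by the higher data. They therefore form a morphism of descent data from the trivial datum \(\{U_i\}\), and so glue to \(s_{\spl}\).

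\emph{Formal completion.} Formal completion along a section is compatible with restriction to opens: completing \(\mathfrak K_i\) along \(0_i\) commutes with base change to \(U_{ij}\) and to higher overlaps. Formal moduli problems over \(S\), viewed as prestacks on test spaces \(T\to S\), also satisfy descent along the cover. Two consequences follow.
\begin{itemize}
\item The completion \(\widehat{\Tw^{\der}_{F/S}}{}_{s_{\spl}}\) is the limit of the completed descent datum \(\{\widehat{\mathfrak K_i}{}_{0_i},\widehat\phi_{ij},\dots\}\).
\item By the algebraization hypothesis, this completed datum is the canonical formal descent datum.
\end{itemize}
Proposition~\ref{prop:canonical-formal-descent-datum} identifies the canonical formal datum, via the \(\eta_i\), with the restriction of \(\widehat\Tw_{F/S}\) to the cover. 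Since \(\widehat\Tw_{F/S}\) is itself recovered as the limit of its restrictions, the equivalences \(\eta_i\) glue to the equivalence in (3).

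\emph{Main obstacle.} The delicate step is making precise that completion along a section commutes with the descent limit. Completion is itself a limit over infinitesimal thickenings, and limits commute with limits, so the \(\infty\)-categorical statement holds. The real work is checking that the fibrewise test-space definition of \(\widehat\Tw_{F/S}\) is local on \(S\). I would argue this directly from Definition~\ref{def:formal-GO-enhancement}: a test space \(T\to S\) is covered by the pullbacks \(T\times_S U_i\). The functor \(R\Gamma(Y_T,\mathfrak t_{F_T/T})\) satisfies descent in \(T\), because it is derived global sections of a sheaf. Finally, \(\MC_\infty\) of nilpotent dg Lie algebras preserves homotopy limits of this form.

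\emph{Quasi-smoothness and obstruction theory.} Quasi-smoothness is a local condition on the cotangent complex, namely Tor-amplitude \([-1,1]\) near the section. It therefore descends from the charts to a neighbourhood of \(s_{\spl}\), which gives the first half of (4). In the algebraic setting, \(t_0\) commutes with restriction to opens. The map \(\mathbb L_{\Tw^{\der}}|_{t_0}\to\mathbb L_{t_0}\) is canonical and, being local, is perfect of amplitude \([-1,0]\) and an obstruction theory. By \cite{BehrendFantechiIntrinsicNormalCone,SchurgToenVezzosiDerived}, this is the induced perfect obstruction theory.
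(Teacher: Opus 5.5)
Your proposal is correct and follows the same route as the paper. Effective descent for derived Artin stacks gives (1)--(2); compatibility of formal completion with restriction, together with the algebraization hypothesis and Proposition~\ref{prop:canonical-formal-descent-datum}, gives (3); locality of quasi-smoothness gives (4); and your check that \(\widehat\Tw_{F/S}\) satisfies descent in \(T\) makes explicit a step the paper leaves implicit. One small slip: you correctly use Tor-amplitude \([-1,1]\) for quasi-smoothness of Artin stacks, so the restricted cotangent complex on \(t_0\) has amplitude \([-1,1]\) in general, and the Behrend--Fantechi \([-1,0]\) form you assert additionally requires the glued stack to be Deligne--Mumford near \(s_{\spl}\) (e.g.\ \(\mathcal H^0=0\), as in the two-term case).
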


\begin{proof}
A homotopy-coherent descent datum for the derived Artin stacks \(\mathfrak K_i\) is, by
standard descent for derived Artin stacks, effective.  Therefore it glues to a derived
Artin stack \(\Tw^{\der}_{F/S}\) over \(S\).  Since the transition equivalences preserve
the zero sections and the coherence data are compatible with them, the local zero
sections glue to a global section \(s_{\spl}:S\to\Tw^{\der}_{F/S}\).  This proves (1) and
(2).

Formal completion along a closed section is compatible with restriction to the cover.
By assumption, the completed descent datum is precisely the canonical formal descent
datum of Proposition~\ref{prop:canonical-formal-descent-datum}.  Hence the formal
completion of the glued stack is obtained by gluing the restrictions of
\(\widehat\Tw_{F/S}\), which gives \(\widehat\Tw_{F/S}\) itself.  This proves (3).

Finally, quasi-smoothness is local for the chosen topology and is preserved under
equivalence.  Thus it descends from the charts to the glued stack near the split section.
In the algebraic setting, a quasi-smooth derived enhancement induces the usual perfect
obstruction theory on its truncation.  Since the construction is local and compatible
with descent, the local obstruction theories glue.  This proves (4).
\end{proof}

\begin{remark}[What the theorem does and does not assert]
\label{rem:descent-criterion-scope}
Theorem~\ref{thm:descent-criterion-global-enhancement} gives a global algebraization of
the \emph{formal neighbourhood of the split section}.  It does not assert that the whole
classical stack \(\Tw^{\cl}_{F/S}\) admits a canonical derived enhancement away from the
split point.  Nor does it assert that an algebraization of the formal descent datum always
exists.  A genuine derived stack \(\Tw^{\der}_{F/S}\) is obtained only after choosing local
algebraizations and algebraizing their formal transition maps.
\end{remark}

\begin{proposition}[Compatibility with the sufficient globalization criteria]
\label{prop:compatibility-previous-globalization}
The two globalization mechanisms of
Subsection~\ref{subsec:globalization-natural-hypotheses} produce algebraizations of the
canonical formal descent datum:
\begin{enumerate}
\item if a global finite filtered Kuranishi datum exists and its finite minimal model is
algebraizable, then the associated global derived Kuranishi stack restricts to a local
Kuranishi atlas whose formal descent datum is algebraized by the identity transitions;

\item if compatible local finite filtered Kuranishi data and compatible local
algebraizations exist on a cover stable under finite intersections, then the induced
strict descent datum is an algebraization of the canonical formal descent datum.
\end{enumerate}
\end{proposition}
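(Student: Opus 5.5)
The plan is to treat the two globalization mechanisms separately. In each case I will exhibit an explicit local Kuranishi atlas in the sense of Definition~\ref{def:local-kuranishi-atlas} together with transition equivalences. I will then check that these transitions complete, along the zero sections, to the canonical formal maps $\widehat\phi_{ij}=\eta_j^{-1}\circ\eta_i$ of Proposition~\ref{prop:canonical-formal-descent-datum}. Since that canonical datum is characterized entirely by the identifications $\eta_i$ with restrictions of $\widehat\Tw_{F/S}$, everything reduces to choosing the $\eta_i$ compatibly with restriction.

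For (1), I start from the global stack $\Tw^{\der}_{F/S}=\mathfrak K(\mathcal H_S)$ of Theorem~\ref{thm:global-derived-kuranishi-from-contraction}, together with its formal identification $\eta:\widehat{\mathfrak K(\mathcal H_S)}_0\simeq\widehat\Tw_{F/S}$. For any cover $\{U_i\}$, I set $\mathfrak K_i\defeq\mathfrak K(\mathcal H_S)\times_S U_i$, let $0_i$ be the restricted zero section, and take $\eta_i\defeq\eta|_{U_i}$. Each $\mathfrak K_i$ comes from the restricted finite filtered Kuranishi datum, so the atlas is Kuranishi. I will check that formal completion along a closed section commutes with base change to $U_i$, so that $\eta_i$ is well defined. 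On overlaps we have literally $\mathfrak K_{i,ij}=\mathfrak K(\mathcal H_S)\times_S U_{ij}=\mathfrak K_{j,ij}$, and I take $\phi_{ij}=\id$ with trivial higher coherences. Its completion is $\id$, and $\eta_j^{-1}\circ\eta_i=(\eta|_{U_{ij}})^{-1}\circ\eta|_{U_{ij}}=\id$, up to the canonical restriction homotopies. This verifies the strict form of Definition~\ref{def:algebraization-formal-descent-datum}.

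For (2), I will use the strictly compatible data of Theorem~\ref{thm:functorial-filtered-contractions-imply-gluing}. There the Kuranishi datum on $U_J$ is the restriction of the datum on $U_I$, and $\mathfrak K_{F/U_J}=\mathfrak K_{F/U_I}\times_{U_I}U_J$ compatibly with zero sections. The transitions $\phi_{ij}:\mathfrak K_{i,ij}\xrightarrow{\sim}\mathfrak K_{j,ij}$ are then the composites of the two restriction identifications with $\mathfrak K_{F/U_{ij}}$, and the cocycle identity on triple overlaps follows from transitivity of restriction. For the formal comparison, each $\eta_i$ should be taken to be the equivalence of Theorem~\ref{thm:local-derived-kuranishi-chart}. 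That equivalence is the composite of the algebraization identification of Assumption~\ref{ass:algebraizable-local-MC-stack} with the homotopy-transfer equivalence of Proposition~\ref{prop:local-minimal-filtered-model}. I then need both pieces to commute with restriction. The transfer piece does, because the transfer formulas are built from $i,p,h,d$ and the bracket, all of which restrict strictly. The algebraization piece does by the assumed compatibility of the algebraizations. It follows that $\widehat\phi_{ij}=\eta_j^{-1}\eta_i$.

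I expect the main obstacle to be this last naturality claim. The canonical equivalence in Proposition~\ref{prop:local-minimal-filtered-model} passes through invariance of $\MC_\infty$ under $L_\infty$-quasi-isomorphisms and through the identification of $E_U^\bullet$ with $\mathfrak g_{F/S}|_U$. Both must be shown to be natural in $U$, meaning they come from one morphism of presheaves of filtered dg Lie algebras rather than from choices made separately on each open set. My first attempt will fix a single global representing map $E\to\mathfrak g_{F/S}$ on the cover, or deduce one from strict compatibility, and then invoke functoriality of the transfer $L_\infty$-morphism $I$. Once that is done, the higher coherences are strict and the identification is immediate.
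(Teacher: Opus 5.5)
Your proposal is correct and follows the same route as the paper. In case (1), the restrictions of the global chart with identity transitions complete to the canonical overlap maps, because every formal identification is a restriction of the single global one. In case (2), strict compatibility of the Kuranishi data and of the algebraizations gives a strict descent datum whose completions agree with $\eta_j^{-1}\circ\eta_i$. The naturality issue you flag (that the equivalence of Proposition~\ref{prop:local-minimal-filtered-model} must commute with restriction) is not argued separately in the paper. It is absorbed into the strict-compatibility hypothesis of Theorem~\ref{thm:functorial-filtered-contractions-imply-gluing}, which amounts to your second option: regard the representing map $E_U^\bullet\simeq\mathfrak g_{F/S}|_U$ as part of the datum that is restricted.
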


\begin{proof}
In the first case the global derived stack restricts to the same object on all overlaps;
therefore the transition maps are identities after identifying the restrictions.  The
formal completion of the global object is \(\widehat\Tw_{F/S}\) by
Theorem~\ref{thm:global-derived-kuranishi-from-contraction}, so the induced formal
descent datum is the canonical one.

In the second case, compatibility of the filtered Kuranishi data implies compatibility of
the transferred \(L_\infty\)-structures and of the resulting algebraizations on all finite
intersections.  Hence the local derived stacks form a strict descent datum.  By
Theorem~\ref{thm:functorial-filtered-contractions-imply-gluing}, their formal
completions identify with the restrictions of \(\widehat\Tw_{F/S}\), and the transition
maps complete to the canonical formal overlap equivalences.
\end{proof}

\begin{remark}[Uniqueness holds only in the formal category]
\label{rem:formal-uniqueness-only}
Suppose two global derived algebraizations
\(\Tw^{\der}_{F/S}\) and \((\Tw^{\der}_{F/S})'\) are obtained from two different
algebraizations of the canonical formal descent datum.  Then their formal completions
along the split sections are canonically equivalent:
\[
\widehat{\Tw^{\der}_{F/S}}{}_{\,s_{\spl}}
\simeq
\widehat\Tw_{F/S}
\simeq
\widehat{(\Tw^{\der}_{F/S})'}{}_{\,s'_{\spl}}.
\]
However, the algebraizations themselves need not be canonically equivalent away from the
formal neighbourhood of the split section.  This is the usual loss of uniqueness when
passing from formal moduli problems to algebraic or analytic representatives -- see for instance
\cite{LurieDAGX,ToenVezzosiHAGII,CalaqueGrivauxFMP}.
\end{remark}

\begin{problem}[Algebraization of canonical formal overlaps]
\label{prob:algebraization-formal-overlaps}
Let \(\{\mathfrak K_i,\eta_i\}\) be a local Kuranishi atlas.  Determine natural
conditions under which the canonical formal equivalences
\[
\widehat\phi_{ij}:
\widehat{\mathfrak K_{i,ij}}{}_{\,0_i}
\xrightarrow{\sim}
\widehat{\mathfrak K_{j,ij}}{}_{\,0_j}
\]
are induced by actual equivalences of derived Artin stacks
\[
\phi_{ij}:\mathfrak K_{i,ij}\xrightarrow{\sim}\mathfrak K_{j,ij},
\]
and under which these algebraized equivalences can be chosen with homotopy-coherent
compatibilities on all higher overlaps.
\end{problem}

Problem~\ref{prob:algebraization-formal-overlaps} is the genuinely global obstruction
left by the preceding analysis.  In affine algebraic or Stein analytic situations, the
cohomological splitting criteria of
Proposition~\ref{prop:cohomological-global-filtered-contraction} often give global finite
Kuranishi data, and the descent problem disappears.  Likewise, if local contractions are
chosen functorially on all finite intersections, the strict gluing result of
Theorem~\ref{thm:functorial-filtered-contractions-imply-gluing} applies.

In the non-affine and non-Stein case, however, there need not be a global filtered
contraction, and independently chosen local contractions need not be compatible on
overlaps.  The formal geometry still glues canonically because it is controlled either
fibrewise by the dg Lie algebras \(R\Gamma(Y_T,\mathfrak t_{F_T/T})\), or, under derived
base change, by the single relative dg Lie object \(\mathfrak g_{F/S}\).  What remains
open, in this level of generality, is the algebraization of the canonical formal gluing
maps.

\begin{remark}[Summary \& Conclusions]
\label{rem:section8-conclusion}
The output of this section is therefore twofold.  First, the fixed-retract splitting
problem has a canonical formal derived enhancement,
\[
\widehat\Tw_{F/S}(T,B)
=
\MC_\infty\!\left(
R\Gamma(Y_T,\mathfrak t_{F_T/T})\otimes\mathfrak m_B
\right),
\]
and, in the absolute case, this is exactly the formal moduli problem \(\Def_X\)
controlled by the filtered dg Lie algebra \(L_X\).  Under derived base change it may be
written compactly as
\[
\widehat\Tw_{F/S}
\simeq
\Def_S(R\pi_*\mathfrak t_{F/S}).
\]
Second, algebraic or analytic derived Kuranishi spaces are available under explicit
finite-model, amplitude, base-change, and descent hypotheses.  Thus the formal theory is
intrinsic and unconditional, whereas its global algebraization is a separate geometric
problem.
\end{remark}


\section{Examples and geometric illustrations}
\label{sec:examples-geometric-illustrations}

The purpose of this section is to make the constructions of the previous sections concrete.
We discuss four examples, ordered by increasing complexity.  The first two are absolute
examples in odd rank three.  They illustrate the Green obstruction tower, the adapted
Maurer--Cartan leading classes, and the filtered affine Atiyah symbols in the abelian range.
The third example is relative and is meant to exhibit, in the simplest possible setting, the
Green--Onishchik formal moduli problem of
Section~\ref{sec:green-onishchik-fixed-retract-moduli}.  The last example is in odd rank
four and displays the first genuinely nonlinear Green--Kuranishi relation.

We keep two distinctions throughout.  First, an adapted leading cocycle is not, by itself,
the same thing as a holomorphic supermanifold.  It is the first nonzero term of a
Maurer--Cartan representative.  It defines an actual fixed-retract supermanifold structure
only when it can be completed to a full Maurer--Cartan element, equivalently to a
non-abelian \v{C}ech cocycle of automorphisms of the split model.  In the two odd-rank
three examples below this completion is automatic for weight reasons, since there is no
weight-four even tangent piece.  In the relative example, the object under discussion is
primarily a formal Green--Onishchik moduli problem, and its vector-bundle presentation is
obtained only after rigidifying the infinitesimal automorphism directions.  In the final
odd-rank four example, by contrast, a general first-order direction need not extend to a
full Maurer--Cartan element; the equation \(uv=0\) records precisely this first nonlinear
compatibility condition.

Second, a \v{C}ech cocycle with values in
\(
\mathcal T_{\widehat X,\bar 0}^{\langle 2j\rangle}
\)
is a tangent-valued adapted leading cocycle.  Its cohomology class is the adapted
leading class
\[
\beta_{X,j}(\sigma^{(2j-1)})
\in
H^1\bigl(\Xred,\mathcal T_{\widehat X,\bar 0}^{\langle 2j\rangle}\bigr).
\]
The corresponding filtered affine Atiyah symbol is represented by applying the
normalized pure odd Hessian construction to an adapted logarithmic \v{C}ech
representative of this class.  This is a representative-level construction: as already stressed above, it should
not be read as coming from a canonical sheaf morphism
\(
\mathcal T_{\widehat X}^{\langle 2j\rangle}
\rightarrow
\mathcal H_{\widehat X}^{\langle 2j\rangle}
\)
defined on the whole homogeneous tangent sheaf.  What is canonical, and what is used
below, are the projected comparison identities.

First, the even projection of the filtered affine Atiyah symbol recovers the projected
Green leading class:
\[
H^1(\Pi_{2j}^{\mathrm{ev}})
\bigl(
\mathfrak{At}^{\langle 2j\rangle}(X;\sigma^{(2j-1)})
\bigr)
=
H^1(\rho_{2j})
\bigl(
\beta_{X,j}(\sigma^{(2j-1)})
\bigr).
\]
Second, after this even obstruction has vanished and a \(2j\)-splitting
\(\sigma^{(2j)}\) has been fixed, the adapted representative can be normalized into
\(\ker(\rho_{2j})\).  The resulting residual affine Atiyah symbol
\(
\widetilde{\mathfrak{At}}^{\langle 2j\rangle}(X;\sigma^{(2j)})
\)
then satisfies
\[
H^1(\Pi_{2j}^{\mathrm{odd}})
\bigl(
\widetilde{\mathfrak{At}}^{\langle 2j\rangle}(X;\sigma^{(2j)})
\bigr)
=
\omega_X^{(2j+1)}(\sigma^{(2j)}).
\]
When \(2j+1>\rk(\FF_X)\), the residual odd obstruction is absent.

\subsection{Odd rank three over an elliptic curve}
\label{subsec:example-elliptic-rank-three}

Let \(E\) be a smooth elliptic curve and set
\[
\FF\defeq \mathcal O_E^{\oplus 3}.
\]
Let
\(
\widehat X=(E,\wedge^\bullet\FF)
\)
be the corresponding split model.  Since \(E\) is an elliptic curve, its tangent bundle is trivial:
\(
\mathcal T_E\cong \mathcal O_E.
\)
Moreover,
\[
\wedge^2\FF\cong \mathcal O_E^{\oplus 3},
\qquad
\wedge^3\FF\cong \mathcal O_E.
\]
Thus the first two Green obstruction sheaves are
\[
\mathcal Q_X^{(2)}
=
\mathcal T_E\otimes \wedge^2\FF
\cong
\mathcal O_E^{\oplus 3},
\quad \text{ and } \quad
\mathcal Q_X^{(3)}
=
\mathcal H\!om_{\mathcal O_E}(\FF,\wedge^3\FF)
\cong
\mathcal O_E^{\oplus 3}.
\]
The weight-two split tangent exact sequence is
\[
0
\longrightarrow
\mathcal Q_X^{(3)}
\longrightarrow
\mathcal T_{\widehat X,\bar0}^{\langle2\rangle}
\overset{\rho_2}{\longrightarrow}
\mathcal Q_X^{(2)}
\longrightarrow
0.
\]
In this trivialized example the sequence is globally split.

\smallskip

Fix a global frame \(e_1,e_2,e_3\) of \(\FF\), and let
\(
\theta_1,\theta_2,\theta_3
\)
be the corresponding odd coordinates.  Let
\(
v\in H^0(E,\mathcal T_E)
\)
be a nonzero translation-invariant vector field.  Then
\(
\mathcal T_{\widehat X,\bar0}^{\langle2\rangle}
\)
is generated by the quotient-type derivations
\[
D_{23}\defeq \theta_2\theta_3 v,
\qquad
D_{31}\defeq \theta_3\theta_1 v,
\qquad
D_{12}\defeq \theta_1\theta_2 v,
\]
and by the kernel-type derivations
\[
K_1\defeq \theta_1\theta_2\theta_3\partial_{\theta_1},
\qquad
K_2\defeq \theta_1\theta_2\theta_3\partial_{\theta_2},
\qquad
K_3\defeq \theta_1\theta_2\theta_3\partial_{\theta_3}.
\]
The map \(\rho_2\) sends the three \(D\)-terms to their images in
\(
\mathcal Q_X^{(2)}
\)
and sends all \(K_\alpha\) to zero.  Hence
\[
\mathcal T_{\widehat X,\bar0}^{\langle2\rangle}
\cong
\mathcal Q_X^{(2)}\oplus \mathcal Q_X^{(3)}
\cong
\mathcal O_E^{\oplus 6}.
\]
Since the odd rank is three, there is no weight-four even tangent piece:
\(
\mathcal T_{\widehat X,\bar0}^{\langle4\rangle}=0.
\)
Consequently the filtered dg Lie algebra governing the splitting problem is abelian after
passing to the only possible positive even weight.  In particular, the minimal filtered
\(L_\infty\)-model has no nonzero higher brackets.

\smallskip

Let \(\mathfrak U=\{U_i\}\) be a Stein cover of \(E\), and choose scalar \v{C}ech
\(1\)-cocycles
\[
\eta_{23},\eta_{31},\eta_{12},\xi^1,\xi^2,\xi^3
\in
\check Z^1(\mathfrak U,\mathcal O_E).
\]
Define the tangent-valued weight-two cocycle
\[
a_{ij}
\defeq
\eta_{23;ij}D_{23}+\eta_{31;ij}D_{31}+\eta_{12;ij}D_{12}
+
\xi^1_{ij}K_1+\xi^2_{ij}K_2+\xi^3_{ij}K_3.
\]
It represents the adapted leading class
\[
\beta_{X,1}\defeq[a]
\in
H^1\bigl(E,\mathcal T_{\widehat X,\bar0}^{\langle2\rangle}\bigr).
\]
Since there is no weight-four term, this cocycle is not merely first-order data.  It is
already a full Maurer--Cartan representative: the possible quadratic term
\(
\frac12[a,a]
\)
would have weight four and hence vanishes.  Therefore every such cocycle integrates to
transition functions
\(
\exp(a_{ij})=1+a_{ij}
\)
for an actual holomorphic supermanifold structure on the fixed smooth split model. In local even
coordinates in which \(v=\partial_z\), these transition functions have the form
\[
z_i
=
z_j
+
\eta_{23;ij}\theta_{j,2}\theta_{j,3}
+
\eta_{31;ij}\theta_{j,3}\theta_{j,1}
+
\eta_{12;ij}\theta_{j,1}\theta_{j,2},
\]
and
\[
\theta_{i,\alpha}
=
\theta_{j,\alpha}
+
\xi^\alpha_{ij}\theta_{j,1}\theta_{j,2}\theta_{j,3},
\qquad
\alpha=1,2,3.
\]
The cocycle condition is exactly the ordinary \v{C}ech cocycle condition for the six scalar
cocycles above.

\smallskip

The projected part of \(\beta_{X,1}\) is
\[
H^1(\rho_2)(\beta_{X,1})
=
\bigl([\eta_{23}],[\eta_{31}],[\eta_{12}]\bigr)
\in
H^1(E,\mathcal O_E)^{\oplus3}
\cong
H^1(E,\mathcal Q_X^{(2)}).
\]
This is the first Green obstruction class
\(
\omega_X^{(2)}
\).
If this class vanishes, then the quotient-type part may be normalized away.  More explicitly,
choose \(0\)-cochains
\(
b^{23},b^{31},b^{12}\in \check C^0(\mathfrak U,\mathcal O_E)
\)
such that
\[
\eta_{23}=\check\delta b^{23},
\qquad
\eta_{31}=\check\delta b^{31},
\qquad
\eta_{12}=\check\delta b^{12}.
\]
Then the normalized representative
\[
\widetilde a
=
a
-
\check\delta
\bigl(b^{23}D_{23}+b^{31}D_{31}+b^{12}D_{12}\bigr)
\]
is kernel-valued:
\[
\widetilde a_{ij}
=
\xi^1_{ij}K_1+\xi^2_{ij}K_2+\xi^3_{ij}K_3
\in
\check Z^1(\mathfrak U,\mathcal Q_X^{(3)}).
\]
The corresponding residual odd obstruction is
\[
\omega_X^{(3)}
=
[\widetilde a]
=
\bigl([\xi^1],[\xi^2],[\xi^3]\bigr)
\in
H^1(E,\mathcal Q_X^{(3)}).
\]
Thus a single weight-two tangent class contains both the even obstruction and, after the
even obstruction has vanished and a normalization has been chosen, the residual odd
obstruction.

There are two extreme cases.  If
\(
a_{ij}=\eta_{ij}D_{23}
\)
with \([\eta]\neq0\in H^1(E,\mathcal O_E)\), then
\[
\omega_X^{(2)}=[\eta]\otimes D_{23}\neq0,
\]
so no \(2\)-splitting exists and no residual odd obstruction is defined.  If instead
\(
a_{ij}=\xi_{ij}K_1
\)
with \([\xi]\neq0\), then
\[
\omega_X^{(2)}=0,
\qquad
\omega_X^{(3)}=[\xi]\otimes K_1\neq0.
\]
This is the simplest situation in which projectedness holds to order two, but the residual
odd obstruction is nonzero.

Finally, the minimal \(L_\infty\)-picture is completely linear.  The Kuranishi term
\(
\frac12[a,a]
\)
would have weight four, but the weight-four sheaf vanishes.  Hence
\(
\kappa_2(\beta_{X,1})=0,
\)
and all higher Kuranishi operations vanish for weight reasons.  This example therefore
separates the \(H^1\)-valued Green obstruction tower from the \(H^2\)-valued nonlinear
Kuranishi relations: in odd rank three, the former may be nontrivial, while the latter are
absent.

\subsection{Odd rank three over a K3 surface}
\label{subsec:example-k3-rank-three}

We now keep odd rank three but replace the reduced curve by a surface.  This makes the
cohomology of the obstruction sheaves richer, while the filtered Lie algebra remains abelian for
weight reasons.

Let \(S\) be an elliptic K3 surface with section class \(C\) and fibre class \(f\), so that
\[
C^2=-2,
\qquad
f^2=0,
\qquad
C\cdot f=1.
\]
Set
\[
L\defeq \mathcal O_S(C-2f),
\qquad
\FF\defeq \mathcal O_S\oplus L\oplus L^{-1}.
\]
Then
\[
\wedge^3\FF\cong \det \FF\cong \mathcal O_S,
\qquad
\wedge^2\FF\cong \mathcal O_S\oplus L\oplus L^{-1}.
\]
Consequently
\[
\mathcal Q_X^{(2)}
=
\mathcal T_S\otimes \wedge^2\FF
\cong
\mathcal T_S\otimes(\mathcal O_S\oplus L\oplus L^{-1}),
\]
and
\[
\mathcal Q_X^{(3)}
=
\mathcal H\!om_{\mathcal O_S}(\FF,\wedge^3\FF)
\cong
\FF^\vee
\cong
\mathcal O_S\oplus L^{-1}\oplus L.
\]

\smallskip

We compute the relevant cohomology of \(L\).  Let
\(
D=C-2f
\).
Then
\[
D^2
=
C^2-4(C\cdot f)+4f^2
=
-2-4
=
-6.
\]
By Riemann--Roch on a K3 surface,
\[
\chi(L)=\chi(\mathcal O_S)+\frac12D^2=2-3=-1.
\]
Assume, as for a standard elliptic K3 surface with section, that
\(
H=C+3f
\)
is ample.  Then
\[
H\cdot D=(C+3f)\cdot(C-2f)=-1,
\]
so \(D\) is not effective and \(H^0(S,L)=0\).  On the other hand,
\[
(-D)\cdot f=(2f-C)\cdot f=-1,
\]
and since \(f\) is nef, \(-D\) is not effective; hence \(H^0(S,L^{-1})=0\).  Using
\(K_S\cong\mathcal O_S\) and Serre duality, we get
\[
H^2(S,L)=H^0(S,L^{-1})^\vee=0,
\qquad
H^2(S,L^{-1})=H^0(S,L)^\vee=0.
\]
Therefore
\[
h^1(S,L)=1,
\qquad
h^1(S,L^{-1})=1.
\]
In particular,
\[
H^1(S,\mathcal Q_X^{(3)})
\cong
H^1(S,L^{-1})\oplus H^1(S,L)
\cong
\CC^2.
\]
The even obstruction space is also nonzero.  Indeed,
\(
H^1(S,\mathcal Q_X^{(2)})
\supset
H^1(S,\mathcal T_S),
\)
and on a K3 surface
\[
H^1(S,\mathcal T_S)
\cong
H^1(S,\Omega_S^1)
\cong
\CC^{20},
\]
where the first isomorphism is induced by a holomorphic symplectic form.

\smallskip

As in the elliptic example, odd rank three implies
\(
\mathcal T_{\widehat X,\bar0}^{\langle4\rangle}=0.
\)
Thus the weight-two exact sequence
\[
0
\longrightarrow
\mathcal Q_X^{(3)}
\longrightarrow
\mathcal T_{\widehat X,\bar0}^{\langle2\rangle}
\overset{\rho_2}{\longrightarrow}
\mathcal Q_X^{(2)}
\longrightarrow
0
\]
contains all possible positive even weights.  The minimal filtered \(L_\infty\)-model is
again abelian, despite the fact that \(S\) has nonzero \(H^2\)-cohomology.

\smallskip

We now build an explicit residual class.  Let \(\mathfrak U=\{U_i\}\) be a Stein cover on
which the three summands of \(\FF\) are trivialized.  Let
\(
\theta_{i,1},\theta_{i,2},\theta_{i,3}
\)
be the corresponding odd coordinates, with \(\theta_{i,2}\) a local frame for \(L\) and
\(\theta_{i,3}\) a local frame for \(L^{-1}\).  The local derivation
\[
K_{3,i}
\defeq
\theta_{i,1}\theta_{i,2}\theta_{i,3}\partial_{\theta_{i,3}}
\]
transforms as a local frame of the \(L\)-summand of
\(
\mathcal Q_X^{(3)}\cong\mathcal O_S\oplus L^{-1}\oplus L
\).
Choose a nonzero class
\(
\xi\in H^1(S,L),
\)
and represent it by local scalar \v{C}ech functions \(\xi_{ij}\) with respect to the frame
\(K_{3,j}\).  Then
\[
a_{ij}\defeq \xi_{ij}K_{3,j}
\in
\check Z^1(\mathfrak U,\mathcal Q_X^{(3)})
\subset
\check Z^1(\mathfrak U,\mathcal T_{\widehat X,\bar0}^{\langle2\rangle})
\]
defines an adapted leading class
\(
\beta_{X,1}\defeq[a]
\in
H^1(S,\mathcal T_{\widehat X,\bar0}^{\langle2\rangle}).
\)
As in the elliptic curve example, the odd rank is three, so there is no weight-four
even tangent piece.  Thus this class is not only a formal first-order direction: the
cocycle \(a\) satisfies the full Maurer--Cartan equation for weight reasons, and
\(
\exp(a_{ij})=1+a_{ij}
\)
defines an actual holomorphic supermanifold structure with fixed split model
\((S,\wedge^\bullet\FF)\). 
Since \(a\) is already kernel-valued,
\[
H^1(\rho_2)(\beta_{X,1})=0.
\]
Hence the even obstruction vanishes.  The normalized residual class is
\[
\omega_X^{(3)}=[a]
\in
H^1(S,\mathcal Q_X^{(3)}),
\]
and it is nonzero by construction. Thus this example realizes
\[
\omega_X^{(2)}=0,
\qquad
\omega_X^{(3)}\neq0,
\]
on a reduced surface.  Nevertheless, because the odd rank is three, there is still no nonlinear
Kuranishi relation: the obstruction tower is nontrivial, but the minimal filtered
\(L_\infty\)-model is abelian.

\subsection{A relative Green--Onishchik moduli example in odd rank three}
\label{subsec:example-relative-GO-rank-three}

We now give a simple relative example illustrating Section~\ref{sec:green-onishchik-fixed-retract-moduli}.
Let \(E\) be a fixed smooth elliptic curve, let \(S\) be a complex base, and consider the
trivial smooth proper family
\[
\pi:Y=E\times S\longrightarrow S.
\]
Let
\[
F\defeq\mathcal O_Y^{\oplus3},
\qquad
A_{\spl}\defeq \wedge^\bullet F.
\]
The relative tangent sheaf is
\(
\mathcal T_{Y/S}\cong \mathcal O_Y,
\)
and therefore
\[
\mathcal Q^{(2)}_{F/S}
=
\mathcal T_{Y/S}\otimes\wedge^2F
\cong
\mathcal O_Y^{\oplus3},
\]
and similarly
\[
\mathcal Q^{(3)}_{F/S}
=
\mathcal H\!om_{\mathcal O_Y}(F,\wedge^3F)
\cong
\mathcal O_Y^{\oplus3}.
\]
Since \(F\) has rank three,
\(
\mathcal T^{[\ge4]}_{A_{\spl}/S,\bar0}=0.
\)
Thus the relative Green controller has only its weight-two piece, and that piece is
\[
\mathcal T^{\langle2\rangle}_{A_{\spl}/S,\bar0}
\cong
\mathcal Q^{(2)}_{F/S}\oplus \mathcal Q^{(3)}_{F/S}
\cong
\mathcal O_Y^{\oplus6}.
\]
In particular the controller is abelian.

\smallskip

For the trivial elliptic family, relative cohomology gives
\[
R^0\pi_*\mathcal O_Y\cong\mathcal O_S,
\qquad
R^1\pi_*\mathcal O_Y\cong\mathcal O_S.
\]
Hence
\[
R^0\pi_*\mathcal T^{\langle2\rangle}_{A_{\spl}/S,\bar0}
\cong
\mathcal O_S^{\oplus6},
\qquad
R^1\pi_*\mathcal T^{\langle2\rangle}_{A_{\spl}/S,\bar0}
\cong
\mathcal O_S^{\oplus6}.
\]
There are no higher obstruction sheaves and no nonzero brackets.  Therefore the intrinsic
Green--Onishchik formal moduli problem
\(
\widehat\Tw_{F/S}
\)
is a linear abelian formal stack over \(S\).  This statement should be read at the
stack-theoretic level: the degree-zero part records infinitesimal automorphisms of the
split model, while the degree-one part records deformation directions.

After rigidifying the degree-zero automorphism directions, the corresponding classical
local moduli space is the vector bundle
\[
\mathbb V_S\!
\left(
R^1\pi_*\mathcal T^{\langle2\rangle}_{A_{\spl}/S,\bar0}
\right)
\cong
\mathbb V_S(\mathcal O_S^{\oplus6}).
\]
Thus the vector bundle above is not the full Green--Onishchik stack; it is the
rigidified local deformation space obtained from the abelian formal controller.  In this
rank-three situation, its points do correspond to actual fixed-retract supermanifold
structures locally on the chosen base, because the Maurer--Cartan equation has no
higher terms.

The decomposition
\[
\mathcal T^{\langle2\rangle}_{A_{\spl}/S,\bar0}
\cong
\mathcal Q^{(2)}_{F/S}\oplus\mathcal Q^{(3)}_{F/S}
\]
correspondingly decomposes the local deformation coordinates into relative projectedness
coordinates and relative residual coordinates:
\[
\mathcal O_S^{\oplus6}
\cong
\mathcal O_S^{\oplus3}\oplus\mathcal O_S^{\oplus3}.
\]
This example is the relative analogue of the elliptic curve example above.  It shows that, in
low odd rank, the Green--Onishchik formal moduli problem can be completely linear.  The
algebraized local Kuranishi model of Section~\ref{subsec:local-derived-kuranishi-algebraization}
is then simply a vector bundle, with zero Kuranishi map.

\subsection{Odd rank four on an abelian surface and the first nonlinear relation}
\label{subsec:example-abelian-surface-rank-four}

We now exhibit the first genuinely nonlinear phenomenon.  Let
\[
A\defeq E_1\times E_2
\]
be the product of two smooth elliptic curves, and set
\[
F\defeq\mathcal O_A^{\oplus4},
\qquad
A_{\spl}:=\wedge^\bullet F.
\]
Since \(A\) is an abelian surface,
\[
\mathcal T_A\cong\mathcal O_A^{\oplus2},
\qquad
H^1(A,\mathcal O_A)\cong\CC^2,
\qquad
H^2(A,\mathcal O_A)\cong\CC.
\]
Moreover, by K\"unneth,
\[
H^2(A,\mathcal O_A)
\cong
H^1(E_1,\mathcal O_{E_1})\otimes
H^1(E_2,\mathcal O_{E_2}).
\]
The exterior powers of \(F\) are trivial:
\[
\wedge^2F\cong\mathcal O_A^{\oplus6},
\qquad
\wedge^3F\cong\mathcal O_A^{\oplus4},
\qquad
\wedge^4F\cong\mathcal O_A.
\]
Thus
\[
\mathcal Q^{(2)}
=
\mathcal T_A\otimes\wedge^2F
\cong
\mathcal O_A^{\oplus12},
\]
\[
\mathcal Q^{(3)}
=
\mathcal H\!om_{\mathcal O_A}(F,\wedge^3F)
\cong
\mathcal O_A^{\oplus16},
\]
and, because \(\wedge^5F=0\),
\[
\mathcal T_{A_{\spl},\bar0}^{\langle4\rangle}
\cong
\mathcal T_A\otimes\wedge^4F
\cong
\mathcal O_A^{\oplus2}.
\]
This is the first odd rank in which the weight-four target of the quadratic Kuranishi term can
be nonzero.

\smallskip

Choose a global frame
\(
e_1,e_2,e_3,e_4
\)
of \(F\), with odd coordinates
\(
\theta_1,\theta_2,\theta_3,\theta_4
\).
Let
\(
v_1,v_2\in H^0(A,\mathcal T_A)
\)
be the translation-invariant vector fields induced by the two factors.  Put
\[
\Theta\defeq \theta_1\theta_2\theta_3\theta_4.
\]
Consider the two weight-two even derivations
\[
D\defeq \theta_3\theta_4 v_1,
\qquad
K\defeq \theta_1\theta_2\theta_3\partial_{\theta_3}.
\]
The derivation \(D\) is quotient-type, while \(K\) is kernel-type.  A direct calculation gives
\[
[D,D]=0,
\qquad
[K,K]=0,
\qquad
[D,K]= -\Theta v_1.
\]
Indeed, \(D\) differentiates only even functions on the first elliptic factor, while the coefficient
of \(K\) is constant, so \(D\circ K=0\).  On the other hand,
\[
K(\theta_3\theta_4)
=
\theta_1\theta_2\theta_3\partial_{\theta_3}(\theta_3\theta_4)
=
\Theta,
\]
and therefore \(K\circ D=\Theta v_1\).

\smallskip

Let \(\mathfrak U^{(1)}\) and \(\mathfrak U^{(2)}\) be Stein covers of \(E_1\) and \(E_2\), and let
\(
\mathfrak U
\)
be the product cover of \(A\).  Choose \v{C}ech cocycles
\[
\eta\in\check Z^1(\mathfrak U^{(1)},\mathcal O_{E_1}),
\qquad
\xi\in\check Z^1(\mathfrak U^{(2)},\mathcal O_{E_2})
\]
representing nonzero classes in
\(
H^1(E_1,\mathcal O_{E_1}),
\) and \(
H^1(E_2,\mathcal O_{E_2}).
\)
We denote their pullbacks to \(A\) by the same symbols.  Then
\[
[\eta]\smile[\xi]\neq0
\quad\text{in}\quad
H^2(A,\mathcal O_A).
\]

Define two classes
\(
x\defeq [\eta D]
\) and \(
y\defeq [\xi K]
\)
in
\(
H^1\bigl(A,\mathcal T_{A_{\spl},\bar0}^{\langle2\rangle}\bigr),
\)
and consider the two-dimensional subspace
\(
P\defeq\langle x,y\rangle.
\)
A point of \(P\) has the form
\(
ux+vy,
\) for \(
u,v\in\CC
\)
and it can be represented by the \v{C}ech cocycle
\[
a_1(u,v)\defeq u\eta D+v\xi K.
\]
The quadratic Kuranishi term is
\[
\frac12[a_1(u,v),a_1(u,v)]
=
-uv\,(\eta\smile\xi)\,\Theta v_1,
\]
up to the harmless sign determined by the chosen \v{C}ech cup convention.  Thus the
cohomology class of the quadratic term is
\[
\kappa_2(ux+vy)
=
-uv\,[\eta]\smile[\xi]\cdot \Theta v_1
\in
H^2\bigl(A,\mathcal T_{A_{\spl},\bar0}^{\langle4\rangle}\bigr).
\]
Set
\[
\Omega
\defeq
[\eta]\smile[\xi]\cdot\Theta v_1.
\]
Since \([\eta]\smile[\xi]\neq0\), the class \(\Omega\) is nonzero.  After projecting the target to
the line \(\CC\Omega\), the Kuranishi map on the slice \(P\) has quadratic term
\[
\kappa_P:\mathbb A^2_{u,v}\longrightarrow\mathbb A^1_w,
\qquad
w=-uv.
\]
Consequently, after choosing the corresponding Kuranishi slice and rigidifying the
infinitesimal automorphism directions, the local derived Kuranishi model on this slice is the
derived zero locus
\[
\mathfrak K_P
=
\mathbb A^2_{u,v}
\times^h_{\mathbb A^1_w}
\operatorname{Spec}\CC,
\]
where \(\operatorname{Spec}\CC\to\mathbb A^1_w\) is the origin.  Its classical truncation is
\[
t_0(\mathfrak K_P)
=
\{uv=0\}\subset\mathbb A^2.
\]
Thus the two coordinate axes are unobstructed at the quadratic level, whereas mixed
directions with \(uv\neq0\) are obstructed.

\smallskip

This example has a direct Green-theoretic interpretation.  The class \(x=[\eta D]\) is a
quotient-type direction, and
\[
H^1(\rho_2)(x)=[\eta]\otimes D\neq0.
\]
The class \(y=[\xi K]\) is kernel-type, and
\(
H^1(\rho_2)(y)=0.
\)
Separately, each pure direction has zero quadratic bracket:
\[
\kappa_2(x)=0,
\qquad
\kappa_2(y)=0.
\]
The obstruction is genuinely mixed.  It comes from the interaction between a quotient-type
Green coordinate and a kernel-type Green coordinate in the same weight-two tangent class.

\smallskip

Equivalently, suppose that the mixed first-order class
\(
ux+vy
\)
with \(uv\neq0\) extended to a full Maurer--Cartan element.  At the next stage one would
need a weight-four \(1\)-cochain
\(
a_2\in\check C^1(\mathfrak U,
\mathcal T_{A_{\spl},\bar0}^{\langle4\rangle})
\)
such that
\[
\check\delta a_2+\frac12[a_1(u,v),a_1(u,v)]=0.
\]
But the cohomology class of the second term is
\(
-uv\,\Omega
\), which is nonzero.  Therefore no such \(a_2\) exists.

\begin{remark}[Actual objects versus formal directions in the rank-four example]
This example should be read differently from the two odd-rank three examples above.
The pure directions \(x=[\eta D]\) and \(y=[\xi K]\) are represented by full
Maurer--Cartan elements inside the chosen slice, because
\[
[D,D]=0,
\qquad
[K,K]=0.
\]
Thus the corresponding coordinate axes are unobstructed in this two-dimensional
Kuranishi slice. By contrast, a mixed vector
\[
ux+vy
\qquad (uv\neq0)
\]
is initially only a first-order, or Kuranishi, direction.  It represents an actual formal
supermanifold structure only if it can be completed by a weight-four correction
\(
a_2\in
\check C^1\bigl(
\mathfrak U,
\mathcal T_{A_{\spl},\bar0}^{\langle4\rangle}
\bigr)
\)
satisfying the Maurer--Cartan equation
\[
\check\delta a_2+\frac12[a_1(u,v),a_1(u,v)]=0.
\]
The calculation above shows that the obstruction class is
\[
-uv\,\Omega
\in
H^2\bigl(A,\mathcal T_{A_{\spl},\bar0}^{\langle4\rangle}\bigr),
\]
which is nonzero for \(uv\neq0\).  Hence the mixed directions do not define actual
supermanifold structures.  The equation
\[
uv=0
\]
is therefore not a pre-existing family of supermanifolds; it is the classical truncation
of the local derived Kuranishi model, and its two components are precisely the
unobstructed branches in this chosen slice.
\end{remark}


\end{document}